\newtheorem{thm}{Theorem}[section]
\newtheorem{lem}[thm]{Lemma}
\newtheorem{cor}[thm]{Corollary}
\newtheorem{prop}[thm]{Proposition}
\newtheorem{defn}[thm]{Definition}
\newtheorem{ex}[thm]{Example}
\newtheorem{rmk}[thm]{Remark}
\newtheorem{cnjc}[thm]{Conjecture}
\numberwithin{equation}{section}
\newcommand{\dd}[1]{\frac{\partial}{\partial #1}}
\newcommand{\pairing}[2]{\left( #1\, , \, #2 \right)}
\newcommand{\integer}{\mathbb{Z}}
\newcommand{\rat}{\mathbb{Q}}
\newcommand{\real}{\mathbb{R}}
\newcommand{\cpx}{\mathbb{C}}
\newcommand{\C}{\mathbb{C}}
\newcommand{\Z}{\mathbb{Z}}
\newcommand{\R}{\mathbb{R}}
\newcommand{\consti}{\mathbf{i}\,}
\newcommand{\conste}{\mathbf{e}}
\newcommand{\proj}{\mathbb{P}}
\newcommand{\tor}{\mathbf{T}}
\newcommand{\pt}{\mathrm{pt}}
\newcommand{\eff}{\mathrm{eff}}
\newcommand{\bP}{\mathbb{P}}
\newcommand{\cD}{\mathscr{D}}
\newcommand{\moduli}{\mathcal{M}}
\newcommand{\cM}{\mathcal{M}}
\newcommand{\virt}{\mathrm{virt}}
\newcommand{\reg}{\mathrm{reg}}
\newcommand{\cF}{\mathcal{F}}
\newcommand{\Aut}{\mathrm{Aut}}
\newcommand{\codim}{\mathrm{codim}}
\newcommand{\Dom}{\mathrm{Dom}}
\newcommand{\Image}{\mathrm{Im}}
\newcommand{\QH}{\mathrm{QH}}
\newcommand{\Jac}{\mathrm{Jac}}
\newcommand{\Kcone}{\mathcal{K}}
\newcommand{\nef}{\mathrm{nef}}
\newcommand{\Sym}{\mathrm{Sym}}
\newcommand{\uf}{\underline{f}}
\begin{document}

\title[Open GW, mirror maps, and Seidel representations for toric]{Open Gromov-Witten invariants, mirror maps, and Seidel representations for toric manifolds}

\author[Chan]{Kwokwai Chan}
\address{Department of Mathematics\\ The Chinese University of Hong Kong\\ Shatin \\ Hong Kong}
\email{kwchan@math.cuhk.edu.hk}
\author[Lau]{Siu-Cheong Lau}
\address{Department of Mathematics\\Harvard University\\ One Oxford Street, Cambridge\\ MA 02138\\ USA}
\email{s.lau@math.harvard.edu}
\author[Leung]{Naichung Conan Leung}
\address{The Institute of Mathematical Sciences and Department of Mathematics\\ The Chinese University of Hong Kong\\ Shatin \\ Hong Kong}
\email{leung@math.cuhk.edu.hk}
\author[Tseng]{Hsian-Hua Tseng}
\address{Department of Mathematics\\ Ohio State University\\ 100 Math Tower, 231 West 18th Ave. \\ Columbus \\ OH 43210\\ USA}
\email{hhtseng@math.ohio-state.edu}

\date{\today}

\begin{abstract}
Let $X$ be a compact toric K\"ahler manifold with $-K_X$ nef. Let $L\subset X$ be a regular fiber of the moment map of the Hamiltonian torus action on $X$. Fukaya-Oh-Ohta-Ono \cite{FOOO1} defined open Gromov-Witten (GW) invariants of $X$ as virtual counts of holomorphic discs with Lagrangian boundary condition $L$. We prove a formula which equates such open GW invariants with closed GW invariants of certain $X$-bundles over $\mathbb{P}^1$ used to construct the Seidel representations \cite{seidel97, McDuff_seidel} for $X$. We apply this formula and degeneration techniques to explicitly calculate all these open GW invariants. This yields a formula for the disc potential of $X$, an enumerative meaning of mirror maps, and a description of the inverse of the ring isomorphism of Fukaya-Oh-Ohta-Ono \cite{FOOO10b}.
\end{abstract}
\maketitle

\section{Introduction} \label{intro}

\subsection{Statements of results}
Let $X$ be a complex $n$-dimensional compact toric manifold equipped with a toric K\"ahler form $\omega$. $X$ admits a Hamiltonian action by a complex torus $\tor_\mathbb{C}\simeq (\mathbb{C}^*)^n$. Let $L\subset X$ be a regular fiber of the associated moment map. We will call $L$ a {\em Lagrangian torus fiber} because it is a Lagrangian submanifold of $X$ diffeomorphic to $(S^1)^n$. Let $\beta\in \pi_2(X, L)$ be a relative homotopy class represented by a disc bounded by $L$.  In \cite{FOOO1}, Fukaya-Oh-Ohta-Ono defined the {\em genus 0 open Gromov-Witten (GW) invariant} $n_1(\beta)\in \mathbb{Q}$ as a virtual count of holomorphic discs in $X$ bounded by $L$ representing the class $\beta$; the precise definition of $n_1(\beta)$ is reviewed in Definition \ref{open GW}. These invariants assemble to a generating function $W^{\mathrm{LF}}$ called the {\em disc potential} of $X$ (see Definition \ref{defn disc potential}).

The disc potential $W^{\mathrm{LF}}$ plays a fundamental role in the Lagrangian Floer theory of $X$ (hence the superscript ``LF'').  It was used by Fukaya-Oh-Ohta-Ono \cite{FOOO1,FOOO2,FOOO10b} to detect non-displaceable Lagrangian torus fibers in $X$.  Indeed, the $A_\infty$-algebra, which encodes all symplectic information of a Lagrangian torus fiber, is determined by $W^{\mathrm{LF}}$ and its derivatives. Furthermore, in an upcoming work, Abouzaid-Fukaya-Oh-Ohta-Ono show that the Fukaya category of $X$ is generated by Lagrangian torus fibers.  So $W^{\mathrm{LF}}$ completely determines the Fukaya category of $X$. On the other hand, the potential $W^{\mathrm{LF}}$ is also very important in the study of mirror symmetry because it serves as the Landau-Ginzburg mirror of $X$ and its Jacobian ring determines the quantum cohomology of $X$ \cite{FOOO10b}.

Open GW invariants are in general very difficult to compute because the obstruction of the corresponding moduli space can be highly non-trivial.  For Fano toric manifolds where the obstruction bundle is trivial, open GW invariants were computed by Cho-Oh \cite{cho06}.  The next simplest non-trivial example, which is the Hirzebruch surface $\mathbb{F}_2$, was computed by Auroux \cite{auroux09} using wall-crossing techniques\footnote{Auroux \cite{auroux09} also computed the open GW invariants for the Hirzebruch surface $\mathbb{F}_3$ using the same method.} and by Fukaya-Oh-Ohta-Ono \cite{FOOO10} using degenerations.  Later, under certain strong restrictions on the geometry of the toric manifolds, open GW invariants were computed in \cite{Chan10, chan-lau, CLL, CLT11, LLW10, LLW_surfaces}.

One main purpose of this paper is to compute the open GW invariants $n_1(\beta)$ for {\em all compact semi-Fano toric manifolds}. By definition, a toric manifold $X$ is {\em semi-Fano} if $-K_X$ is nef, i.e. $-K_X\cdot C\geq 0$ for every holomorphic curve $C\subset X$. Let $\beta\in \pi_2(X,L)$ be a disc class\footnote{By dimension reasons only classes $\beta$ of Maslov index 2 can have non-zero $n_1(\beta)$. See Section \ref{Sect open_GW_toric} for details.} of Maslov index 2 such that $n_1(\beta)\neq0$. By the results of Cho-Oh \cite{cho06} and Fukaya-Oh-Ohta-Ono \cite{FOOO1} (see also Lemma \ref{Lem Maslov-two stable disc}), the class $\beta$ must be of the form $\beta=\beta_l+\alpha$, where $\beta_l\in \pi_2(X, L)$ is the {\em basic disc class} associated to a toric prime divisor $D_l$ (the class of the unique Maslov index 2 embedded disk intersecting $D_l$ at a point; see \cite[Definition 7.1]{cho06}) and $\alpha\in H_2^\text{eff}(X) \subset H_2(X, \mathbb{Z})$ is an effective curve class with {\em Chern number} $c_1(\alpha):=-K_X\cdot \alpha=0$. Define the following generating function (see Definition \ref{defn disc potential} for more details):
$$\delta_l(q):=\sum_{\substack{\alpha\in H_2^\text{eff}(X)\setminus \{0\} \\ c_1(\alpha)=0}} n_1(\beta_l+\alpha)q^\alpha.$$

One of our main results is an explicit formula for the generating function $\delta_l(q)$ which we now explain. The {\em toric mirror theorem} of Givental \cite{givental98} and Lian-Liu-Yau \cite{LLY3}, as recalled in Theorem \ref{thm toric mir thm}, states that there is an equality $$I(\check{q},z)=J(q(\check{q}),z),$$
where $I(\check{q},z)$ is the combinatorially defined {\em $I$-function} of $X$ (see Definition \ref{defn I}), $J(q,z)$ is a certain generating function of closed GW invariants of $X$ called the {\em $J$-function} (see Equation \eqref{J-function}), and $q(\check{q})$ is the {\em mirror map} in Definition \ref{Defn mir map}. Our formula for $\delta_l(q)$ reads as follows:

\begin{thm} \label{thm delta_intro}
Let $X$ be a compact semi-Fano toric manifold. Then
$$1+\delta_l(q)= \exp(g_l(\check{q}(q))),$$ where
\begin{equation}\label{eqn:funcn_g}
g_l(\check{q}):=\sum_{d}\frac{(-1)^{(D_l\cdot d)}(-(D_l\cdot d)-1)!}{\prod_{p\neq l} (D_p\cdot d)!}\check{q}^d
\end{equation}
where the summation is over all effective curve classes $d\in H_2^\text{eff}(X)$ satisfying
$$-K_X\cdot d=0, D_l\cdot d<0 \text{ and } D_p\cdot d \geq 0 \text{ for all } p\neq l$$
and $\check{q}=\check{q}(q)$ is the inverse of the mirror map $q=q(\check{q})$.
\end{thm}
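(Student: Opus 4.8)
The plan is to reduce the computation of the open GW generating function $\delta_l(q)$ to a closed GW computation via the Seidel-space correspondence, and then to evaluate that closed computation using the toric mirror theorem. Concretely, I would proceed in three stages. First, I would establish an equality of the form $n_1(\beta_l+\alpha) = \text{(some closed GW invariant of the } X\text{-bundle } E_l \to \mathbb{P}^1)$, where $E_l$ is the total space of the fiberwise-$\mathbb{C}^*$-rotation bundle used in defining the Seidel element associated to the divisor $D_l$. This is exactly the open/closed comparison formula advertised in the abstract, so I would either invoke it directly (once proved earlier in the paper) or prove it here by a gluing argument: a Maslov-index-two disc in class $\beta_l+\alpha$ bounded by $L$ can be capped off, using the basic disc $\beta_l$ replaced by a section of $E_l$, into a genus-0 stable map in $E_l$ of the appropriate fiber class, and conversely; matching virtual fundamental classes (invoking the FOOO construction reviewed in Definition \ref{open GW}) gives the numerical identity.

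Second, with $\delta_l(q)$ rewritten as a generating function of closed GW invariants of $E_l$, I would organize these invariants into a component of the $J$-function (or equivalently the Seidel element) of $E_l$, and then use the known toric description of $E_l$ — it is itself a toric manifold, semi-Fano when $X$ is — to write down its $I$-function from Definition \ref{defn I}. The toric mirror theorem (Theorem \ref{thm toric mir thm}) then identifies this $J$-function component with the corresponding hypergeometric $I$-function component after the $E_l$-mirror map substitution. The key point is that the $I$-function of $E_l$ differs from that of $X$ only in the extra $\mathbb{P}^1$-direction and in the factor coming from the fiberwise line bundle attached to $D_l$; extracting the relevant coefficient produces precisely the hypergeometric sum $\exp(g_l(\check q))$ with the factorials $(-(D_l\cdot d)-1)!/\prod_{p\neq l}(D_p\cdot d)!$ and the sign $(-1)^{D_l\cdot d}$, since the $z\to\infty$ (or $z\to 0$) limit of a ratio of Gamma/Pochhammer factors over the roots $D_p$ yields exactly these combinatorial coefficients when $c_1(d)=0$ forces cancellation of all the $z$-powers. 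I would then argue that the $E_l$-mirror map, restricted to the $X$-variables, coincides with the $X$-mirror map $q=q(\check q)$ (the extra direction decouples because $-K_{E_l}$ pulled back from $\mathbb{P}^1$ contributes trivially under $c_1(d)=0$), so that inverting gives $\check q = \check q(q)$ and hence $1+\delta_l(q) = \exp(g_l(\check q(q)))$.

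Third, I would handle the degeneration/vanishing input needed to make the $I$-function extraction rigorous: one must know that only curve classes $d$ with $D_l\cdot d<0$ and $D_p\cdot d\ge 0$ for $p\neq l$ contribute, which follows from the shape of the $I$-function summand (the factor attached to $D_l$ becomes a finite product $\prod_{k}(D_l + kz)$ running over \emph{negative} values, giving the $(-(D_l\cdot d)-1)!$ with its sign, while positivity of the other intersection numbers is needed for the hypergeometric modification to be polynomial rather than a formal Laurent tail). Matching this against the constraint set in the statement of the theorem is then bookkeeping.

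The main obstacle I anticipate is the open/closed comparison in stage one: showing that the FOOO virtual count $n_1(\beta_l+\alpha)$ — defined through a delicate Kuranishi-structure/obstruction-bundle construction on the moduli of bordered stable maps — equals the closed genus-0 GW invariant of $E_l$ on the nose, including all virtual multiplicities. The subtlety is that sphere bubbles of Chern number zero (which carry the class $\alpha$) attach to the boundary disc, and one must verify that the Seidel-space compactification sees exactly the same bubbling with compatible obstruction spaces; degenerate configurations and automorphisms of the bubble trees must be matched carefully on both sides. Once that identity is in hand, stages two and three are essentially a hypergeometric residue computation plus an application of Theorem \ref{thm toric mir thm}, which, while notationally heavy, is routine.
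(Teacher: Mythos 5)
Your stage one matches the paper (Theorem \ref{Thm open-closed}), but stage two contains a genuine gap: you have the wrong Seidel space carrying the closed invariants, and the invariants themselves are not of the type the mirror theorem computes. The open/closed correspondence identifies $n_1(\beta_l+\alpha)$ (after the divisor equation) with invariants of $E_l^-$, the Seidel space for the action generated by $-v_l$, in the class $\sigma_l^-+\alpha$. This space is in general \emph{not} semi-Fano --- the infinity section class $\sigma_{l,\infty}^-$ can have negative Chern number --- so Theorem \ref{thm toric mir thm} and the $I$-function extraction you propose simply do not apply to it; the semi-Fano member of the pair is $E_l$ (for $+v_l$), but that is not the space appearing on the closed side of the correspondence. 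Moreover, the closed invariants in question are the refined $\sigma_l^-$-regular invariants of Definition \ref{def:regGW}, i.e.\ integrals over selected connected components of $\moduli^{E_l^-}_{0,2,\sigma_l^-+\alpha}(D,\pt)$ (those containing a sphere component in class $\sigma_l^-$), and such component-restricted numbers are not coefficients of the $J$-function of $E_l^-$ even if a mirror theorem were available. Your claim that the $E_l$-mirror map ``restricted to the $X$-variables'' equals the $X$-mirror map, with the extra direction decoupling, is also not automatic: making this precise is exactly the nontrivial computation of Gonz\'alez--Iritani (Proposition \ref{prop Seidel Batyrev}), which produces the correction functions $g_{i,j}$ and expresses the normalized Seidel element of the semi-Fano space $E_l$ in terms of Batyrev elements.

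The paper bridges the gap you leave open as follows: it uses the G--I mirror-theorem computation on the semi-Fano spaces $E_{v_i}$ to express the divisor $D_i$ in terms of (lifted) Batyrev/Seidel elements (Proposition \ref{Prop D-B}), and then evaluates the pairing $\sum_\alpha q^\alpha\langle \iota_* S_i^\circ,[\pt]\rangle^{E_j^-,\,\sigma_j^-\,\reg}_{0,2,\sigma_j^-+\alpha}=\delta_{ij}$ by a degeneration argument (Proposition \ref{prop:delta}): the family degenerating $E_{v_i-v_j}$ into $E_{v_i}\cup_X E_{-v_j}$, the same geometry underlying the composition law of Seidel representations, restricted to the $\sigma_j^-$-regular components, with Lemma \ref{lem:reg=>unique} controlling which configurations occur. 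Only after this does the hypergeometric identification with $\exp(g_l(\check q(q)))$ follow. Without some substitute for this degeneration-plus-Seidel-element step (for instance the conjectural degeneration formula of the independent G--I approach), your proposed direct $I$-function extraction on ``$E_l$'' cannot produce the stated formula.
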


The mirror map $q=q(\check{q})$ is combinatorially defined, and its inverse $\check{q}=\check{q}(q)$ can be explicitly computed, at least recursively. So our formula provides an effective calculation for all genus 0 open GW invariants. It may also be inverted to give a formula which expresses the inverse mirror map $\check{q}(q)$ in terms of genus 0 open GW invariants (see Corollary \ref{cor inv mir map}), thereby giving the inverse mirror map an {\em enumerative meaning} in terms of disc counting.

Our calculation of open GW invariants can also be neatly stated in terms of the disc potential, giving the following {\em open mirror theorem}:
\begin{thm} \label{thm W equal}
Let $X$ be a compact semi-Fano toric manifold. Then
\begin{equation} \label{eqn same W}
W^{\mathrm{LF}}_{q}=\tilde{W}^{\mathrm{HV}}_{\check{q}(q)},
\end{equation}
where $\tilde{W}^{\mathrm{HV}}$ is the {\em Hori-Vafa superpotential} for $X$ in a certain explicit choice of coordinates of $(\cpx^*)^n$; see Definition \ref{defn HV} and Equation \eqref{eqn W tilde}.
\end{thm}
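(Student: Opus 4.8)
The plan is to derive Theorem~\ref{thm W equal} from the explicit evaluation in Theorem~\ref{thm delta_intro}: once the definitions of the two potentials are unwound, they are assembled from the very same monomials, and the only content that remains is a comparison of coordinate systems.

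\textbf{Step 1: decompose the disc potential.} I would first recall, from the classification of Maslov index~$2$ stable discs bounding $L$ (Lemma~\ref{Lem Maslov-two stable disc}, after Cho--Oh and Fukaya--Oh--Ohta--Ono) together with $n_1(\beta_l)=1$ for each basic disc class $\beta_l$, that the disc potential of Definition~\ref{defn disc potential} has the form
$$W^{\mathrm{LF}}_q \;=\; \sum_{l} \bigl(1+\delta_l(q)\bigr)\, Z_l,$$
where the sum is over the toric prime divisors $D_l$ and $Z_l$ is the monomial attached to $\beta_l$. Here the variable $q^{\alpha}$ occurring in $\delta_l$ records the symplectic area of the bubbled-off curve class $\alpha$, so that $q^{\beta_l+\alpha}=q^{\beta_l}q^{\alpha}$ factors $Z_l=q^{\beta_l}z^{v_l}$ out of every term, $v_l$ being the primitive generator of the ray of $D_l$.

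\textbf{Step 2: insert Theorem~\ref{thm delta_intro}.} Substituting $1+\delta_l(q)=\exp\bigl(g_l(\check q(q))\bigr)$ gives $W^{\mathrm{LF}}_q=\sum_{l}\exp\bigl(g_l(\check q(q))\bigr)\,Z_l$. On the other side, from Definition~\ref{defn HV} and Equation~\eqref{eqn W tilde} the modified Hori--Vafa superpotential can be written as $\tilde W^{\mathrm{HV}}_{\check q}=\sum_l \exp(g_l(\check q))\,\check Z_l$, where $\check Z_l$ is the monomial for $\beta_l$ in the specified coordinates on $(\cpx^*)^n$ and in the complex parameters $\check q$; equivalently, $\tilde W^{\mathrm{HV}}$ is the plain Hori--Vafa potential $\sum_l\check q^{\beta_l}z^{v_l}$ rewritten in the ``tilde'' coordinates, which is exactly the twist producing the factors $\exp(g_l)$ (this uses that the $g_l$ satisfy the linear compatibility forcing them into the image of the map dual to $\{v_l\}$). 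Evaluating at $\check q=\check q(q)$, Theorem~\ref{thm W equal} is thereby reduced to the monomial identities $Z_l=\check Z_l\big|_{\check q=\check q(q)}$ for all $l$.

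\textbf{Step 3, and the main obstacle.} This last identity is the assertion that the ``certain explicit choice of coordinates'' entering Definition~\ref{defn HV} is precisely the one in which the mirror map of Definition~\ref{Defn mir map} intertwines the complex-structure parameters $\check q$ of the Landau--Ginzburg model with the Kähler parameters $q$ controlling the areas of the basic discs. I would check it divisor by divisor: track how $q^{\beta_l}$ and $z^{v_l}$ transform under $q=q(\check q)$ and under the coordinate change defining $\tilde W^{\mathrm{HV}}$, and verify agreement using the fan data $\{v_l\}$ and the linear relations among the $D_l$ that define the Novikov/Kähler variables, keeping the normalizations straight so that each basic disc enters with leading coefficient~$1$, consistent with $n_1(\beta_l)=1$. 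Since Theorem~\ref{thm delta_intro} already carries all the geometric content (the open--closed formula and the degeneration argument), the difficulty here is entirely in this last step: the two potentials a priori live in different variables, and the substance of Theorem~\ref{thm W equal} beyond Theorem~\ref{thm delta_intro} is that the coordinate system chosen to define $\tilde W^{\mathrm{HV}}$ is compatible with the mirror map on the nose, simultaneously for every toric divisor. No new input is needed once this dictionary is pinned down, but setting it up correctly is the crux.
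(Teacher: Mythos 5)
Your proposal is correct and takes essentially the same route as the paper, whose proof simply substitutes Theorem \ref{thm delta_intro} (in the form $1+\delta_l(q)=\exp g_l(\check{q}(q))$) and the inverse mirror map into the expression \eqref{eqn W tilde} and matches terms. The coordinate dictionary you defer in Step 3 is not an actual obstacle: it is exactly Corollary \ref{cor inv mir map}, $\check{q}_k(q)=q_k(1+\delta_{n+k}(q))\prod_{p=1}^n(1+\delta_p(q))^{-\pairing{v_{n+k}}{\nu_p}}$, which follows in one line from the mirror-map formula \eqref{mirror_map_eqn} (via $g^{\Psi_k}=\sum_l (D_l\cdot\Psi_k)g_l$) together with Theorem \ref{thm delta_intro}, and upon substitution into \eqref{eqn W tilde} the factors $\exp\left(\pairing{v_{n+k}}{\nu_p}g_p\right)$ cancel, leaving $\sum_l (1+\delta_l(q))Z_l=W^{\mathrm{LF}}_q$ with $Z_l$ as in \eqref{Eqn Z}.
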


While the disc potential $W^{\mathrm{LF}}$ is a relatively new object invented to describe the symplectic geometry of $X$, the Hori-Vafa superpotential $\tilde{W}^{\mathrm{HV}}$ has been studied extensively in the literature. Thus existing knowledge on $\tilde{W}^{\mathrm{HV}}$ can be employed to understand the disc potential $W^{\mathrm{LF}}$ better via Theorem \ref{thm W equal}.

In particular, since $\tilde{W}^{\mathrm{HV}}$ is written in terms of (inverse) mirror maps which are known to be convergent, it follows that the coefficients of the disc potential $W^{\mathrm{LF}}$ are convergent power series as well (See Theorem \ref{thm conv}).

Furthermore, the mirror theorem \cite{givental98, LLY3} induces an isomorphism
\begin{equation}
\QH^*(X,\omega_q)\overset{\simeq}{\longrightarrow} \Jac(\tilde{W}^{\mathrm{HV}}_{\check{q}})
\end{equation}
between the quantum cohomology of $X$ and the Jacobian ring of the Hori-Vafa superpotential when $X$ is semi-Fano.  Combining with Theorem \ref{thm W equal}, this gives another proof of the following
\begin{cor}[FOOO's isomorphism \cite{FOOO10b} for small quantum cohomology in semi-Fano case\footnote{Fukaya-Oh-Ohta-Ono \cite{FOOO10b} proved a ring isomorphism between the {\em big} quantum cohomology ring of {\em any} compact toric manifold $X$ and the Jacobian ring of its {\em bulk-deformed} potential function; our results give such an isomorphism for the {\em small} quantum cohomology of a semi-Fano toric manifold $X$.}]
Let $X$ be a compact semi-Fano toric manifold.  Then there exists an isomorphism
\begin{equation}\label{FOOO_isom_intro}
\QH^*(X,\omega_q)\overset{\simeq}{\longrightarrow} \Jac(W_q^{\mathrm{LF}})
\end{equation}
between the small quantum cohomology ring of $X$ and the Jacobian ring of  $W_q^{\mathrm{LF}}$.
\end{cor}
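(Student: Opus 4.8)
The plan is to prove the corollary by composing two isomorphisms, thereby re-deriving the result of \cite{FOOO10b} (in the small-quantum, semi-Fano case) entirely from closed-string data and Theorem \ref{thm W equal}. The first ingredient is the isomorphism asserted in the paragraph preceding the statement: in the semi-Fano case the toric mirror theorem of Givental \cite{givental98} and Lian--Liu--Yau \cite{LLY3} upgrades the equality $I(\check q,z)=J(q(\check q),z)$ of Theorem \ref{thm toric mir thm} to a ring isomorphism
\[
\Phi_{\check q}\colon QH^*(X,\omega_{q(\check q)})\;\xrightarrow{\simeq}\;\Jac(\tilde W^{\mathrm{HV}}_{\check q}).
\]
The second ingredient is the identification of Jacobian rings supplied by Theorem \ref{thm W equal}: since $W^{\mathrm{LF}}_q$ and $\tilde W^{\mathrm{HV}}_{\check q(q)}$ are one and the same Laurent polynomial on $(\cpx^*)^n$, their ideals of logarithmic partial derivatives coincide, so $\Jac(W^{\mathrm{LF}}_q)=\Jac(\tilde W^{\mathrm{HV}}_{\check q(q)})$ as rings over the relevant Novikov ring. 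Substituting $\check q=\check q(q)$ into $\Phi$ and composing with this equality yields the desired isomorphism $QH^*(X,\omega_q)\xrightarrow{\simeq}\Jac(W^{\mathrm{LF}}_q)$.

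It remains to justify the first ingredient, which I would do by identifying both rings with a common ``Batyrev-type'' presentation. The Jacobian ring $\Jac(\tilde W^{\mathrm{HV}}_{\check q})$ is the quotient of the Laurent polynomial ring in the coordinates $y_1,\dots,y_n$ of $(\cpx^*)^n$ by the logarithmic relations $y_i\,\partial_{y_i}\tilde W^{\mathrm{HV}}_{\check q}=0$; clearing denominators, these generate an ideal of ``linear'' relations and ``Stanley--Reisner'' relations, the latter deformed by the parameters $\check q$. On the A-side, the small quantum cohomology $QH^*(X,\omega_q)$ is presented by the quantum linear and quantum Stanley--Reisner relations in the parameters $q$. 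The substance of the mirror theorem, specialized to the semi-Fano situation in which the (non-equivariant) $I$-function has the normalized form $1+O(z^{-1})$, is precisely that these two presentations are intertwined by the substitution $q=q(\check q)$; granting this, the assignment sending each toric prime divisor class $D_i$ to the monomial appearing as the $i$-th summand of $\tilde W^{\mathrm{HV}}_{\check q}$ descends to a ring homomorphism between the quotients, and it is bijective because both sides are free of rank $\dim_{\rat} H^*(X;\rat)$ over the Novikov ring. This map is $\Phi_{\check q}$; the ring-homomorphism property is a formal consequence of the compatibility of the Dubrovin connection with the quantum product, and convergence of all the series involved is Theorem \ref{thm conv}.

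The main obstacle is the verification, inside the previous paragraph, that the mirror transformation is ``purely in the K\"ahler parameters'', i.e. that no further reparametrization of the ambient coordinates on $(\cpx^*)^n$ is needed beyond $q=q(\check q)$. This is exactly where the hypothesis that $-K_X$ is nef enters: it forces $D_p\cdot d\ge 0$ for every curve class $d$ contributing to the relevant part of the $I$-function, so the only correction produced by the mirror theorem is the change of K\"ahler variables, and the classes $D_i$ map to honest monomials. Once this is in place, matching the defining relations of the two presentations is the standard identification of the small quantum $D$-module of a semi-Fano toric manifold with the GKZ/hypergeometric system attached to $\tilde W^{\mathrm{HV}}$, and the remaining points---signs, completion with respect to the Novikov filtration, and the precise normalization of the mirror map---are routine and already present in the literature on toric quantum cohomology. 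In particular, no input from Lagrangian Floer theory is used, so the corollary follows from Theorem \ref{thm W equal} once this classical package is assembled.
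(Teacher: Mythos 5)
Your proposal is correct and follows essentially the same route as the paper: the corollary is obtained by combining the second form of the toric mirror theorem (Theorem \ref{Giv_thm2}, which gives $QH^*(X,\omega_q)\cong \Jac(\tilde{W}^{\mathrm{HV}}_{\check{q}(q)})$) with the open mirror theorem $W^{\mathrm{LF}}_q=\tilde{W}^{\mathrm{HV}}_{\check{q}(q)}$ of Theorem \ref{thm W equal}. The only difference is that you sketch a proof of the mirror-theorem isomorphism via Batyrev-type presentations, whereas the paper simply cites Givental and Lian--Liu--Yau for that ingredient, so no new content is needed there.
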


On the other hand, McDuff-Tolman \cite{McDuff-Tolman} constructed a presentation of $\QH^*(X,\omega_q)$ using Seidel representations (\cite{seidel97, McDuff_seidel}) and showed that it is abstractly isomorphic to the Batyrev presentation \cite{batyrev93}. This was exploited by Fukaya-Oh-Ohta-Ono \cite{FOOO10b} in their proof of the injectivity of the homomorphism \eqref{FOOO_isom_intro} but they did not specify the precise relations between \eqref{FOOO_isom_intro} and Seidel elements. Using our results on open GW invariants, we deduce that:
\begin{thm} \label{thm Seidel to gen intro}
Suppose $X$ is semi-Fano. Then the isomorphism \eqref{FOOO_isom_intro} maps the (normalized) Seidel elements $S^\circ_l\in \QH^*(X, \omega_q)$ (see Section \ref{Sect Seidel_rep}) to the generators $Z_1, \ldots, Z_m$ of the Jacobian ring $\Jac(W_q^{\mathrm{LF}})$, where $Z_l$ are monomials defined by Equation \eqref{Eqn Z}.
\end{thm}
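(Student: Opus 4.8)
The plan is to track the normalized Seidel elements through the composite isomorphism
\[
QH^*(X,\omega_q)\;\overset{\simeq}{\longrightarrow}\;\Jac(\tilde W^{\mathrm{HV}}_{\check q})\;\overset{\simeq}{\longrightarrow}\;\Jac(W^{\mathrm{LF}}_q),
\]
where the first arrow is the mirror isomorphism induced by the toric mirror theorem of Givental \cite{givental98} and Lian--Liu--Yau \cite{LLY3}, and the second is the substitution $\check q=\check q(q)$ of the inverse mirror map furnished by the open mirror theorem (Theorem \ref{thm W equal}); their composite is exactly \eqref{FOOO_isom_intro}. Since both rings are generated by the elements in question --- the $S^\circ_l$ generate $QH^*(X,\omega_q)$ by McDuff--Tolman \cite{McDuff-Tolman} with the Batyrev \cite{batyrev93} relations, and the $Z_l$ generate $\Jac(W^{\mathrm{LF}}_q)$ with the mirror image of those relations (used already by \cite{FOOO10b}) --- it suffices to compute the image of each $S^\circ_l$ and check it equals $Z_l$.

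First I would unwind the right-hand side. By definition $Z_l$ is the $l$-th monomial summand of the disc potential, so that $W^{\mathrm{LF}}_q=\sum_l Z_l$, and by Theorem \ref{thm W equal} it equals the corresponding summand of $\tilde W^{\mathrm{HV}}_{\check q(q)}$. Writing $\check z_l$ for the pure Hori--Vafa monomial carrying no quantum correction, Theorem \ref{thm delta_intro} gives $Z_l=(1+\delta_l(q))\,\check z_l=\exp\!\bigl(g_l(\check q(q))\bigr)\,\check z_l$. In particular $Z_l$ is the image, under the second isomorphism above, of the monomial $\exp(g_l(\check q))\,\check z_l$, which is precisely the $l$-th summand $\tilde Z_l$ of $\tilde W^{\mathrm{HV}}_{\check q}$ itself.

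Next I would compute $S^\circ_l$ on the quantum side using the paper's main formula. Recall from Section \ref{Sect Seidel_rep} that $S_l$ is assembled from genus $0$ closed GW invariants of the Seidel space $E_l\to\mathbb{P}^1$ of the $l$-th toric $\cpx^*$-action, counting sections; $E_l$ is again a compact semi-Fano toric manifold. The main open/closed comparison formula identifies the contribution of each section class of $E_l$ with fiber part $\alpha\in H_2^{\mathrm{eff}}(X)$ with the open invariant $n_1(\beta_l+\alpha)$, and the semi-Fano condition --- controlling Maslov indices of discs in $X$ and the virtual dimensions of the section moduli of $E_l$ --- forces only the classes with $c_1(\alpha)=0$ to contribute beyond the minimal section. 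Summing, $S_l$ equals the fixed ``classical'' minimal-section class times the generating function $1+\delta_l(q)$; after the normalization of Section \ref{Sect Seidel_rep}, which multiplies by the appropriate Kähler monomial and strips the cohomological degree shift, one obtains $S^\circ_l=(1+\delta_l(q))\,m_l$ for an explicit monomial $m_l$. The identity $I(\check q,z)=J(q(\check q),z)$ restricted to degree-two classes identifies $m_l$, under the first isomorphism, with the pure Hori--Vafa monomial $\check z_l$; hence $S^\circ_l\mapsto (1+\delta_l(q(\check q)))\,\check z_l=\exp(g_l(\check q))\,\check z_l=\tilde Z_l$ in $\Jac(\tilde W^{\mathrm{HV}}_{\check q})$, and then $\tilde Z_l\mapsto Z_l$ under the inverse-mirror-map substitution, as claimed.

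The hard part will be the third step: showing that the quantum corrections to the Seidel element beyond the classical minimal section are governed \emph{exactly} by the disc counts $n_1(\beta_l+\alpha)$ with $c_1(\alpha)=0$ --- with the correct signs and virtual multiplicities --- and by nothing else. This is the heart of the paper's open/closed comparison: one must match the obstruction theory of the moduli of Maslov-index-$2$ discs in $X$ bounded by $L$ with that of the moduli of sections of $E_l$ in the corresponding class, and invoke semi-Fanoness to exclude section classes of positive fiber Chern number from the relevant virtual-dimension-zero stratum. A secondary but delicate point is the bookkeeping of normalizations --- the Kähler/Novikov monomial relating $S_l$ to $S^\circ_l$, the normalization of the mirror map, and the choice of nef basis --- so that the leading terms agree on the nose rather than merely up to an overall monomial; aligning these conventions is what upgrades the computation to the clean statement $S^\circ_l\mapsto Z_l$.
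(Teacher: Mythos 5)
Your overall frame (factor \eqref{FOOO_isom_intro} as the mirror isomorphism followed by the inverse-mirror-map substitution, then chase $S^\circ_l$ through it) matches the paper's, but your key third step rests on a misreading of the open/closed formula and would fail as stated. Theorem \ref{Thm open-closed} does \emph{not} identify the GW invariants that assemble into the Seidel element of $E_l$ with the disc counts: it equates $n_{1,1}^X(\beta_l+\alpha;D,[\pt]_L)$ with the refined $\sigma^-$-\emph{regular} two-point invariants of the \emph{opposite} Seidel space $E_l^-$ (the one for $-v_l$, which is generally not semi-Fano), whereas $S_l$ and $S^\circ_l$ are built from the one-point invariants $\langle \phi_a^{E_l}\rangle_{0,1,\sigma_l+\alpha}$ of the space $E_l$ for $+v_l$. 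Consequently your central claim ``$S_l$ equals the minimal-section contribution times $1+\delta_l(q)$, hence $S^\circ_l=(1+\delta_l)m_l$'' is unsupported, and it is in fact false in $QH^*(X,\omega_q)$: by Proposition \ref{prop Seidel Batyrev}, $S^\circ_l=\exp(-g_l)\bigl(D_l-\sum_i g_{i,l}D_i\bigr)$, which involves the other toric divisors through the $g_{i,l}$ and is not a generating series times a single class. The actual bridge the paper uses is the Batyrev element: $B_l=\exp(g_l(\check q(q)))\,S^\circ_l$ (Proposition \ref{prop Seidel Batyrev}) together with Proposition \ref{prop Bat to}, which says $B_l\mapsto(\exp g_l)Z_l$ under $QH^*\cong\Jac(\tilde W^{\mathrm{HV}}_{\check q(q)})$; once Theorem \ref{thm W equal} identifies $\Jac(\tilde W^{\mathrm{HV}}_{\check q(q)})$ with $\Jac(W^{\mathrm{LF}}_q)$, the conclusion $S^\circ_l\mapsto Z_l$ is immediate. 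The relation between Seidel-space invariants and $\delta_l$ that you want to establish in step three is proved earlier in the paper (via the degeneration argument of Proposition \ref{prop:delta} feeding into Theorem \ref{Thm g-open}), and re-deriving it by directly matching disc moduli with the section moduli defining $S_l$ is not something the paper's comparison gives you; without that input your step three is a gap, and with it your argument becomes circular, since it is equivalent to Theorem \ref{thm delta_intro} which you already invoke.

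There are also normalization errors that leave your final answer off by a factor: by Definition \ref{defn disc potential}, $W^{\mathrm{LF}}_q=\sum_l(1+\delta_l)Z_l$, not $\sum_l Z_l$, so the $l$-th summand is $(1+\delta_l)Z_l$, and your chain $S^\circ_l\mapsto\exp(g_l(\check q))\check z_l=\tilde Z_l$ lands on the image of $B_l$, i.e.\ on $(1+\delta_l)Z_l$, which differs from the asserted $Z_l$ by $\exp(g_l)$. Moreover there is no further ``inverse-mirror-map substitution'' sending $\tilde Z_l$ to $Z_l$: after substituting $\check q=\check q(q)$ the two potentials are literally equal, so the second arrow in your factorization is the identity and cannot absorb that factor. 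Fixing both issues essentially forces you back to the paper's route through $B_l$.
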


We conjecture that Theorem \ref{thm Seidel to gen intro}, which provides a highly non-trivial relation between open Gromov-Witten invariants and Seidel representations, holds true for {\em all} toric manifolds; see Conjecture \ref{conj:seidel_elt} for the precise statement.

\subsection{Outline of methods}
The {\em closed} GW theory for toric manifolds has been studied extensively, and various powerful computational tools such as virtual localization are available. The situation is drastically different for {\em open} GW theory with respect to Lagrangian torus fibers -- the open GW invariants, which are defined using moduli spaces of stable discs that could have very sophisticated structures, are very hard to compute in general, especially because of the lack of localization techniques.\footnote{This is in sharp contrast with the situation for Aganagic-Vafa type Lagrangian submanifolds in toric Calabi-Yau 3-folds, where the open GW invariants are practically defined by localization formulas and can certainly be evaluated using them.}


In this paper we study the problem of computing open GW invariants via a geometric approach which we outline as follows. As we mention above, for $\beta\in \pi_2(X, L)$ of Maslov index 2 with $n_1(\beta) \neq 0$, a stable disc representing $\beta$ must have its domain being the union of a disc $D$ and a collection of rational curves. Na\"ively one may hope to ``cap off'' the disc $D$ by finding another disc $D'$ and gluing $D$ and $D'$ together along their boundaries to form a sphere. If this can be done, it is then natural to speculate that the open GW invariants we want to compute are equal to certain closed GW invariants. This idea was first worked out in \cite{Chan10} for toric manifolds of the form $X=\mathbb{P}(K_Y\oplus \mathcal{O}_Y)$ where $Y$ is a compact Fano toric manifold; in that case the $\mathbb{P}^1$-bundle structure on $X$ provides a way to find the needed disc $D'$. The same idea was applied in subsequent works \cite{LLW10, LLW_surfaces, CLL, chan-lau, CLT11}, and it gradually became clear that in more general situations, we need to work with a target space different from $X$ in order to find the ``capping-off'' disc $D'$.

One novelty of this paper is the discovery that {\em Seidel spaces} are the correct spaces to use in the case of semi-Fano toric manifolds. Given a toric manifold $X$, let $D_l$ be a toric prime divisor and let $v_l$ be the primitive generator of the corresponding ray in the fan. Then $-v_l$ defines a $\mathbb{C}^*$-action on $X$. Let $\mathbb{C}^*$ act on $\mathbb{C}^2\setminus \{0\}$ by $z\cdot (u,v):=(zu, zv), z\in \mathbb{C}^*, (u,v)\in \mathbb{C}^2\setminus \{0\}$. The {\em Seidel space} associated to the $\mathbb{C}^*$-action defined by $-v_l$ is the quotient $$E_l^{-}:=(X\times (\mathbb{C}^2\setminus \{0\}))/\mathbb{C}^*.$$
By construction, $E_l^-$ is also a toric manifold, and there is a natural map $E_l^-\to \mathbb{P}^1$ giving $E_l^-$ the structure of a fiber bundle over $\mathbb{P}^1$ with fiber $X$. The toric data of $E_l^-$, as well as geometric information such as its Mori cone, can be explicitly described; see Section \ref{Sect Seidel_rep}.

Recall that the disc classes which give non-zero open GW invariants are of the form $\beta_l+\alpha$, where $\beta_l$ is the basic disc class associated to the toric prime divisor $D_l$ for some $l$, and $\alpha\in H_2^\text{eff}(X)$ is an effective curve class with $c_1(\alpha)=0$.  We prove the following
\begin{thm}[See Theorem \ref{Thm open-closed}]\label{thm:open-closed_intro}
Let $X$ be a compact semi-Fano toric manifold defined by a fan $\Sigma$, and $L\subset X$ a Lagrangian torus fiber. Let $P$ be the fan polytope of $X$, which is the convex hull of minimal generators of rays in $\Sigma$.
Then for minimal generators $v_l,v_k$ of rays in $\Sigma$, we have
\begin{equation}\label{open_closed_intro}
n^X_{1,1}(\beta_l+\alpha; D_k, [\mathrm{pt}]_{L}) = \langle D_k^{{E_l^-}}, [\mathrm{pt}]_{E_l^-} \rangle^{E_l^-,\, \sigma_l^- \, \reg}_{0,2,\sigma_l^-+\alpha},
\end{equation}
when $v_k \in F(v_l)$ and $\alpha \in H_2^{\eff,c_1=0}(X)$ satisfies $D_i \cdot \alpha = 0$ whenever $v_i \not\in F(v_l)$, where $F(v_l)$ is the minimal face of $P$ containing $v_l$; and $n^X_{1,1}(\beta_l+\alpha; D_k, [\mathrm{pt}]_{L}) = 0$ otherwise.
\end{thm}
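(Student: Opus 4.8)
The plan is to turn the open count in $X$ into the closed count in the Seidel space $E_l^-$ by a ``capping-off'' identification of moduli spaces, and then to match the two virtual fundamental classes. \emph{Step 1 (toric bookkeeping on $E_l^-$).} From the fan $\Sigma$ and the cocharacter $-v_l$ I would first write out the fan of $E_l^-$ explicitly: every ray $v_i$ of $\Sigma$ produces a ray of $E_l^-$, together with two extra rays for the toric divisors $X_0,X_\infty$ over $0,\infty\in\bP^1$. This yields the inclusion of fiber classes $H_2(X)\hookrightarrow H_2(E_l^-)$, the two section classes $\sigma_l^\pm$ attached to the $\C^*$-action, the Mori cone of $E_l^-$, and the toric prime divisor $D_k^{E_l^-}$ restricting to $D_k$ on a fiber. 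Here one reads off $c_1(E_l^-)\cdot\alpha=c_1(X)\cdot\alpha=0$ and $c_1(E_l^-)\cdot\sigma_l^-=2$, so that the $\sigma_l^-\,\reg$-moduli problem of $E_l^-$ cut down by $D_k^{E_l^-}$ and $[\pt]_{E_l^-}$ has virtual dimension $0$, matching that of $\overline{\mathcal{M}}_{1,1}(X,L;\beta_l+\alpha)$ (real dimension $n+2$) cut down by $D_k$ and $[\pt]_L$. The same description settles the vanishing clause: if $v_k\notin F(v_l)$, or $D_i\cdot\alpha\neq 0$ for some $v_i\notin F(v_l)$, then the relevant curve or bubble is forced off the toric stratum carved out by $F(v_l)$, and the Mori cone together with nefness of $-K_{E_l^-}$ and the constraints makes the constrained moduli space empty.

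\emph{Step 2 (structure of the two moduli problems).} On the open side, Lemma \ref{Lem Maslov-two stable disc} (via Cho--Oh and Fukaya--Oh--Ohta--Ono) tells us that a stable disc of class $\beta_l+\alpha$ consists of a unique disc component in the basic class $\beta_l$ together with a genus-$0$ tree of rational curves of total class $\alpha$ with $c_1=0$, lying on toric strata adjacent to $D_l$; adjoining the interior marked point on $D_k$ and the boundary marked point on $L$ describes $\overline{\mathcal{M}}_{1,1}(X,L;\beta_l+\alpha;D_k,[\pt]_L)$ completely. In parallel I would prove, using the toric geometry of $E_l^-$ and again the semi-Fano hypothesis, that the $\sigma_l^-\,\reg$ locus of $\overline{\mathcal{M}}_{0,2}(E_l^-,\sigma_l^-+\alpha)$ consists of a degree-one ``horizontal'' section component representing $\sigma_l^-$ together with a tree of \emph{vertical} rational curves of total class $\alpha$ with $c_1=0$; the remaining configurations --- where the horizontal component represents a strictly smaller section class while the excess area is absorbed into more vertical bubbling --- are exactly what the $\reg$-decoration of Section \ref{Sect Seidel_rep} singles out and discards, and under our constraints they contribute nothing in any case.

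\emph{Step 3 (the capping map).} The core of the argument is a morphism $\Phi$ from the constrained open moduli space onto the constrained $\sigma_l^-\,\reg$ locus, built from the $\C^*$-action $-v_l$. It caps off the basic disc component by gluing it, along its boundary loop in $L$, to the canonical Seidel disc in $E_l^-$ with matching boundary (a holomorphic disc in $E_l^-$ that is rigid and unobstructed because $-K_{E_l^-}$ is nef), producing a section in the class $\sigma_l^-=\beta_l\#(\text{Seidel disc})$; the rational tail of class $\alpha$ is transported to vertical bubbles, the interior marked point on $D_k$ becomes the $D_k^{E_l^-}$-incidence, and the boundary marked point on $L$ becomes the $[\pt]_{E_l^-}$-incidence --- the one extra unit of codimension being absorbed by the choice of which fiber the relevant vertical component sits in. Using the trivializations $E_l^-|_\Delta\cong X\times\Delta$ over the two hemispheres of $\bP^1$ one checks that $\Phi$ is a homeomorphism: a $\sigma_l^-$-section is recovered uniquely from the holomorphic disc it cuts out over one hemisphere together with its equatorial boundary data, which is precisely the datum of a basic disc of class $\beta_l$ with boundary on $L$. (Where genuine holomorphic discs with boundary exactly on $L\subset X_0$ are awkward one works with a nearby torus fiber of $E_l^-$ and passes to the limit.)

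\emph{Step 4 (matching virtual classes --- the main obstacle).} The serious point is to promote $\Phi$ to an isomorphism of the two Kuranishi/obstruction theories: the obstruction bundle of the open moduli space --- coming from $H^1$ of the disc with the totally-real boundary condition along $TL$, plus $H^1$ of the attached rational curves --- must be carried by $\Phi$ to the restriction to the $\sigma_l^-\,\reg$ locus of the obstruction bundle of $\overline{\mathcal{M}}_{0,2}(E_l^-,\sigma_l^-+\alpha)$, and similarly for the deformation spaces. I would argue this by a Mayer--Vietoris/doubling computation of the linearized $\dbar$-operator on the normal bundle of the glued section: over the equator it splits into the contribution of the disc in $X$, which reproduces exactly the open-string $\dbar$-operator with totally-real boundary condition on $TL$ (hence the same obstruction and deformation spaces as on the open side), and the contribution of the rigid Seidel-disc factor, whose $H^1$ vanishes by nefness of $-K_{E_l^-}$ and therefore contributes nothing; the vertical tails are unaffected, and the $D_k^{E_l^-}$- and $[\pt]_{E_l^-}$-constraints are cut with the same transversality as $D_k$ and $[\pt]_L$ on the open side. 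Granting this identification, $\Phi_\ast$ sends the open virtual class to the $\sigma_l^-\,\reg$ virtual class with its constraints, and integrating yields \eqref{open_closed_intro}; combined with the vanishing from Step 1, this completes the proof.
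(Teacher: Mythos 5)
Your overall strategy (identify the constrained disc moduli with the $\sigma_l^-$-regular closed moduli in the Seidel space and match obstruction theories) is the right one, but several load-bearing steps rest on a false premise or are left unaddressed. Most seriously, you invoke nefness of $-K_{E_l^-}$ three times — to settle the vanishing clause, to get the ``canonical Seidel disc'' rigid and unobstructed, and to kill the $H^1$ of the capping factor in Step 4 — but $E_l^-$ is \emph{not} semi-Fano in general: the infinity section class $\sigma_{l,\infty}^-$ can have negative Chern number (this is exactly why the paper must work with the refined $\sigma_l^-$-regular invariants of Definition \ref{def:regGW} rather than ordinary GW invariants of $E_l^-$). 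The unobstructedness of the section component has to be established directly, e.g.\ from the explicit Cho--Oh-type description of the class-$\sigma_l^-$ sphere through a point of the open orbit and $\cD_0\cap\cD_l$, as is done in Lemmas \ref{lem:reg=>unique} and \ref{lem:singleton}. Your dimension bookkeeping is also off: $c_1(E_l^-)\cdot\sigma_l^-=3$ (since $\sigma_l^-=b_0+b_\infty+b_l$ meets $\cD_0,\cD_\infty,\cD_l$ once each), not $2$; with your numbers the constrained virtual dimension would be $-1$, not $0$. Relatedly, the ``extra unit of codimension absorbed by the choice of which fiber the vertical component sits in'' is backwards: in the $\sigma_l^-$-regular moduli the $\alpha$-tail is \emph{pinned} to the fiber $\cD_0$ by the $D_k^{E_l^-}$-incidence, and it is precisely this pinning (together with the fact that the vertical curve is free to move in the $N\cD_0$-directions, so those directions are unobstructed) that makes the deformation and obstruction spaces match those of the open problem; the codimension ledger is balanced instead by the point constraint being a full point of $E_l^-$ rather than of $L$, and $D_k^{E_l^-}$ having codimension $4$ rather than $2$.

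Two further gaps. First, the vanishing clause of the theorem is a statement about the \emph{open} invariant in the excluded cases, and you cannot derive it from emptiness of a constrained moduli in $E_l^-$, because the open--closed equality is only being proved under the stated hypotheses; the paper proves it on the $X$-side via Proposition \ref{prop:Cin} (a $c_1=0$ tail attached to a basic disc of class $\beta_l$ must lie in $\bigcup_{v_i\in F(v_l)}D_i$, using Lemma \ref{lem:G-I}) together with the divisor equation (Theorem \ref{thm div eq open}). Second, your one-step ``cap along the boundary on $L$ and split the linearized operator along the equator'' is not carried out anywhere at the level of actual boundary conditions ($L\subset X_0$ is not Lagrangian in $E_l^-$, and your parenthetical limit over nearby torus fibers is not a proof); the paper avoids this by an intermediate open problem on $E_l^-$ itself, namely discs of class $b_0+b_l+\alpha$ bounded by a Lagrangian torus fiber $\tor^{E_l^-}$, where the key Lemma \ref{Lem Maslov 4} rules out configurations with two Maslov-index-2 disc components and shows absence of codimension-one boundary, and the Maslov-index-4 disc is then extended holomorphically over the other hemisphere to give the class-$\sigma_l^-$ sphere. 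Your proposal contains no substitute for this analysis, so as written the bijection of configurations, the absence of boundary strata, and the identification of Kuranishi (obstruction) data are all asserted rather than proved.
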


The left-hand side of \eqref{open_closed_intro} is the open GW invariant defined in Definition \ref{open GW}, which roughly speaking counts discs of classes $\beta_l+\alpha$ meeting a fixed point in $L$ at the boundary marked point and meeting the divisor $D_k$ at the interior marked point. The invariant $n^X_{1,1}(\beta_l+\alpha; D, [\mathrm{pt}]_{L})$ is related to the previous invariant $n_1(\beta_l+\alpha)$ via the {\em divisor equation} proved by Fukaya-Oh-Ohta-Ono \cite[Lemma 9.2]{FOOO2} (see also Theorem \ref{thm div eq open}):
$$n^X_{1,1}(\beta_l+\alpha; D, [\mathrm{pt}]_{L})= \big( D\cdot(\beta_l+\alpha) \big) n_1(\beta_l+\alpha).$$
On the right-hand side of \eqref{open_closed_intro} we have the two-point {\em closed} $\sigma_l^-$-regular GW invariant
\begin{equation}\label{closed_inv_intro}
\langle D_k^{{E_l^-}}, [\mathrm{pt}]_{{E_l^-}} \rangle^{{E_l^-, \, \sigma_l^- \, \reg}}_{0,2,\sigma_l^-+\alpha}
\end{equation}
of the Seidel space $E_l^-$, which is the integration over a connected component of the moduli $\moduli^{E^-}_{0,2,\sigma^-+\alpha}(D^{{E^-}}, \pt)$ where $\sigma_l^-$ is the zero section class of the Seidel space $E_l^-$; see Section \ref{Sect Seidel_rep} for the notations.

The geometric idea behind the proof of \eqref{open_closed_intro} is the following.  If $v_k \not\in F(v_l)$, then $D_k \cdot (\beta_l + \alpha) = 0$ (Proposition \ref{prop:Cin}), and so $n^X_{1,1}(\beta_l+\alpha; D_k, [\mathrm{pt}]_{L}) = 0$.  Now consider the more difficult case $v_k \in F(v_l)$.  A stable disc representing the class $\beta_l+\alpha$ is a union of a disc $\Delta$ in $X$ representing $\beta_l$ and a rational curve $C$ in $X$ representing $\alpha$.  We identify $X$ with the fiber of $E_l^-\to \mathbb{P}^1$ over $0\in \mathbb{P}^1$ and consider $C$ as in $E_l^-$. The key point is that the disc $\Delta$ in $X$ bounded by a Lagrangian torus fiber $L$ of $X$ can be identified with a disc $\widetilde{\Delta}$ in $E_l^-$ bounded by a Lagrangian torus fiber $\widetilde{L}$ of $E_l^-$, and there exists a ``capping-off'' disc $\widetilde{\Delta}'$ in $E_l^-$ which can be glued together with $\widetilde{\Delta}$ to form a rational curve representing a section $\sigma_l^-$ of $E_l^-\to \mathbb{P}^1$. This idea, which is illustrated in Figure 1, allows us to identify the relevant moduli spaces. A further analysis on their Kuranishi structures yields the formula \eqref{open_closed_intro}.

\begin{rmk}
In this paper we consider open GW invariants defined using Kuranishi structures. However we would like to point out that the formula \eqref{open_closed_intro} in Theorem \ref{thm:open-closed_intro} remains valid whenever reasonable structures are put on the moduli spaces to define GW invariants. This is because our ``capping off'' argument is geometric in nature and it identifies the deformation and obstruction theories of the two moduli problems on the nose.
\end{rmk}

\begin{figure}[htp]
\begin{center}
\includegraphics[scale=0.7]{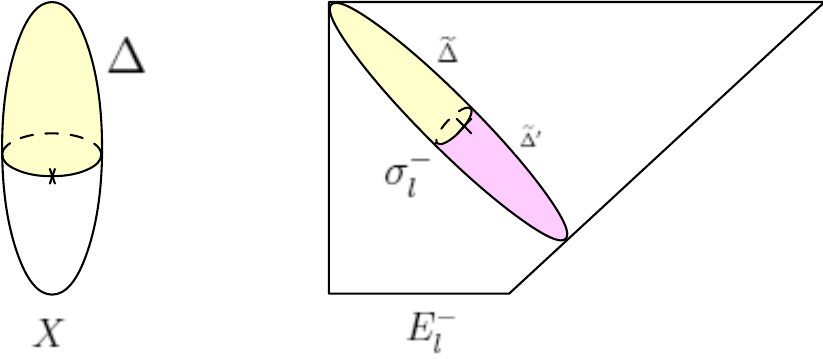}
\end{center}
\caption{Relating disc invariants to GW invariants of the Seidel space.  A disc $\Delta$ in $\proj^1$ bounded by a torus fiber is `pushed' into the associated Seidel space and compactified to a sphere in the section class $\sigma_l^-$.}
\label{fig disc to Seidel}
\end{figure}

Our formula \eqref{open_closed_intro} reduces the computation of open GW invariants to the computation of the closed GW invariants \eqref{closed_inv_intro}. But since $E_l^-$ is {\em not} semi-Fano, and these invariants are some more refined closed GW invariants of $E_l^-$ (see Definition \ref{def:regGW}), computing \eqref{closed_inv_intro} presents a non-trivial challenge. 

Our calculation of \eqref{closed_inv_intro} uses several techniques. First of all, the Seidel space $E_l$ is semi-Fano. Gonz\'alez-Iritani \cite{G-I11} calculated the corresponding Seidel element $S_l$ using the $J$-function of $E_l$ and applying the toric mirror theorem, and expressed it in terms of the so-called {\em Batyrev elements} $B_l$ (see Proposition \ref{prop Seidel Batyrev}). We then write the divisor $D_k$ in terms of the Batyrev elements $B_l$ (see Proposition \ref{Prop D-B}). Finally, a degeneration technique for closed GW invariants, which was used to derive the composition law for Seidel representations, can be exploited to analyze the invariants $\langle D_k^{{E_l^-}}, [\mathrm{pt}]_{{E_l^-}} \rangle^{{E_l^-, \, \sigma_l^- \, \reg}}_{0,2,\sigma_l^-+\alpha}$ and deduce Theorem \ref{thm delta_intro}. The details are given in Section \ref{sect compute Seidel}.
\begin{rmk}
\hfill
\begin{enumerate}


\item
Equation \eqref{open_closed_intro} is a relation between open and closed GW invariants. An ``open/closed relation'' of somewhat different flavor is present in open GW theory of toric Calabi-Yau 3-folds with respect to Aganagic-Vafa type Lagrangian branes; see \cite{Mayr02, Lerche_Mayr, Mayr02-2, Lerche_Mayr_Warner}.

\item
During the preparation of this paper, we learnt of an independent work of Gonz\'alez-Iritani \cite{G-I12} in which an alternative approach to Theorem \ref{thm W equal} based on a conjectural degeneration formula for open GW invariants is developed.
\end{enumerate}
\end{rmk}

The rest of this paper is organized as follows. Section \ref{Sect open_GW_toric} contains a brief review of open GW invariants of toric manifolds. In Section \ref{sect tor mir} we review the toric mirror theorem \cite{givental98, LLY3}, Hori-Vafa superpotentials, mirror maps, and related materials. In Section \ref{Sect Seidel_rep} we recall some basic materials on Seidel representations of toric manifolds. In Section \ref{Sect open_closed} we prove the relation \eqref{open_closed_intro} between open and closed GW invariants. In Section \ref{sect compute Seidel} we calculate the closed GW invariants which appear in \eqref{open_closed_intro} and prove our main Theorems \ref{thm delta_intro}, \ref{thm W equal}, \ref{thm conv}, \ref{thm Seidel to gen intro}.

\section*{Acknowledgement}
We express our deep gratitude to Eduardo Gonz\'alez and Hiroshi Iritani for various illuminating discussions on this subject and informing us another insightful approach to this problem. We thank Kaoru Ono and Cheol-Hyun Cho for interesting and useful discussions. We are deeply indebted to several referees for many useful comments and suggestions and for pointing out essential issues, which help to greatly improve this paper. Parts of this work were carried out when the authors met at The Chinese University of Hong Kong, University of Wisconsin-Madison, Ohio State University, and Kavli IPMU. We thank these institutions for hospitality and support.

The work of K. C. was partially supported by a grant from the Research Grants Council of the Hong Kong Special Administrative Region, China (Project No. CUHK404412). The work of S.-C. L. was supported by Harvard University. The work of N. C. L. described in this paper was substantially supported by a grant from the Research Grants Council of the Hong Kong Special Administrative Region, China (Project No. CUHK401809). H.-H. T. was supported in part by a Simons Foundation Collaboration Grant.

\begin{center}
{\Large \bf A list of notations}
\end{center}
\begin{tabular}{p{1.75cm}p{14cm}}
$N$ & A lattice $\Z^n$ \\
$\tor$ & The real torus $\R^n / \mathbb{Z}^n$ and a Lagrangian torus fiber\\
$X$ & Toric manifold of dimension $n$ \\
$\Sigma$ & The defining fan of a toric manifold \\
$m$ & The number of primitive generators in $\Sigma$ \\
$K_X$ & The canonical line bundle of $X$ \\
$\Kcone_X$ & The K\"ahler cone of $X$ \\
$\Kcone_X^\cpx$ & The complexified K\"ahler cone of $X$ \\
$D_j$ & Toric prime divisor (as a cycle) \\
$[D_j]$ & The class of $D_j$ in $H^2(X)$ \\
$v_j$ & Primitive generator of a ray of a fan \\
$\nu_j$ & The dual basis of $\{v_1,\ldots,v_n\}$ \\
$\Psi_k$ & The dual basis of $[D_{n+1}], \ldots, [D_m]$ in $H^2(X)$ \\
$\Phi_a$ & Homogeneous basis of $H^*(X)$ \\
$\Phi^a$ & The dual basis of $\Phi_a$ with respect to Poincar\'e pairing \\
$L$ & A regular moment-map fiber of a toric manifold \\
$\beta$ & A disc class bounded by a regular moment-map fiber of a toric manifold \\
$H_2^{\eff}$ & The set of effective curve classes in $H_2$ \\
$\beta_j$ & Basic disc class of a toric manifold \\
$\cM_{l,k}(\beta)$ & Compactified moduli of stable discs in $\beta$ with $l$ interior and $k$ boundary marked points \\
$\cM_{k}(\beta)$ & Compactified moduli of stable discs in $\beta$ with $k$ boundary marked points \\
$\cM_{1,1}(\beta;C)$ & The fiber product $\cM_{1,1}(\beta) \times_X C$ \\
$n_1(\beta)$ & One-pointed open Gromov-Witten invariant of $\beta$ \\
$n_{1,1}$ & Open Gromov-Witten invariant with one interior and one boundary insertion \\
$W^{\mathrm{LF}}$ & Disc potential of a toric manifold \\
$\tilde{W}^{\mathrm{HV}}$ & Hori-Vafa superpotential of a toric manifold\\
$q_i$ ($\check{q}_i$) & K\"ahler parameter (resp. mirror complex parameter) (for $i=1,\ldots,m-n$) \\
$Q_k$ & Extended K\"ahler parameter (for $k=1,\ldots,m$) \\
$z$ & $\mathbb{S}^1$-equivariant parameter \\
$z_j$ & Mirror complex coordinates (for $j=1,\ldots,n$) \\
$Z_i$ & $z_i$ if $i=1,\ldots,n$ and $q_{l-n}\prod_{i=1}^n z_i^{(\nu_i, v_l)}$ if $i=n+1,\ldots,m$\\
$B_{n+i}$ & Batyrev element (for $i=1,\ldots,m-n$) \\
$\tilde{B}_k$ & Extended Batyrev element (for $k=1,\ldots,m$) \\
$q(\check{q})$ & Mirror map from mirror complex moduli to K\"ahler moduli \\
$\check{q}(q)$ & Inverse mirror map from K\"ahler moduli to mirror complex moduli \\
$Q(\check{Q})$ & Extended mirror map \\
$\check{Q}(Q)$ & Inverse extended mirror map \\
$I(\check{q},z)$ & Givental $I$-function defined from combinatorial data of a toric manifold \\
$J(q,z)$ & Givental $J$-function defined from Gromov-Witten invariants \\
$\QH$ & Quantum cohomology \\
$\Jac$ & Jacobian ring \\
$E_j$ ($E_j^-$) & Seidel space associated to a primitive generator $v_j$ (resp. $-v_j$) \\
$S_j$ ($S_j^-$) & Seidel element associated to a primitive generator $v_j$ (resp. $-v_j$)\\
$\sigma_j$ ($\sigma_j^-$) & Zero section class of $E_j$ (resp. $E_j^-$) \\
$\sigma_\infty$ ($\sigma_\infty^-$) & Infinite section class of $E_j$ (resp. $E_j^-$) \\
\end{tabular}

\section{A brief review on open GW invariants of toric manifolds} \label{Sect open_GW_toric}

This section gives a quick review on toric manifolds and their open GW invariants which are the central objects to be studied in this paper.  For a nice exposition of toric varieties, the readers are referred to Fulton's book \cite{Fu}.  The Lagrangian Floer theory we use in this paper is developed by Fukaya-Oh-Ohta-Ono \cite{FOOO_I, FOOO_II, FOOO1, FOOO2, FOOO10b}.

We work with a projective toric $n$-fold $X$ equipped with a toric K\"ahler form.  Let $N\cong\integer^n$ be a lattice and let $\Sigma \subset N \otimes_\integer\real$ be the complete simplicial fan defining $X$.  The minimal generators of rays in $\Sigma$ are denoted by $v_j\in N$ for $j = 1, \ldots, m$.  Each $v_j$ corresponds to a toric prime divisor denoted by $D_j \subset X$.  There is an action on $X$ by the torus $\tor_\cpx = (N \otimes_\integer \cpx) / N$ which preserves the K\"ahler structure, and the associated moment map which maps $X$ to a polytope in $(N \otimes_\integer\real)^*$.  Each regular fiber of the moment map is a Lagrangian submanifold and it is a free orbit under the real torus $\tor = (N \otimes_\integer \real) / N$ action.  By abuse of notation we also denote such a fiber by $\tor$, and call it a Lagrangian torus fiber.  $X$ is said to be \emph{semi-Fano} if $-K_X$ is numerically effective, i.e. $-K_X \cdot C \geq 0$ for any holomorphic curve $C$.  We call $-K_X \cdot C$ the {\em Chern number} of $C$ and denote it by $c_1 (C)$.

Let $X$ be a semi-Fano toric manifold equipped with a toric K\"ahler form.  Our goal is to compute the open GW invariants of $X$, which are rational numbers associated to disc classes $\beta \in \pi_2(X,\tor)$ bounded by a Lagrangian torus fiber. To define open GW invariants, recall that in the toric case the Maslov index of a disc class $\beta$ is given by
$$\mu(\beta) = 2\sum_{j=1}^m D_j \cdot \beta,$$
where $D_j \cdot \beta$ is the intersection number of $\beta$ with the toric divisor $D_j$. By Cho-Oh \cite{cho06}, holomorphic disc classes in $\pi_2(X,\tor)$ are generated by basic disc classes $\beta_j$, $j=1,\ldots,m$, with $\partial \beta_j = v_j \in \pi_1(\tor)$.  Since $\mu(\beta_j) = 2$ for all $j=1,\ldots,m$, every non-constant holomorphic disc has Maslov index at least 2.

For each disc class $\beta \in \pi_2(X,\mathbf{T})$, Fukaya-Oh-Ohta-Ono \cite{FOOO1,FOOO2} defined the moduli space
$$\mathcal{M}_{l,k} (\beta) = \mathcal{M}^{\mathrm{op}}_{l,k} (\beta) $$
of stable discs with $l$ interior marked points and $k$ boundary marked points representing $\beta$, which is oriented and compact.  When $l=0$, we simply denote $\mathcal{M}^{\mathrm{op}}_{0,k} (\beta)$ by $\mathcal{M}^{\mathrm{op}}_{k} (\beta)$.  Here we use the superscript ``op'' to remind ourselves that it is the moduli space for defining open GW invariants.  Later we will use the superscript ``cl'' (which stands for ``closed'') for the moduli space of stable maps from rational curves.

The main difficulty in defining the invariants is the lack of transversality: the actual dimension of $\mathcal{M}_{l,k} (\beta)$ in general is higher than its expected (real) dimension $n+\mu(\beta)+k+2l-3$.  To tackle this problem, Fukaya-Oh-Ohta-Ono analyzed the obstruction theory and used the torus action on $\mathcal{M}_{l,k} (\beta)$ to construct a virtual fundamental chain $[\mathcal{M}_{l,k} (\beta)]_{\mathrm{virt}}$ which is intrinsic to the disc moduli.  By using the evaluation map $\mathcal{M}_{l,k} (\beta) \to X^l\times \tor^k$, we shall identify $[\mathcal{M}_{l,k} (\beta)]_{\mathrm{virt}}$ as a $\mathbb{Q}$-chain of dimension $n+\mu(\beta)+k+2l-3$ in $X^l\times \tor^k$. In this paper we shall only need the cases when $k=1$ and $l$ is either $0$ or $1$.  When $l=0, k=1$ and $\mu(\beta)=2$, as non-constant stable discs bounded by $\mathbf{T}$ have Maslov indices at least 2, the moduli space $\mathcal{M}_l(\beta) = \mathcal{M}_{0,1}(\beta)$ has no codimension one boundary and so $[\mathcal{M}_1(\beta)]_{\mathrm{virt}}$ is actually a cycle.  (For a nice discussion of this, we refer the reader to \cite[Section 3]{auroux07}).  For $l=1, k=1$, we will consider
$\mathcal{M}_{1,1} (\beta;C) := \mathcal{M}_{1,1} (\beta) \times_X C$,
where $C \subset \bigcup_{j=1}^m D_j$ is a proper toric cycle (i.e. an algebraic cycle preserved by the torus action).  Since the interior marked point is constrained to map to $C$, it can never approach the boundary of the disc. Hence the moduli space does not have codimension one boundary and $[\moduli_{1,1} (\beta; C)]_{\virt}$ is again a cycle.  By Poincar\'e duality we will identify both $[\mathcal{M}_1(\beta)]_{\mathrm{virt}}$ and $[\mathcal{M}_{1,1}(\beta;C)]_{\mathrm{virt}}$ as cohomology classes in $H^*(\tor)$.

\begin{defn} [Open GW invariants \cite{FOOO1, FOOO2}] \label{open GW}
Let $X$ be a compact semi-Fano toric manifold and $\tor$ a Lagrangian torus fiber of $X$. We denote by $\pairing{\cdot}{\cdot}$ the Poincar\'e pairing on $H^*(\tor)$. The {\em one-point open GW invariant associated to a disc class $\beta \in \pi_2(X,\tor)$} is defined to be
$$n_1(\beta;[\pt]_\tor) = n_1(\beta) := \pairing{[\mathcal{M}_1 (\beta)]_{\mathrm{virt}}}{[\pt]_\tor} \in \rat.$$
For a proper toric cycle $C \subset X$ (i.e. an algebraic cycle invariant under the torus action on $X$ contained in $\bigcup_{j=1}^m D_j$) of real codimension $\codim(C)$ and a disc class $\beta \in \pi_2(X,\tor)$, let
$$\moduli_{1,1} (\beta; C) := \moduli_{1,1} (\beta) \times_X C$$
where the fiber product over $X$ is defined by the evaluation map $\mathrm{ev}_+: \moduli_{1,1} (\beta) \to X$ at the interior marked point and the inclusion map $C \hookrightarrow X$.  The expected dimension of $\moduli_{1,1} (\beta; C)$ is $n + \mu(\beta) - \codim(C)$.  The {\em one-point open GW invariant of class $\beta$ relative to $C$} is defined to be
$$ n_{1,1}(\beta;C,[\pt]_\tor) := \pairing{[\moduli_{1,1} (\beta; C)]_{\virt}}{[\pt]_\tor} \in \rat$$
where the torus action on $\moduli_{1,1} (\beta; C)$ is used to construct the virtual class $[\moduli_{1,1} (\beta; C)]_{\virt} \in H^*(\tor)$.
\end{defn}

Intuitively $n_1(\beta)$ counts stable discs in the class $\beta$ passing through a generic boundary marked point in $\tor$, while $n_{1,1}(\beta;C,[\pt]_\tor)$ counts stable discs in the class $\beta$ hitting the cycle $C$ at an interior marked point and passing through a generic boundary marked point in $\tor$.  Notice that by dimension counting, $n_1(\beta) \neq 0$ (resp. $n_{1,1}(\beta;C,[\pt]_\tor) \neq 0$) only when $\mu(\beta) = 2$ (resp. $\mu(\beta) = \codim(C)$).  Thus when $C$ is a toric divisor, we only need to consider those $\beta$ with $\mu(\beta) = 2$. We have the following analog of divisor equation in the open case:

\begin{thm}[See \cite{FOOO2}, Lemma 9.2] \label{thm div eq open}
For a toric divisor $D\subset X$ and a disc class $\beta\in \pi_2(X,\tor)$ with $\mu(\beta) = 2$, we have $ n_{1,1}(\beta;D,[\pt]_\tor) = (D\cdot\beta)\, n_1(\beta)$,
where $D\cdot\beta$ denotes the intersection number between $D$ and $\beta$.
\end{thm}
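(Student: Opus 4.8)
The plan is to prove Theorem~\ref{thm div eq open} by the standard forgetful-map argument, adapted to the Kuranishi-structure framework of Fukaya--Oh--Ohta--Ono, reducing the identity to the elementary fact that a holomorphic disc of class $\beta$ meets a toric divisor $D$ in $D\cdot\beta$ points counted with sign. The key players are the forgetful map $\mathfrak{forget}\colon \moduli_{1,1}(\beta)\to\moduli_{1}(\beta)$ dropping the interior marked point (with stabilization where needed), the interior evaluation $\ev_+\colon\moduli_{1,1}(\beta)\to X$, and the boundary evaluation $\ev_0$, which satisfies $\ev_0\circ\mathfrak{forget}=\ev_0$.

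First I would record the geometric picture underlying $\mathfrak{forget}$: it exhibits $\moduli_{1,1}(\beta)$ as the universal disc over $\moduli_{1}(\beta)$, so that the fiber over a stable disc $[u\colon(\Sigma,\partial\Sigma)\to(X,\tor)]$ is the domain $\Sigma$, and the restriction of $\ev_+$ to that fiber is $u$ itself. Because $D$ is a toric divisor it is disjoint from every Lagrangian torus fiber $\tor$, hence from $u(\partial\Sigma)$; thus $u^{-1}(D)$ is a finite set of interior points of $\Sigma$ whose count with signs (positive, for an honest holomorphic $u$) is the topological intersection number
\[
\#\,u^{-1}(D)=u_*[\Sigma,\partial\Sigma]\cdot[D]=D\cdot\beta .
\]
When $\Sigma$ carries sphere bubble components the count redistributes among the components but still totals $D\cdot\beta$; a component entirely contained in $D$ is excluded after replacing $D$ by a rationally equivalent $\tor$-invariant cycle (equivalently, by a generic choice in the fiber product defining $\moduli_{1,1}(\beta;D)$). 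This already gives, at the set-theoretic level, the chain identity $\mathfrak{forget}_{*}[\moduli_{1,1}(\beta;D)]=(D\cdot\beta)\,[\moduli_{1}(\beta)]$ as chains mapping to $\tor$.

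The substance is to upgrade this to virtual fundamental cycles. Here I would invoke the FOOO package: one chooses the Kuranishi structure on $\moduli_{1,1}(\beta)$ compatibly with that on $\moduli_{1}(\beta)$ so that $\mathfrak{forget}$ is a ``Kuranishi submersion'' with fiber the universal disc; the interior evaluation $\ev_+$ is weakly submersive, so the fiber product $\moduli_{1,1}(\beta;D)=\moduli_{1,1}(\beta)\times_X D$ inherits a Kuranishi structure of the expected dimension $n+\mu(\beta)-\codim(D)$; and everything is $\tor$-equivariant, so $\tor$-equivariant transversal multisections exist. Since $\mu(\beta)=2$ none of these spaces has codimension-one boundary, so their virtual fundamental chains are cycles. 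Pushing the virtual cycle forward by $\mathfrak{forget}$ and using that the perturbed fiberwise count of interior points landing on $D$ is still the topological number $D\cdot\beta$ (a homological intersection number, hence perturbation-independent), one obtains $\mathfrak{forget}_{*}[\moduli_{1,1}(\beta;D)]_{\virt}=(D\cdot\beta)\,[\moduli_{1}(\beta)]_{\virt}$. Pairing with $[\pt]_\tor$ through $\ev_0$, using $\ev_0\circ\mathfrak{forget}=\ev_0$ and the projection formula, then yields $n_{1,1}(\beta;D,[\pt]_\tor)=(D\cdot\beta)\,n_1(\beta)$.

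I expect the main obstacle to be precisely the virtual step: arranging the Kuranishi structures and multisections on $\moduli_{1,1}(\beta)$, $\moduli_{1}(\beta)$ and $\moduli_{1,1}(\beta;D)$ to be simultaneously compatible with $\mathfrak{forget}$, with $\ev_+$ and with the $\tor$-action, so that the fiberwise degree computation survives perturbation. This compatibility of forgetful maps with Kuranishi structures is the technical core of \cite[Lemma~9.2]{FOOO2}, and I would cite it rather than reprove it. A secondary point requiring care is the contribution of configurations in which the interior marked point sits on a sphere bubble mapped into $D$: one needs the virtual count to localize correctly component by component, which again is part of the FOOO formalism.
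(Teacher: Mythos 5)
The paper does not actually prove this statement: as the theorem header indicates, it is quoted directly from Fukaya--Oh--Ohta--Ono \cite{FOOO2}, Lemma 9.2, with no argument supplied in the text. Your sketch is the standard forgetful-map/divisor-equation argument, and since you defer its only genuinely technical ingredient (the compatibility of the Kuranishi structures and $\tor$-equivariant multisections with the forgetful map, including the configurations where the interior marked point sits on a sphere component inside $D$) to that same Lemma 9.2, your proposal is consistent with the paper's treatment and there is nothing further to compare.
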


We can now define the disc potential.
First we make the following choices.  By relabeling the generators $\{v_j\}_{j=1}^m$ of rays if necessary, we may assume that $v_1, \ldots, v_n$ span a cone in the fan $\Sigma$ so that $\{v_1, \ldots, v_n\}$ gives a $\mathbb{Z}$-basis of $N$. Denote the dual basis by $\{\nu_k\}_{k=1}^n \subset M:=N^*$. Moreover, take the basis $\{\Psi_k\}_{k=1}^{m-n}$ of $H_2(X)$ where
\begin{equation}\label{H_2_basis}
 \Psi_k := -\sum_{p=1}^n \pairing{\nu_p}{v_{n+k}} \beta_p + \beta_{n+k} \in H_2(X), \quad \text{for } k = 1, \ldots, m-n.
\end{equation}
(Recall that the basic disc classes $\{\beta_j\}_{j=1}^m$ form a basis of $H_2(X,\tor)$.)  Note that $\Psi_k\in H_2(X)$ because
$\partial \Psi_k = -\sum_{p=1}^n\pairing{\nu_p}{v_{n+k}}v_p + v_{n+k} = -v_{n+k}+v_{n+k} = 0$.
Since $D_{n+r} \cdot \Psi_k = \delta_{kr}$ for all $k,r=1,\ldots,m-n$, the dual basis of $\{\Psi_k\}_{k=1}^{m-n}$ is given by $[D_{n+1}],\ldots,[D_{m}] \in H^2(X)$.

The basis $\{\Psi_k\}_{k=1}^{m-n}$ defines flat coordinates on $H^2(X,\cpx)/2\pi\consti H^2(X,\integer)$ by sending $$[\eta] \in H^2(X,\cpx)/2\pi\consti H^2(X,\integer)$$ to $q_k([\eta]) = q^{\Psi_k}(\eta) := \exp \left(\int_{\Psi_k}\eta\right)$ for $k=1,\ldots,m-n$.  Let $\Kcone_X$ denote the K\"ahler cone which consists of all K\"ahler classes on $X$.  The {\em complexified K\"ahler cone}
$$\Kcone^{\cpx}_X := \Kcone_X \oplus (\consti H^2(X,\real)/2\pi\consti H^2(X,\integer))$$
is embedded as an open subset of $H^2(X,\cpx)/2\pi\consti H^2(X,\integer)$ by taking $\omega \in \Kcone^{\cpx}_X$ to $-\omega \in H^2(X,\cpx)/2\pi\consti H^2(X,\integer)$.  Then $(q_k)_{k=1}^{m-n}$ pull back to give a flat coordinate system on $\Kcone^{\cpx}_X$.

Note that $[D_{n+1}],\ldots,[D_{m}] \in H^2(X)$ may not be nef (meaning their Poincar\'e pairings algebraic curves may be negative).  A theoretically better choice would be a nef basis of $H^2(X)$.  Taking its Poincar\'e dual basis gives another set of flat coordinates which we denote as $q^{\nef}_k$ on $H^2(X,\cpx)/2\pi\consti H^2(X,\integer) \cong (\C^*)^{m-n}$.  Then the {\em large radius limit} is defined by $q^{\nef} = 0$.  Since in most situations we work in $(\C^*)^{m-n}$ (except when we talk about convergence at $q^\nef = 0$), we may use the above more explicit coordinate system $q$.

The disc potential is defined by summing up the one-point open GW invariants for all $\beta \in \pi_2(X,\tor)$ weighted by $q^\beta$.  By dimension reasons, only those $\beta$ with Maslov index 2 contribute.  Since $X$ is a semi-Fano toric manifold, stable discs with Maslov index 2 must be of the form $\beta_j + \alpha$ for some basic disc class $\beta_j$ and $\alpha \in H_2^{\eff}(X)$ with $c_1(\alpha) = 0$. Here $H_2^{\eff}(X)\subset H_2(X,\mathbb{Z})$ is the semi-group of effective curve classes of $X$. More precisely,
\begin{lem}[\cite{cho06,FOOO1}] \label{Lem Maslov-two stable disc}
Let $X$ be a semi-Fano toric manifold and $\tor$ a Lagrangian torus fiber of $X$.  A stable disc in $\moduli^{\mathrm{op}}_{l,k}(\beta)$ for $k=1$ and $l=0,1$ where $\beta \in \pi_2(X,\tor)$ with $\mu(\beta) = 2$ is a union of a holomorphic disc component and a rational curve, which are attached to each other at only one nodal interior point.  The disc component represents the class $\beta_j$ for some $j=1, \ldots, m$, and the rational curve has $c_1 = 0$.  Thus the class of every stable disc is of the form $\beta_j + \alpha$ for some $j=1, \ldots, m$ and $\alpha \in H_2^{\mathrm{eff}}(X)$ with $c_1(\alpha) = 0$.
\end{lem}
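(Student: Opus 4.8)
The plan is to decompose the domain of a stable disc and account for the Maslov index component by component, exploiting the positivity built into the toric semi-Fano setting. Let $u$ be a stable disc representing $\beta$, where $\mu(\beta)=2$, and write its domain as a union of disc components $\Delta_1,\dots,\Delta_a$ (each with boundary on $\tor$) together with sphere components $S_1,\dots,S_b$; the dual graph is a tree. Additivity of homology classes gives $\beta=\sum_i[\Delta_i]+\sum_j[S_j]$ in $\pi_2(X,\tor)$, and since the Maslov index is additive and restricts to twice the Chern number on sphere classes,
\begin{equation*}
2=\mu(\beta)=\sum_{i=1}^a\mu([\Delta_i])+2\sum_{j=1}^b c_1([S_j]).
\end{equation*}
Each summand is a non-negative even integer. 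For the disc components this is Cho--Oh \cite{cho06}: since $\partial\Delta_i\subset\tor$ lies in the open orbit, $\Delta_i$ is contained in no toric divisor $D_k$, so $D_k\cdot[\Delta_i]\ge 0$ and hence $\mu([\Delta_i])=2\sum_k D_k\cdot[\Delta_i]\ge 0$, with equality only if $[\Delta_i]=0$, i.e.\ $\Delta_i$ is constant. For the sphere components it is exactly the semi-Fano hypothesis applied to the effective class $[S_j]$.

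Consequently precisely one summand equals $2$ and the rest vanish. I would first rule out that the ``$2$'' is carried by a sphere while all disc components are constant: a constant disc maps to a point of $\tor$, which lies in the open orbit $(\cpx^*)^n$, whereas any non-constant curve of Chern number $0$ is contained in the toric boundary $\bigcup_k D_k$ (the affine open orbit admits no complete curve); so no non-constant sphere can be attached to a constant disc, a tree built solely from constant spheres cannot be stable, and this case would force $\beta=0$, contradicting $\mu(\beta)=2$. Hence exactly one disc component $\Delta$ is non-constant and has $\mu([\Delta])=2$; writing $[\Delta]=\sum_l k_l\beta_l$ with all $k_l\ge 0$ forces $\sum_l k_l=1$, so $[\Delta]=\beta_j$ for a unique $j$. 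Every other disc component is constant, and each sphere component has $c_1([S_j])=0$, so the total sphere class $\alpha:=\sum_j[S_j]\in H_2^{\eff}(X)$ satisfies $c_1(\alpha)=0$. This already yields $\beta=\beta_j+\alpha$, the statement that feeds into the rest of the paper.

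The remaining, and most delicate, point is to identify the combinatorial type of $u$. Using that $k=1$ and $l\le 1$, one argues that there are no non-trivial constant disc bubbles: such a bubble carries no sphere bubble (it sits at an open-orbit point, whereas $c_1=0$ curves live in $\bigcup_k D_k$) and, in the moduli $\moduli_{1,1}(\beta;C)$ relevant to the invariants, no interior marked point either (the marked point lands on $C\subset\bigcup_k D_k$); hence its special points are all boundary points, of which there are at most $1+\#\{\text{boundary nodes}\}$, forcing it to carry at least two boundary nodes and thus to be an interior vertex of the tree of disc components; but then that tree has at least two leaves, each necessarily non-constant, contradicting the uniqueness of $\Delta$. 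So the disc part is $\Delta$ alone. Since $D_k\cdot\beta_j=\delta_{kj}$ and $\Delta$ is a non-constant holomorphic disc not contained in $D_j$, its image meets $\bigcup_k D_k$ in a single transverse point $p$. Finally, any sphere bubble attaches to $\Delta$; if the attaching point lay in the open orbit, the sphere tree rooted there would consist entirely of constant spheres (a non-constant one would pass through an open-orbit point yet be contained in the boundary) and could not be stable, so every sphere bubble attaches at $p$. Therefore $\Delta$ has a single interior node carrying a connected, genus-zero rational curve $C$ with $[C]=\alpha$ and $c_1(\alpha)=0$ (and $C=\emptyset$, so $u$ is an honest disc of class $\beta_j$, when $\alpha=0$), which is exactly the asserted description. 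I expect this last structural step, rather than the Maslov bookkeeping, to be where the real care is needed.
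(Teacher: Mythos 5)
Your overall route is the same as the paper's: Cho--Oh's classification plus semi-Fano positivity forces exactly one component to carry the index $2$, and then the facts that $c_1=0$ spheres lie in $\bigcup_k D_k$ while a disc in class $\beta_j$ meets $\bigcup_k D_k$ in a single point pin down the combinatorial type. Your treatment of constant disc bubbles is actually more explicit than the paper's proof (which passes over them silently); note, though, that for $l=1$ your exclusion of a constant disc carrying the interior marked point invokes the constrained moduli $\moduli_{1,1}(\beta;C)$ with $C\subset\bigcup_k D_k$, which is how the lemma is used in the paper but is slightly narrower than the literal statement about $\moduli^{\mathrm{op}}_{1,1}(\beta)$.

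The one step that does not close as written is your exclusion of the configuration in which the ``$2$'' is carried by a sphere while all disc components are constant. In that configuration exactly one sphere component has $c_1=1$, and the fact you cite (``any non-constant curve of Chern number $0$ is contained in the toric boundary'') says nothing about it: a priori that $c_1=1$ sphere could be attached directly to a constant disc at its image point in the open orbit, so the assertion ``no non-constant sphere can be attached to a constant disc'' does not follow from what you stated. To rule this out you need the balancing argument that the paper uses in Lemma \ref{Lem alpha}: if a non-constant sphere $S$ is contained in no toric divisor, then $D_k\cdot S\ge 0$ for all $k$ and $\sum_k (D_k\cdot S)\,v_k=0$, so at least two of the intersection numbers are positive and hence $c_1(S)\ge 2$; equivalently, every non-constant sphere with $c_1\le 1$ lies in $\bigcup_k D_k$ and so cannot pass through a point of the open orbit. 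With this supplement the problematic case is excluded and the rest of your argument goes through.
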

\begin{proof}
For a toric manifold $X$, the classification result of Cho-Oh \cite{cho06} says that a smooth non-constant holomorphic disc bounded by a Lagrangian torus fiber has Maslov index at least 2, and one with Maslov index equal to 2 must represent a basic disc class $\beta_j$ for some $j = 1, \ldots, m$.  If $X$ is semi-Fano, every holomorphic curve has non-negative Chern number.  Now a non-constant stable disc bounded by a Lagrangian torus fiber consists of at least one holomorphic disc component and possibly several sphere components.  Thus it has Maslov index at least 2, and if it is of Maslov index 2, it must consist of only one disc component which represents a basic disc class $\beta_j$.  Moreover, the sphere components all have Chern number zero, and thus they are contained in the toric divisors (otherwise they are constant and cannot be stable since there is only one interior marked point).  But a holomorphic disc in class $\beta_j$ intersect with $\cup_k D_k$ at only one point.  Thus it is only attached with one of the sphere components, and by connectedness the sphere components form a rational curve whose class is denoted as $\alpha$ which has $c_1 = 0$.
\end{proof}

By Cho-Oh \cite{cho06}, $n_1(\beta_j) = 1$ for $j=1,\ldots,m$. Hence the disc potential is of the form:
\begin{defn}[Disc potential \cite{FOOO1}] \label{defn disc potential}
For a semi-Fano toric manifold $X$, the {\em disc potential} of $X$ is defined by
$$ W^{\mathrm{LF}} := \sum_{l=1}^{m} (1+\delta_l) Z_l, $$
where
\begin{equation} \label{Eqn Z}
Z_l = \left\{ \begin{array}{ll}
z_l & \text{ when } l=1,\ldots,n;\\
q_{l-n} z^{v_l}:=q_{l-n}\prod_{i=1}^n z_i^{(\nu_i, v_l)} & \text{ when } l=n+1,\ldots,m,
\end{array}
\right.
\end{equation}
$$ \delta_l := \sum_{\alpha \in H_2^{c_1=0}(X)\setminus\{0\}} n_1(\beta_l+\alpha) q^{\alpha}, \quad \text{ for all } l=1,\ldots,m,$$ $q^{\alpha} = \prod_{k=1}^{m-n} q_k^{D_{n+k} \cdot \alpha}$, and $H_2^{c_1=0}(X)\subset H_2^{\eff}(X)$ denotes the semi-group of all effective curve classes $\alpha$ with $c_1(\alpha) = 0$. We also call $W^{\mathrm{LF}}$ the {\em Lagrangian Floer superpotential} of $X$.
\end{defn}

$\delta_l$ can also be expressed in terms of the flat coordinates $q^\nef$ defined using a nef basis of $H^2(X)$.  A priori each $\delta_l$ is only a formal power series in the formal Novikov variables $q^\nef_1, \ldots, q^\nef_k$. In this paper we will show that $W^{\mathrm{LF}}$ is equal to the {\em Hori-Vafa superpotential} via the inverse mirror map and it will follow that each $\delta_l$ is in fact a convergent power series.

In Floer-theoretic terms, $W^{\mathrm{LF}}$ is exactly the $m_0$-term, which, for toric manifolds, governs the whole Lagrangian Floer theory. All the higher $A_\infty$-products $m_k$, $k\geq 1$ can be recovered by taking the derivatives of $m_0$, and it can be used to detect the non-displaceable Lagrangian torus fibers.  See \cite{FOOO1,FOOO2,FOOO10b} for detailed discussions.


\section{Hori-Vafa superpotential and the toric mirror theorem} \label{sect tor mir}
We now come to the complex geometry (B-model) of mirrors of toric manifolds.  The mirror of a toric variety $X$ is given by a Laurent polynomial $W^{\mathrm{HV}}$ which is explicitly determined by the fan of $X$ \cite{givental98, hori00}.  It defines a singularity theory whose moduli has flat coordinates given by the oscillatory integrals.  These have explicit formulas and will be reviewed in this section.  We will then recall the celebrated mirror theorem for toric varieties \cite{givental98, LLY3}.

\subsection{Mirror theorems}
The mirror complex moduli is defined as a certain neighborhood of $0$ of $(\cpx^*)^{m-n}$ (see Definition \ref{defn mir mod}), whose coordinates are denoted as $\check{q} = (\check{q}_1, \ldots, \check{q}_{m-n})$; $\check{q}_k$ is also denoted as $\check{q}^{\Psi_k}$ for $k=1,\ldots,m-n$.  We may also use a nef basis of $H^2(X)$ instead, and the corresponding complex coordinates are denoted as $\check{q}_k^\nef$.

\begin{defn}[$I$-function] \label{defn I}
The {\em $I$-function} of a toric manifold $X$ is defined as
$$ I^X(\check{q},z) := \exp\left(\frac{1}{z} \sum_{k=1}^{m-n} (\log \check{q}_k)[D_{n+k}] \right) \sum_{d \in H_2^{\eff}(X)} \check{q}^d \, I_d $$
where
$$ I_d := \prod_{l=1}^m\frac{\prod_{s=-\infty}^0(D_l+sz)}{\prod_{s=-\infty}^{D_i\cdot d}(D_l+sz)}$$
and $\check{q}^d := \prod_{k=1}^{m-n} \check{q}_k^{D_{n+k} \cdot d}$.
\end{defn}

Notice that in the above expression of $I$, $z$ is the $\mathbb{S}^1$-equivariant parameter.  By doing a Laurent expansion around $z=\infty$, we see that $I_d$ can be regarded as a $\Sym^* (H_\cpx^2(X,\tor))$-valued function (or as an element of $\Sym^* (H^2(X,T)) ((z^{-1})) $), where $H_\cpx^2(X,\tor) := H^2(X,\tor) \otimes \cpx$.  A basis of $H^2(X,\tor)$ is given by $\{D_l\}_{l=1}^m$, which is dual to the basis  $\{\beta_l\}_{l=1}^m \subset H_2(X,\tor)$.  The canonical projection $H^2(X,\tor) \to H^2(X)$ sends $D_l$ to its class $[D_l]$ for $l=1,\ldots,m$.  Since $\{[D_l]\}_{l=n+1}^m$ forms a basis of $H^2(X,\cpx)$, we may choose a splitting $H^2(X,\cpx) \hookrightarrow H_\cpx^2(X,\tor)$ by taking the basic vector $[D_{n+k}]$ to $D_{n+k}$ for $k=1, \ldots, m-n$.  In this way we can regard $H^2(X,\cpx)$ as a subspace of $H_\cpx^2(X,\tor)$.

\begin{defn}[The mirror map] \label{Defn mir map}
Let $X$ be a semi-Fano toric manifold.  The {\em mirror map} is defined as the $1/z$-coefficient of the $I$-function of $X$, which is an $H^2(X,\cpx)$-valued function in $\check{q} \in (\cpx^*)^{m-n}$. More precisely, the $1/z$-coefficient of $\sum_{d \in H_2^{\eff}(X)} \check{q}^d I_d$ is of the form
$$ -\sum_{l=1}^m g_l(\check{q}) D_l \in H_\cpx^2(X,\tor)$$
where the functions $g_l$ in $\check{q} \in (\cpx^*)^{m-n}$, $l=1,\ldots,m$ are given by (\ref{eqn:funcn_g}).
Then we can write
$$ \sum_{k=1}^{m-n} (\log \check{q}_k) [D_{n+k}] - \sum_{l=1}^m g_l(\check{q}) [D_l] = \sum_{k=1}^{m-n} (\log \check{q}_k - g^{\Psi_k}(\check{q})) [D_{n+k}],$$
where
\begin{equation*}
g^{\Psi_k} := \sum_{l=1}^m \left( D_l \cdot \Psi_k \right) g_l.
\end{equation*}
Thus in terms of the coordinates $(q_k)_{k=1}^{m-n}$ of $H^2(X,\cpx)/2\pi\consti H^2(X,\integer)$, the mirror map $q(\check{q})$ is
\begin{equation}\label{mirror_map_eqn}
q_k(\check{q}) = \check{q}_k \exp (- g^{\Psi_k}(\check{q}))
\end{equation}
for $k=1,\ldots,m-n$.
\end{defn}

Each $g_l$ can also be expressed in terms of the flat coordinates $\check{q}^\nef$ defined by the dual of a nef basis of $H^2(X)$.  While a priori $g_l(\check{q}^\nef)$ is a formal power series in $\check{q}^\nef$ (or an element in the Novikov ring), by the theory of hypergeometric series it is known that $g_l(\check{q}^\nef)$ is indeed convergent around $\check{q}^\nef = 0$.  Moreover, the mirror map $q^\nef(\check{q}^\nef)$ is a local diffeomorphism, and its inverse is denoted as $\check{q}^\nef(q^\nef)$.

\begin{defn} \label{defn mir mod}
The {\em mirror complex moduli} $\moduli^{\mathrm{mir}}$ is defined as the domain of convergence of $(g_l(\check{q}^\nef))_{l=1}^m$ around $\check{q}^\nef = 0$ in $(\cpx^*)^{m-n}$.

The {\em K\"ahler moduli} is defined as the intersection of the complexified K\"ahler cone $\Kcone^{\cpx}_X$ with the domain of convergence of the inverse mirror map $\check{q}^\nef(q^\nef)$.  By abuse of notation we will still denote the K\"ahler moduli as $\Kcone^{\cpx}_X$.
\end{defn}

The most important result in closed-string mirror symmetry for toric manifolds is:
\begin{thm}[Toric mirror theorem \cite{givental98, LLY3}] \label{thm toric mir thm}
Let $X$ be a compact semi-Fano toric manifold. Consider the {\em $J$-function} of $X$:
\begin{equation} \label{J-function}
J(q,z)= \exp\left(\frac{1}{z} \sum_{k=1}^{m-n} (\log q_k)[D_{n+k}] \right) \left(1+\sum_a\sum_{d \in H_2^\mathrm{eff}(X)\setminus\{0\}} q^d \Big\langle1,\frac{\Phi_a}{z-\psi}\Big\rangle_{0,2,d}\Phi^a\right)
\end{equation}
where $\{\Phi_a\}$ is a homogeneous additive basis of $H^*(X)$ and $\{\Phi^a\} \subset H^*(X)$ is its dual basis with respect to the Poincar\'e pairing.  We always use $\langle \cdots \rangle_{g,k,d}$ to denote the genus $g$, degree $d$ descendent GW invariant of $X$ with $k$ insertions. Then
$$ I(\check{q},z) = J(q(\check{q}),z) $$
where $q(\check{q})$ is the mirror map given in Definition \ref{Defn mir map}.
\end{thm}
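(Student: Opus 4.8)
The plan is to deduce Theorem~\ref{thm toric mir thm} from Givental's description of the genus-$0$ Gromov--Witten theory of $X$ via a Lagrangian cone, together with the identification of the hypergeometric series of Definition~\ref{defn I} as a family of points on that cone. Work in Givental's symplectic vector space $\mathcal{H} = H^*(X,\cpx)((z^{-1}))$ with its standard symplectic pairing built from the Poincar\'e pairing and $\mathrm{Res}_{z=0}$, and let $\mathcal{L}_X \subset \mathcal{H}$ be the cone swept out by the graph of the differential of the genus-$0$ descendant potential. Two structural facts are used: $\mathcal{L}_X$ is a cone whose tangent spaces $T$ satisfy $zT \subset T$ and $T \cap zT = zT$, so $\mathcal{L}_X$ is ruled by the subspaces $zT$; and the $J$-function \eqref{J-function} is (after the dilaton shift $-z$) the restriction of $\mathcal{L}_X$ to the small locus $H^2(X)$, so that a point of $\mathcal{L}_X$ of the form $-z + p + O(z^{-1})$ with $p \in H^2(X)$ is forced to equal $J(\exp(p),z)$. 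Granting these, proving $I(\check q,z) = J(q(\check q),z)$ reduces to two claims: (a) the family $\{I^X(\check q,z)\}_{\check q}$ lies on $\mathcal{L}_X$; and (b) its asymptotic expansion at $z = \infty$ exhibits the change of variables $q(\check q)$ of Definition~\ref{Defn mir map}.

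Claim (a) is the crux, and I would establish it by the differential-equations route. One checks directly from Definition~\ref{defn I} that the components of $I^X(\check q,z)$ form a complete solution of the GKZ / Picard--Fuchs system attached to the fan $\Sigma$: the ``box'' relations indexed by $d \in H_2^{\eff}(X)$ (with the operators $z\check q_k \partial_{\check q_k}$ playing the role of the divisors $[D_{n+k}]$) together with the linear relations coming from the primitive generators $v_j$. On the other hand, the $J$-function and its derivatives span the solution space of the quantum differential equation $z\partial_a J = ([D_a]\star_q)\, J$. The hypothesis that $-K_X$ is nef forces $I^X$ to have the admissible form $1 + \tau(\check q)/z + O(z^{-2})$ with $\tau$ valued in $H^{\le 2}(X)$ (no positive powers of $z$ occur); Givental's reconstruction/uniqueness theorem then applies, since the $z$-stability of tangent spaces together with the string and divisor equations let one absorb the $z^0$- and $z^{-1}$-parts, and one concludes $I^X(\check q,z) \in \mathcal{L}_X$ for $\check q$ near $0$. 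Alternatively one could run the ``mirror principle'' of Lian--Liu--Yau: introduce the graph space $\overline{\moduli}_{0,0}(X\times\proj^1,(d,1))$ with the $\cpx^*$-action on the $\proj^1$-factor, apply virtual localization, and use functorial localization to the $\proj^1$ to match the contributions of the two fixed loci with the $J$-side and the combinatorial $I$-side respectively; I would, however, prefer the GKZ route as it is cleaner for semi-Fano targets and avoids fixed-point bookkeeping.

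For claim (b), given $I^X(\check q,z) \in \mathcal{L}_X$ one simply expands near $z = \infty$. By construction the $z^0$-coefficient is $1$, and by the very Definition~\ref{Defn mir map} the $z^{-1}$-coefficient is $\sum_k (\log\check q_k)[D_{n+k}] - \sum_l g_l(\check q)[D_l] = \sum_k (\log\check q_k - g^{\Psi_k}(\check q))[D_{n+k}] \in H^2(X)$. By the characterization of $\mathcal{L}_X$ recalled above, $I^X(\check q,z) = J(q,z)$ with $q_k = \check q_k \exp(-g^{\Psi_k}(\check q))$, which is exactly the mirror map \eqref{mirror_map_eqn}. Convergence is not an issue: by the theory of $\Gamma$-series (already invoked after Definition~\ref{Defn mir map}) each $g_l(\check q^\nef)$, and hence the mirror map and its inverse, converge near $\check q^\nef = 0$, so the equality holds as an identity of analytic functions on $\moduli^{\mathrm{mir}}$.

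The main obstacle is claim (a): for Fano $X$ one has simply $I^X = J$ with no change of variables, but in the semi-Fano case the non-trivial $1/z$-term $g^{\Psi_k}$ must be absorbed, and the delicate point is to show that the explicit GKZ solution $I^X$ — a priori only a solution with the correct symbol — actually coincides with the cohomology-valued series assembled from genus-$0$ Gromov--Witten invariants. This is precisely where Givental's structure theory for $\mathcal{L}_X$ and the hypothesis $-K_X$ nef enter, the latter guaranteeing that $I^X = 1 + O(1/z)$ so that the reconstruction theorem is applicable.
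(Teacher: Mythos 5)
First, note that the paper does not prove Theorem \ref{thm toric mir thm} at all: it is quoted as a known result with references to Givental \cite{givental98} and Lian--Liu--Yau \cite{LLY3}, so there is no internal proof to compare your argument against; your proposal has to be judged as a reconstruction of the literature proof.

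Your overall frame is the standard modern one, and step (b) is fine: once one knows that the family $I^X(\check q,z)$ lies on Givental's cone $\mathcal{L}_X$, the asymptotics $I = 1 + \bigl(\sum_k(\log\check q_k)[D_{n+k}] - \sum_l g_l(\check q)[D_l]\bigr)z^{-1} + O(z^{-2})$ (valid exactly because $-K_X$ is nef) together with the characterization of points of the cone of the form $-z + p + O(z^{-1})$, $p \in H^2(X)$, yields $I(\check q,z) = J(q(\check q),z)$ with the mirror map \eqref{mirror_map_eqn}. The genuine gap is claim (a), which is where the entire content of the theorem sits. Your proposed justification --- $I^X$ satisfies the GKZ/Picard--Fuchs system, $J$ and its derivatives solve the quantum differential equation, hence by ``Givental's reconstruction/uniqueness theorem'' $I^X \in \mathcal{L}_X$ --- does not go through as stated. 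Membership in $\mathcal{L}_X$ is not a consequence of satisfying the GKZ system: the GKZ $D$-module generally has rank larger than $\dim H^*(X)$, and the assertion that it surjects onto (or is compatible with) the quantum $D$-module after a change of variables is essentially equivalent to the mirror theorem itself, so the argument is circular. The uniqueness statements you invoke only identify a point \emph{already known} to lie on the cone from its $z$-asymptotics; they cannot be used to place $I^X$ on the cone in the first place. The actual proofs in the cited references do this hard step by torus localization: Givental characterizes the relevant series by fixed-point recursion relations plus a (double) polynomiality property and verifies both for the hypergeometric series, while Lian--Liu--Yau run functorial localization on the graph/linear--nonlinear moduli with Euler data. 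You mention this route only as an alternative and defer it, so as written the proposal asserts rather than proves the key point. (A minor slip: the condition ``$T \cap zT = zT$'' is vacuous given $zT \subset T$; the relevant geometric fact is that $\mathcal{L}_X$ is ruled by the subspaces $zT$, i.e.\ $\mathcal{L}_X \cap T = zT$ for tangent spaces $T$.)
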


In this paper, we are interested in open-string mirror symmetry. The Hori-Vafa superpotential (which plays a role analogous to that of the $I$-function in closed-string mirror symmetry) is the central object for this purpose.

\begin{defn} \label{defn HV}
The {\em Hori-Vafa superpotential} of the toric manifold $X$ is a holomorphic function $W^{\mathrm{HV}}: \moduli^{\mathrm{mir}} \times (\cpx^*)^n \to \cpx^*$ defined by
$$ W^{\mathrm{HV}}_{\check{q}}(z_1, \ldots, z_n) = z_1 + \cdots + z_n + \sum_{k=1}^{m-n} \check{q}_{k}z^{v_{n+k}} $$
where $z^{v_{n+k}}$ denotes the monomial $\prod_{l=1}^n z_l^{\pairing{v_{n+k}}{\nu_l}}$.  It is pulled back to the K\"ahler moduli $\Kcone^\cpx_X$ by substituting the inverse mirror map $\check{q}_{k} = \check{q}_{k}(q)$ ($k=1,\ldots,m-n$) in the above expression.
\end{defn}

The Hori-Vafa potential may also be written as
\begin{equation} \label{eqn W tilde}
\tilde{W}^{\mathrm{HV}}_{\check{q}}(z_1, \ldots, z_n) = \sum_{p=1}^n (\exp g_p(\check{q})) z_p +
\sum_{k=1}^{m-n} \check{q}_{k} z^{v_{n+k}} \prod_{p=1}^n \exp \left(\pairing{v_{n+k}}{\nu_p}g_p(\check{q})\right).
\end{equation}
via the coordinate change $z_p \mapsto (\exp g_p(\check{q})) z_p$, $p=1,\ldots,n$.  Such a coordinate change will be necessary for the comparison with the disc potential.

The Hori-Vafa superpotential contains closed-string enumerative information of $X$:
\begin{thm}[Second form of the toric mirror theorem \cite{givental98, LLY3}] \label{Giv_thm2}
Let $X$ be a semi-Fano toric manifold and $\omega \in \Kcone^\cpx_X$ with coordinate $q(-\omega) = q$. Let $W^{\mathrm{HV}}$ be its Hori-Vafa superpotential. Then
$$\QH^* (X,\omega) \cong \Jac (W^{\mathrm{HV}}_{\check{q}(q)})$$
where $\check{q}(q)$ is the inverse mirror map.
Moreover, the isomorphism is given by sending the generators $[D_{n+k}] \in \QH^* (X,\omega)$ to $\left[\dd{\log q_{k}} W^{\mathrm{HV}}_{\check{q}(q)}\right] \in \Jac (W^{\mathrm{HV}}_{\check{q}(q)})$.
\end{thm}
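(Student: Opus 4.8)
The plan is to derive Theorem~\ref{Giv_thm2} from the first form of the toric mirror theorem (Theorem~\ref{thm toric mir thm}) by matching the two cyclic $D$-modules generated by the $I$- and $J$-functions and then passing to their ``classical limits'' at $z=0$, at which stage the flat connections degenerate into the quantum product and into multiplication in the Jacobian ring, respectively.

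First I would realize $QH^*(X,\omega_q)$ in these terms. Let $\mathcal{D}=\C[q_1^{\pm},\dots,q_{m-n}^{\pm}][z]\langle zq_1\partial_{q_1},\dots,zq_{m-n}\partial_{q_{m-n}}\rangle$, and let it act on $\mathcal{O}\otimes H^*(X)$ through the Dubrovin connection, so that $zq_k\partial_{q_k}$ becomes $zq_k\partial_{q_k}-[D_{n+k}]\star_q$. Because $X$ is a smooth projective toric manifold, $H^*(X)$ is generated as a ring by $[D_{n+1}],\dots,[D_m]\in H^2(X)$ (the Stanley--Reisner presentation), and since the semi-Fano quantum product reduces to the cup product modulo the Novikov variables the same classes generate $QH^*(X,\omega_q)$; hence this quantum $D$-module is cyclic with cyclic vector $\mathbf 1$, and its $z=0$ fiber, as a ring, is $QH^*(X,\omega_q)$ with $[D_{n+k}]$ realized by the residual action of $zq_k\partial_{q_k}$. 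The $J$-function of \eqref{J-function} is a flat section and satisfies the quantum differential equations $zq_k\partial_{q_k}J=[D_{n+k}]\star_q J$; a standard argument identifies the differential operators annihilating $J$ with the annihilator ideal of $\mathbf 1$, yielding
\begin{equation*}
QH^*(X,\omega_q)\;\cong\;\mathcal{D}\big/\big(\mathrm{Ann}_{\mathcal{D}}(J)+z\mathcal{D}\big),\qquad [D_{n+k}]\ \longleftrightarrow\ zq_k\partial_{q_k}\ (\text{mod }z).
\end{equation*}

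Next I would identify the analogous object on the mirror side. The $I$-function (Definition~\ref{defn I}) is annihilated by the GKZ-type hypergeometric operators attached to the fan $\Sigma$, and these generate a cyclic $D$-module in the variables $\check q_k$ whose $z=0$ fiber is the Jacobian ring $\Jac(W^{\mathrm{HV}}_{\check q})$: writing $w_j$ for the monomials of $W^{\mathrm{HV}}_{\check q}$ (so $w_j=z_j$ for $j\le n$ and $w_{n+k}=\check q_k z^{v_{n+k}}$), the relations $z_i\partial_{z_i}W^{\mathrm{HV}}_{\check q}=0$ become the linear relations $\sum_j\langle v_j,\nu_i\rangle w_j=0$, the tautologies $\prod_j w_j^{d_j}=\check q^{\,d}$ over curve classes $d$ become the quantum Stanley--Reisner relations, and the operator $z\check q_k\partial_{\check q_k}$ reduces modulo $z$ to multiplication by $w_{n+k}$. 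Now invoke Theorem~\ref{thm toric mir thm}: the equality $I(\check q,z)=J(q(\check q),z)$ identifies the GKZ $D$-module with the pullback of the quantum $D$-module along the mirror map $q=q(\check q)$, carrying $I$ to $J$; taking $z=0$ fibers and then substituting the inverse mirror map $\check q=\check q(q)$ produces the ring isomorphism $QH^*(X,\omega_q)\cong\Jac(W^{\mathrm{HV}}_{\check q(q)})$. For the generators, the chain rule gives $zq_k\partial_{q_k}=\sum_j\frac{\partial\log\check q_j}{\partial\log q_k}\,(z\check q_j\partial_{\check q_j})$, so the class of $zq_k\partial_{q_k}$ --- namely $[D_{n+k}]$ --- is carried to $\sum_j\frac{\partial\log\check q_j}{\partial\log q_k}\,w_{n+j}\big|_{\check q=\check q(q)}=\sum_j\frac{\partial\check q_j(q)}{\partial\log q_k}\,z^{v_{n+j}}=\big[\dd{\log q_k}W^{\mathrm{HV}}_{\check q(q)}\big]$, exactly the asserted correspondence.

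The delicate point --- and the main obstacle --- is the $z\to0$ reduction: one must check that $\mathrm{Ann}_{\mathcal{D}}(J)+z\mathcal{D}$ cuts out precisely the quantum relations (no relation lost or gained in the limit), and similarly on the GKZ side, so that the resulting ring map is an isomorphism rather than merely a surjection. Cyclicity of the quantum $D$-module and the normalized shape of $J$ make the surjection $QH^*(X,\omega_q)\twoheadrightarrow\Jac(W^{\mathrm{HV}}_{\check q(q)})$ transparent; the clean way to upgrade it to an isomorphism is the dimension count $\dim_{\C}QH^*(X)=\chi(X)=\#\{\text{maximal cones of }\Sigma\}=\dim_{\C}\Jac(W^{\mathrm{HV}}_{\check q})$. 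Within this, the one genuinely essential use of Theorem~\ref{thm toric mir thm} is in showing that the quantum Stanley--Reisner relations map to zero, since when $X$ is semi-Fano but not Fano these relations differ from their naive (Batyrev) form precisely by the mirror map.
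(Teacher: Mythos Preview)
The paper does not supply its own proof of Theorem~\ref{Giv_thm2}; the statement is quoted as a known result from \cite{givental98, LLY3} and used as input elsewhere in the paper. So there is nothing in the paper to compare your argument against.

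Assessed on its own merits, your outline is the standard derivation of the second form from the first and is essentially correct. The realization of $QH^*(X,\omega_q)$ as the $z=0$ fiber of the quantum $D$-module generated by $J$, the parallel identification of $\Jac(W^{\mathrm{HV}}_{\check q})$ with the $z=0$ fiber of the GKZ module, the transport via $I=J$, and the chain-rule tracking of $[D_{n+k}]\mapsto[\partial_{\log q_k}W^{\mathrm{HV}}_{\check q(q)}]$ are all the right moves. You have also correctly flagged the genuine subtlety: the passage to $z=0$ is not automatic, and one needs either the dimension count $\dim QH^*(X)=\chi(X)=\#\{\text{maximal cones}\}=\dim\Jac(W^{\mathrm{HV}}_{\check q})$ (the last equality being Kushnirenko's theorem for generic $\check q$) or a direct verification that the quantum Stanley--Reisner and linear relations exhaust the Jacobian ideal. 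One small point worth being explicit about in a full write-up: the dimension equality $\dim\Jac(W^{\mathrm{HV}}_{\check q})=\#\{\text{maximal cones}\}$ requires $W^{\mathrm{HV}}_{\check q}$ to be convenient and nondegenerate in Kushnirenko's sense, which holds for generic $\check q$ (hence for $\check q$ in a Zariski-open set containing a neighborhood of the large complex structure limit); this is what justifies working over the Novikov ring or for $q$ near the large radius limit.
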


\subsection{Extended moduli} \label{sec:extmod}
We have seen that $I_d$ is indeed $\Sym^*(H_\cpx^2(X,\tor))$-valued.  Thus it is natural to extend the mirror map and the Hori-Vafa superpotential $W^{\mathrm{HV}}_{\check{q}}$ from $H_\cpx^2(X)$ to $H_\cpx^2(X,\tor)$.  Extended moduli was introduced by Givental \cite{givental98} (see also Iritani \cite{iritani09}).

Let
$$Q_l = \exp\left( - \pairing{\cdot}{\beta_l} \right), \quad\text{for }l=1,\ldots, m,$$
be the flat coordinates on $H_\cpx^2(X,\tor)/2\pi\consti H^2(X,\tor)$.  In terms of these coordinates, the canonical projection
$$H_\cpx^2(X,\tor)/2\pi\consti H^2(X,\tor) \to H^2(X,\cpx)/2\pi\consti H^2(X,\integer)$$
is given by $q_k = Q^{-v_{n+k}} Q_{n+k}$ for $k=1,\ldots,m-n$.  Here $Q^{-v_{n+k}}$ denotes the monomial $\prod_{l=1}^n Q_l^{\pairing{-v_{n+k}}{\nu_l}}$.
The splitting $$H^2(X,\cpx)/2\pi\consti H^2(X,\integer) \hookrightarrow H_\cpx^2(X,\tor)/2\pi\consti H^2(X,\tor)$$ is given by $Q_l = 1$  for $l = 1, \ldots, n$ and $Q_l = q_l$ for $l=n+1,\ldots,m$.

\begin{defn}[Extended moduli] \label{Defn ext_mod}
The {\em extended K\"ahler moduli}
$\tilde{\Kcone}^{\cpx}_X\subset H_\cpx^2(X,\tor)/2\pi\consti H^2(X,\tor)$
is defined as the inverse image of the K\"ahler moduli $\Kcone^{\cpx}_X \subset H^2(X,\cpx)/2\pi\consti H^2(X,\integer)$ under the canonical projection $$H_\cpx^2(X,\tor)/2\pi\consti H^2(X,\tor) \to H^2(X,\cpx)/2\pi\consti H^2(X,\integer).$$

The {\em extended mirror complex moduli} $\tilde{\moduli}^{\mathrm{mir}}$ is the inverse image of $\moduli^{\mathrm{mir}} \subset (\cpx^*)^{m-n}$ under the projection $(\cpx^*)^{m} \to (\cpx^*)^{m-n}$ defined by sending $(\check{Q}_1, \ldots, \check{Q}_m) \in (\cpx^*)^{m}$ to $(\check{q}_1, \ldots, \check{q}_{m-n})\in(\cpx^*)^{m-n}$, $\check{q}_k = \check{Q}^{-v_{n+k}} \check{Q}_{n+k}$ for $k=1,\ldots,m-n$.  $\moduli^{\mathrm{mir}}$ can be regarded as a submanifold of $\tilde{\moduli}^{\mathrm{mir}}$ by the splitting
$$\moduli^{\mathrm{mir}} \hookrightarrow \tilde{\moduli}^{\mathrm{mir}} $$
given by $\check{Q}_l = 1$ for $l=1,\ldots,n$ and $\check{Q}_{n+k} = \check{q}_{k}$ for $k=1,\ldots,m-n$.
\end{defn}

\begin{defn}[Extended superpotential and mirror map] \label{Def ext}
The {\em extension of Hori-Vafa potential} $W^{\mathrm{HV}}_{\check{q}}$ from $\check{q} \in \moduli^{\mathrm{mir}}$ to $\check{Q} \in \tilde{\moduli}^{\mathrm{mir}}$ is defined as
$$\mathscr{W}^{\mathrm{HV}}_{\check{Q}}(z_1,\ldots,z_n) := \sum_{l=1}^m \check{Q}_l z^{v_l}$$
where $z^{v_l}$ denotes the monomial $\prod_{p=1}^n z_p^{\pairing{v_l}{\nu_p}}$ (and so $z^{v_l} = z_l$ for $l = 1, \ldots, n$).

The {\em extended mirror map} from $\tilde{\moduli}^{\mathrm{mir}}$ to $\tilde{\Kcone}^{\cpx}_X$ is defined to be
$$ Q_l(\check{Q}) = \check{Q}_l \exp (-g_l(\check{q}(\check{Q})))$$
where $\check{q}(\check{Q})$ is the canonical projection $\check{q}_k = \check{Q}^{-v_{n+k}} \check{Q}_{n+k}$ for $k=1,\ldots,m-n$.

The {\em extended inverse mirror map} from $\tilde{\Kcone}^{\cpx}_X$ to $\tilde{\moduli}^{\mathrm{mir}}$ is defined to be
$$ \check{Q}_l(Q) = Q_l \exp (g_l(\check{q}(q(Q))))$$
where $q(Q)$ is the canonical projection $q_k = Q^{-v_{n+k}} Q_{n+k}$ for $k=1,\ldots,m-n$, and $\check{q}(q)$ is the inverse mirror map.
\end{defn}


The following proposition follows immediately from the above definitions:
\begin{prop} \label{Prop compatible}
We have the following commutative diagram
$$
\begin{diagram}
	\node{\tilde{\moduli}^{\mathrm{mir}}} \arrow{s} \arrow{e} \node{\tilde{\Kcone}_X^\C} \arrow{s} \\
	\node{\moduli^{\mathrm{mir}}} \arrow{e} \node{\Kcone^\C_X}
\end{diagram}
$$
and a similar one for the inverse mirror map and its extended version.
As a consequence,
$$ W^{\mathrm{HV}}_{\check{q}(q)} = W^{\mathrm{HV}}_{\check{q}(\check{Q}(q))} =\mathscr{W}^{\mathrm{HV}}_{\check{Q}(q)}$$
where $\check{q}(q)$ is the inverse mirror map, $\check{Q}(q)$ is the restriction of the extended inverse mirror map $\check{Q}(Q)$ to the K\"ahler moduli $\Kcone^\cpx \ni q$, and $\check{q}(\check{Q})$ is the canonical projection.
\end{prop}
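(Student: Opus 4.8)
The plan is to check the square by a direct computation in the explicit flat coordinates, deduce the inverse-map square formally, and then read off the potential identities. First I would unwind the four maps: the left vertical projection is $(\check Q_l)_{l=1}^m\mapsto(\check q_k)_{k=1}^{m-n}$ with $\check q_k=\check Q^{-v_{n+k}}\check Q_{n+k}$; the right vertical projection is $(Q_l)\mapsto(q_k)$ with $q_k=Q^{-v_{n+k}}Q_{n+k}$; the top arrow is the extended mirror map $Q_l(\check Q)=\check Q_l\exp(-g_l(\check q(\check Q)))$; and the bottom arrow is the mirror map $q_k(\check q)=\check q_k\exp(-g^{\Psi_k}(\check q))$ of \eqref{mirror_map_eqn}. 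Well-definedness of the domains and codomains is automatic: by Definition \ref{Defn ext_mod} the extended moduli are by construction the preimages of $\moduli^{\mathrm{mir}}$ and of $\Kcone^\cpx_X$ under the canonical projections, and the ordinary mirror map already carries one into the other, so the extended mirror map carries $\tilde{\moduli}^{\mathrm{mir}}$ into $\tilde{\Kcone}^\cpx_X$ (and likewise for the inverse).

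The heart of the argument is the single identity
\[ g^{\Psi_k}=-\sum_{p=1}^{n}\pairing{\nu_p}{v_{n+k}}g_p+g_{n+k},\qquad k=1,\dots,m-n, \]
which I would get by substituting $\Psi_k=-\sum_{p=1}^n\pairing{\nu_p}{v_{n+k}}\beta_p+\beta_{n+k}$ into the defining formula $g^{\Psi_k}=\sum_{l=1}^m(D_l\cdot\Psi_k)g_l$ and using $D_l\cdot\beta_j=\delta_{lj}$. Granting this, I would evaluate the composite ``extended mirror map, then canonical projection'' on $\check Q$: writing $\check q=\check q(\check Q)$ for the projection of $\check Q$ and collecting exponents,
\[ Q(\check Q)^{-v_{n+k}}Q_{n+k}(\check Q)=\check q_k\exp\!\Big(\sum_{p=1}^n\pairing{\nu_p}{v_{n+k}}g_p-g_{n+k}\Big)=\check q_k\exp(-g^{\Psi_k})=q_k(\check q), \]
which is the composite ``canonical projection, then mirror map'', so the square commutes. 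The square for the inverse mirror maps then follows by reversing the horizontal arrows: the extended inverse mirror map of Definition \ref{Def ext} is a genuine two-sided inverse of the extended mirror map (the ordinary inverse mirror map being a local diffeomorphism, as recalled in the excerpt, and the remaining lattice bookkeeping being the same identity run backwards), and it intertwines the two vertical projections for the same reason.

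For the consequence, $W^{\mathrm{HV}}_{\check q(q)}=W^{\mathrm{HV}}_{\check q(\check Q(q))}$ holds on the nose once one checks that the canonical projection of $\check Q(q)$ equals $\check q(q)$: restricting the extended inverse mirror map to the splitting $Q_l=1$ ($l\le n$), $Q_{n+k}=q_k$ gives $\check Q_l(q)=\exp(g_l(\check q(q)))$ and $\check Q_{n+k}(q)=q_k\exp(g_{n+k}(\check q(q)))$, hence $\check Q(q)^{-v_{n+k}}\check Q_{n+k}(q)=q_k\exp(g^{\Psi_k}(\check q(q)))=\check q_k(q)$ by the defining relation of the inverse mirror map together with the identity above. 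For $W^{\mathrm{HV}}_{\check q(q)}=\mathscr W^{\mathrm{HV}}_{\check Q(q)}$ I would compare $\mathscr W^{\mathrm{HV}}_{\check Q(q)}(z)=\sum_l\check Q_l(q)z^{v_l}$ with $W^{\mathrm{HV}}_{\check q(q)}$ after the monomial rescaling $z_p\mapsto(\exp g_p(\check q(q)))z_p$, $p=1,\dots,n$: this rescaling turns $W^{\mathrm{HV}}_{\check q(q)}$ into $\sum_p(\exp g_p)z_p+\sum_k\check q_k(q)\prod_p\exp(\pairing{v_{n+k}}{\nu_p}g_p)z^{v_{n+k}}$, and the identity above collapses the $k$-th coefficient to $q_k\exp(g_{n+k})=\check Q_{n+k}(q)$, matching $\mathscr W^{\mathrm{HV}}_{\check Q(q)}$ term by term; since this is precisely the change of variables that produces $\tilde W^{\mathrm{HV}}$ from $W^{\mathrm{HV}}$ in \eqref{eqn W tilde}, the three potentials agree up to that explicit monomial coordinate change on $(\cpx^*)^n$, in particular defining the same singularity and the same Jacobian ring. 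I do not anticipate a genuine obstacle: the whole statement is bookkeeping with the pairings $\pairing{\nu_p}{v_{n+k}}$ and the intersection numbers $D_l\cdot\Psi_k$; the only step demanding care is tracking the monomial rescaling of $(\cpx^*)^n$ that separates $W^{\mathrm{HV}}$, $\tilde W^{\mathrm{HV}}$, and $\mathscr W^{\mathrm{HV}}$ while leaving all downstream statements (Jacobian rings, critical points) untouched.
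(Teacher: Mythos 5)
Your verification is correct and is essentially the argument the paper has in mind: Proposition \ref{Prop compatible} is stated there as following immediately from the definitions, and your computation just unwinds the mirror map, the extended mirror map, and the canonical projections, with the identity $g^{\Psi_k}=-\sum_{p}\pairing{\nu_p}{v_{n+k}}g_p+g_{n+k}$ doing all the work. Your added care in noting that the final equality $W^{\mathrm{HV}}_{\check q(q)}=\mathscr{W}^{\mathrm{HV}}_{\check Q(q)}$ holds via the monomial rescaling $z_p\mapsto(\exp g_p(\check q(q)))z_p$ (i.e. $\mathscr{W}^{\mathrm{HV}}_{\check Q(q)}$ literally coincides with $\tilde W^{\mathrm{HV}}_{\check q(q)}$ of \eqref{eqn W tilde}) is exactly how the paper uses this identification downstream, so nothing is missing.
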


\begin{defn}\label{defn_diff_operator}
An element $A = \sum_{l=1}^m \tilde{a}_l D_l \in H_\cpx^2(X,\tor)$ induces a differential operator
$$\hat{A}_{\check{Q}} := \sum_{l=1}^m \tilde{a}_l \partial_{\log\check{Q}_l}$$
which operates on functions on $\tilde{\moduli}^{\mathrm{mir}}$; such an association is linear, i.e. $\widehat{(c A + B)}_{\check{Q}} = c \hat{A}_{\check{Q}} + \hat{B}_{\check{Q}}$ for all $c \in \cpx$.  The element $A$ projects to $[A] \in H^2(X,\cpx)$ which can be written as $\sum_{k=1}^{m-n} a_k [D_{n+k}]$ in terms of the basis $\{[D_{n+k}]\}_{k=1}^{m-n}$.  It induces the differential operator
$$\hat{A}_{\check{q}} := \sum_{k=1}^{m-n} a_k \partial_{\log\check{q}_k}$$
which operates on functions on $\moduli^{\mathrm{mir}}$;
similarly this association is linear.

Replacing $\check{Q}$ by $Q$ and $\check{q}$ by $q$, $A$ induces the differential operator on the extended K\"ahler moduli $\tilde{\Kcone}^{\cpx}$:
$$\hat{A}_{Q} := \sum_{l=1}^m \tilde{a}_l \partial_{\log Q_l}$$
and the differential operator on the K\"ahler moduli $\Kcone^{\cpx}$:
$$\hat{A}_{q} := \sum_{k=1}^{m-n} a_k \partial_{\log q_k}.$$
\end{defn}

A good thing about the extension $\mathscr{W}^{\mathrm{HV}}$ is the following observation:

\begin{prop} \label{Prop hat}
If $A,B \in H_\cpx^2(X,\tor)$ project to the same element in $H^2(X,\cpx)$ (meaning that $A$ and $B$ are linearly equivalent), then
$$ [\hat{A}_{\check{Q}} \mathscr{W}^{\mathrm{HV}}] = [\hat{B}_{\check{Q}} \mathscr{W}^{\mathrm{HV}}] \in \Jac(\mathscr{W}^{\mathrm{HV}}_{\check{Q}}). $$
In particular, restricting to the mirror moduli $\moduli^{\mathrm{mir}}$, one has
$$ [(\hat{A}_{\check{Q}} \mathscr{W}^{\mathrm{HV}})(\check{q}))] = [\hat{A}_{\check{q}} W^{\mathrm{HV}}] \in \Jac(W^{\mathrm{HV}}_{\check{q}}). $$
\end{prop}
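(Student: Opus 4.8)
The plan is to derive both statements from a single observation: the logarithmic Euler vector fields $\partial_{\log z_p}=z_p\partial_{z_p}$ act as $0$ on Jacobian rings of Laurent-polynomial potentials in $z_1,\dots,z_n$, and the kernel of the projection $H_\cpx^2(X,\tor)\to H^2(X,\cpx)$ is exactly the span of the linear relations among the toric divisors. First, since $\Jac(\mathscr{W}^{\mathrm{HV}}_{\check{Q}})=\cpx[z_1^{\pm},\dots,z_n^{\pm}]/(\partial_{z_1}\mathscr{W}^{\mathrm{HV}},\dots,\partial_{z_n}\mathscr{W}^{\mathrm{HV}})$, each $\partial_{\log z_p}\mathscr{W}^{\mathrm{HV}}$ vanishes in it. Because $A\mapsto\hat{A}_{\check{Q}}$ is linear and $\hat{A}_{\check{Q}}\mathscr{W}^{\mathrm{HV}}=\sum_{l=1}^m\tilde{a}_l\check{Q}_l z^{v_l}$, the identity $[\hat{A}_{\check{Q}}\mathscr{W}^{\mathrm{HV}}]=[\hat{B}_{\check{Q}}\mathscr{W}^{\mathrm{HV}}]$ is equivalent to $[\widehat{(A-B)}_{\check{Q}}\mathscr{W}^{\mathrm{HV}}]=0$, so it suffices to prove $[\hat{A}_{\check{Q}}\mathscr{W}^{\mathrm{HV}}]=0$ whenever $A$ maps to $0$ in $H^2(X,\cpx)$.

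Next I would pin down this kernel. Dualizing the exact sequence $0\to H_2(X)\to H_2(X,\tor)\to N\to 0$ coming from the boundary map $\partial\beta_l=v_l$ (recall $\{D_l\}$ is dual to $\{\beta_l\}$ and the projection sends $D_l\mapsto[D_l]$), one sees that $\ker(H_\cpx^2(X,\tor)\to H^2(X,\cpx))$ is the image of $M_\cpx$ under $\nu\mapsto\sum_{l=1}^m\pairing{\nu}{v_l}D_l$; indeed this image lies in the kernel since $\sum_l\pairing{\nu}{v_l}[D_l]=0$ is the standard linear relation in $H^2(X)$, and it has the right dimension $n$ because the $v_l$ span $N_\cpx$. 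So it is enough to treat $A=\sum_{l=1}^m\pairing{\nu}{v_l}D_l$ with $\nu\in M_\cpx$.

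For such an $A$, expand $\nu=\sum_{p=1}^n c_p\nu_p$ and use $\partial_{\log z_p}z^{v_l}=\pairing{v_l}{\nu_p}z^{v_l}$ to compute
\[
\hat{A}_{\check{Q}}\mathscr{W}^{\mathrm{HV}}=\sum_{l=1}^m\pairing{\nu}{v_l}\check{Q}_l z^{v_l}=\sum_{p=1}^n c_p\sum_{l=1}^m\check{Q}_l\,\partial_{\log z_p}z^{v_l}=\sum_{p=1}^n c_p\,\partial_{\log z_p}\mathscr{W}^{\mathrm{HV}},
\]
which lies in the Jacobian ideal; this proves the first assertion. For the second, restrict $\hat{A}_{\check{Q}}\mathscr{W}^{\mathrm{HV}}$ to $\moduli^{\mathrm{mir}}$ by setting $\check{Q}_l=1$ for $l\le n$ and $\check{Q}_{n+k}=\check{q}_k$, and subtract $\hat{A}_{\check{q}}W^{\mathrm{HV}}$. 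Writing $[A]=\sum_k a_k[D_{n+k}]$ and pairing with $\Psi_k=-\sum_{p=1}^n\pairing{\nu_p}{v_{n+k}}\beta_p+\beta_{n+k}$ via $D_l\cdot\beta_j=\delta_{lj}$ yields $a_k=\tilde{a}_{n+k}-\sum_{p=1}^n\tilde{a}_p\pairing{v_{n+k}}{\nu_p}$, and a direct computation then identifies the difference with $\sum_{p=1}^n\tilde{a}_p\,\partial_{\log z_p}W^{\mathrm{HV}}_{\check{q}}$, which again lies in the Jacobian ideal.

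There is no analytic or geometric obstacle here; the only places demanding care are the precise identification of $\ker(H_\cpx^2(X,\tor)\to H^2(X,\cpx))$ with the span of the divisor relations $\sum_l\pairing{\nu}{v_l}D_l$, and the index bookkeeping in the $\check{Q}\leftrightarrow\check{q}$ coordinate change needed for the restricted statement — I expect the latter to be the step most prone to sign and indexing errors.
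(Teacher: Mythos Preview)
Your proof is correct and follows essentially the same approach as the paper: reduce the first statement to the case $[A]=0$, identify such $A$ with linear relations $\sum_l\pairing{\nu}{v_l}D_l$ coming from $M$, and rewrite $\hat{A}_{\check{Q}}\mathscr{W}^{\mathrm{HV}}$ as a combination of the $\partial_{\log z_p}\mathscr{W}^{\mathrm{HV}}$. For the second statement the paper is slightly slicker---it observes that $A$ and the canonical lift $\sum_k a_k D_{n+k}$ of $[A]$ project to the same class and then applies the first statement directly, avoiding your explicit bookkeeping with the $a_k$---but your direct computation of the difference as $\sum_p \tilde{a}_p\,\partial_{\log z_p}W^{\mathrm{HV}}_{\check{q}}$ is equivalent and equally valid.
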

\begin{proof}
For the first statement, it suffices to prove that if $\sum_{l=1}^m \tilde{a}_l D_l$ is linearly equivalent to zero, then
$ \sum_{l=1}^m \tilde{a}_l \partial_{\log\check{Q}_l} \mathscr{W}^{\mathrm{HV}}_{\check{Q}} $
is in the Jacobian ideal of $\mathscr{W}^{\mathrm{HV}}_{\check{Q}}$. Now
$$ \sum_{l=1}^m \tilde{a}_l \partial_{\log\check{Q}_l} \mathscr{W}^{\mathrm{HV}}_{\check{Q}} = \sum_{l=1}^m \tilde{a}_l \check{Q}_l z^{v_l}. $$
Since $\sum_{l=1}^m \tilde{a}_l D_l$ is linearly equivalent to zero, there exists $\nu \in M$ such that $\pairing{\nu}{v_l} = \tilde{a}_l$ for all $l = 1, \ldots, m$. So the above expression is equal to
$$\sum_{l=1}^m \pairing{\nu}{v_l} \check{Q}_l z^{v_l} = \sum_{k=1}^n (\nu, v_k) \frac{\partial}{\partial \log z_k} \mathscr{W}^{\mathrm{HV}}_{\check{Q}}.$$

For the second statement, write $A = \sum_{l=1}^m \tilde{a}_l D_l$ and its projection $[A] = \sum_{k=1}^{m-n} a_k [D_{n+k}]$.  Then $A$ and $\sum_{k=1}^{m-n} a_k D_{n+k} \in H_\cpx^2(X,\tor)$ projects to the same element in $H^2(X,\cpx)$.  Thus
$$ [\hat{A}_{\check{Q}} \mathscr{W}^{\mathrm{HV}}] = \left[\sum_{k=1}^{m-n} a_k \partial_{\log\check{Q}_{n+k}} \mathscr{W}^{\mathrm{HV}}\right] \in \Jac(\mathscr{W}^{\mathrm{HV}}_{\check{Q}}). $$
Take  $\check{Q}_l = 1$ for $l=1,\ldots,n$ and $\check{Q}_{n+k} = \check{q}_k$ for $k=1,\ldots,m-n$, since $\mathscr{W}^{\mathrm{HV}}_{\check{q}} = W^{\mathrm{HV}}_{\check{q}}$, we get
$$ [(\hat{A}_{\check{Q}} \mathscr{W}^{\mathrm{HV}})(\check{q}))] = [\hat{A}_{\check{q}} W^{\mathrm{HV}}] \in \Jac(W^{\mathrm{HV}}_{\check{q}}). $$
\end{proof}

\subsection{Batyrev elements}
Theorem \ref{Giv_thm2} gives a presentation of the quantum cohomology ring, where the generators are given by the {\em Batyrev elements} introduced by Gonz\'alez-Iritani \cite{G-I11}.  
\begin{defn}[Batyrev elements \cite{G-I11}]
The {\em Batyrev elements}, which are $H^2(X,\cpx)$-valued functions on $\moduli^{\mathrm{mir}}$, are defined as follows.  For $k=1,\ldots,m-n$,
$$ B_{n+k} := \sum_{r=1}^{m-n} \frac{\partial \log q_{r}(\check{q})}{\partial \log \check{q}_{k}} [D_{n+r}] $$
where $q(\check{q})$ is the mirror map.
For $l=1,\ldots,n$,
$$ B_l := \sum_{k=1}^{m-n} \left(D_l \cdot \Psi_k \right) B_{n+k}.$$
\end{defn}

The Batyrev elements satisfy two sets of explicit relations \cite{G-I11}:
\begin{enumerate}
\item
Linear relations.  It follows from the definition that $\{B_l\}_{l=1}^m$ satisfies the same linear relations as that satisfied by $\{D_l\}_{l=1}^m$, namely, for every $\nu \in M$,
$$ \sum_{l=1}^m \pairing{\nu}{v_l} B_l = 0. $$

\item
Multiplicative relations.  For every $k=1,\ldots,m-n$,
\begin{equation}\label{eqn Batyrev multi}
B_1^{D_1 \cdot \Psi_k} * \cdots * B_m^{D_m \cdot \Psi_k} = q_k,
\end{equation}
where $B_l^{D_l \cdot \Psi_k}$ means $B_l$ quantum-multiplies itself for $D_l \cdot \Psi_k$ times.  This relation is a consequence of the toric mirror theorem (second form, see Theorem \ref{Giv_thm2}).
\end{enumerate}

Batyrev elements can also be lifted to $\tilde{\moduli}^{\mathrm{mir}}$:
\begin{defn}[Extended Batyrev elements] \label{def:extB}
Define the following $H^2_\cpx (X,\tor)$-valued functions on $\tilde{\moduli}^{\mathrm{mir}}$:
$$ \tilde{B}_l := \sum_{p=1}^m \frac{\partial \log Q_p(\check{Q})}{\partial \log \check{Q}_l} D_p, \quad l=1,\ldots,m$$
Here $Q(\check{Q})$ is the extended mirror map given in Definition \ref{Def ext}.
\end{defn}

More conceptually, the extended Batyrev elements are push-forward of the vector fields $\dd{\log \check{Q}_l}$ for $l=1,\ldots,m$ via the extended mirror map $\tilde{\moduli}^{\mathrm{mir}} \to \tilde{\Kcone}_X^\C$, and Batyrev elements are push-forward of the vector fields $\dd{\log \check{q}_l}$ for $l=1,\ldots,m-n$ via the mirror map $\moduli^{\mathrm{mir}} \to \Kcone_X^\C$.  So by the commutative diagram of Proposition \ref{Prop compatible}, we have
\begin{prop} \label{Prop [B]}
$[\tilde{B}_l|_{\moduli^{\mathrm{mir}}}] = B_l \text{ for } l=1,\ldots,m.$
\end{prop}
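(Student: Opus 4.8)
The plan is to prove Proposition~\ref{Prop [B]} by directly unwinding the definitions; the only substantive input is that $\{[D_{n+k}]\}_{k=1}^{m-n}$ and $\{\Psi_k\}_{k=1}^{m-n}$ are dual bases of $H^2(X,\cpx)$ and $H_2(X)$. First I would record the Jacobian of the vertical projection $\tilde{\moduli}^{\mathrm{mir}}\to\moduli^{\mathrm{mir}}$, $\check Q\mapsto\check q$: from $\check q_k=\check Q^{-v_{n+k}}\check Q_{n+k}$ together with the formula for $\Psi_k$ in \eqref{H_2_basis} and $D_l\cdot\beta_j=\delta_{lj}$, one gets $\partial\log\check q_k/\partial\log\check Q_l=D_l\cdot\Psi_k$ for all $l=1,\dots,m$ and $k=1,\dots,m-n$; moreover along the section $\moduli^{\mathrm{mir}}\hookrightarrow\tilde{\moduli}^{\mathrm{mir}}$ (namely $\check Q_l=1$ for $l\le n$ and $\check Q_{n+k}=\check q_k$) one has $\check q(\check Q)=\check q$, so the coefficient functions appearing in $\tilde B_l$ restrict to functions of $\check q$ alone.

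Next I would expand $\tilde B_l$ using $\log Q_p(\check Q)=\log\check Q_p-g_p(\check q(\check Q))$ (Definition~\ref{Def ext}) and the chain rule, which gives
$$\tilde B_l = D_l - \sum_{k=1}^{m-n}(D_l\cdot\Psi_k)\sum_{p=1}^m\frac{\partial g_p}{\partial\log\check q_k}\,D_p,$$
so that restricting to $\moduli^{\mathrm{mir}}$ and then applying the projection $H_\cpx^2(X,\tor)\to H^2(X,\cpx)$ yields the same formula with every $D_p$ replaced by $[D_p]$. In parallel I would expand $B_{n+k}$ from the mirror map $q_r=\check q_r\exp(-g^{\Psi_r})$ with $g^{\Psi_r}=\sum_p(D_p\cdot\Psi_r)g_p$, obtaining
$$B_{n+k}=[D_{n+k}]-\sum_{p=1}^m\frac{\partial g_p}{\partial\log\check q_k}\sum_{r=1}^{m-n}(D_p\cdot\Psi_r)[D_{n+r}]=[D_{n+k}]-\sum_{p=1}^m\frac{\partial g_p}{\partial\log\check q_k}[D_p],$$
where the second equality uses $\gamma=\sum_r(\gamma\cdot\Psi_r)[D_{n+r}]$ for any $\gamma\in H^2(X,\cpx)$, i.e. the duality above. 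Comparing the two displays settles the case $l=n+k$ at once; for $l\le n$, substituting the $B_{n+k}$ formula into that for $[\tilde B_l|_{\moduli^{\mathrm{mir}}}]$ and using $\sum_k(D_l\cdot\Psi_k)[D_{n+k}]=[D_l]$ to cancel the leading terms leaves exactly $\sum_k(D_l\cdot\Psi_k)B_{n+k}=B_l$, as desired.

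There is no real obstacle here; the proposition is bookkeeping, as the sentence preceding it suggests. The one point requiring care is to keep the two "projections" distinct --- the geometric section $\moduli^{\mathrm{mir}}\hookrightarrow\tilde{\moduli}^{\mathrm{mir}}$, along which we restrict and along which $\check q(\check Q)$ collapses to $\check q$, versus the linear map $H_\cpx^2(X,\tor)\to H^2(X,\cpx)$ denoted $[\ \cdot\ ]$ --- and to remember that $\tilde B_l|_{\moduli^{\mathrm{mir}}}$ means ``differentiate on $\tilde{\moduli}^{\mathrm{mir}}$, then restrict,'' which is what the computation does. A slicker, coordinate-free alternative is to note that $\tilde B_l$ and $B_l$ (for $l\le m-n$) are the push-forwards of $\dd{\log\check Q_l}$ and $\dd{\log\check q_l}$ under the extended mirror map and the mirror map, that the vertical arrows of the commutative square in Proposition~\ref{Prop compatible} carry $\dd{\log\check Q_l}$ to $\sum_k(D_l\cdot\Psi_k)\dd{\log\check q_k}$ on the source and to the projection $H_\cpx^2(X,\tor)\to H^2(X,\cpx)$ on the target, and then to invoke naturality of push-forward around the square, the case $l\le n$ following from the linear definition $B_l=\sum_k(D_l\cdot\Psi_k)B_{n+k}$.
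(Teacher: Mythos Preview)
Your computation is correct. The paper, however, does not carry out this explicit chain-rule expansion: its entire argument is the single sentence preceding the proposition, namely that $\tilde B_l$ and $B_{n+k}$ are by construction the push-forwards of the coordinate vector fields under the extended mirror map and the mirror map respectively, so the commutative square of Proposition~\ref{Prop compatible} (together with the linear definition $B_l=\sum_k(D_l\cdot\Psi_k)B_{n+k}$ for $l\le n$) immediately gives $[\tilde B_l|_{\moduli^{\mathrm{mir}}}]=B_l$. This is exactly the ``slicker, coordinate-free alternative'' you sketch in your final paragraph. Your direct computation has the virtue of making the role of the duality $\gamma=\sum_r(\gamma\cdot\Psi_r)[D_{n+r}]$ completely explicit and of verifying the identity without appealing to the push-forward interpretation, whereas the paper's route is shorter but relies on the reader unpacking what ``push-forward of $\partial/\partial\log\check Q_l$ along the commutative diagram'' means in coordinates.
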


It follows from the above discussions that Batyrev elements have a very simple form under the isomorphism $\QH^* (X,\omega_q) \cong \Jac(W^{\mathrm{HV}}_{\check{q}(q)})$:
\begin{prop} \label{prop Bat to}
Under the isomorphism $\QH^* (X,\omega_q) \cong \Jac(W^{\mathrm{HV}}_{\check{q}(q)})$ of Theorem \ref{Giv_thm2}, the Batyrev elements $B_l$ are mapped to $z^{v_l}$ for $l=1,\ldots,n$, and $\check{q}_{l-n}(q) z^{v_l}$ for $l=n+1,\ldots,m$. Equivalently, each $B_l$ is mapped to $(\exp g_l(\check{q}(q))) Z_l$ for $l=1,\ldots,m$ under $\QH^* (X,\omega_q) \cong \Jac(\tilde{W}^{\mathrm{HV}}_{\check{q}(q)})$, where $Z_l$ is defined by Equation \eqref{Eqn Z}.
\end{prop}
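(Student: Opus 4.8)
The plan is to deduce the statement directly from Theorem \ref{Giv_thm2} together with the explicit form of the mirror and inverse mirror maps. Recall that Theorem \ref{Giv_thm2} gives the ring isomorphism $QH^*(X,\omega_q)\cong \Jac(W^{\mathrm{HV}}_{\check q(q)})$ sending $[D_{n+k}]$ to $[\partial_{\log q_k} W^{\mathrm{HV}}_{\check q(q)}]$ for $k=1,\ldots,m-n$. So I must compute the image of each Batyrev element $B_l$ under this map. For $l=n+k$ with $k\in\{1,\ldots,m-n\}$, using the definition $B_{n+k}=\sum_{r=1}^{m-n}\frac{\partial\log q_r(\check q)}{\partial\log\check q_k}[D_{n+r}]$ and linearity, the image is $\sum_r \frac{\partial\log q_r}{\partial\log\check q_k}[\partial_{\log q_r} W^{\mathrm{HV}}_{\check q(q)}]$, which by the chain rule is precisely $[\partial_{\log\check q_k} W^{\mathrm{HV}}_{\check q}]$ evaluated after substituting $\check q=\check q(q)$ — i.e. the image is represented by $\hat{B}_{\check q}$ applied to $W^{\mathrm{HV}}$ in the intrinsic sense of Definition \ref{defn_diff_operator}. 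The first main step is therefore to observe that the diagram of Proposition \ref{Prop compatible} lets me pull the differentiation back to the mirror moduli side before substituting.

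Next I would compute $\partial_{\log\check q_k} W^{\mathrm{HV}}_{\check q}$ directly from Definition \ref{defn HV}. Since $W^{\mathrm{HV}}_{\check q}(z) = z_1+\cdots+z_n+\sum_{j=1}^{m-n}\check q_j z^{v_{n+j}}$ and the $z_p$ are the independent coordinates not depending on $\check q$, differentiating in $\log\check q_k$ simply yields $\check q_k z^{v_{n+k}}$. By Equation \eqref{Eqn Z} this is exactly $Z_{n+k}$ after the identification (with $\check q_k=\check q_k(q)$ understood, hence $\check q_{l-n}(q) z^{v_l}$ for $l=n+k$). This handles $B_l$ for $l=n+1,\ldots,m$. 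For $l=1,\ldots,n$, I use the relation $B_l=\sum_{k=1}^{m-n}(D_l\cdot\Psi_k)B_{n+k}$, so the image is $\sum_k (D_l\cdot\Psi_k)\check q_k z^{v_{n+k}}$; I then need to recognize this as $z^{v_l}=z_l$ in $\Jac(W^{\mathrm{HV}}_{\check q(q)})$. This is where Proposition \ref{Prop hat} (linear equivalence of divisors implies equality of the associated classes in the Jacobian ring) enters: the divisor $\sum_k (D_l\cdot\Psi_k)D_{n+k}$ is linearly equivalent to $D_l$ (indeed $[D_l]=\sum_k(D_l\cdot\Psi_k)[D_{n+k}]$ since $\{[D_{n+k}]\}$ is a basis dual to $\{\Psi_k\}$), so by Proposition \ref{Prop hat} the corresponding derivatives of $\mathscr{W}^{\mathrm{HV}}$ agree in the Jacobian ring; restricting to $\moduli^{\mathrm{mir}}$ and using $\hat{D}_{l,\check Q}\mathscr{W}^{\mathrm{HV}} = \check Q_l z^{v_l}\big|_{\check Q_l=1} = z_l$ gives the claim.

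For the second, equivalent statement, I would pass from $W^{\mathrm{HV}}$ to $\tilde W^{\mathrm{HV}}$ via the coordinate change $z_p\mapsto (\exp g_p(\check q))z_p$ displayed before Equation \eqref{eqn W tilde}. Under this substitution $z^{v_l}$ becomes $\prod_p(\exp g_p(\check q))^{(v_l,\nu_p)} z^{v_l} = (\exp g_l(\check q))\cdot(\text{monomial})$; matching monomials with Definition \ref{defn disc potential} shows $z^{v_l}$ (for $l\le n$) and $\check q_{l-n}z^{v_l}$ (for $l>n$) become $(\exp g_l(\check q(q)))Z_l$, since by construction $g^{\Psi_k}=\sum_l(D_l\cdot\Psi_k)g_l$ absorbs exactly the $q_k$ versus $\check q_k$ discrepancy (Equation \eqref{mirror_map_eqn}). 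I expect the main obstacle to be purely bookkeeping: keeping straight the three layers of coordinates ($Q/\check Q$ on the extended moduli, $q/\check q$ on the unextended moduli, and the $z_p$ on the fiber torus), and verifying that the exponents $(v_l,\nu_p)$ in the coordinate change conspire correctly so that the factor $\exp g_l$ comes out uniformly for every $l=1,\ldots,m$. No single step is deep; the care lies in applying Propositions \ref{Prop compatible}, \ref{Prop hat}, and the definition of the mirror map consistently.
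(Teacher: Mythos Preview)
Your proposal is correct and essentially follows the same route as the paper: both proofs recognize that under the isomorphism of Theorem~\ref{Giv_thm2} the Batyrev element $B_l$ corresponds to $[\partial_{\log\check q_l}\mathscr{W}^{\mathrm{HV}}]=[\check Q_l z^{v_l}]$ (after restriction to $\moduli^{\mathrm{mir}}$), and then apply the coordinate change \eqref{eqn W tilde} to obtain the $\tilde W^{\mathrm{HV}}$ version. The only organizational difference is that the paper handles all $l=1,\ldots,m$ uniformly by passing through the extended Batyrev elements $\tilde B_l$ (Definition~\ref{def:extB}) and Proposition~\ref{Prop [B]}, whereas you split into the cases $l>n$ (chain rule directly on the unextended moduli) and $l\le n$ (invoking Proposition~\ref{Prop hat}); the content is the same.
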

\begin{proof}
Associate $D_p \in H^2_\cpx(X,\tor)$ to the differential operator $\dd{\log Q_p}$ for $p=1,\ldots,m$.  Then $\tilde{B}_l$ is associated with
$\widehat{\tilde{B}_l} = \sum_{p=1}^m \frac{\partial \log Q_p}{\partial \log \check{Q}_l} \dd{\log Q_p} = \dd{\log \check{Q}_l}$.
Restricting to $\moduli^{\mathrm{mir}}$, by Proposition \ref{Prop [B]}, $\tilde{B}_l$ projects to $B_l \in H^2(X,\cpx)$.  By Proposition \ref{Prop hat},
$[\widehat{\tilde{B}_l} \mathscr{W}^{\mathrm{HV}}(\check{q}(q))] = [\widehat{B_l} W^{\mathrm{HV}}_{\check{q}(q)}] \in \Jac(W^{\mathrm{HV}}_{\check{q}(q)})$.
On the other hand,
$ \widehat{\tilde{B}_l} \mathscr{W}^{\mathrm{HV}}(\check{q}(q)) = \dd{\log \check{Q}_l} \sum_{p=1}^m \check{Q}_p z^{v_p} = \check{Q}_l z^{v_l}$.
Thus
$[\widehat{B_l} W^{\mathrm{HV}}_{\check{q}(q)}] = z^{v_l}$
for $l=1,\ldots,n$ and
$[\widehat{B_{n+k}} W^{\mathrm{HV}}_{\check{q}(q)}] = \check{q_k}(q) z^{v_{n+k}}$
for $k=1,\ldots,m-n$.
Under the coordinate change \eqref{eqn W tilde}, $z_l$ changes to $(\exp g_l(\check{q}(q))) z_l$ for $l=1,\ldots,n$, and for $l=n+1,\ldots,m$, $\check{q}_{l-n}(q) z^{v_l}$ changes to
\begin{align*}
\check{q}_{l-n}(q) \prod_{p=1}^n \big((\exp g_p(\check{q}(q))) z_p\big)^{\pairing{v_l}{\nu_p}} &= z^{v_l} q_{l-n} (\exp g^{\Psi_{l-n}}(\check{q}(q)))  \prod_{p=1}^n (\exp g_p(\check{q}(q))) \\
&= q_{l-n} (\exp g_l(\check{q}(q))) z^{v_l}.
\end{align*}
\end{proof}

\section{Seidel representations for toric manifolds} \label{Sect Seidel_rep}
In this section we review the construction and properties of the Seidel representation \cite{seidel97, McDuff_seidel}, which is an action\footnote{Here $\mathrm{Ham}(X,\omega)$ denotes the group of Hamiltonian diffeomorphisms of $(X,\omega)$.} of $\pi_1(\mathrm{Ham}(X,\omega))$ on $\QH^*(X,\omega)$, in the toric case. A key insight of this paper is that open GW invariants of a semi-Fano toric manifold $X$ are equal to some closed GW invariants of certain manifolds related to $X$ used to construct these representations, and so we call them the Seidel spaces:
\begin{defn}
Let $X$ be a manifold. Suppose that we have an action $\rho:\cpx^* \times X \to X$ of $\cpx^*$ on $X$.  The manifold
$$E = E_\rho := (X \times (\cpx^2 \setminus \{0\})) / \cpx^*$$
is called the {\em Seidel space} associated to the action $\rho$, where $z \in \cpx^*$ acts on the second factor $\cpx^2 \setminus \{0\} \ni (u,v)$ by $z \cdot (u,v) = (zu,zv)$. The Seidel space $E$ is an $X$-bundle over $\proj^1$ where the bundle map $(X \times (\cpx^2 \setminus \{0\})) / \cpx^* \to \proj^1$ is given by the projection to the second factor.
\end{defn}

Let  $X$ be a toric $n$-fold defined by a fan $\Sigma^X$ supported on the vector space $N\otimes_\mathbb{Z} \mathbb{R}$ where $N$ is a lattice. Each lattice point $v \in N$ produces a $\cpx^*$-action on $X$, which can be written as $t \cdot [a+\consti b] = [a + \consti b + v \log t]$ for $[a+\consti b] \in N_\C / (2\pi\consti N) \subset X$.

In particular, the minimal generator $v_j\in N$ of a ray of $\Sigma^X$ gives a $\cpx^*$-action and thus defines a corresponding Seidel space $E=E_j$.  It is a toric manifold of dimension $n+1$ whose fan $\Sigma^{E}$ has rays generated by $v_l^E = (0,v_l)$ for $l = 1, \ldots, m$, $v_0^E = (1,0)$ and $v_\infty^E = (-1,v_j)$.

On the other hand, we may use the opposite direction $-v_j \in N$ to generate a $\cpx^*$-action. The corresponding Seidel space will be denoted by $E^- = E_j^-$.  It is also a toric manifold of dimension $n+1$ whose fan $\Sigma^{E^-}$ has rays generated by $v_l^{E^-} = (0,v_l)$ for $l = 1, \ldots, m$, $v_0^{E^-} = (1,0)$ and $v_\infty^{E^-} = (-1,-v_j)$.

Since $E^-$ is a toric manifold,  $\pi_2(E^-,\tor^{E^-})$ is generated by the basic disc classes which are denoted as $b_l$, $l = 0, 1, \ldots, m, \infty$ (while recall that basic disc classes in $X$ are denoted as $\beta_l$ for $l = 1,\ldots,m$).  Moreover, the toric prime divisors of $E^-$ are denoted as $\mathscr{D}_0, \mathscr{D}_1, \ldots, \mathscr{D}_m, \mathscr{D}_\infty$ (while recall that toric prime divisors of $X$ are denoted as $D_l$ for $l = 1,\ldots,m$).
Viewing Seidel spaces as $X$-bundles over $\mathbb{P}^1$, one has the following specific sections of the Seidel spaces:

\begin{defn} \label{defn section}
Let $X$ be a toric manifold, and let $v_j\in N$ be the minimal generator of a ray in the fan of $X$. Let $E=E_j$ and $E^-=E_j^-$ be the Seidel spaces associated to $v_j$ and $-v_j$ respectively.  Under the $\cpx^*$-action generated by either $v_j$ or $-v_j$, there are finitely many fixed loci in $X$.  One of them is $D_j \subset X$, whose normal bundle is of rank one with weight $-1$ with respect to $v_j$ (or weight $1$ with respect to $-v_j$).  Each point $p \in D_j$ gives a section $\sigma=\sigma_j: \proj^1 \to E$ (resp. $\sigma^-=\sigma_j^-: \proj^1 \to E^-$) whose value is constantly $p \in D_j \subset X$.  It is called the {\em zero section} of $E$ (resp. $E^-$).

There is another unique fixed locus $S$ in $X$ whose normal bundle has all weights positive with respect to the $\cpx^*$-action of $v_j$ (or all negative with respect to $-v_j$).  Similarly each point $p \in S$ gives a section $\sigma_\infty: \proj^1 \to E$ (resp. $\sigma^-_\infty: \proj^1 \to E^-$) whose value is constantly $p \in S \subset X$, and it is called an {\em infinity section} of $E$ (resp. $E^-$).
\end{defn}

The various sections in the above definition are illustrated by Figure \ref{fig Seidel_sp} below, which depicts the Seidel spaces of $\proj^1$.
\begin{figure}[htp]
\begin{center}
\includegraphics[scale=0.8]{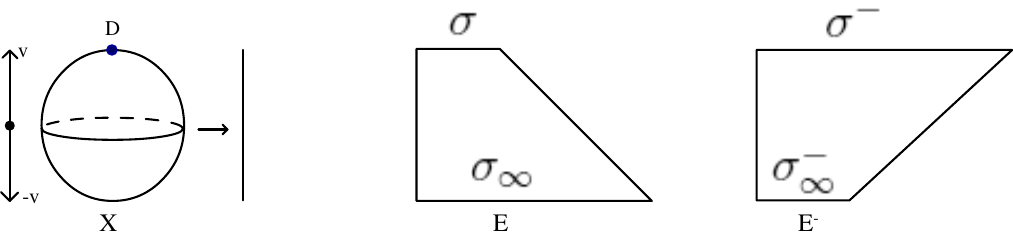}
\end{center}
\caption{This figure shows the Seidel spaces $E$ and $E^-$ associated to the two different torus actions on $\proj^1$ and their sections corresponding to the fixed points in $\proj^1$ under the action.  In this simple case both $E$ and $E^-$ are the Hirzebruch surface $\mathbb{F}_1$, while in general they are different manifolds.}
\label{fig Seidel_sp}
\end{figure}
By abuse of notation their classes in $H_2(E)$ and $H_2(E^-)$ are also denoted as $\sigma$, $\sigma_\infty$ and $\sigma^-$, $\sigma_\infty^-$ respectively.  Notice that under the $\cpx^*$-action generated by $v_j$, $D_j$ has negative weight (namely, $-1$); while under the $\cpx^*$-action generated by $-v_j$, the other fixed locus $S$ has negative weight.  Then using Lemma 2.2 of Gonz\'alez-Iritani \cite{G-I11}, all the curve classes of $E$ ($E^-$ resp.) are generated by $\sigma$ ($\sigma_\infty^-$ resp.) and curve classes of $X$:

\begin{prop}[Lemma 2.2 of \cite{G-I11}] \label{Prop sigma gen} We have
$$ H_2^{\eff}(E) = \integer_{\geq 0}[\sigma] + H_2^{\eff}(X), \quad H_2^{\eff}(E^-) = \integer_{\geq 0}[\sigma_\infty^-] + H_2^{\eff}(X),$$
where $H_2^{\eff}(E)$,  $H_2^{\eff}(E^-)$, and $H_2^{\eff}(X)$ denote the Mori cones of effective curve classes of $E$, $E^-$, and $X$ respectively. 
\end{prop}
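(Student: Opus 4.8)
The plan is to prove the two inclusions of monoids, treating $E=E_j$ and $E^-=E_j^-$ uniformly. Note first that $E^-$ is literally the Seidel space of the action generated by $-v_j$, and that $\sigma_\infty^-$ is, for $-v_j$, exactly what $\sigma$ is for $v_j$: the section through the fixed locus of \emph{negative} normal weight (for $v_j$ this is $D_j$, giving $\sigma$; for $-v_j$ it is $S$, giving $\sigma_\infty^-$). So it suffices to prove $H_2^{\eff}(E)=\mathbb{Z}_{\geq 0}[\sigma]+H_2^{\eff}(X)$. Write $\iota\colon X\hookrightarrow E$ for the inclusion of the fibre $\mathscr{D}_0\cong X$ over $0\in\mathbb{P}^1$. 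Since $E\to\mathbb{P}^1$ is a locally trivial fibre bundle admitting the holomorphic section $\sigma$, the Leray spectral sequence degenerates and $\sigma$ splits it: $H_2(E;\mathbb{Z})=\mathbb{Z}[\sigma]\oplus\iota_*H_2(X;\mathbb{Z})$, and in this decomposition $\mathscr{D}_0$ pairs as $(1,0)$ because a section meets a fibre once while a curve inside a fibre has vanishing normal degree. In particular $\pi_*$ reads off the first coordinate and $\iota_*$ is injective. The inclusion ``$\supseteq$'' is then immediate: $[\sigma]$ is effective since $\sigma$ is an honest holomorphic $\mathbb{P}^1$ (Definition \ref{defn section}), each $\iota_*\alpha$ with $\alpha\in H_2^{\eff}(X)$ is represented by a curve inside the fibre $\mathscr{D}_0$, and $H_2^{\eff}(E)$ is closed under addition.

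For ``$\subseteq$'' I would use that $E$ is a smooth projective toric variety, so that $H_2^{\eff}(E)$ consists of the lattice points of the Mori cone $\overline{NE}(E)$, which is generated by the classes of the torus-invariant curves --- one for each wall (codimension-one cone) of $\Sigma^E$. Using the fan $\Sigma^E$ recalled above (rays $v_l^E=(0,v_l)$, $v_0^E=(1,0)$, $v_\infty^E=(-1,v_j)$): each maximal cone of $\Sigma^E$ projects onto a maximal cone of $\Sigma^X$ and contains exactly one of $v_0^E,v_\infty^E$, so every wall falls into one of three families: (i) walls containing $v_0^E$, whose invariant curve lies in the fibre $\mathscr{D}_0\cong X$ and hence has class in $\iota_*H_2^{\eff}(X)$; (ii) walls containing $v_\infty^E$, whose curve lies in the opposite fibre $\mathscr{D}_\infty\cong X$; and (iii) walls $\{0\}\times\tau$ with $\tau$ a maximal cone of $\Sigma^X$, whose curve is a section of $E\to\mathbb{P}^1$ (of degree $1$ over the base). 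By the splitting above, a family-(iii) class has the form $[\sigma]+\iota_*\alpha_\tau$ for a unique $\alpha_\tau\in H_2(X)$, and a family-(ii) class has the form $\iota_*\beta+c[\sigma]$ for some $\beta\in H_2^{\eff}(X)$ and $c\in\mathbb{Z}$ (the term $c[\sigma]$ accounting for the twist of the bundle, which is read off from the linear relation $v_0^E+v_\infty^E=(0,v_j)$ among the rays). In each case one then pairs the class with the toric divisors $\mathscr{D}_0,\dots,\mathscr{D}_m,\mathscr{D}_\infty$ of $E$ and with $[\sigma]$ to compute the correction class explicitly, and checks that it pairs nonnegatively with all toric divisors $D_p$ of $X$, i.e.\ that it is effective in $X$, and that $c\geq 0$. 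Summing over the generating torus-invariant curves then gives $H_2^{\eff}(E)\subseteq\mathbb{Z}_{\geq 0}[\sigma]+\iota_*H_2^{\eff}(X)$.

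The part I expect to be the real obstacle is precisely the positivity of the correction classes in families (ii) and (iii): a priori $\alpha_\tau$ and $c$ are only constrained to lie in the cone dual to $\mathrm{Nef}(E)$, and one must rule out the ``wrong'' section, namely the infinity section $\sigma_\infty$ through the positively-weighted fixed locus $S$, which differs from $\sigma$ by a nonzero effective fibre class --- the familiar phenomenon that on a Hirzebruch surface $\mathbb{F}_n$ ($n\geq1$) the Mori cone is generated by the \emph{negative} section together with the fibre, not by the positive section. This is where the hypothesis that $\sigma$ (resp.\ $\sigma_\infty^-$) is the section through the negatively-weighted fixed locus is used: it forces $\sigma$ to be the section of least degree, so every other section exceeds it by an effective fibre class, and every curve of positive degree over $\mathbb{P}^1$ exceeds $d[\sigma]$ by an effective fibre class. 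Concretely this can be extracted by propagating the wall-crossing relations of $\Sigma^X$ outward from the cone $\tau_j$ that contains $v_j$, or --- perhaps more transparently --- by a Bialynicki--Birula / localization argument for the fibrewise $\mathbb{C}^*$-action on $E$ whose sink is the far fibre and whose source lies over $D_j$, so that every effective curve decomposes into a piece through the source and pieces in the fibres. Alternatively, since this is exactly the content of Lemma 2.2 of Gonz\'alez--Iritani \cite{G-I11}, one may simply invoke that result.
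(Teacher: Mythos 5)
The paper does not actually prove this proposition --- it is imported wholesale from Gonz\'alez--Iritani \cite{G-I11} (their Lemma 2.2), so the only ``proof'' in the paper is the citation you mention in your last sentence. Measured against that, your reduction of the two statements to one (both distinguished sections pass through the fixed locus whose normal weights are all negative), the splitting $H_2(E)=\integer[\sigma]\oplus\iota_*H_2(X)$, the inclusion $\supseteq$, and the use of the toric cone theorem to reduce $\subseteq$ to torus-invariant curves are all sound. One slip along the way: for family (ii) no correction term $c[\sigma]$ can appear --- a curve contained in the fibre $\mathscr{D}_\infty$ has degree zero over $\proj^1$ (it pairs to zero with $\mathscr{D}_0$), and the inclusions of the fibres over $0$ and $\infty$ are homotopic, so such a curve directly represents $\iota_*\beta$ with $\beta\in H_2^{\eff}(X)$. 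Thus the only nontrivial generators are the invariant sections of family (iii). A second, smaller point: the statement is about the monoid of effective classes, while the toric cone theorem controls the real Mori cone; it is cleaner to argue with an arbitrary irreducible curve (either it lies in a fibre, or it has degree $d\geq 1$ over $\proj^1$) rather than with lattice points of the cone.

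The genuine gap is exactly the step you flag yourself: showing that every invariant section class equals $[\sigma]+\iota_*\alpha_\tau$ with $\alpha_\tau$ \emph{effective} (equivalently, that the section through the all-negative-weight fixed locus is minimal, every other section exceeding it by an effective fibre class). This is not a routine verification --- it is precisely the content of Lemma 2.2 of \cite{G-I11}, whose proof runs a Bia{\l}ynicki--Birula--type degeneration of curves under the fibrewise $\cpx^*$-action (the second alternative you name) rather than a wall-by-wall toric computation. In your write-up this step is only gestured at (``propagating the wall-crossing relations'', ``a Bia{\l}ynicki--Birula / localization argument'') and then deferred to the cited lemma, so as a self-contained argument the proposal is incomplete at its own declared crux; and if one does simply invoke \cite{G-I11}, as the paper does, the toric scaffolding preceding it becomes unnecessary. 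To make your route genuinely independent you would need to carry out the positivity argument, e.g.\ by showing that for the invariant section $\sigma_\tau$ over the fixed point of a maximal cone $\tau$, the class $\sigma_\tau-\sigma$ pairs nonnegatively with every nef divisor of $X$, or by exhibiting an explicit chain of wall relations from $\tau$ to a cone containing $v_j$ in which only effective fibre classes are added; neither is done in the proposal.
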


The section class $\sigma^- \in H_2^\eff(E^-)$ can also be written as $\sigma^- = b_0 + b_\infty + b_j$, where we recall that $b_l$ for $l = 0, 1, \ldots, m, \infty$ are the basic disc classes of $E^-$.  It is the most important curve class to us as we shall see in the next section.  By Proposition \ref{Prop sigma gen}, it can also be written as
$$ \sigma^- = \sigma^-_{\infty} + \uf $$
for some curve class $\uf$ in $X$.

\begin{defn}[Seidel element]\label{defn:seidel_elt}
Given a $\cpx^*$-action on $X$, let $E$ be the corresponding Seidel space.  The {\em Seidel element} is
$$ S := \sum_{a} \sum_{d\in H_2^{ \mathrm{sec}}(E)} q^{d} \langle \Phi_a^E \rangle_{0,1,d} \, \Phi^a = q^{\sigma} \sum_{a} \sum_{\alpha \in H_2^{ \mathrm{eff}}(X)} q^{\alpha} \langle \Phi_a^E \rangle_{0,1,\sigma + \alpha} \, \Phi^a, $$
where $\{\Phi_a\}$ is a basis of $H^*(X)$ and $\{\Phi^a\}$ is the dual basis with respect to Poincar\'e pairing;  $\Phi_a^E \in H^*(E)$ denotes the push-forward of $\Phi_a$ under the inclusion of $X$ into $E$ as a fiber;  and
$ H_2^{\mathrm{sec}}(E) := \{\sigma + \alpha: \alpha \in H_2^{\eff}(X)\} \subset H_2^{\eff}(E)$,
where $\sigma$ is the section class of $E$ corresponding to the fixed locus in $X$ with all weights to be negative.
The {\em normalized Seidel element} is
$$ S^\circ := \sum_{a} \sum_{\alpha \in H_2^{ \mathrm{eff}}(X)} q^{\alpha} \langle \Phi_a^E \rangle_{0,1,\sigma + \alpha} \, \Phi^a \in \QH^*(X,\omega_{q}). $$
\end{defn}

Using degeneration arguments, Seidel \cite{seidel97} (in monotone case) and McDuff \cite{McDuff_seidel} proved that if $\rho_1,\rho_2$ are two commuting $\cpx^*$-actions and $\rho_3 = \rho_1 * \rho_2$ is the composition, then the corresponding Seidel elements $S_1,S_2,S_3$ satisfy the relation\footnote{Indeed they proved this in a more general situation in which the Seidel elements are generated by loops in $\pi_1 (\mathrm{Ham}(X,\omega))$.  Then every element in $\pi_1 (\mathrm{Ham}(X,\omega))$ gives a Seidel element which acts on $\QH^*(X)$ by quantum multiplication, and they showed that it defines an action of $\pi_1 (\mathrm{Ham}(X,\omega))$ on $\QH^*(X)$.}
$$ S_3 = S_1 * S_2,$$
(here $*$ denotes the quantum multiplication) under the following assignment of relations between Novikov variables of $E_1,E_2,E_3$.

The $\cpx^*$-actions $\rho_1,\rho_2$ define an $X$-bundle $\hat{E}$ over $\proj^1 \times \proj^1$:
\begin{equation} \label{eq:degfam}
\hat{E} := \big(X \times (\cpx^2 \setminus \{0\}) \times (\cpx^2 \setminus \{0\}) \big) / (\cpx^* \times \cpx^*)
\end{equation}
where $(\zeta_1,\zeta_2) \in \cpx^* \times \cpx^*$ acts by
$$(\zeta_1,\zeta_2) \cdot (x,(u_1,v_1),(u_2,v_2)) := (\rho_1(\zeta_1,\rho_2(\zeta_2,x)), (\zeta_1 u_1,\zeta_1 v_1), (\zeta_2 u_2,\zeta_2 v_2)).$$
$\hat{E}$ restricted to $\proj^1 \times \{[0,1]\}$ is the Seidel space $E_1$ associated to $\rho_1$; $\hat{E}$ restricted to $\{[1,0]\} \times \proj^1$ is the Seidel space $E_2$ associated to $\rho_2$; $\hat{E}$ restricted to the diagonal $\{(p_1,p_2) \in \proj^1 \times \proj^1: p_1 = p_2 \}$ is the Seidel space $E_3$ associated to the composition $\rho_3$.

Effective curves classes in $\hat{E}$ are generated by those of $E_1$ and $E_2$.  In particular, $\sigma_3$, the section class of $E_3$ which corresponds to the fixed locus in $X$ with negative weight under the action of $\rho_3$, when pushed forward to a curve class in $\hat{E}$, is of the form $d_1 + d_2$ for some $d_1 \in H_2^{\eff}(E_1)$ and $d_2 \in H_2^{\eff}(E_2)$.  Then assign the relation
$$q^{\sigma_3} := q^{d_1} q^{d_2}$$
between the Novikov variables $q^{d_1}$, $q^{d_2}$, $q^{\sigma_3}$ of $E_1,E_2$ and $E_3$ respectively.  The Novikov variables $q^\alpha$ of $E_i$, where $\alpha \in H_2^{\eff}(X)$, are set to equal for $i=1,2,3$.  Thus once the Novikov variables of $E_1$ and $E_2$ are fixed (with the requirement that $q^\alpha$ are the same for all $\alpha \in H_2^{\eff}(X)$), those of $E_3$ are automatically fixed.  In other words, $H_2^{\sec}(E_3)$ can be identified as $(H_2^{\sec}(E_1) \oplus H_2^{\sec}(E_2))/H_2(X)$, where $H_2(X)$ is embedded into $H_2^{\sec}(E_1) \oplus H_2^{\sec}(E_2)$ by $\alpha \mapsto (\alpha,-\alpha)$.

Returning to our case that $\rho_1$ is the action generated by $v_j$ and $\rho_2$ is the action generated by $-v_j$, their composition is the trivial action. We write $S_j$ and $S_j^-$ for the two Seidel elements. The Seidel element generated by the trivial action is simply $q^{\sigma_0}$, where $\sigma_0$ is a section of the trivial bundle $X \times \proj^1 \to \proj^1$.  The above assignment of Novikov variables gives
$$q^{\sigma_0} = q^{\sigma_{j} + \sigma_{j}^-}.$$
\begin{prop} \label{Cor Seidel}
One has the following equality for Seidel elements:
$$ S_j * S_j^- = q^{\sigma_{j} + \sigma_{j}^-}, $$
where the Novikov variables are regarded as elements in the (completed) group ring of $(H_2^{\sec}(E_1) \oplus H_2^{\sec}(E_2))/H_2(X)$.
\end{prop}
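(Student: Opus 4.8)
The plan is to obtain Proposition~\ref{Cor Seidel} as an immediate instance of the composition law for Seidel elements recalled above, applied to the pair of mutually inverse $\cpx^*$-actions generated by $v_j$ and $-v_j$. I would set $\rho_1$ to be the $\cpx^*$-action generated by $v_j$ and $\rho_2$ the one generated by $-v_j$; these are commuting one-parameter subgroups of the abelian torus $\tor_\cpx$, so the construction \eqref{eq:degfam} produces the $X$-bundle $\hat E$ over $\proj^1\times\proj^1$, and $\rho_3 = \rho_1 * \rho_2$ is generated by $v_j + (-v_j) = 0$, i.e.\ it is the trivial action. By the composition law of Seidel \cite{seidel97} and McDuff \cite{McDuff_seidel} this gives
$$ S_3 = S_1 * S_2 = S_j * S_j^- $$
in $QH^*(X,\omega)$, under the identification $H_2^{\sec}(E_3)\cong (H_2^{\sec}(E_1)\oplus H_2^{\sec}(E_2))/H_2(X)$ in which, as explained in the discussion preceding the proposition, the section class $\sigma_3$ of $E_3$ corresponds to $\sigma_j + \sigma_j^-$, so that $q^{\sigma_3} = q^{\sigma_j}q^{\sigma_j^-}$.

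It then remains to evaluate the left-hand side $S_3$, the Seidel element of the trivial action. Here $E_3$ is the trivial bundle $X\times\proj^1$ and $\sigma_3 = \sigma_0$ is the class of a constant section $\{\mathrm{pt}\}\times\proj^1$, and I claim $S_3 = q^{\sigma_0}\cdot\mathbf{1}$. The leading term is immediate: the moduli space of stable maps to $E_3$ in the section class $\sigma_0$ is $X\times\proj^1$ (parametrizing the point $x\in X$ and the location of the marked point on the section $\{x\}\times\proj^1$), with evaluation map to $E_3$ of degree one, so that $\langle\phi_a^{E_3}\rangle_{0,1,\sigma_0} = \int_X\phi_a$, which is nonzero only for $\phi_a = [\mathrm{pt}]^\vee$, whose dual is $\phi^a = \mathbf{1}$; this yields the term $q^{\sigma_0}\cdot\mathbf{1}$. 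For the remaining classes $\sigma_0 + \alpha$ with $\alpha\in H_2^{\eff}(X)\setminus\{0\}$ (necessarily with $c_1(\alpha) = 0$, since $X$ is semi-Fano), a dimension count on $E_3 = X\times\proj^1$ again forces $\phi_a = [\mathrm{pt}]^\vee$, and the vanishing of the resulting invariant $\langle[\mathrm{pt}]^\vee_{E_3}\rangle_{0,1,\sigma_0+\alpha}$ is the statement that a trivial Hamiltonian action contributes no quantum corrections to its Seidel element -- a standard fact (see \cite{seidel97, McDuff_seidel}), which one may also verify directly by localizing with respect to the fibrewise $\cpx^*$-action on $X\times\proj^1$.

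Combining the two steps, $S_j * S_j^- = S_3 = q^{\sigma_0}\cdot\mathbf{1} = q^{\sigma_j + \sigma_j^-}\cdot\mathbf{1}$, which is the asserted identity, the Novikov scalars being read in the completed group ring of $(H_2^{\sec}(E_1)\oplus H_2^{\sec}(E_2))/H_2(X)$. The only portion of the argument with content beyond this bookkeeping is the identification $S_3 = q^{\sigma_0}\cdot\mathbf{1}$ for the trivial action, i.e.\ the vanishing of the higher section-class invariants $\langle\phi_a^{E_3}\rangle_{0,1,\sigma_0+\alpha}$ for $\alpha\neq 0$; I expect this to be the main (and essentially the only) obstacle, and it is handled either by citing the corresponding computation from the foundational references or by the short localization argument indicated above, after which everything else follows by direct translation through the composition law.
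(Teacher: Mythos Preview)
Your proposal is correct and follows exactly the approach the paper takes: the proposition is stated in the paper immediately after the discussion of the composition law, and that discussion \emph{is} the proof---one specializes $\rho_1,\rho_2$ to the actions generated by $v_j,-v_j$, observes that $\rho_3$ is trivial so that $E_3=X\times\proj^1$ and $S_3=q^{\sigma_0}$, and uses the Novikov identification $q^{\sigma_0}=q^{\sigma_j+\sigma_j^-}$. The paper simply asserts that the Seidel element of the trivial action is $q^{\sigma_0}$ as a known fact without further justification, whereas you go further and sketch why (leading term via the moduli being $X\times\proj^1$, vanishing of higher corrections by dimension count plus citation/localization); this extra care is fine but not something the paper itself provides.
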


For the purpose of computing open GW invariants, we need the following definition:
\begin{defn}\label{defn:reg}
Let $Y$ be a K\"ahler manifold and $C_1,\ldots,C_k$ be complex analytic cycles in $Y$.  Denote by $\moduli_{0,k,d}^Y(C_1,\ldots,C_k)$ the fiber product of the (compactified) moduli space $\moduli^Y_{0,k,d}$ of stable maps in class $d$ with the cycles $C_1,\ldots,C_k$ by the evaluation maps.  Given an effective curve class $d_0$, let $\moduli^{Y, d_0 \, \reg}_{0,k,d}(C_1,\ldots,C_k)$ denote the union of those connected components of $\moduli_{0,k,d}^Y(C_1,\ldots,C_k)$ which contain a stable map with a holomorphic sphere component representing $d_0$. (Note that it can simply be an empty set, for instance, when $d - d_0$ is not effective.) Then define $\langle C_1, \ldots, C_k \rangle^{Y, d_0 \, \reg}_{0,k,d}$ to be the integration of $1$ over the virtual fundamental class associated to $\moduli^{Y, d_0 \, \reg}_{0,k,d}(C_1,\ldots,C_k)$.
\end{defn}

Intuitively $\langle C_1, \ldots, C_k \rangle^{Y, d_0 \, \reg}_{0,k,d}$ counts those genus-zero stable maps in the class $d$ passing through $C_1, \ldots, C_k$ which have a sphere component in the class $d_0$.
Notice that the above definition of $\langle C_1, \ldots, C_k \rangle^{Y, d_0 \, \reg}_{0,k,d}$ depends on the actual cycles rather than just the homology classes of $C_1, \ldots, C_k$.

From now on we denote by $D^{E^-}$ the push-forward of the toric prime divisor $D \subset X$ to the fiber $\cD_0$ of $E^- \to \bP^1$ at $0 \in \bP^1$.
The fiber of $E^- \to \bP^1$ at $\infty \in \bP^1$ is denoted as $\cD_\infty$.  We will apply the above definition to the moduli space $\moduli^{E^-,\sigma^- \, \reg}_{0,2,\sigma^-+\alpha}(D^{E^-}, \pt)$ where $\alpha \in H_2^{\eff,c_1=0}(X)$.  It is the connected component of $\moduli^{E^-}_{0,2,\sigma^-+\alpha}(D^{E^-}, \pt)$ which contains a rational curve with one holomorphic sphere component representing $\sigma^-$.  Then we have the
\begin{defn}[$\sigma^-$-regular GW invariants]\label{def:regGW}
$\langle D^{{E^-}}, [\pt]_{{E^-}} \rangle^{{E^-,\sigma^- \, \reg}}_{0,2,\sigma^-+\alpha}$ is defined as integration of $1$ over the virtual fundamental class of $\moduli^{E^-,\sigma^- \, \reg}_{0,2,\sigma^-+\alpha}(D^{{E^-}}, \pt)$.
\end{defn}

The following lemma, which will be useful later, says that every curve in $\moduli^{E^-,\sigma^- \, \reg}_{0,2,\sigma^-+\alpha}(D^{E^-}, \pt)$, where $\alpha \in H_2^{\eff,c_1=0}(X)$, has precisely one holomorphic sphere component representing $\sigma^-$:
\begin{lem} \label{lem:reg=>unique}
Let $\pt$ be a point in the open toric orbit of $E^-$.  Every rational curve in the moduli space $\moduli^{E^-,\sigma^- \, \reg}_{0,2,\sigma^-+\alpha}(D^{{E^-}}, \pt)$, where $\alpha \in H_2^{\eff,c_1=0}(X)$, consists of the unique holomorphic sphere component representing $\sigma^-$ passing through $\pt$ and $\cD_0 \cap \cD_j$, and some other components supported in $\cD_0$ representing $\alpha$.
\end{lem}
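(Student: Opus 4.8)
The plan is to analyze the possible configurations of a stable map $f\colon C\to E^-$ in the class $\sigma^-+\alpha$ with $\alpha\in H_2^{\eff,c_1=0}(X)$, using the bundle projection $\pi\colon E^-\to\proj^1$ together with the constraints coming from the two marked points. First I would observe that, since $C$ has genus $0$, the composite $\pi\circ f$ is either constant or a branched cover of $\proj^1$; but $\sigma^-$ is a section class, so $(\sigma^-+\alpha)\cdot[\text{fiber}]=1$, which forces $\pi\circ f$ to have degree exactly one onto $\proj^1$ on a single irreducible component $C_0$ of $C$, while every other component is contracted by $\pi$ and hence lies in a single fiber of $E^-\to\proj^1$. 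Thus $f|_{C_0}$ is a section over $\proj^1$ — its class is some section class $\sigma^-+\alpha_0$ — and the remaining components carry the rest of $\alpha$, each of them being a rational curve inside a fiber $\pi^{-1}(t)\cong X$.

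Next I would use the toric geometry of $E^-$ described in Section \ref{Sect Seidel_rep}, specifically Proposition \ref{Prop sigma gen}, which gives $H_2^{\eff}(E^-)=\integer_{\geq 0}[\sigma_\infty^-]+H_2^{\eff}(X)$, together with the fact that $\sigma^-=\sigma_\infty^-+\uf$ for a curve class $\uf$ in $X$; and the Chern-number bookkeeping: $c_1(E^-)$ restricted to fiber classes is $c_1(X)$, which vanishes on $\alpha$, and $c_1(E^-)\cdot\sigma^->0$ while $c_1(E^-)\cdot\sigma_\infty^-$ can be computed from the fan of $E^-$. Since we are in the component $\moduli^{E^-,\sigma^-\,\reg}$, by definition there is at least one sphere component $C_0'$ representing exactly $\sigma^-$; I would show $C_0'=C_0$ (i.e. the section component is in class $\sigma^-$ itself, so $\alpha_0=0$) by a positivity argument: any decomposition $\sigma^-=\sigma^-_{\text{(section part)}}+(\text{fiber classes})$ with the section part still a legitimate curve class effective in $E^-$ must, by Proposition \ref{Prop sigma gen} and the semi-Fano/nef structure of $E^-$ (which is semi-Fano, as noted in the outline of methods), have the fiber classes contributing non-negative Chern number, while $c_1(\alpha)=0$ pins things down so that the section component carries all of $\sigma^-$ and none of $\alpha$. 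The bubble components then represent $\alpha$ and, having $c_1=0$ in $X$, are supported on the toric divisors of the fiber.

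Then I would locate everything using the marked-point constraints. The interior marked point mapping to the generic point $\pt$ in the open toric orbit of $E^-$ must lie on the section component $C_0$: any fiber-supported bubble lies in $\bigcup_l \cD_l$, hence cannot pass through a point of the open orbit, and $C_0$ is the only component surjecting onto $\proj^1$, so in particular $C_0$ meets the fiber $\pi^{-1}(\pi(\pt))$ over the orbit, forcing the $\pt$-marked point onto $C_0$. The point $\pt$ and the class $\sigma^-$ then determine $C_0$ uniquely, since a section in the class $\sigma^-=b_0+b_\infty+b_j$ through a fixed generic point is the unique toric curve of that class through that point; I would check this using the standard description of holomorphic spheres/discs in toric manifolds (Cho--Oh) and that $\sigma^-$ is represented by the zero section of $E^-\to\proj^1$, which passes through $\cD_0\cap\cD_j$. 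Finally, the $\cD_0$-marked point (the divisor insertion $D^{E^-}$) forces the bubble tree representing $\alpha$ to be attached along $\cD_0$, and since $C_0$ meets $\cD_0$ in the single point $\cD_0\cap\cD_j$, every bubble component is supported in $\cD_0$ and the configuration is exactly as claimed.

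\emph{Main obstacle.} The delicate step is showing that the \emph{section} component is forced to be in class $\sigma^-$ exactly (not $\sigma^-$ plus some $c_1=0$ curve in $X$, with the honest-$\sigma^-$ sphere demanded by ``$\sigma^-\,\reg$'' sitting elsewhere as a fiber bubble): a priori the definition of $\moduli^{E^-,\sigma^-\,\reg}$ only guarantees \emph{some} sphere component in class $\sigma^-$, and one must rule out the section being $\sigma^-+\alpha_1$ with a separate $\sigma^-$-bubble — but this is impossible for degree reasons, since a $\sigma^-$ bubble is not a fiber class (it has fiber intersection $1$), so the only component that can carry a $\sigma^-$ summand is the unique non-contracted component $C_0$. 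Making this rigorous requires carefully combining the fiber-degree count with Proposition \ref{Prop sigma gen} and the stability of the map, which is where most of the work lies.
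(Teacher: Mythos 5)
There is a genuine gap at exactly the step you flag as the ``main obstacle,'' and the fix you propose does not close it. Definition \ref{defn:reg} only says that the $\sigma^-$-regular moduli is the union of those \emph{connected components} containing \emph{at least one} stable map that has a sphere component in class $\sigma^-$; it does not say that every map in these components has such a sphere --- that is precisely what the lemma asserts. So when you write ``by definition there is at least one sphere component $C_0'$ representing exactly $\sigma^-$'' for an arbitrary curve in the moduli, you are assuming the conclusion. Your fiber-degree count correctly shows that any stable map in class $\sigma^-+\alpha$ has a unique non-contracted component $C_0$, a section, with all other components in fibers; but a priori the class of $C_0$ is only of the form $\sigma^-_\infty+\gamma$, e.g.\ $\sigma^-+\alpha_1$ with $\alpha_1\neq 0$, $c_1(\alpha_1)=0$ and $\alpha-\alpha_1$ effective, with \emph{no} $\sigma^-$-sphere anywhere in the configuration, and nothing in your argument excludes this. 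The positivity argument cannot: first, $E^-$ is \emph{not} semi-Fano (the paper stresses this --- it is $E$, for the action by $+v_j$, that is semi-Fano; $c_1(\sigma^-_\infty)$ may be negative), and second, Chern-number bookkeeping is blind to moving a $c_1=0$ fiber class onto the section component. Your ``degree reasons'' remark only rules out a curve carrying both a $\sigma^-+\alpha_1$ section and a separate $\sigma^-$ bubble, which was never the dangerous configuration. Note also that your claim that fiber bubbles miss the generic point $\pt$ presupposes that all bubbles have $c_1=0$ (Lemma \ref{Lem alpha}), which again presupposes that the section already carries the full class $\sigma^-$; so that step is circular as written.

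What the paper does instead, and what is missing from your proposal, is a rigidity-plus-connectedness argument that actually uses the ``regular'' restriction: for the one curve guaranteed by Definition \ref{defn:reg}, the $\sigma^-$-sphere must pass through $\pt$ (its $c_1=0$ companions lie in the toric divisors and miss the generic point) and through $\cD_0\cap\cD_j$ (either it meets $D^{E^-}$ itself, or connectedness to the divisor-supported bubbles forces it to meet $\bigcup_l \cD_0\cap\cD_l$, hence $\cD_0\cap\cD_j$ since $\cD_l\cdot\sigma^-=\delta_{lj}$); the curve in class $\sigma^-$ through $\pt$ and $\cD_0\cap\cD_j$ is unique and undeformable, so every other curve in the same connected component, being a deformation of this one, contains the \emph{same} sphere $C$; only then are the remaining components forced to represent $\alpha$, lie in $\bigcup_l\cD_l$, and attach to $C$ at its single intersection point with $\bigcup_l\cD_l$, which lies in $\cD_0$. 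Without some version of this deformation/rigidity step, the restriction to $\sigma^-$-regular components plays no role in your argument, yet it is essential: the asserted description cannot be expected for the full moduli space $\moduli^{E^-}_{0,2,\sigma^-+\alpha}(D^{E^-},\pt)$.
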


\begin{proof}
$\moduli^{E^-,\sigma^- \, \reg}_{0,2,\sigma^-+\alpha}(D^{{E^-}}, \pt)$ contains a rational curve which consists of a holomorphic sphere $C$ representing $\sigma^-$ and some other components representing $\alpha$.  Since $c_1(\alpha)=0$, the components representing $\alpha$ never pass through the generic point $\pt$ in the open toric orbit.  Thus $C$ has to pass through $\pt$.  Moreover either the holomorphic sphere $C$ passes through $D^{{E^-}} \subset \cD_0$, or the components representing $\alpha$ pass through $D^{{E^-}}$, which implies these components representing $\alpha$ are contained in $\cup_{l=1}^m \cD_0 \cap \cD_l$ and hence $C$ intersects $\cup_{l=1}^m \cD_0 \cap \cD_l$ (so that the whole curve is connected).  In both cases $C$ intersects $\cup_{l=1}^m \cD_0 \cap \cD_l$, implying that it intersects $\cD_0 \cap \cD_j$ since it represents $\sigma^-$.

Such a curve $C$ in class $\sigma^-$ passing through both $\pt$ and $\cD_0 \cap \cD_j$ is unique and not deformable.  Moreover, the nodal intersection between $C$ and $C'$ is not smoothable because of the following.  Suppose we can smooth out the nodal intersection.  Then we obtain a holomorphic sphere $\tilde{C}$ which passes through $\pt$ in the open toric orbit and represents $\sigma^- +\alpha'$, where $\alpha' \not= 0 \in H_2^{\eff, c_1=0}(X)$ since $X$ is semi-Fano.  The class $[\tilde{C}]$ is a non-negative linear combination of the basic disc classes $\beta_p$'s.  Now $c_1(\tilde{C}) = 3$, and $\tilde{C} \cdot \cD_0 = \tilde{C} \cdot \cD_\infty = 1$ (because $\alpha' \cdot \cD_0 = \alpha \cdot \cD_\infty = 0$).  This forces $\tilde{C} \cdot \cD_j = 1$ and $\tilde{C} \cdot \cD_p = 0$ for all $p \not= 0, \infty, j$.  Thus $\tilde{C}$ lies in the class $\sigma^-$, and so $\alpha' = 0$, a contradiction.

Thus if we consider another curve in the connected component $\moduli^{E^-,\sigma^- \, \reg}_{0,2,\sigma^-+\alpha}(D^{{E^-}}, \pt)$ which comes from a deformation of the curve $C$, it must have the same sphere component $C$.  Thus a rational curve in the moduli consists of $C$ union with a rational curve $C'$ representing $\alpha$.  Since $c_1(\alpha)=0$ and $\alpha$ is a fiber class, $C'$ must be supported in $\cup_{l=1}^m \cD_l$, see Lemma \ref{Lem alpha} below.  The sphere $C$ intersects $\cup_{l=1}^m \cD_l$ at exactly one point in $\cD_0$.  By connectedness of the rational curve $C'$ must be supported in $\cD_0$.
\end{proof}

\section{Relating open and closed invariants} \label{Sect open_closed}

Open GW invariants are difficult to compute in general because there are highly nontrivial obstructions to the moduli problems and, in contrast to closed GW theory, localization and degeneration formulas cannot be applied. In \cite{Chan10,LLW10}, under some strong restrictions on the geometry of the toric manifold $X$, it was shown that open GW invariants could be equated with certain closed GW invariants of $X$ (or certain toric compactifications of $X$ when $X$ is non-compact).  This gives an effective way to compute open GW invariants because closed GW invariants can be computed by various techniques.

However, for an arbitrary toric manifold $X$, the geometric technique in \cite{Chan10,LLW10} fails, and searching for spaces whose closed GW invariants correspond to open GW invariants of $X$ becomes much more difficult. An exciting discovery in this paper is that Seidel spaces associated to $X$, which are one dimensional higher than $X$, are indeed what we need in order to have such an open-closed comparison. Moreover it works for \emph{all} semi-Fano toric manifolds:

\begin{thm} \label{Thm open-closed}
Let $X$ be a semi-Fano toric manifold and $\beta \in \pi_2(X,\tor^X)$ a disc class of Maslov index 2 bounded by a Lagrangian torus fiber $\tor^X \subset X$.  Then $\beta$ must be of the form $\beta_j + \alpha$ for some basic disc class $\beta_j$ of $X$ ($j=1,\ldots,m$) and $\alpha \in H^{\eff}_2(X)$ with $c_1(\alpha) = 0$.

Let $v_j = \partial\beta_j \in N$ be the minimal generator of the corresponding ray in the fan of $X$. Let ${E_j^-}$ be the Seidel space corresponding to the $\cpx^*$-action generated by $-v_j = -\partial \beta_j$, and denote by $\tor^{{E_j^-}}$ a Lagrangian torus fiber of ${E_j^-}$.  Any class $a \in H^*(X)$ can be pushed forward (via Poincar\'e duality) by the inclusion $X \hookrightarrow {E_j^-}$ of $X$ as a fiber to give a class in $H^*(E_j^-)$, and it is denoted as $a^{E_j^-}$.

Let $v_i$ be a minimal generator and denote the corresponding toric prime divisor by $D=D_i$.  When $v_i \not\in F(v_j)$ or $D_l \cdot \alpha \not= 0$ for some $v_l \not\in F(v_j)$, where $F(v_j)$ is the minimal face of the fan polytope containing $v_j$, $n^X_{1,1}(\beta; D, [\pt]_{\tor^X})=0$.  Otherwise
$$ n^X_{1,1}(\beta; D, [\pt]_{\tor^X}) = \langle D^{{E_j^-}}, [\pt]_{{E_j^-}} \rangle^{E_j^-,\,\sigma_j^-\,\reg}_{0,2,\sigma_j^-+\alpha^{E_j^-}}$$
where $\sigma_j^-\in H_2({E_j^-})$ is the zero section class of ${E_j^-}$ (see Definition \ref{defn section}), $[\pt]_{\tor^X} \in H^n(\tor^X)$ is a point class of $\tor^X$ and $[\pt]_{{E_j^-}} \in H^{2n}({E_j^-})$ is a point class of ${E_j^-}$.  The $\sigma_j^-$-regular Gromov-Witten invariant on the right-hand side is defined in Definition \ref{def:regGW}.
\end{thm}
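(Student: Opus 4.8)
The plan is to prove the identity by setting up an explicit bijection between the moduli space $\moduli^{\mathrm{op}}_{1,1}(\beta_j+\alpha;D_i,[\pt]_{\tor^X})$ defining the open invariant and the moduli space $\moduli^{E^-,\sigma^-\,\reg}_{0,2,\sigma^-+\alpha^{E^-}}(D^{E^-},[\pt]_{E^-})$ defining the $\sigma^-$-regular closed invariant, and then promoting it to an identification of Kuranishi structures. The set-up is favorable because both moduli problems are rigidly structured: by Lemma \ref{Lem Maslov-two stable disc} every stable disc in class $\beta_j+\alpha$ is a holomorphic disc $\Delta$ in class $\beta_j$ glued at one interior node to a rational curve $C$ in class $\alpha$, and by Lemma \ref{lem:reg=>unique} every element of the $\sigma^-$-regular moduli is the unique rigid holomorphic section $C_0$ of $E^-\to\proj^1$ in class $\sigma^-$ glued to a rational curve $C'$ supported in the fiber $\cD_0\cong X$ in class $\alpha$. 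The bridge between the two descriptions is a ``capping off'' of the disc $\Delta$ into the section $C_0$, as in Figure \ref{fig disc to Seidel}.

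First I would treat the two vanishing cases. If $v_i\notin F(v_j)$, then $D_i\cdot(\beta_j+\alpha)=0$ by Proposition \ref{prop:Cin}, so the divisor equation (Theorem \ref{thm div eq open}) gives $n^X_{1,1}(\beta;D_i,[\pt]_{\tor^X})=(D_i\cdot\beta)\,n_1(\beta)=0$. If instead $D_l\cdot\alpha\neq 0$ for some $v_l\notin F(v_j)$, I would show $\moduli^{\mathrm{op}}_{1,1}(\beta_j+\alpha;D_i)=\emptyset$: the unique point where the basic disc $\Delta$ meets the toric boundary lies in the open orbit of $D_j$, so for the stable disc to be connected the tail $C$ must pass through that orbit, and since $c_1(\alpha)=0$ confines $C$ to the toric boundary, its support lies in the strata indexed by the face $F(v_j)$; hence no such $\alpha$ occurs and the invariant vanishes.

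For the main case, assume $v_i\in F(v_j)$ and $D_l\cdot\alpha=0$ for every $v_l\notin F(v_j)$, and realize $X$ as the fiber $\cD_0$ of $E^-\to\proj^1$ over $0$. Given a stable disc $\Delta\cup C$, the disc $\Delta$ in class $\beta_j$ is given by the explicit Cho--Oh description; flowing it by the $\cpx^*$-action generated by $-v_j$ and compactifying over $\proj^1$ yields a holomorphic section $C_0$ of $E^-$ in the class $b_0+b_\infty+b_j=\sigma^-$, uniquely determined by $\Delta$. The Lagrangian boundary condition ``$\partial\Delta$ passes through a fixed point of $\tor^X$'' becomes the incidence condition ``$C_0$ passes through a fixed point $[\pt]_{E^-}$ in the open orbit of $E^-$''; the tail $C\subset X$ is carried verbatim into $\cD_0\subset E^-$; the interior marked point constrained to $D_i$ becomes the marked point constrained to $D^{E^-}$; and the node is preserved. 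Conversely, Lemma \ref{lem:reg=>unique} shows that every element of $\moduli^{E^-,\sigma^-\,\reg}_{0,2,\sigma^-+\alpha}(D^{E^-},[\pt]_{E^-})$ is of precisely this shape, and restricting $C_0$ to the hemisphere over $0$ recovers $\Delta$. This produces a homeomorphism of the two moduli spaces, compatible with the evaluation maps used to impose the $D$- and $[\pt]$-constraints.

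Finally I would verify that this bijection identifies the virtual fundamental classes, and this is where I expect the real work to lie. The rational tail contributes the same on both sides: since $\cD_0\cdot\alpha=0$, the bundle $N_{\cD_0/E^-}|_C$ is trivial, so the deformation--obstruction complex of $C$ in $E^-$ agrees with that of $C$ in $X$, the lone extra direction (moving $C$ off $\cD_0$) being killed by the constraint that $C$ stay attached to the rigid section $C_0$; and the obstruction bundles from sphere bubbling within $\alpha$ are literally the same. For the disc-versus-section part, the basic disc is unobstructed with $n_1(\beta_j)=1$ and its capped-off section is regular and rigid (Lemma \ref{lem:reg=>unique}), and I would match the linearized $\dbar$-operators once the interchange of the boundary marked point with the interior incidence condition is accounted for. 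The main obstacle is exactly this: proving that the Fukaya--Oh--Ohta--Ono Kuranishi structure on the disc moduli and the Gromov--Witten Kuranishi structure on the $\sigma^-$-regular moduli are identified \emph{on the nose} by the capping-off map --- i.e.\ that pushing the disc into the Seidel space alters neither the deformation nor the obstruction spaces --- which requires a careful local model near the node and near $\cD_0\cap\cD_j$, and is where the semi-Fano hypothesis and the explicit toric geometry of $E^-$ (its Mori cone, Proposition \ref{Prop sigma gen}, and the position of $F(v_j)$) are used.
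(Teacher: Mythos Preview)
Your approach is correct in spirit and follows the same geometric ``capping-off'' idea as the paper, but the paper organizes the argument differently: it factors the comparison through an \emph{intermediate open invariant of $E^-$}. Concretely, the paper first proves
\[
n^X_{1,1}(\beta_j+\alpha;D,[\pt]_{\tor^X}) \;=\; n^{E^-}_{1,1}(b_0+b_j+\alpha;D^{E^-},[\pt]_{\tor^{E^-}})
\]
by identifying the two open moduli spaces as Kuranishi spaces (Proposition \ref{prop open mod compare}), and then proves
\[
n^{E^-}_{1,1}(b_0+b_j+\alpha;D^{E^-},[\pt]_{\tor^{E^-}}) \;=\; \langle D^{E^-},[\pt]_{E^-}\rangle^{E^-,\,\sigma^-\,\reg}_{0,2,\sigma^-+\alpha}
\]
by identifying the open moduli in $E^-$ with the closed $\sigma^-$-regular moduli (Proposition \ref{Prop open-closed mod}). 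Your one-step map composes these.

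What the factorization buys: in the first step both sides are disc moduli with a single unobstructed disc component, so the Kuranishi comparison reduces cleanly to matching obstruction spaces on the rational tail (this is where the normal-bundle argument you sketch appears). In the second step both sides live in $E^-$ and the passage disc $\to$ sphere is literal analytic continuation of the explicit map $z\mapsto((z,1,\dots,1),z)$. The price of the factorization is a new technical lemma you did not anticipate: because $b_0+b_j$ has Maslov index $4$ and $E^-$ need not be semi-Fano, one must separately prove (Lemma \ref{Lem Maslov 4}) that $\moduli^{\mathrm{op}}_{1,1}(b_0+b_j+\alpha;D^{E^-})$ has no codimension-one boundary and that every stable disc in it has exactly one disc component in class $b_0+b_j$. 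In your direct approach this lemma is implicitly needed to rule out exotic configurations on the closed side being matched with something unexpected, but it is hidden. Also, your description ``flowing by the $\cpx^*$-action'' is imprecise; the paper's construction is simply to embed the basic disc $z\mapsto(z,1,\dots,1)$ in $X$ as the disc $z\mapsto((z,1,\dots,1),z)$ in $E^-$ and then analytically continue across $|z|=1$.
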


\begin{rmk}
\hfill
\begin{enumerate}
\item
As suggested by a referee, the equality in the above theorem should hold true for all $i$, $j$ and $\alpha$ because the regular GW invariant in the right hand side also vanishes if $v_i \notin F(v_j)$ or $D_l \cdot \alpha \neq 0$ for some $v_l \notin F(v_j)$, but the current statement suffices for the purposes of this paper.

\item
In this paper we consider open GW invariants defined using Kuranishi structures. However we would like to point out that the above formula in Theorem \ref{Thm open-closed} remains valid whenever reasonable analytic structures are put on the moduli spaces to define GW invariants. This is because the way we compare moduli spaces of stable discs and maps, as detailed in the proofs of Propositions \ref{prop open mod compare} and \ref{Prop open-closed mod}, is geometric in nature and it identifies the deformation and obstruction theories of the two moduli problems on the nose.
\end{enumerate}
\end{rmk}

The statement that a stable disc class of Maslov index 2 bounded by $\tor^X$ is of the form $\beta_j + \alpha$ was proved by Cho-Oh \cite{cho06} and Fukaya-Oh-Ohta-Ono \cite{FOOO1}, and it is recalled in Lemma \ref{Lem Maslov-two stable disc}. We also need the following lemma about curves in the Seidel space ${E_j^-}$ representing $\alpha$:

\begin{lem} \label{Lem alpha}
Assume the setting as in Theorem \ref{Thm open-closed}.  Let $C \subset {E_j^-}$ be a rational curve representing a fiber class (i.e. a class in $H_2^{\eff}(X)$) of ${E_j^-} \to \proj^1$ with $c_1(C) \leq 1$.  Then $C \subset \bigcup_{l=1}^m \mathscr{D}_l$.
\end{lem}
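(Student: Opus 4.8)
The plan is to reduce everything to the following elementary toric fact, which I would establish first: \emph{if $Y$ is a smooth complete toric variety with toric prime divisors $E_1,\dots,E_r$ and $f\colon\proj^1\to Y$ is a non-constant morphism whose image is not contained in $\partial Y:=\bigcup_k E_k$, then $\deg f^*(-K_Y)\geq 2$.} To see this, note that since $f(\proj^1)\not\subseteq E_k$ for any $k$, each $f^*E_k$ is a genuine effective divisor on $\proj^1$; using $-K_Y=\sum_k E_k$ we get $\deg f^*(-K_Y)=\sum_k\deg f^*E_k=\sum_{q\in\proj^1}\bigl(\sum_k\mathrm{ord}_q(f^*E_k)\bigr)$. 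Set $Z:=f^{-1}(\partial Y)=\bigcup_k\mathrm{supp}(f^*E_k)$, a finite subset of $\proj^1$. If $|Z|\leq 1$, then on $\proj^1$ minus at most one point, i.e.\ on a copy of $\cpx$, $f$ takes values in the open torus orbit $(\cpx^*)^{\dim Y}$; composing with the coordinate characters gives nowhere-vanishing regular functions on $\cpx$, which are constant, so $f$ itself is constant --- a contradiction. Hence $|Z|\geq 2$, and since for each $q\in Z$ the point $f(q)$ lies on some $E_k$ (so $\mathrm{ord}_q(f^*E_k)\geq 1$), we conclude $\deg f^*(-K_Y)\geq|Z|\geq 2$.

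Next I would use that $[C]$ is a fiber class to confine $C$ to one fiber. A non-constant morphism $\proj^1\to\proj^1$ has positive degree, so the image of $C$ under the bundle projection $\pi\colon E^-\to\proj^1$ is a single point $t$, i.e.\ $C$ lies in the fiber $X_t:=\pi^{-1}(t)$. I would then check that $X_t$ is, as a toric variety, isomorphic to $X$, with $\mathscr{D}_l\cap X_t$ ($l=1,\dots,m$) its toric prime divisors: for $t\neq 0,\infty$ this is immediate from the $X$-bundle structure, while for $t=0$ (resp.\ $t=\infty$) one has $X_t=\mathscr{D}_0$ (resp.\ $\mathscr{D}_\infty$), whose fan is the star of the ray $v_0^{E^-}$ (resp.\ $v_\infty^{E^-}$) in $\Sigma^{E^-}$, in the quotient lattice $N_{E^-}/\integer v_0^{E^-}\cong N$ (resp.\ $N_{E^-}/\integer v_\infty^{E^-}\cong N$), which is exactly the fan of $X$; the toric boundary of $X_t$ is then precisely $\bigcup_{l=1}^m(\mathscr{D}_l\cap X_t)$ because $\mathscr{D}_0\cap\mathscr{D}_\infty=\emptyset$ and $\mathscr{D}_0,\mathscr{D}_\infty$ are the only other toric divisors of $E^-$. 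Finally, since the normal bundle of $X_t$ in $E^-$ is trivial, $-K_{X_t}\cdot[C]=-K_{E^-}\cdot[C]=c_1(C)\leq 1$.

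Combining these, the toric fact applied to $Y=X_t$ and $f$ the normalization of $C$ shows that $C\subseteq\partial X_t=\bigcup_{l=1}^m(\mathscr{D}_l\cap X_t)\subseteq\bigcup_{l=1}^m\mathscr{D}_l$, which is the assertion. Should $C$ be allowed to be a connected reducible curve of arithmetic genus $0$, the same argument runs component by component: $X_t\cong X$ is semi-Fano, so each irreducible component has nonnegative $(-K_{X_t})$-degree, hence degree at most $c_1(C)\leq 1$; each non-constant component therefore lies in $\partial X_t$, and the constant components lie there too by connectedness of $C$.

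I expect the main difficulty to lie not in the geometric idea but in the bookkeeping for the two special fibers $\mathscr{D}_0$ and $\mathscr{D}_\infty$: checking carefully, via the star-of-a-ray computation in $\Sigma^{E^-}$ (together with $\mathscr{D}_0\cap\mathscr{D}_\infty=\emptyset$), that each is genuinely isomorphic, as a toric variety, to $X$ with the $\mathscr{D}_l$ restricting to the $D_l$, and that the normal bundle of a fiber is trivial so that the Chern-number hypothesis really transfers from $E^-$ to $X_t$. A variant that avoids part of this is to apply the toric fact directly to the $(n{+}1)$-dimensional $E^-$: if $C$ were contained in none of $\mathscr{D}_0,\mathscr{D}_1,\dots,\mathscr{D}_m,\mathscr{D}_\infty$ it would meet the open orbit of $E^-$ and hence satisfy $-K_{E^-}\cdot[C]\geq 2>1$; so $C$ lies in some $\mathscr{D}_l$, and if $l\in\{0,\infty\}$ one finishes via the fiber reduction above.
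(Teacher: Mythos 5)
Your argument is correct, and it reaches the conclusion by a genuinely different route than the paper for the key step. Both proofs begin the same way: a fiber class forces $C$ into a single fiber $X_t\cong X$, and triviality of the normal bundle (equivalently, adjunction with $\mathcal{O}(X_t)|_{X_t}$ trivial) transfers the Chern number, so semi-Fanoness of $X$ gives $0\le -K_{X_t}\cdot C'\le 1$ for every irreducible component $C'$. Where you then invoke the uniform fact that a non-constant algebraic $\proj^1$ meeting the open torus orbit of a complete toric variety must hit the toric boundary in at least two points and hence has anticanonical degree at least $2$ (your unit-on-$\mathbb{A}^1$ argument for this is fine in the algebraic category, which is the relevant one here), the paper instead argues component by component through intersection numbers: if $c_1(C')=0$ then not all $D_i\cdot C'$ can vanish (else $[C']=0$), so some $D_i\cdot C'<0$ and $C'\subset D_i$; if $c_1(C')=1$ the only alternative to a negative intersection is $D_i\cdot C'=1$ for a single $i$ and zero otherwise, which contradicts the linear relations $\sum_i\langle\nu,v_i\rangle\,(D_i\cdot C')=0$. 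Your ``two boundary points'' lemma handles both cases at once and is arguably more conceptual and more general (it makes no use of the splitting into $c_1=0,1$ or of the linear relations), at the cost of a slightly heavier setup; the paper's computation is more elementary bookkeeping with divisor classes. Two minor remarks: the careful identification of the fibers over $0$ and $\infty$ with $X$ via the star construction is correct but not really needed beyond noting that $E^-\to\proj^1$ is an $X$-bundle so every fiber, including $\mathscr{D}_0$ and $\mathscr{D}_\infty$, is a copy of $X$ whose toric boundary is $\bigcup_{l=1}^m(\mathscr{D}_l\cap X_t)$; and since $C$ is a curve in $E^-$ (not a stable map), there are no contracted components, so your closing remark about constant components is superfluous, though harmless.
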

\begin{proof}
Since $C$ represents a fiber class, its image under ${E_j^-} \to \proj^1$ can only be a point, which means $C$ belongs to a fiber of ${E_j^-} \to \proj^1$, which is identified as $X$.  Note that the Chern number of $C$ in $X$ is the same as that of $C$ in $E_j^-$. Since $X$ is semi-Fano, every component of $C$ has non-negative Chern number.  But $c_1(C) \leq 1$.  Thus each component of $C$ has $c_1 \leq 1$. Let $C'\subset C$ be a component. Then $-K_X\cdot C'$ is either $0$ or $1$. Suppose $-K_X\cdot C'=0$. It is impossible to have $D_i\cdot C'=0$ for all $i$ since this means $[C']=0$. So there exists an $i$ such that $D_i\cdot C'<0$, which implies that $C'\subset D_i$. Suppose $-K_X\cdot C'=1$. It is possible that $D_i\cdot C'<0$ for some $i$, which implies $C'\subset D_i$. The other possibility is that $D_i\cdot C'=1$ for some $i$ and $D_j\cdot C'=0$ for $j\neq i$. This is impossible since it violates linear relations. We thus conclude that every component of $C$ lies in a toric divisor of $X$.  Under the inclusion $X \hookrightarrow {E_j^-}$ as a fiber, $D_l \subset \mathscr{D}_l$ for all $l = 1, \ldots, m$.  Thus $C \subset \bigcup_{l=1}^m \mathscr{D}_l$.
\end{proof}

Now consider the easier case $v_i \not\in F(v_j)$ or $D_l \cdot \alpha \neq 0$ for some $v_l \not\in F(v_j)$ of Theorem \ref{Thm open-closed}.  We will use the following lemma.

\begin{lem}[Lemma 4.5 of \cite{G-I11}] \label{lem:G-I}
Let $\sigma$ be a cone in $\Sigma$.  Suppose that $d \in H_2(X)$ satisfies $c_1(d)=0$ and $D_i \cdot d \geq 0$ for all $i$ such that $v_i \not\in \sigma$.  Then $d$ is effective and $D_i \cdot d = 0$ for all $i$ such that $v_i \not\in F(\sigma)$, where $F(\sigma)$ denotes the minimal face of the fan polytope containing the primitive generators in $\sigma$.
\end{lem}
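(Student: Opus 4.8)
The plan is to reduce the statement to the integers $a_i := D_i \cdot d$, $i=1,\dots,m$, and to feed them into a single supporting hyperplane of the face $F(\sigma)$. Recall that for the smooth projective toric variety $X$ the standard exact sequence $0 \to H_2(X,\mathbb{Z}) \to \mathbb{Z}^m \to N \to 0$ (with $e_i \mapsto v_i$) identifies $H_2(X,\mathbb{Z})$ with the kernel of the second map and sends $d$ to $(a_1,\dots,a_m)$; hence $\sum_{i=1}^m a_i v_i = 0$. Since $-K_X = \sum_i D_i$, the hypothesis $c_1(d)=0$ reads $\sum_{i=1}^m a_i = 0$, and the remaining hypothesis says $a_i \geq 0$ whenever $v_i \notin \sigma$ — in particular whenever $v_i \notin F(\sigma)$, because by definition $F(\sigma)$ contains every ray generator lying in $\sigma$, so $v_i \notin F(\sigma)$ forces $v_i \notin \sigma$.

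First I would prove the vanishing assertion. As $F := F(\sigma)$ is a face of the fan polytope $P = \mathrm{conv}(v_1,\dots,v_m)$, choose $u \in M_{\mathbb{R}}$ and $c \in \mathbb{R}$ supporting $F$, i.e. $\langle u, x\rangle \leq c$ for all $x \in P$, with equality exactly on $F$; then $\langle u, v_i\rangle = c$ for $v_i \in F$ and $\langle u, v_i\rangle < c$ otherwise. Pairing $\sum_i a_i v_i = 0$ with $u$ and subtracting $c\sum_i a_i = 0$ gives
\[ \sum_{i\,:\,v_i \notin F} a_i\,\big(\langle u, v_i\rangle - c\big) = 0 . \]
Each term is $\leq 0$, since $a_i \geq 0$ (because $v_i \notin F \Rightarrow v_i \notin \sigma$) while $\langle u, v_i\rangle - c < 0$; hence every term vanishes, so $a_i = 0$ for all $v_i \notin F(\sigma)$.

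Next I would prove effectivity. By Kleiman duality on the projective manifold $X$ it suffices to check $D \cdot d \geq 0$ for every nef divisor $D = \sum_i c_i D_i$; then $d$ lies in the Mori cone $\overline{NE}(X)$, which for a smooth projective toric variety is a rational polyhedral cone with $\overline{NE}(X)\cap H_2(X,\mathbb{Z}) = H_2^{\eff}(X)$, so $d$ is effective. Since $\Sigma$ is complete, $\sigma$ is a face of some maximal cone $\sigma'$; let $m_{\sigma'}\in M$ be the linear datum of $D$ on $\sigma'$ and replace $D$ by the linearly equivalent $D' = D + \mathrm{div}(\chi^{m_{\sigma'}}) = \sum_i c_i' D_i$, so that $c_i' = 0$ for $v_i \in \sigma'$ while $c_i' \geq 0$ for all $i$ by the convexity (nef) inequalities. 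Then
\[ D \cdot d = D' \cdot d = \sum_i c_i' a_i = \sum_{v_i \in F(\sigma)\setminus\sigma'} c_i' a_i \geq 0, \]
because the terms with $v_i \in \sigma'$ vanish ($c_i' = 0$), those with $v_i \notin F(\sigma)$ vanish ($a_i = 0$), and for the survivors $v_i \notin \sigma'$ forces $v_i \notin \sigma$, hence $a_i \geq 0$, while $c_i' \geq 0$.

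The computation is essentially formal; the only genuine idea is to probe the minimal face $F(\sigma)$ with its supporting functional, and — for the effectivity half — to absorb $\sigma$ into a maximal cone on which a given nef divisor is normalized to vanish, so that the sign hypothesis ``$a_i \geq 0$ off $\sigma$'' bears on precisely the indices that survive. The one step that is not purely formal, and where I would take the most care in a detailed write-up, is the identification $\overline{NE}(X)\cap H_2(X,\mathbb{Z}) = H_2^{\eff}(X)$ for smooth projective toric $X$.
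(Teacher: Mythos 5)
The paper does not actually prove this lemma --- it is imported verbatim from Gonz\'alez--Iritani \cite{G-I11} --- so I am judging your argument on its own terms. Your proof of the vanishing half is correct and complete: reducing to $a_i = D_i\cdot d$ with $\sum_i a_i v_i = 0$ and $\sum_i a_i = 0$, and pairing against a supporting functional of the face $F(\sigma)$ (noting that $v_i\notin F(\sigma)$ forces $v_i\notin\sigma$, hence $a_i\ge 0$) is exactly the natural convexity argument, and the case $F(\sigma)=P$ is vacuous. Your computation in the effectivity half is also correct as far as it goes: normalizing a nef invariant divisor on a maximal cone $\sigma'\supseteq\sigma$ so that its coefficients vanish on $\sigma'$ and are nonnegative everywhere, and then using the vanishing half to kill all terms outside $F(\sigma)$, does show $D\cdot d\ge 0$ for every nef $D$, i.e. $d\in\overline{NE}(X)\cap H_2(X,\mathbb{Z})$.

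The gap is the step you yourself flag and then assert without proof or reference: the identification $\overline{NE}(X)\cap H_2(X,\mathbb{Z}) = H_2^{\eff}(X)$. In this paper $H_2^{\eff}(X)$ is the semigroup of classes of effective $1$-cycles (it indexes the $I$-function and the Novikov variables), so ``$d$ is effective'' means $d$ is represented by an actual effective curve, not merely that $d$ is an integral point of the Mori cone. By toric degeneration (flat limits under one-parameter subgroups) the effective semigroup is the $\mathbb{Z}_{\geq 0}$-span of the classes of the invariant curves $V(\tau)$, $\tau$ a wall; the statement you need is therefore that this semigroup is \emph{saturated} in $\overline{NE}(X)$. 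That does not follow formally from the cone-level equality $\overline{NE}(X)=\mathrm{Cone}(\{[V(\tau)]\})$: a rational polyhedral cone generated by primitive lattice vectors can easily contain lattice points that are not nonnegative integral combinations of its generators, so some genuine input (a proof of saturation for smooth projective toric varieties, a correct citation, or a direct construction of an effective $1$-cycle in class $d$) is required. Note also that the standard Cox-coordinate construction of a rational curve of class $d$ needs $D_i\cdot d\ge 0$ for \emph{all} $i$, which is precisely what fails here for the rays $v_i\in\sigma$ with $a_i<0$; so the effectivity assertion is the real content of this half of the lemma, and as written your argument establishes only $d\in\overline{NE}(X)\cap H_2(X,\mathbb{Z})$.
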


The following consequence will be useful later.

\begin{cor} \label{cor:g}
$\exp(g_j(\check{q}(q)))$ only involves Novikov variables $q^d$ with $c_1(d)=0$ and $D_i \cdot d = 0$ whenever $v_i \not\in F(v_j)$.
\end{cor}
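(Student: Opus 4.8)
The plan is to read off the relevant Novikov variables directly from the defining formula \eqref{eqn:funcn_g} for $g_j$ and then propagate the constraints through the inverse mirror map. First I would examine the function $g_j(\check{q}) = \sum_d \frac{(-1)^{(D_j\cdot d)}(-(D_j\cdot d)-1)!}{\prod_{p\neq j}(D_p\cdot d)!}\,\check{q}^d$. By the summation convention stated right after \eqref{eqn:funcn_g} (and in Theorem \ref{thm delta_intro}), the sum runs over effective curve classes $d$ with $-K_X\cdot d = 0$, $D_j\cdot d < 0$, and $D_p\cdot d \geq 0$ for all $p\neq j$. In particular every such $d$ satisfies $c_1(d)=0$ and $D_i\cdot d \geq 0$ for all $i$ with $v_i\neq v_j$, so a fortiori for all $i$ with $v_i\not\in\sigma$, where $\sigma$ is the ray generated by $v_j$. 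Applying Lemma \ref{lem:G-I} with this cone $\sigma$, we conclude that $D_i\cdot d = 0$ for all $i$ such that $v_i\not\in F(v_j)$, since $F(\sigma)=F(v_j)$ in this case. Hence $g_j(\check{q})$, as a power series in $\check{q}$, only involves monomials $\check{q}^d$ with $c_1(d)=0$ and $D_i\cdot d = 0$ whenever $v_i\not\in F(v_j)$.

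Next I would pass from $\check{q}$ to $q$ via the inverse mirror map and control the resulting monomials. The key point is that the set $\Lambda := \{d\in H_2(X): c_1(d)=0,\ D_i\cdot d = 0 \text{ for all } v_i\not\in F(v_j)\}$ is closed under addition, so it suffices to check that each variable $\check{q}^d$ appearing in $g_j$ is, after substituting the inverse mirror map $\check{q}=\check{q}(q)$, a power series in the variables $q^e$ with $e\in\Lambda$. Since $g_j$ only involves $\check{q}^d$ with $d\in\Lambda$, and $\exp$ of such a series again only involves monomials indexed by $\Lambda$, the claim reduces to showing that the inverse mirror map, restricted to these monomials, stays within $\Lambda$. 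Concretely, from \eqref{mirror_map_eqn} the mirror map has the form $q_k = \check{q}_k\exp(-g^{\Psi_k}(\check{q}))$ with $g^{\Psi_k}=\sum_l (D_l\cdot\Psi_k)g_l$; inverting gives $\check{q}_k = q_k\exp(G_k(q))$ for some power series $G_k$. The monomials $q^d$ with $d\in\Lambda$ form a sub-semigroup, and one checks by the recursive structure of the inversion (or by the general fact that mirror maps and their inverses respect the grading by $c_1$ and by intersection numbers with divisors not in $F(v_j)$, since those gradings are preserved by each $g_l$ that can enter — this is exactly the content of Lemma \ref{lem:G-I} applied uniformly) that $\check{q}^d$ for $d\in\Lambda$ becomes a series in $q^e$, $e\in\Lambda$. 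Substituting into $g_j$ and exponentiating yields the assertion.

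I expect the main obstacle to be the second paragraph: making rigorous the claim that the \emph{inverse} mirror map preserves the relevant semigroup $\Lambda$ of curve classes. The forward direction is transparent from \eqref{mirror_map_eqn} together with Lemma \ref{lem:G-I} (each $g_l$ appearing is handled by the same argument as for $g_j$, or contributes trivially), but the inversion requires either an explicit recursive argument — writing $\check{q}^d = q^d\cdot(\text{correction})$ and inducting on, say, the symplectic area of $d$ — or an invariance argument showing that the two gradings ``$c_1$'' and ``$\sum_{v_i\not\in F(v_j)} (D_i\cdot -)$'' are respected throughout. Once this bookkeeping is in place, composing with $\exp$ is immediate since $\Lambda$ is additively closed, and the corollary follows.
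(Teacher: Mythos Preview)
Your approach matches the paper's: first pin down the monomials in $g_j(\check q)$ via Lemma~\ref{lem:G-I}, then argue that the (inverse) mirror map preserves the sub-semigroup $\Lambda$. The first paragraph is exactly what the paper does.

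The gap you correctly flag in the second paragraph is closed in the paper by one concrete observation you are missing: if $v_i\in F(v_j)$ then $F(v_i)\subset F(v_j)$. Here is why this finishes the job. For $d\in\Lambda$ the mirror map reads
\[
\log q^d \;=\; \log\check q^{\,d}\;-\;\sum_{l=1}^m (D_l\cdot d)\,g_l(\check q)
\;=\;\log\check q^{\,d}\;-\!\!\sum_{l:\,v_l\in F(v_j)}\!\! (D_l\cdot d)\,g_l(\check q),
\]
because $D_l\cdot d=0$ whenever $v_l\notin F(v_j)$. Now apply your first-paragraph argument to each surviving $g_l$: by Lemma~\ref{lem:G-I}, $g_l(\check q)$ only involves $\check q^{\,d'}$ with $D_p\cdot d'=0$ for all $v_p\notin F(v_l)$; since $F(v_l)\subset F(v_j)$, this forces $D_p\cdot d'=0$ for all $v_p\notin F(v_j)$ as well, i.e.\ $d'\in\Lambda$. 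So the forward mirror map, restricted to the variables $\{\check q^{\,d}:d\in\Lambda\}$, lands in power series in $\{q^e:e\in\Lambda\}$ and is of the form $q^d=\check q^{\,d}\cdot(1+\text{higher order in }\check q^{\Lambda})$. Such a map between power-series rings in the same set of generators is invertible \emph{within} that subring, which is exactly the ``self-contained'' statement you wanted; no separate recursion on symplectic area is needed. Exponentiating then uses only that $\Lambda$ is additively closed, as you said.
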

\begin{proof}
By definition $g_j(\check{q})$ is a summation over curve classes $d$ with $c_1(d)=0$ and $D_i \cdot d\geq 0$ for all $i \neq j$.  By Lemma \ref{lem:G-I}, $D_i \cdot d = 0$ whenever $v_i \not\in F(v_j)$.  Hence $g_j(\check{q})$ involves $\check{q}^d$ where $D_i \cdot d = 0$ whenever $v_i \not\in F(v_j)$.  For such $d$, the mirror map $\log q^d = \log \check{q}^d - \sum_{v_i \in F(v_j)} (D_i \cdot d) g_i(\check{q})$ also involves only $\check{q}^{d'}$ with $D_l \cdot d' = 0$ whenever $v_l \not\in F(v_j)$ (because $F(v_i) \subset F(v_j)$ if $v_i \in F(v_j)$).  Such $d$'s satisfying $c_1(d)=0$ and $D_i \cdot d = 0$ whenever $v_i \not\in F(v_j)$ form a subcone of the Mori cone.  Then the inverse mirror map $\check{q}^{d}(q)$ only depends on $q^d$ with $c_1(d)=0$ and $D_i \cdot d = 0$ whenever $v_i \not\in F(v_j)$.
\end{proof}

\begin{prop} \label{prop:Cin}
A connected rational curve $C$ in $X$ with $c_1(C)=0$ which has a sphere component intersecting the open toric orbit of $D_j$ (as a toric manifold itself) must be contained in $\bigcup_{i: v_i \in F(v_j)} D_i$, and $D_i \cdot [C] = 0$ whenever $v_i \not\in F(v_j)$.
\end{prop}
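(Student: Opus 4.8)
The plan is to analyze $C$ component by component and to propagate the required information from the distinguished component to all of $C$ via connectedness. I begin with two elementary observations. Since $-K_X$ is nef, each irreducible component $C'$ of $C$ has $c_1(C')\ge 0$; as these Chern numbers sum to $c_1(C)=0$, in fact $c_1(C')=\sum_{i=1}^m D_i\cdot C'=0$ for every component. Moreover, a non-constant component $C'$ must lie in some toric divisor $D_{i'}$ with $D_{i'}\cdot C'<0$: it cannot be contained in the (affine) open torus orbit, so it meets $\bigcup_i D_i$; if it lay in no $D_i$ it would meet some $D_i$ properly and then $\sum_i D_i\cdot C'\ge 1$, contradicting $c_1(C')=0$; and if every $D_i\cdot C'$ were $\ge 0$ they would all vanish, forcing $[C']=0$. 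I will also use the orbit--cone correspondence: for $p\in X$ there is a unique cone $\tau(p)\in\Sigma$ with $p$ in its relatively open orbit, and $p\in D_i$ precisely when $v_i\in\tau(p)$.

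Next I consider the sphere component $C_0$ meeting the dense torus orbit of $D_j$. For a point $p_0$ of that orbit, $\tau(p_0)$ is the ray through $v_j$, so $p_0\in D_j$ but $p_0\notin D_i$ for $i\ne j$; hence $C_0\not\subset D_i$ for $i\ne j$, so $D_i\cdot C_0\ge 0$ for all $i\ne j$, and by the observation above the divisor with negative intersection must be $D_j$, i.e.\ $C_0\subset D_j$. Feeding $d=[C_0]$ and $\sigma$ the ray through $v_j$ into Lemma \ref{lem:G-I} yields $D_i\cdot C_0=0$ for all $v_i\notin F(v_j)$; since $C_0\not\subset D_i$ for such $i$, the intersection $C_0\cap D_i$ is proper and therefore empty. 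Let me call a component of $C$ \emph{admissible} if it is disjoint from $\bigcup_{v_i\notin F(v_j)}D_i$; thus $C_0$ is admissible.

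The crux is to prove that every component is admissible, and I would do this by a connectedness argument on the dual graph of $C$. If some component is not admissible, there are an admissible component $C'$ and a non-admissible component $C''$ sharing a node $p$. Since $p\in C'$ is admissible, $p\notin D_i$ for all $v_i\notin F(v_j)$, so every ray of $\tau:=\tau(p)$ lies in $F(v_j)$ and hence the minimal face $F(\tau)$ of the fan polytope containing the generators of $\tau$ satisfies $F(\tau)\subseteq F(v_j)$. Because $p\in C''$ and $p\notin D_i$ for $v_i\notin\tau$, the component $C''$ lies in no $D_i$ with $v_i\notin\tau$, so $D_i\cdot C''\ge 0$ there; applying Lemma \ref{lem:G-I} with this $\sigma=\tau$ and $d=[C'']$ gives $D_i\cdot C''=0$ for all $v_i\notin F(\tau)$, a fortiori for all $v_i\notin F(v_j)$, and once more each such intersection is proper and hence empty --- so $C''$ is admissible, a contradiction. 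With every component admissible, $D_i\cdot[C]=\sum_{C'}D_i\cdot C'=0$ whenever $v_i\notin F(v_j)$; and each non-constant component $C'$ lies in some $D_{i'}$ with $D_{i'}\cdot C'<0$, which forces $v_{i'}\in F(v_j)$, while a contracted component maps to a node lying on such a component, so $C\subset\bigcup_{v_i\in F(v_j)}D_i$.

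I expect the propagation step to be the real obstacle. The subtlety is that at each node one must pick the cone $\tau(p)$ and invoke the \emph{face-of-the-fan-polytope} conclusion of Lemma \ref{lem:G-I} (Lemma 4.5 of \cite{G-I11}), not merely effectivity: it is this conclusion that keeps the index set $\{i:v_i\in F(v_j)\}$ from growing as one walks across nodes, which is precisely what makes the induction close. The auxiliary bookkeeping --- stability forcing each contracted component to carry at least three special points, and $[C']\ne 0$ for non-constant $C'$ --- is routine and I would not dwell on it.
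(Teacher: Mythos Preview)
Your proof is correct and follows essentially the same approach as the paper's: you analyze the distinguished component using Lemma~\ref{lem:G-I} applied to the ray through $v_j$, then propagate along the dual graph by applying the same lemma at each node with $\sigma=\tau(p)$. The paper's argument is identical in substance---it too takes the minimal toric stratum at the node (your $\tau(p)$), observes that any divisor with negative intersection with the next component must contain the node and hence have its generator in $\tau(p)$, and applies Lemma~\ref{lem:G-I}; your formulation via ``admissible'' components and the explicit inclusion $F(\tau)\subseteq F(v_j)$ is slightly more careful than the paper's terse ``$F(\sigma)=F(v_j)$'' (which is literally correct only at the first step) and its one-word ``inductively''.
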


\begin{proof}
All sphere components of $C$ lie in toric divisors of $X$ since $c_1(C)=0$.  Let $C_1$ be a holomorphic sphere component of $C$ lying in $D_j$ which intersects the open toric orbit of $D_j$.  It satisfies $D_j \cdot C_1 < 0$ and $D_i \cdot C_1 \geq 0$ for all $i\not=j$.  By Lemma \ref{lem:G-I} applied to the cone  $\R_{\geq 0} v_j$, we have $D_i \cdot C_1 = 0$, and so $C_1 \cap D_i = \emptyset$, for all $v_i \not\in F(v_j)$.

Now consider another sphere component $C_2$ of $C$ contained in some $D_{j_2}$ which intersects $C_1$ at a nodal point $p$ lying in $D_{j_2} \cap D_j$.  Then $v_{j_2} \in F(v_j)$.  Consider the minimal toric strata containing $p$, which is dual to a certain cone $\sigma$ in the fan containing $v_{j_2}$ and $v_j$.  Since $p$ does not lie in $D_i$ for any $v_i \not\in F(v_j)$, $\sigma$ is contained in $F(v_j)$.  Consider a toric prime divisor $D$ with $D \cdot C_2 < 0$.  Then $p \in C_2 \subset D$, and hence the minimal toric strata containing $p$ is a subset of $D$.  Thus $D$ must correspond to a primitive generator in $\sigma$.  This proves $D_i \cdot C_2 \geq 0$ for all $v_i \not\in \sigma$.  By Lemma \ref{lem:G-I} applied to the cone $\sigma$, we have $D_i \cdot C_2 = 0$ for all $v_i \not\in F(\sigma) = F(v_j)$.  Inductively all sphere components of $C$ are contained in $\bigcup_{i: v_i \in F(v_j)} D_i$.
\end{proof}

Since $n_{1,1}^X (\beta_j + \alpha; D_i, [\pt]_L) = (D_i \cdot (\beta_j+\alpha)) n_1(\beta_j + \alpha)$ (Theorem \ref{thm div eq open}), we obtain
\begin{cor} \label{cor:n}
$n_{1,1}^X (\beta_j + \alpha; D_i, [\pt]_L) = 0$ if $v_i \not\in F(v_j)$ or $D_l \cdot \alpha \neq 0$ for some $v_l \not\in F(v_j)$.  Moreover the generating function $\sum_\alpha q^\alpha n_1(\beta_j+\alpha)$ has only Novikov variables $q^\alpha$ with $D_i \cdot \alpha = 0$ whenever $v_i \not\in F(v_j)$.
\end{cor}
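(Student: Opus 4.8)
The plan is to reduce the whole statement to a single geometric fact about the rational-curve component of a Maslov-two stable disc, and then invoke Proposition \ref{prop:Cin}.

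First I would apply the divisor equation of Theorem \ref{thm div eq open} to write $n_{1,1}^X(\beta_j+\alpha; D_i, [\pt]_L) = (D_i\cdot(\beta_j+\alpha))\, n_1(\beta_j+\alpha)$. It then suffices to establish the single claim: \emph{if $n_1(\beta_j+\alpha)\neq 0$, then $D_l\cdot\alpha = 0$ for every $v_l\notin F(v_j)$.} Granting this, the ``moreover'' assertion is immediate. Moreover, if $v_i\notin F(v_j)$ then $i\neq j$ (since $v_j\in F(v_j)$), so $D_i\cdot\beta_j=\delta_{ij}=0$, and combined with the claim we get $D_i\cdot(\beta_j+\alpha)=0$, hence $n_{1,1}^X=0$; and if $D_l\cdot\alpha\neq 0$ for some $v_l\notin F(v_j)$, the claim forces $n_1(\beta_j+\alpha)=0$, so again $n_{1,1}^X=0$. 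Thus both vanishing statements and the ``moreover'' part all follow from the claim.

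To prove the claim I would argue as follows. If $n_1(\beta_j+\alpha)\neq 0$, then the moduli space defining this invariant is nonempty, so by Lemma \ref{Lem Maslov-two stable disc} it contains a stable disc that is the union of a holomorphic disc $\Delta$ in a basic class $\beta_k$ and a connected rational curve $C$ with $c_1(C)=0$, joined at a single interior node $p$. Since $\partial\beta_k=v_k$ while $[C]$ and $\alpha$ lie in $H_2(X)$, equating the classes in $H_2(X,L)$ and applying $\partial$ gives $v_k=v_j$, hence $k=j$ and $[C]=\alpha$. Because $D_l\cdot\beta_j=\delta_{lj}$, the disc $\Delta$ meets $\bigcup_l D_l$ in exactly one point (with multiplicity one), which lies on $D_j$ and on no other $D_l$; as $p\in\Delta$ and $p\in C\subset\bigcup_l D_l$, this point must be $p$, so $p$ lies in the open toric orbit $D_j\setminus\bigcup_{l\neq j}D_l$. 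Hence the sphere component of $C$ passing through $p$ meets the open toric orbit of $D_j$, and $C$ is a connected rational curve with $c_1=0$, so Proposition \ref{prop:Cin} applies and yields $D_l\cdot[C]=D_l\cdot\alpha=0$ for all $v_l\notin F(v_j)$. (When $\alpha=0$ the claim is vacuous.)

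The one step that needs genuine care — though it is short — is the identification of the gluing node $p$ with a point of the open toric orbit of $D_j$; this rests on the basic-disc intersection numbers $D_l\cdot\beta_j=\delta_{lj}$ together with the description of that orbit as $D_j\setminus\bigcup_{l\neq j}D_l$. Everything else is a direct appeal to Theorem \ref{thm div eq open}, Lemma \ref{Lem Maslov-two stable disc}, and Proposition \ref{prop:Cin}, so I anticipate no serious obstacle.
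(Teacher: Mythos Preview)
Your proposal is correct and follows essentially the same approach as the paper's own proof: reduce via the divisor equation (Theorem \ref{thm div eq open}) to showing that $n_1(\beta_j+\alpha)\neq 0$ forces $D_l\cdot\alpha=0$ for all $v_l\notin F(v_j)$, then use the structure result of Lemma \ref{Lem Maslov-two stable disc} together with Proposition \ref{prop:Cin}. You are in fact more explicit than the paper in two places it leaves terse---the identification $k=j$ via the boundary map $\partial$, and the verification that the nodal point lies in the open toric orbit of $D_j$---both of which are correct as written.
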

\begin{proof}
Let $\beta_j+\alpha$ be represented by a union of basic disc $D$ representing $\beta_j$ and a rational curve $C$ representing $\alpha$, where $D$ and $C$ intersect at a node.  Then $C$ has a sphere component intersecting the open toric orbit of $D_j$, and hence by Proposition \ref{prop:Cin} $D_i \cdot \alpha = 0$ for all $v_i \not\in F(v_j)$.  So $n_1(\beta_j+\alpha) \not=0$ only when $D_i \cdot \alpha = 0$ for all $v_i \not\in F(v_j)$.  Moreover $n_{1,1}^X (\beta_j + \alpha; D_i, [\pt]_L) = (D_i \cdot (\beta_j+\alpha)) n_1(\beta_j + \alpha) = 0$ if $v_i \not\in F(v_j)$ or $D_l \cdot \alpha \neq 0$ for some $v_l \not\in F(v_j)$.
\end{proof}

The above proves Theorem \ref{Thm open-closed} in the case $v_i \not\in F(v_j)$. The rest of this section is devoted to proving Theorem \ref{Thm open-closed} in the case $v_i \in F(v_j)$ and $D_l \cdot \alpha = 0$ for all $v_l \not\in F(v_j)$.  The proof is divided into two main steps.  First, we equate the open GW invariant $n_{1,1}(\beta; D, [\pt]_{\tor^X})$ of $X$ to a certain open GW invariant of ${E_j^-}$ (Theorem \ref{Thm X-{E_j^-}-open}).  Then we show that this open GW invariant of $E_j^-$ is equal to the closed GW invariant
$\langle D^{E_j^-}, [\pt]_{{E_j^-}} \rangle^{E_j^-,\,\sigma_j^-\,\reg}_{0,2,\sigma_j^-+\alpha}$
of ${E_j^-}$ (Theorem \ref{Thm {E_j^-}-open-closed}). Here $D^{E_j^-}\in H^4(E_j^-)$ is the push-forward of $D\in H^2(X)$ under the inclusion $X\hookrightarrow E_j^-$ of $X$ as a fiber. Since $D$ is a divisor of $X$, $D^{{E_j^-}}$ is of complex codimension 2 in ${E_j^-}$.

\subsection{First step}
The precise statement of the first main step is the following:
\begin{thm} \label{Thm X-{E_j^-}-open}
Assume the notations as in Theorem \ref{Thm open-closed}.  Then
$$ n^X_{1,1}(\beta_j+\alpha; D, [\pt]_{\tor^X}) = n^{{E_j^-}}_{1,1}(b_0 + b_j + \alpha; D^{E_j^-}, [\pt]_{\tor^{E_j^-}}) $$
where we recall that $b_l$ ($l = 0, 1, \ldots, m, \infty$) are the basic disc classes of ${E_j^-}$ (see Section \ref{Sect Seidel_rep}).  Moreover $[\pt]_{\tor^{{E_j^-}}} \in H^{n+1} (\tor^{{E_j^-}})$ denotes the point class of the Lagrangian torus fiber of ${E_j^-}$.
\end{thm}

Recall that $ n^X_{1,1}(\beta_j+\alpha; D, [\pt]_{\tor^X}) = \pairing{[\moduli^{\mathrm{op}}_{1,1} (\beta_j+\alpha; D)]_{\virt}}{[\pt]_{\tor^X}} \in \rat$,
and by definition of Poincar\'e pairing, this is the same as
$$\iota_{\tor^{X}}^* [\moduli^{\mathrm{op}}_{1,1}(\beta_j+\alpha; D)]_{\mathrm{virt}} \in H^0(\pt, \rat) \cong \rat.$$
where $\iota_{\tor^{X}}: \{\pt\} \hookrightarrow \tor^X$ is an inclusion of a point to $\tor^X$.  Similarly
$$n^{E_j^-}_{1,1}(b_0 + b_j + \alpha; D^{{E_j^-}}, [\pt]_{\tor^{{E_j^-}}}) = \iota_{\tor^{{E_j^-}}}^* [\moduli^{\mathrm{op}}_{1,1}(b_0+b_j+\alpha; D^{{E_j^-}})]_{\mathrm{virt}} \in H^0(\pt, \rat) \cong \rat$$
where $\iota_{\tor^{{E_j^-}}}: \{\pt\} \hookrightarrow \tor^{E_j^-}$ is an inclusion of a point to $\tor^{E_j^-}$.  We denote the images of $\iota_{\tor^{X}}$ and $\iota_{\tor^{{E_j^-}}}$ to be $\pt_{\tor^X}$ and $\pt_{\tor^{E_j^-}}$ respectively.

By Lemma \ref{Lem Maslov-two stable disc}, a stable disc in $\moduli^{\mathrm{op}}_{1,1}(\beta; D)$ has only one disc component.  Thus it never splits into the union of two stable discs.  Hence $\moduli^{\mathrm{op}}_{1,1}(\beta; D)$ has no codimension one boundary.  The following key lemma shows that $\moduli^{\mathrm{op}}_{1,1}(b_0+b_j+\alpha; D^{E_j^-})$ also has this property, whose proof requires a more careful analysis of the stable discs since $b_0+b_j$ has Maslov index 4 (which is not the minimal Maslov index of $\tor^{E_j^-}$) and ${E_j^-}$ may not be semi-Fano:

\begin{lem} \label{Lem Maslov 4}
Assume the above settings.  A stable disc in $\moduli^{\mathrm{op}}_{1,1}(b_0+b_j+\alpha; D^{E_j^-})$ consists of a holomorphic disc component and a rational curve, which meet at only one nodal point.  The disc component belongs to the class $b_0 + b_j$ for some $j=1, \ldots, m$, and the rational curve belongs to $\alpha$.  In particular, $\moduli^{\mathrm{op}}_{1,1}(b_0+b_j+\alpha; D^{E_j^-})$ has no codimension one boundary.
\end{lem}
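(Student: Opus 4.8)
The plan is to analyse an arbitrary stable disc $u\in\moduli^{\mathrm{op}}_{1,1}(b_0+b_j+\alpha;D^{E^-})$ by separating its disc components from its sphere components, to pin down the classes of these components by a numerical argument (which circumvents the fact that $E^-$ need not be semi-Fano by reducing everything to fibre classes of the semi-Fano fibre $X$), and then to rule out any splitting of the disc part by a geometric argument using the constraint cycle $D^{E^-}$ (which deals with $b_0+b_j$ having non-minimal Maslov index). In the easier Lemma \ref{Lem Maslov-two stable disc} for $X$ the single-disc-component property is automatic from minimality of the Maslov index; here it is the delicate point.

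First I would write the total class of $u$ as $\beta^u+\alpha^u=b_0+b_j+\alpha$, where $\beta^u=\sum_l c_l\,b_l$ (with $c_l\geq 0$ by the Cho--Oh classification \cite{cho06}) collects the classes of the disc components and $\alpha^u\in H_2^{\eff}(E^-)$ those of the sphere components. The key facts are: $\mathscr{D}_0$ and $\mathscr{D}_\infty$ are linearly equivalent in $E^-$, being fibres of $\pi\colon E^-\to\mathbb{P}^1$, so $\mathscr{D}_0\cdot\alpha^u=\mathscr{D}_\infty\cdot\alpha^u$, and this common value — the fibre degree of the effective class $\alpha^u$ — is non-negative; combining this with $\mathscr{D}_0\cdot(b_0+b_j+\alpha)=1$, $\mathscr{D}_\infty\cdot(b_0+b_j+\alpha)=0$ and $c_0,c_\infty\geq 0$ forces $c_0=1$, $c_\infty=0$ and $\mathscr{D}_0\cdot\alpha^u=0$. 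Then $\alpha^u$ is a fibre class, so $\alpha^u\in H_2^{\eff}(X)$ by Proposition \ref{Prop sigma gen}, and each sphere bubble, being an irreducible effective curve of fibre degree $0$, lies in a single fibre $\cong X$ and hence (as $X$ is semi-Fano) has $c_1\geq 0$; note $c_1^{E^-}=c_1^X$ on fibre classes. Cancelling $b_0$ and using injectivity of $\pi_2(X,\tor^X)\hookrightarrow\pi_2(E^-,\tor^{E^-})$ to rewrite the class equation inside $\pi_2(X,\tor^X)$ as $\sum_{l\geq 1}c_l\,\beta_l+\alpha^u=\beta_j+\alpha$, and taking Maslov indices, yields $\sum_{l\geq 1}c_l+c_1^X(\alpha^u)=1$. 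Both terms are $\geq 0$; the case $\sum c_l=0$ is impossible since it would force $\alpha^u=\beta_j+\alpha$ to have nonzero boundary $v_j$; so $\sum c_l=1$ and $c_1^X(\alpha^u)=0$, whence (taking boundaries) $\beta^u=b_0+b_j$, $\alpha^u=\alpha$, every sphere bubble has $c_1=0$, and by Lemma \ref{Lem alpha} the sphere part lies in $\bigcup_{l=1}^m\mathscr{D}_l$.

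The main obstacle is showing $u$ has exactly one disc component. Since non-constant disc components have Maslov index $\geq 2$ and the sphere bubbles contribute $0$ to the Maslov index, $\beta^u=b_0+b_j$ (Maslov $4$) can be split among at most two non-constant disc components, and if split it must be into a $b_0$-disc $\Delta_0$ and a $b_j$-disc $\Delta_j$. I would exclude this using the interior constraint. A $b_0$-disc meets $\mathscr{D}_l$ for no $l\neq 0$, so it meets $\mathscr{D}_0$ at a single point lying in the \emph{open torus orbit} of $\mathscr{D}_0\cong X$, which is disjoint from $D^{E^-}$ (a toric prime divisor of $X\cong\mathscr{D}_0$). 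A $b_j$-disc has constant image in $\mathbb{P}^1$, since $\mathscr{D}_0\cdot b_j=\mathscr{D}_\infty\cdot b_j=0$, hence lies in a fibre $X_t$ with $t\in\pi(\tor^{E^-})$, so $t\neq 0$ and $\Delta_j\cap\mathscr{D}_0=\varnothing$; and the sphere part, lying in $\bigcup_{l\geq 1}\mathscr{D}_l$, is disjoint from $\Delta_0$, so it attaches to $\Delta_j$ and therefore lies in $X_t$ as well, again disjoint from $\mathscr{D}_0$. Hence no component of such a configuration meets $D^{E^-}\subset\mathscr{D}_0$, so the interior marked point cannot map to $D^{E^-}$ — a contradiction. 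The same observation (nothing meets $D^{E^-}$) rules out ghost disc components, so $u$ has a single disc component $\Delta$, necessarily of class $b_0+b_j$. I expect this open-orbit argument to be the crux.

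Finally, since $\mathscr{D}_l\cdot(b_0+b_j)=\delta_{lj}$ for $1\leq l\leq m$, the disc $\Delta$ meets $\bigcup_{l=1}^m\mathscr{D}_l$ only at the single point $\Delta\cap\mathscr{D}_j$; as the sphere part lies in $\bigcup_{l=1}^m\mathscr{D}_l$ it meets $\Delta$ only there, and since the domain is connected the sphere part is a single connected rational curve of class $\alpha$ attached to $\Delta$ at this one node. For the last assertion, no codimension-one boundary arises: by the previous paragraph there is never a disc bubble, and the unique node $\Delta\cap\mathscr{D}_j$ is interior to $\Delta$ because $\mathscr{D}_j\cap\tor^{E^-}=\varnothing$, so it cannot migrate to a boundary node; the only remaining degenerations are those of the closed rational curve, which have real codimension at least two.
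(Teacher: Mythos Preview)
Your proof is correct and uses essentially the same ingredients as the paper's: sphere bubbles are fibre classes (via $\mathscr{D}_\infty\cdot(b_0+b_j+\alpha)=0$), hence have $c_1\geq 0$ and lie in $\bigcup_{l=1}^m\mathscr{D}_l$ by Lemma~\ref{Lem alpha}, and the interior-marked-point constraint at $D^{E^-}\subset\mathscr{D}_0$ is what rules out the two-disc splitting. Your organization differs somewhat --- you first pin down $\beta^u=b_0+b_j$ and $\alpha^u=\alpha$ numerically from intersection numbers, whereas the paper runs a Maslov-index case analysis --- and your two-disc argument (no sphere can attach to $\Delta_0$ since $\Delta_0\cap\bigcup_{l\geq 1}\mathscr{D}_l=\varnothing$, so all spheres attach to $\Delta_j$ and hence sit in $X_t$ with $t\neq 0$, missing $\mathscr{D}_0$ entirely) is a bit more direct than the paper's (the sphere carrying the interior marked point must lie in $\mathscr{D}_0\cap\mathscr{D}_l$, which no Maslov-$\leq 2$ disc can reach). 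Your single-node argument via $\mathscr{D}_l\cdot(b_0+b_j)=\delta_{lj}$ for $1\leq l\leq m$ is also cleaner than the paper's detour through ``if the disc passes through $D_l^{E^-}$''.

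One small gap: the sentence ``the same observation (nothing meets $D^{E^-}$) rules out ghost disc components'' does not apply once you are down to a single non-constant disc, since the $b_0+b_j$-disc itself \emph{can} meet $D^{E^-}$. What actually kills ghosts there is stability, as in the paper: a ghost disc cannot carry the interior marked point (its image lies in $\tor^{E^-}$, disjoint from $D^{E^-}$) and cannot carry a sphere bubble (spheres lie in toric divisors, disjoint from $\tor^{E^-}$), so a terminal ghost disc has at most two boundary special points and is unstable.
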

\begin{proof}
Consider a stable disc $\phi$ in $\moduli^{\mathrm{op}}_{1,1}(b_0+b_j+\alpha; D^{E_j^-})$.  It consists of several disc components and sphere components.  Notice that $\pairing{b_0+b_j+\alpha}{\mathscr{D}_\infty}=0$, where $\pairing{\cdot}{\cdot}$ denotes the pairing between $H_2(E_j^-, \tor^{E_j^-})$ and $H^2(E_j^-, \tor^{E_j^-})$.  Since every holomorphic disc bounded by $\tor^{E_j^-}$ and every holomorphic sphere in ${E_j^-}$ has non-negative intersection with $\mathscr{D}_\infty$, this implies that each sphere component of $\phi$ has intersection number $0$ with $\mathscr{D}_\infty$.  So every sphere component of $\phi$ is in a fiber class as otherwise it would have positive intersection number with $\mathscr{D}_\infty$.  In particular each sphere component of $\phi$ has non-negative Chern number and is contained in a fiber of ${E_j^-} \to \proj^1$.  Together with the fact that $\phi$ has Maslov index $\mu(b_0)+\mu(b_j)+2 c_1(\alpha)=4$, this implies that each disc component has at most Maslov index 4.

Suppose a disc component of $\phi$ has Maslov index 4.  Then all the sphere components have Chern number zero.  Since every non-constant holomorphic disc has Maslov index at least 2, the other disc components of $\phi$ must be constant, and they are mapped to $\tor^{E_j^-}$.  On the other hand the interior marked point $p^{\mathrm{int}}$ of $\phi$ has to be mapped to $D^{E_j^-}$, which sits inside the fiber $D_0$ and is disjoint from $\tor^{E_j^-}$.  Hence $p^{\mathrm{int}}$ cannot be located in the constant disc components. But then at least one of the constant disc components does not have 3 special points, making $\phi$ unstable. This shows that $\phi$ has only one disc component which has Maslov index 4.

Then we prove that the disc component is attached with the holomorphic spheres at only one interior nodal point.  Holomorphic discs bounded by a Lagrangian torus fiber have been classified by Cho-Oh \cite{cho06}.  In particular if a holomorphic disc of Maslov index 4 passes through $D_l^{E_j^-}$ for any $l$, it intersects with the union of toric divisors at only one single interior point.  On the other hand, by Lemma \ref{Lem alpha}, all the sphere components are mapped to the union of the toric divisors $\mathscr{D}_l$, $l = 1, \ldots, m$.  Thus the disc component must passes through one $D_l^{E_j^-}$ and is attached with the holomorphic spheres at only one interior nodal point.  This implies that $\phi$ is the union of a holomorphic disc and a rational curve joint at a single nodal point.  The disc component belongs to $b_0 + b_l$  for some $l$ and the rational curve component belongs to a certain class $\rho$.  Then $b_0 + b_l + \rho = b_0 + b_j + \alpha$ as disc classes, which forces $l = j$ and $\rho = \alpha$.  Hence the holomorphic disc represents $b_0 + b_j$, and the rational curve must represent $\alpha$.

Now suppose otherwise that every disc component of $\phi$ has Maslov index less than 4.  Then $\phi$ must have a disc component of Maslov index 2.  Then the other disc components have Maslov index at most two, and the sphere components have Chern number at most one.  Moreover the sphere components belong to some fiber classes.  By Lemma \ref{Lem alpha}, each of them is contained in $\mathscr{D}_l$ for some $l = 1, \ldots, m$.

A holomorphic disc of Maslov index at most two does not pass through $D^{E_j^-}$.  Thus the interior marked point $p^{\mathrm{int}}$ of $\phi$ must be located in a sphere component.  But $\phi(p^{\mathrm{int}}) \in D^{E_j^-}$ which is contained in the fiber at $0$.  This implies that this sphere component is contained in $\mathscr{D}_0 \cap \mathscr{D}_l$ for some $l = 1, \ldots, m$.  However, a holomorphic disc of Maslov index at most two does not pass through $\mathscr{D}_0 \cap \mathscr{D}_l$, and so none of the disc components is connected to this sphere component.  We thus conclude that this situation cannot occur.

We have now proved that $\phi$ has only one disc component.  This implies that it never splits into two stable discs, meaning that disc bubbling never occurs.  Thus the moduli space $\moduli^{\mathrm{op}}_{1,1}(b_0+b_j+\alpha; D^{E_j^-})$ has no codimension one boundary.
\end{proof}

Now both $\moduli^{\mathrm{op}}_{1,1}(\beta_j+\alpha; D)$ and $\moduli^{\mathrm{op}}_{1,1}(b_0+b_j+\alpha; D^{E_j^-})$ have no codimension one boundaries.  By \cite[Lemma A1.43]{FOOO_I}, we have
\begin{align*}
n^X_{1,1}(\beta_j+\alpha; D, [\pt]_{\tor^X}) &= \iota_{\tor^{X}}^* [\moduli^{\mathrm{op}}_{1,1}(\beta_j+\alpha; D)]_{\mathrm{virt}} \\
&= [\moduli^{\mathrm{op}}_{1,1}(\beta_j+\alpha; D, \pt_{\tor^X})]_{\mathrm{virt}} \in H^{\mathrm{top}}(D \times \{\pt_{\tor^X}\}, \rat) = \rat,
\end{align*}
and
\begin{align}
n^{{E_j^-}}_{1,1}(b_0 + b_j + \alpha; D^{{E_j^-}}, [\pt]_{\tor^{{E_j^-}}}) &= \iota_{\tor^{{E_j^-}}}^* [\moduli^{\mathrm{op}}_{1,1}(b_0+b_j+\alpha; D^{E_j^-})]_{\mathrm{virt}} \notag \\
\label{eqn open_E} &= [\moduli^{\mathrm{op}}_{1,1}(b_0+b_j+\alpha; D^{E_j^-}, \pt_{\tor^{E_j^-}})]_{\mathrm{virt}} \\
&\in H^\mathrm{top}(D^{E_j^-} \times \{\pt_{\tor^{E_j^-}}\}, \rat) = \rat \notag,
\end{align}
where
\begin{align*}
\moduli^{\mathrm{op}}_{1,1}(\beta_j+\alpha; D_i, \pt_{\tor^X}) &= (\moduli^{\mathrm{op}}_{1,1} (\beta_j + \alpha) \times_{X} D_i) \times_{\tor^X} \{\pt_{\tor^X}\}\\
&= \moduli^{\mathrm{op}}_{1,1} (\beta_j + \alpha) \times_{X \times \tor^X} (D_i \times \{\pt_{\tor^X}\})
\end{align*}
and
\begin{align*}
\moduli^{\mathrm{op}}_{1,1}(b_0+b_j+\alpha; D_i^{E_j^-}, \pt_{\tor^{E_j^-}}) &= (\moduli^{\mathrm{op}}_{1,1} (b_0+b_j + \alpha) \times_{{E_j^-}} D_i^{E_j^-}) \times_{\tor^{E_j^-}} \{\pt_{\tor^{E_j^-}}\}\\
&= \moduli^{\mathrm{op}}_{1,1} (b_0+b_j + \alpha) \times_{{E_j^-} \times \tor^{E_j^-}} (D_i^{E_j^-} \times \{\pt_{\tor^{E_j^-}}\}).
\end{align*}
The fiber products appeared above use the evaluation maps $\mathrm{ev}^X_+: \moduli^{\mathrm{op}}_{1,1} (\beta_j + \alpha) \to X$, $\mathrm{ev}^X_0: \moduli^{\mathrm{op}}_{1,1} (\beta_j + \alpha) \to \tor^X$, $\mathrm{ev}^E_+: \moduli^{\mathrm{op}}_{1,1} (\beta_j + \alpha) \to {E_j^-}$, $\mathrm{ev}^E_0: \moduli^{\mathrm{op}}_{1,1} (\beta_j + \alpha) \to \tor^{E_j^-}$, and the inclusion maps $D_i  \hookrightarrow X$, $\{\pt_{\tor^X}\} \hookrightarrow \tor^X$, $D^E_i  \hookrightarrow {E_j^-}$, $\{\pt_{\tor^{E_j^-}}\} \hookrightarrow \tor^{E_j^-}$.

Thus, in order to prove
$n^X_{1,1}(\beta_j+\alpha; D, [\pt]_{\tor^X}) = n^{{E_j^-}}_{1,1}(b_0 + b_j + \alpha; D^{{E_j^-}}, [\pt]_{\tor^{{E_j^-}}}),$
it suffices to show 
the following
\begin{prop} \label{prop open mod compare}
Fix a point $\pt_{\tor^X} \in \tor^X$ and a point $\pt_{\tor^{E_j^-}} \in \tor^{E_j^-}$.  Then we have
\begin{equation}\label{open=open_K_spaces}
\moduli^{\mathrm{op}}_{1,1}(\beta_j+\alpha; D, \pt_{\tor^X}) \cong \moduli^{\mathrm{op}}_{1,1}(b_0+b_j+\alpha; D^{E_j^-}, \pt_{\tor^{E_j^-}})
\end{equation}
as Kuranishi spaces.
\end{prop}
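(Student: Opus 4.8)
The plan is to prove \eqref{open=open_K_spaces} by the explicit ``capping-off'' construction of Figure \ref{fig disc to Seidel}: a stable disc in $X$ of class $\beta_j+\alpha$ is pushed into the fiber of $E^-\to\proj^1$ over $0$ and its disc component is completed to a disc of class $b_0+b_j$ in $E^-$; conversely every stable disc of class $b_0+b_j+\alpha$ in $E^-$ arises this way from a unique one in $X$. One then checks that this correspondence is not merely a homeomorphism of the underlying spaces but an isomorphism of Kuranishi structures.

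\textbf{The map.} By Lemma \ref{Lem Maslov-two stable disc}, an element of $\moduli^{\mathrm{op}}_{1,1}(\beta_j+\alpha;D,\pt_{\tor^X})$ is a holomorphic disc $\Delta$ of class $\beta_j$ with $\partial\Delta\subset\tor^X$, joined at a single interior node to a genus-$0$ stable map of class $\alpha$; since $\beta_j\cdot D_l=\delta_{jl}$ the node lies on $D_j$, the class-$\alpha$ curve is supported in $\bigcup_l D_l$, and the interior marked point maps to $D=D_i$. Identify $X$ with the fiber $\mathscr{D}_0$ of $E^-\to\proj^1$ over $0$ and use that $E^-\setminus\mathscr{D}_\infty$ is biholomorphic to $X\times\cpx$ with $\mathscr{D}_0=X\times\{0\}$ and $\mathscr{D}_l=D_l\times\cpx$. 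For a suitable Lagrangian torus fiber $\tor^{E^-}$ — one restricting in this chart to a product $\tor^X\times\{|w|=r\}$, which exists after a convenient choice of toric K\"ahler form on $E^-$ and whose precise choice is immaterial for the resulting invariant since, by Lemma \ref{Lem Maslov 4}, there is no codimension-one boundary — one caps $\Delta$ off to the disc $\widetilde\Delta\subset X\times\cpx$ whose $\cpx$-component is the standard degree-one disc onto $\{|w|\le r\}$. Then $\widetilde\Delta$ has class $b_0+b_j$ and meets $\mathscr{D}_0$ only at the node point; gluing it to the class-$\alpha$ curve (now viewed inside $\mathscr{D}_0\cong X$, legitimate because $\alpha$ is a fiber class) and carrying over the two marked points produces an element of $\moduli^{\mathrm{op}}_{1,1}(b_0+b_j+\alpha;D^{E^-},\pt_{\tor^{E^-}})$.

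\textbf{Bijectivity.} Injectivity is clear. For surjectivity, a stable disc $\widetilde\phi$ in the target is, by Lemma \ref{Lem Maslov 4}, a disc $\widetilde\Delta$ of class $b_0+b_j$ joined to a genus-$0$ curve of class $\alpha$; from $(b_0+b_j)\cdot\mathscr{D}_\infty=0$, $(b_0+b_j)\cdot\mathscr{D}_0=1$ and $(b_0+b_j)\cdot\mathscr{D}_l=\delta_{jl}$ it follows that $\widetilde\Delta$ avoids $\mathscr{D}_\infty$, so it lies in $X\times\cpx$, where Cho--Oh's classification \cite{cho06} applied to each factor forces it to have the form $(\Delta',\mathrm{std})$ with $\Delta'$ a basic disc of class $\beta_j$ in $X$. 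The class-$\alpha$ curve is supported in $\bigcup_l\mathscr{D}_l$ (Lemma \ref{Lem alpha}) and attached to $\widetilde\Delta$ at its unique intersection with $\bigcup_l\mathscr{D}_l$, and the interior marked point maps to $D^{E^-}\subset\mathscr{D}_0$, which confines the class-$\alpha$ part to the fiber $\mathscr{D}_0\cong X$. Uncapping yields the unique preimage, and continuity in both directions is routine. Here the extra complex codimension of $D^{E^-}$ over $D$ and the extra real dimension of $\pt_{\tor^{E^-}}$ over $\pt_{\tor^X}$ are precisely what pin down the $\cpx$-location of $\widetilde\Delta\cap\mathscr{D}_0$ and the boundary marked point, so the two $0$-dimensional constrained moduli spaces match point for point.

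\textbf{Kuranishi structures, and the crux.} Over the chart $X\times\cpx$ every relevant stable map factors as $(\phi,\mathrm{std})$, so its linearized deformation complex splits as that of $\phi$, regarded as a stable disc in $X$, plus that of $\mathrm{std}$ into $\cpx$ with boundary on $\{|w|=r\}$, taken relative to the two incidence conditions. The second summand is acyclic: requiring $\mathrm{std}$ to meet $\{w=0\}$ at the prescribed point and to send its boundary marked point to the prescribed point of $\{|w|=r\}$ kills all its infinitesimal deformations and leaves no cokernel — equivalently, the section class $\sigma^-=b_0+b_j+b_\infty$ is rigid and unobstructed, and this persists for the piece $b_0+b_j$. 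Hence $\mathrm{Def}(\widetilde\phi)\cong\mathrm{Def}(\phi)$ and $\mathrm{Obs}(\widetilde\phi)\cong\mathrm{Obs}(\phi)$ canonically, and the finite-dimensional obstruction spaces, the rigidifying data, and the torus-equivariant structures of Fukaya--Oh--Ohta--Ono's construction can be chosen compatibly under the capping map, the $\tor^X$-action on the $X$-side being the restriction of the $\tor^{E^-}$-action preserving $\mathscr{D}_0$. I expect this last point to be the main obstacle: one must verify that passing to the \emph{non}-semi-Fano space $E^-$, for which $b_0+b_j$ is not of minimal Maslov index, creates no genuine new deformations or obstructions — concretely, compute the cohomology of the relevant $\bar\partial$-operator on the pullback of the normal bundle $N_{\mathscr{D}_0/E^-}$ to the capped disc together with its boundary condition, show it vanishes once the incidence conditions are imposed, and check that all auxiliary choices in the Kuranishi construction descend simultaneously to the two moduli spaces.
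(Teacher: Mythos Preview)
Your proposal is correct and follows essentially the same route as the paper: the bijection via capping/uncapping using Lemmas \ref{Lem Maslov-two stable disc}, \ref{Lem Maslov 4}, \ref{Lem alpha} and Cho--Oh's classification, followed by a comparison of Kuranishi charts via the splitting $TE^-|_{\mathscr{D}_0}=T\mathscr{D}_0\oplus N\mathscr{D}_0$ (your product decomposition $(\phi,\mathrm{std})$ on $X\times\cpx$). Your closing worry is unfounded --- the normal-direction summand is indeed acyclic once the incidence conditions are imposed (the disc component of class $b_0+b_j$ is unobstructed by Cho--Oh, and the $N\mathscr{D}_0$-piece of the curve component is surjective because the fiber-class curve moves freely across fibers), so no new deformations or obstructions appear; the paper carries this out by writing down the Kuranishi chart ingredients explicitly and observing that the obstruction bundle on the $E^-$ side can be taken to be literally the same $0\oplus\mathscr{E}^-_1\oplus\cdots\oplus\mathscr{E}^-_l$ coming from the rational-curve components.
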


\begin{proof}
We divide the proof into three parts.\\
\noindent{\bf (A) Virtual dimensions.}
First of all, both sides have virtual dimension zero:
$$ \dim \moduli^{\mathrm{op}}_{1,1} (\beta_j + \alpha) = \mu (\beta_j) + 2 c_1(\alpha) + 2 + 1 + n - 3 = 2 + n.$$
Requiring the interior marked point to pass through $D$ cuts down the dimension by $2$; requiring the boundary marked point to pass through $\pt_{\tor^X}$ further cuts down the dimension by $n$.  Thus the virtual dimension of the LHS of \eqref{open=open_K_spaces} is zero.  For the RHS of \eqref{open=open_K_spaces},
$$\dim  \moduli^{\mathrm{op}}_{1,1} (b_0 + b_j + \alpha) = \mu (b_0) + \mu(b_j) + 2 c_1(\alpha) + 2 + 1 + (n+1) - 3 = 5 + n.$$
Requiring the interior marked point to pass through $D^{E_j^-}$ cuts down the dimension by $4$; requiring the boundary marked point to pass through $\pt_{\tor^{E_j^-}}$ further cuts down the dimension by $n+1$.  Thus the virtual dimension of the RHS of \eqref{open=open_K_spaces} is also zero.

\noindent{\bf (B) Spaces.} In what follows the domain interior marked point of a stable disc is always denoted as $p^{\mathrm{int}}$, and the domain boundary marked point is always denoted as $p^{\mathrm{bdy}}$.

Now we construct a bijection between the left-hand side and the right-hand side of \eqref{open=open_K_spaces}.  In the following we fix a local toric chart $\chi = (\chi_1, \ldots, \chi_n)$ of $X$ which covers the open orbit of $D_j \subset X$, and such that $\chi_1(D_j) = 0$.  Without loss of generality, we may take $\tor^X$ to be the fiber $|\chi_l| = 1$ for all $l$, and $\pt_{\tor^X}$ to be $\chi_l(\pt_{\tor^X}) = 1$ for all $l$.  Correspondingly we have the local chart $(\chi,w)$ of ${E_j^-}$ around the fiber $w=0 \in \proj^1$.  Without loss of generality we take $\tor^{E_j^-}$ to be the fiber $|\chi_l| = |w| = 1$, and $\pt_{\tor^{E_j^-}}$ to be $\chi_l(\pt_{\tor^{E_j^-}}) = w(\pt_{\tor^{E_j^-}}) = 1$ for all $l$.

First consider the easier case $\alpha = 0$.  By Lemma \ref{Lem Maslov-two stable disc}, a stable disc in the LHS is a holomorphic disc $u$ in class $\beta_j$.  The domain is a closed unit disc $\Delta \subset \cpx$.  By using automorphism we may take $p^{\mathrm{int}} = 0$ and $p^{\mathrm{bdy}} = 1$.  In the above chosen local coordinates of $X$, $u$ has the expression
$$u(z) = \left(\conste^{\consti\theta_1} \frac{z-\alpha_0}{1+\bar{\alpha_0} z}, \conste^{\consti\theta_2}, \ldots, \conste^{\consti\theta_n}\right)$$
for some $\alpha_0 \in \Delta$ and $\theta_k \in \real$ for $k = 1,\ldots, n$.  $u$ passes through $D = D_i$ only when $i = j$.  Thus the left-hand side is simply an empty set when $i \neq j$.  When $i = j$, $u(0) \in D_j$ forces $\alpha_0 = 0$, and requiring $u(1) = \pt_{\tor_X}=(1,\ldots,1)$ fixes $\theta_k \cong 0$ for all $k = 1,\ldots,n$.  Thus the left-hand side is the empty set when $i \neq j$, and is a singleton when $i = j$.

On the other side, by Lemma \ref{Lem Maslov 4}, a stable disc in the RHS is a holomorphic disc $\nu$ in class $b_0 + b_j$.  Such discs are also classified by Cho-Oh \cite{cho06}.  Again we use the domain automorphism to fix $p^{\mathrm{bdy}} = 1$ and $p^{\mathrm{int}} = 0$.  Then the disc is of the form
$$w \circ \nu(z) = \conste^{\consti\theta_0} \frac{z-\alpha_1}{1+\bar{\alpha}_1z}, \quad \chi \circ \nu (z) = \left(\conste^{\consti\theta_1} \frac{z-\alpha_2}{1+\bar{\alpha}_2z},\conste^{\consti\theta_2}, \ldots, \conste^{\consti\theta_n}\right),$$
where $\alpha_1,\alpha_2 \in \Delta$ and $\theta_i \in \real$.  $\nu$ never hits $D_i^{E_j^-}$ when $i \neq j$.  When $i = j$, $\nu(0) \in D_j^{E_j^-}$ forces $w = \chi_1 = 0$ when $z = 0$.  Then $\alpha_1 = \alpha_2 = 0$.  Also $\nu(1) = \pt_{\tor^{E_j^-}}$ means $w = \chi_1 = \cdots = \chi_n = 1$ when $z = 1$, which implies $\theta_0 = \cdots = \theta_n = 1$.  Thus the moduli space in the RHS is empty when $i \neq j$, and is a singleton when $i = j$.  This verifies that the LHS matches with the RHS.

Now consider the case $\alpha \neq 0$.  Let $\phi$ be a stable disc bounded by $\tor^X$ in the LHS of \eqref{open=open_K_spaces}.  We associate $\phi$ with a stable disc bounded by $\tor^{E_j^-}$ in the RHS of \eqref{open=open_K_spaces} as follows.  By Lemma \ref{Lem Maslov-two stable disc}, $\phi$ is a holomorphic disc in class $\beta_j$ attached with a rational curve in class $\alpha$ at an interior nodal point.  Let us identify the domain disc component with the closed unit disc $\Delta \subset \cpx$, denote the domain of the rational curve by $C$, and denote $\phi_\Delta := \phi|_{\Delta}, \phi_C:= \phi|_{C}$.  The nodal point corresponds to a point $p^{\mathrm{nod}} \in C$ and a point in $\Delta$.  By using automorphism of $\Delta$ we may assume this point to be $0$ and  $p^{\mathrm{bdy}} = 1$.  Then $\phi_\Delta(0) = \phi_C(p^{\mathrm{nod}})$.  In the chosen local coordinates $\chi$, we have
$$\phi_\Delta(z) = \left(\conste^{\consti\theta_1} \frac{z-\alpha_0}{1+\bar{\alpha_0} z},\conste^{\consti\theta_2}, \ldots, \conste^{\consti\theta_n}\right)$$
for some $\alpha_0 \in \Delta, \theta_l \in \real$ for $l = 1, \ldots, n$.

Since $\phi_C$ has Chern number zero, $\phi(C) \subset \bigcup_l D_l$, and in particular $\phi_\Delta(0) = \phi_C (p^{\mathrm{nod}}) \in \bigcup_l D_l$.  But $\phi_\Delta$ does not hit any toric divisors except $D_j$.  Thus $\phi_\Delta(0) \in D_j$, and $0 \in \Delta$ is the only point which is mapped to $\bigcup_l D_l$ under $\phi_\Delta$.  This forces $\alpha_0 = 0$ in the above expression of $\phi_\Delta$.  Moreover $\phi_\Delta$ maps $z=1$ to $\pt_{\tor^X} = (1, \ldots, 1)$, and this forces $\theta_1 \cong \theta_2 \cong \cdots \cong \theta_n \cong 0$.  As a result, $\phi_\Delta = (z, 1, \ldots, 1)$.
On the other hand $\phi(p^{\mathrm{int}}) \in D_i$.  Suppose $p^{\mathrm{int}}$ lies on the disc component.  Since $p^{\mathrm{int}}$ has to be different from the nodal point, $p^{\mathrm{int}} \neq 0$.  But then $\phi_\Delta(p^{\mathrm{int}}) \not\in \bigcup_l D_l$, and so $p^{\mathrm{int}}$ is not mapped to $D_i$, a contradiction.  Thus $p^{\mathrm{int}}$ has to be located in the rational curve $C$.

We associate to $\phi$ an element $\phi^{E_j^-}$ in the RHS which has the same domain and marked points $p^{\mathrm{bdy}},p^{\mathrm{int}}$ as $\phi$ (the domain is $\Delta$ attached with $C$ at $z=0$).  $\phi^{E_j^-}|_\Delta$ is defined to be $(\phi_\Delta(z),z)$ written in terms of the above chosen local coordinates $(\chi,w)$ of ${E_j^-}$, and $\phi^{E_j^-}|_C := (\phi_C,0)$.  Notice that $\phi^{E_j^-}|_\Delta(0) = (\phi_\Delta(0),0) = (\phi_C(p^{\mathrm{nod}}),0) = \phi^{E_j^-}|_C(p^{\mathrm{nod}})$, and so $\phi^{E_j^-}$ is well-defined.  Moreover since $\phi_C(p^{\mathrm{int}}) \in D_i$, $\phi^{E_j^-}(p^{\mathrm{int}}) = (\phi_C(p^{\mathrm{int}}),0) \in D_i^{E_j^-}$.  Also $\phi^{E_j^-}(p^{\mathrm{bdy}}) = (\phi_\Delta(1),1) = \pt_{\tor^{E_j^-}}$.  This verifies that $\phi^{E_j^-}$ is an element in the RHS.

Now we prove that every element in the RHS of \eqref{open=open_K_spaces} comes from an element from the LHS of \eqref{open=open_K_spaces} in the way we described above.  By Lemma \ref{Lem Maslov 4}, a stable disc $\phi^{E_j^-}$ in $b_0 + b_j + \alpha$ must be a holomorphic disc representing $b_0 + b_j$ attached with a rational curve of Chern number zero representing $\alpha$.  As above, the domain disc component is identified with the unit disc $\Delta\subset\cpx$, and the domain rational curve is denoted by $C$.  The nodal point corresponds to a point $p^{\mathrm{nod}} \in C$ and a point in $\Delta$.  By using automorphism of $\Delta$ we may assume this point to be $0$ and  $p^{\mathrm{bdy}} = 1$.  Then $\phi^{E_j^-}_\Delta(0) = \phi^{E_j^-}_C(p^{\mathrm{nod}})$.  Using Cho-Oh's classification of holomorphic discs \cite{cho06}, in the chosen local coordinates $(w,\chi)$, $\phi^{E_j^-}_\Delta$ is of the form
$$w \circ \phi^{E_j^-}_\Delta(z) = \conste^{\consti\theta_0} \frac{z-\alpha_0}{1+\bar{\alpha}_0 z}, \quad
\chi \circ \phi^{E_j^-}_\Delta (z) = \left(\conste^{\consti\theta_1} \frac{z-\alpha_1}{1-\bar{\alpha_1}z}, \conste^{\consti\theta_2}, \ldots, \conste^{\consti\theta_n}\right).$$

Suppose $p^\mathrm{int}$ lies in the disc component.  Then $\phi^{E_j^-}_\Delta(p^\mathrm{int}) \in D_i^{E_j^-}$.  This happens only when $i=j$, $\alpha_0 = \alpha_1 = 0$.  In such case $\phi^{E_j^-}_\Delta$ hits the union of toric divisors of ${E_j^-}$ only at one point $z=p^{\mathrm{int}}$.  Now $\phi^{E_j^-}_C$ represents the fiber class $\alpha$ with $c_1(\alpha) = 0$, and so by Lemma \ref{Lem alpha} $\phi^{E_j^-}(C) \subset \bigcup_{l=1}^m \mathscr{D}_l$.  In particular $\phi^{E_j^-}_\Delta(0) = \phi^{E_j^-}_C(p^{\mathrm{nod}}) \in \bigcup_{l=1}^m \mathscr{D}_l$.  This forces $p^{\mathrm{int}}$ to coincide with the nodal point, a contradiction.  Thus $p^\mathrm{int}$ must lie in the rational curve $C$.

The image of $\phi^{E_j^-}_C$ lies in a fiber of ${E_j^-} \to \proj^1$.  But since $\phi^{E_j^-}_C(p^\mathrm{int}) \in D_i^{E_j^-}$ which lies in $\mathscr{D}_0$ (the fiber at zero), this forces $\phi^{E_j^-}_C$ to lie in $\mathscr{D}_0$.  Then $\phi^{E_j^-}_C$ is of the form $(0, \phi_C)$ in the local coordinates $(w,\chi)$.  Together with $\phi^{E_j^-}(C) \subset \bigcup_{l=1}^m \mathscr{D}_l$, this means $\phi^{E_j^-}(C) \subset \bigcup_{l=1}^m D^{E_j^-}_l$.  Then $\phi^{E_j^-}_\Delta(0) = \phi^{E_j^-}_C(p^{\mathrm{nod}}) \in \bigcup_{l=1}^n D^{E_j^-}_l$, which happens only when $\alpha_0 = \alpha_1 = 0$.  Moreover $\phi^{E_j^-}_\Delta(1) = (1, \ldots, 1)$, and so $\theta_0 = \cdots = \theta_n = 1$.  Thus $\phi^{E_j^-}_\Delta =  (z, \phi_\Delta(z))$ in the local coordinates $(w, \chi)$, where $\phi_\Delta(z) = (z, 1, \ldots, 1)$.  Thus $\phi^{E_j^-}$ comes from the stable disc $\phi$ in $X$, which is a union of $\phi_\Delta$ and $\phi_C$.

\noindent{\bf (C) Kuranishi Structures.} Now we compare the Kuranishi structures on the both sides of \eqref{open=open_K_spaces}.  Let us have a brief reasoning on why they should have the same Kuranishi structures.  On both sides the disc components are regular, and so the obstructions merely come from the rational curve components in class $\alpha$.  For the curve component of $\phi^{E_j^-}$, since it is free to move from fiber to fiber of ${E_j^-} \to \proj^1$, the obstruction comes from the directions along $X$, and this is identical with the corresponding curve component of $\phi$.  Now consider the deformations.  Due to the boundary point condition, the disc components on both sides cannot be deformed.  For the curve component of $\phi^{E_j^-}$, the interior point condition that it has to pass through $D^{E_j^-}$ kills the deformations in the direction transverse to fibers. Thus $\phi^{E_j^-}$ has the same deformations as $\phi$.  Therefore the corresponding stable discs on both sides have the same deformations and obstructions, and hence the moduli have the same Kuranishi structures.  In what follows, we write down and equate the deformations and obstructions explicitly on both sides.

A Kuranishi structure on $\moduli^{\mathrm{op}}_{1,1}(\beta_j+\alpha; D;\pt_{\tor^X})$ assigns a Kuranishi chart
$$(V_{\mathrm{op}}, \mathscr{E}^-_{\mathrm{op}}, \Gamma_{\mathrm{op}}, \psi_{\mathrm{op}}, s_{\mathrm{op}})$$
around each  $\phi \in \moduli^{\mathrm{op}}_{1,1}(\beta_j+\alpha; D;\pt_{\tor^X})$ which is constructed as follows.  Let $$D_{\phi} \bar{\partial}: W^{1,p}(\mathrm{Dom}(\phi), \phi^*(TX), \tor) \to W^{0,p}(\mathrm{Dom}(\phi), u^*(TX) \otimes \Lambda^{0,1})$$
be the linearized Cauchy-Riemann operator at $\phi$. (Here $\mathrm{Dom}(\phi)$ is the domain of $\phi$.)

\begin{enumerate}
\item
$\Gamma_{\mathrm{op}}$ is the automorphism group of $\phi$, that is, the group of all elements
$$g \in \Aut(\Dom(\phi),p^{\mathrm{int}},p^{\mathrm{bdy}})$$
such that $\phi \circ g = \phi$.  By stability of $\phi$, $\Gamma_{\mathrm{op}}$ is a finite group. (Note that by definition, $g(p^{\mathrm{int}})= p^{\mathrm{int}}, g(p^{\mathrm{bdy}})= p^{\mathrm{bdy}}$.)

\item
The so-called obstruction space $\mathscr{E}^-_{\mathrm{op}}$ is the cokernel of the linearized Cauchy-Riemann operator $D_{\phi} \bar{\partial}$, which is finite dimensional since $D_{\phi} \bar{\partial}$ is Fredholm.  For the purpose of the next step of construction, it is identified (in a non-canonical way) with a subspace of $W^{0,p}(\mathrm{Dom}(\phi), \phi^*(TX) \otimes \Lambda^{0,1})$ as follows.  Denote by $\Delta$ and $S_1, \ldots, S_l$ the disc and sphere components of $\mathrm{Dom}(\phi)$ respectively.   Take non-empty open subsets $W_0 \subset \Delta$ and $W_i \subset S_i$ for $i = 1, \ldots, l$.  Then by unique continuation theorem there exists finite dimensional subspaces $\mathscr{E}^-_i \subset C_0^\infty (W_i, \phi^*(TX)\otimes \Lambda^{0,1})$ such that
$$\mathrm{Im} (D_{\phi} \bar{\partial}) \oplus \mathscr{E}^-_{\mathrm{op}} = W^{0,p}(\mathrm{Dom}(\phi), \phi^*(TX) \otimes \Lambda^{0,1})$$
and $\mathscr{E}^-_{\mathrm{op}}$ is invariant under $\Gamma_{\mathrm{op}}$, where $$\mathscr{E}^-_{\mathrm{op}} := \mathscr{E}^-_0 \oplus \cdots \oplus \mathscr{E}^-_l.$$

\item
$\tilde{V}_{\mathrm{op}}$ is taken to be (a neighborhood of $0$ of) the space of first order deformations $\Phi$ of $\phi$ which satisfies the linearized Cauchy-Riemann equation modulo elements in $\mathscr{E}^-_{\mathrm{op}}$:
$$ (D_{\phi} \bar{\partial}) \cdot \Phi \equiv 0 \,\, \mod \mathscr{E}^-_{\mathrm{op}}. $$
Such deformations may come from deformations of the map or deformations of complex structures of the domain.  More precisely,
$$\tilde{V}_{\mathrm{op}} = V_{\mathrm{op}}^{\mathrm{map}} \times V_{\mathrm{op}}^{\mathrm{dom}}$$
where $V_{\mathrm{op}}^{\mathrm{map}}$ is defined in the following way.  Let $V'_{\mathrm{op},\mathrm{map}}$ be the kernel of the linear map $$[D_{\phi} \bar{\partial}]: W^{1,p}(\mathrm{Dom}(\phi), \phi^*(TX), \tor) \to W^{0,p}(\mathrm{Dom}(\phi), \phi^*(TX) \otimes \Lambda^{0,1})/\mathscr{E}^-_{\mathrm{op}}.$$
Notice that $\Aut(\mathrm{Dom}(\phi),p^{\mathrm{int}},p^{\mathrm{bdy}})$ may not be finite since the domain of $\phi$ may not be stable, and it acts on $V'_{\mathrm{op},\mathrm{map}}$.  Thus its Lie algebra $\mathfrak{g}$ is contained in $V'_{\mathrm{op},\mathrm{map}}$, and we take $V_{\mathrm{op}}^{\mathrm{map}} \subset V'_{\mathrm{op},\mathrm{map}}$ such that
$V'_{\mathrm{op},\mathrm{map}} = V_{\mathrm{op}}^{\mathrm{map}} \oplus \mathfrak{g}$.

$V_{\mathrm{op}}^{\mathrm{dom}}$ is a neighborhood of zero in the space of deformations of the domain rational curve $C$.  Such deformations consists of two types: one is deformations of each stable component (in this genus 0 case, it means movements of special points in each component), and another one is smoothing of nodes between components.  That is,
$$V_{\mathrm{op}}^{\mathrm{dom}} = V_{\mathrm{op}}^{\mathrm{cpnt}} \times V_{\mathrm{op}}^{\mathrm{smth}}$$
where $V_{\mathrm{op}}^{\mathrm{cpnt}}$ is a neighborhood of zero in the space of deformations of components of $C$, and $V_{\mathrm{op}}^{\mathrm{smth}}$ is a neighborhood of zero in the space of smoothing of the  nodes (each node contributes to a one-dimensional family of smoothings).  Each deformation in $V_{\mathrm{op}}^{\mathrm{dom}}$ gives $\Delta \cup \tilde{C}$, where $\Delta$ is a disc with one boundary marked point, and $\tilde{C}$ is a rational curve with one interior marked point, such that $\Delta$ and $\tilde{C}$ intersect at a nodal point.  $\Delta \cup \tilde{C}$ serves as the domain of the deformed map $\Phi$.

\item
$\tilde{s}_{\mathrm{op}}:\tilde{V}_{\mathrm{op}} \to \mathscr{E}^-_{\mathrm{op}}$ is a transversal $\Gamma_{\mathrm{op}}$-equivariant perturbed zero-section of the trivial bundle $\mathscr{E}^-_{\mathrm{op}} \times \tilde{V}_{\mathrm{op}}$ over $\tilde{V}_{\mathrm{op}}$.  By \cite{FOOO1}, this can be chosen to be $\tor$-equivariant.

\item
There exists a continuous family of smooth maps $\rho^{\mathrm{op}}_{\Phi}: (\mathscr{D},\partial\mathscr{D}) \to (X,\tor)$ over $\tilde{V}_{\mathrm{op}} \ni \Phi$ such that it solves the inhomogeneous Cauchy-Riemann equation: $\bar{\partial} \rho^{\mathrm{op}}_{\Phi} = \tilde{s}_{\mathrm{op}}(\Phi).$  Set
$$V_{\mathrm{op}} := \{\Phi \in \tilde{V}_{\mathrm{op}}: \mathrm{ev}_0 (\rho^{\mathrm{op}}_{\Phi}) = \pt_{\tor^X}; \mathrm{ev}_+ (\rho^{\mathrm{op}}_{\Phi}) \in D \}$$
where $\mathrm{ev}_0$ is the evaluation map at $p^{\mathrm{bdy}}$.  Then set $s_{\mathrm{op}} := \tilde{s}_{\mathrm{op}}|_{V_{\mathrm{op}}}$.

\item
$\psi_{\mathrm{op}}$ is a map from $s_{\mathrm{op}}^{-1} (0) / \Gamma_{\mathrm{op}}$ onto a neighborhood of $[\phi] \in \moduli^{\mathrm{op}}_{1,1}(\beta_j+\alpha; D; \mathrm{pt}_{\tor^X})$.

\end{enumerate}

Now comes the key: in Item (2) of the above construction, since the disc component of $\phi$ is unobstructed (that is, the linearized Cauchy-Riemann operator localized to the disc component is surjective), $\mathscr{E}^-_0 = 0$ so that $\mathscr{E}^-_{\mathrm{op}}$ is of the form $$\mathscr{E}^-_{\mathrm{op}} = 0 \oplus \mathscr{E}^-_1 \oplus \cdots \oplus \mathscr{E}^-_l.$$  The analogous statement is also true for the corresponding stable disc $\phi^{E_j^-} \in \moduli^{\mathrm{op}}_{1,1}(b_0+b_j+\alpha; D^{E_j^-};\pt_{\tor^{E_j^-}})$.  With this observation, we argue in the following that $(V_{\mathrm{op}}, \mathscr{E}^-_{\mathrm{op}}, \Gamma_{\mathrm{op}}, \psi_{\mathrm{op}}, s_{\mathrm{op}})$ can be identified as a Kuranishi chart $(V^E_{\mathrm{op}}, \mathscr{E}_{\mathrm{op}}^{-,E}, \Gamma^E_{\mathrm{op}}, \psi^E_{\mathrm{op}}, s^E_{\mathrm{op}})$ around the corresponding stable disc $\phi^{E_j^-}$ bounded by $\tor^{E_j^-} \subset {E_j^-}$.

\begin{enumerate}
\item
$\phi$ and $\phi^{E_j^-}$ have the same automorphism group, that is, $\Gamma_{\mathrm{op}} = \Gamma^E_{\mathrm{op}}$.  This is because the disc component have only one boundary marked point and one interior nodal point and thus has no automorphism, and any automorphism on the rational-curve part of $\phi$ will give an automorphism on the rational-curve part of $\phi^{E_j^-}$, and vice versa.

\item
The disc component of $\phi^{E_j^-}$ is unobstructed.  For the rational curve component $C$ which is mapped into $\mathscr{D}_0 \cong X$, notice that there is a splitting $TE|_{\mathscr{D}_0} = T(\mathscr{D}_0) \oplus N\mathscr{D}_0$ and so $W^{0,p}(C, (\phi^{E_j^-}|_C)^*(TE) \otimes \Lambda^{0,1})$ is equal to
$$
\hspace{35pt} W^{0,p}(C, (\phi^{E_j^-}|_C)^*(T\mathscr{D}_0) \otimes \Lambda^{0,1}) \oplus W^{0,p}(C, (\phi^{E_j^-}|_C)^*(N\mathscr{D}_0) \otimes \Lambda^{0,1})
$$
where the first summand is equal to $W^{0,p}(C, (\phi)^*TX \otimes \Lambda^{0,1})$.

Since the curve component is free to move in the direction of the normal bundle $N\mathscr{D}_0$, we have
$$\Image (D_{\phi^{E_j^-}|_C} \bar{\partial}) \supset W^{0,p}(C, (\phi^{E_j^-}|_C)^*(N\mathscr{D}_0) \otimes \Lambda^{0,1}).$$
Hence
$$\hspace{35pt} \Image (D_{\phi^{E_j^-}} \bar{\partial}) \oplus (0 \oplus \mathscr{E}^-_1 \oplus \cdots \oplus \mathscr{E}^-_l) = W^{0,p}(\mathrm{Dom}(\phi^{E_j^-}), (\phi^{E_j^-})^*(TX) \otimes \Lambda^{0,1}). $$
Thus we may take $\mathscr{E}^{-,E}_{\mathrm{op}} = 0 \oplus \mathscr{E}^-_1 \oplus \cdots \oplus \mathscr{E}^-_l$.

\item
$\tilde{V}^E_{\mathrm{op}}$, $\tilde{s}^E_{\mathrm{op}}$, $\rho^{{E_j^-},\mathrm{op}}_{\Phi^{E_j^-}}$ are defined in the same way as above.
The subspace $\tilde{V}'$ of those deformations $\Phi^{E_j^-} \in \tilde{V}^E_{\mathrm{op}}$ such that the image of the curve component under $\rho^{{E_j^-},\mathrm{op}}_{\Phi^{E_j^-}}$ lies in $\mathscr{D}_0$ is isomorphic to $\tilde{V}_{\mathrm{op}}$, and restrictions of $\tilde{s}^E_{\mathrm{op}}$ and $\rho^{{E_j^-},\mathrm{op}}_{\Phi^{E_j^-}}$ to $\tilde{V}'$ gives choices of $\tilde{s}_{\mathrm{op}}$ and $\rho^{\mathrm{op}}$ respectively.  Moreover,
$$ V^E_{\mathrm{op}} := \{\Phi^{E_j^-} \in \tilde{V}^E_{\mathrm{op}}: \mathrm{ev}_0 (\rho^{{E_j^-},\mathrm{op}}_{\Phi^{E_j^-}}) = \pt_{\tor^{E_j^-}}; \mathrm{ev}_+ (\rho^{{E_j^-},\mathrm{op}}_{\Phi^{E_j^-}}) \in D^{E_j^-} \} $$
lies in $\tilde{V}_{\mathrm{op}}$.  Thus $V^E_{\mathrm{op}} = V_{\mathrm{op}}$, and $s^E_{\mathrm{op}} := \tilde{s}^E_{\mathrm{op}}|_{V^E_{\mathrm{op}}} = s_{\mathrm{op}}$.
Then
$\psi_{\mathrm{op}}$ can be identified as a map $\psi^E_{\mathrm{op}}$ which maps $(s^E_{\mathrm{op}})^{-1} (0) / \Gamma_{\mathrm{op}}^{E}$ onto a neighborhood of $[\phi^{E_j^-}] \in \moduli^{\mathrm{op}}_{1,1}(b_0+b_j+\alpha; D^{E_j^-};\pt_{\tor^{E_j^-}})$.
\end{enumerate}
In conclusion, a Kuranishi neighborhood of $\phi$ can be identified with a Kuranishi neighborhood of $\phi^{E_j^-}$.  Thus the Kuranishi structures on $\moduli^{\mathrm{op}}_{1,1}(\beta; D, \pt_{\tor^X})$ and that on $\moduli^{\mathrm{op}}_{1,1}(b_0+b_j+\alpha; D^{E_j^-}, \pt_{\tor^{E_j^-}})$ are identical. This completes the proof of the proposition.
\end{proof}

\subsection{Second step}
Now we come to the second main step, which is the following theorem:

\begin{thm} \label{Thm {E_j^-}-open-closed}
Assume the notations as in Theorem \ref{Thm open-closed}, $v_i \in F(v_j)$ and $D_l \cdot \alpha = 0$ whenever $v_l \not\in F(v_j)$.  Then
$$ n^{{E_j^-}}_{1,1}(b_0 + b_j + \alpha; D^{{E_j^-}}, [\pt]_{\tor^{{E_j^-}}}) = \langle D^{E_j^-}, [\pt]_{{E_j^-}} \rangle^{E_j^-,\,\sigma_j^-\,\reg}_{0,2,\sigma_j^-+\alpha}. $$
\end{thm}

By Equation \eqref{eqn open_E},
$$n^{{E_j^-}}_{1,1}(b_0 + b_j + \alpha; D^{{E_j^-}}, [\pt]_{\tor^{{E_j^-}}}) = [\moduli^{\mathrm{op}}_{1,1}(b_0+b_j+\alpha; D^{E_j^-}, \pt_{\tor^{E_j^-}})]_{\mathrm{virt}} \in H^0(D^{E_j^-} \times \{\pt_{\tor^{E_j^-}}\}, \rat) = \rat$$
and
$$\langle D^{{E_j^-}}, [\pt]_{{E_j^-}} \rangle^{E_j^-,\sigma_j^- \,\reg}_{0,2,\sigma_j^-+\alpha} = [\moduli^{\mathrm{cl},\sigma_j^- \,\reg}_{0,2,\sigma_j^-+\alpha}(D^{E_j^-},\pt)]_{\mathrm{virt}} \in H^0(\pt, \rat) = \rat$$
is the $\sigma_j^-$-regular closed GW invariants given in Definition \ref{def:regGW}.  In order to prove the equality between open and closed invariants in Theorem \ref{Thm {E_j^-}-open-closed}, it suffices to exhibit an isomorphism between the Kuranishi structures:

\begin{prop} \label{Prop open-closed mod}
Assume the condition in Theorem \ref{Thm {E_j^-}-open-closed}.  Fix a point $\pt \in \tor^{E_j^-} \subset {E_j^-}$.  Then
\begin{equation}\label{open=closed_K_spaces}
\moduli^{\mathrm{op}}_{1,1}(b_0+b_j+\alpha; D^{E_j^-}, \pt_{\tor^{E_j^-}}) \cong \moduli^{\mathrm{cl},\,\sigma_j^-\,\reg}_{0,2,\sigma_j^- + \alpha}(D^{E_j^-},\pt).
\end{equation}
as Kuranishi spaces.
\end{prop}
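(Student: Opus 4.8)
The plan is to prove Proposition~\ref{Prop open-closed mod} along the same three-step template used for Proposition~\ref{prop open mod compare}: \textbf{(A)} both moduli spaces have virtual dimension zero; \textbf{(B)} there is a natural bijection between the underlying spaces, obtained by holomorphically extending (``capping off'') the disc component of an open stable disc to the section-class sphere $C_0$ of Lemma~\ref{lem:reg=>unique}; \textbf{(C)} this bijection upgrades to an isomorphism of Kuranishi structures. For \textbf{(A)}: the open side has virtual dimension $0$, as computed in the proof of Proposition~\ref{prop open mod compare}. The closed moduli $\moduli^{\mathrm{cl}}_{0,2,\sigma^-+\alpha}(E^-)$ has complex virtual dimension $(n+1)+c_1(\sigma^-+\alpha)+2-3=n+3$, using $c_1(\sigma^-)=\frac{1}{2}(\mu(b_0)+\mu(b_j)+\mu(b_\infty))=3$ (as $\sigma^-=b_0+b_j+b_\infty$) and $c_1(\alpha)=0$; requiring one marked point to lie on $D^{E^-}$ (real codimension $4$) and the other to equal $\pt$ (real codimension $2n+2$) cuts the real virtual dimension from $2n+6$ to $0$. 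Since $\moduli^{\mathrm{cl},\,\sigma^-\,\reg}_{0,2,\sigma^-+\alpha}$ is a union of connected components of $\moduli^{\mathrm{cl}}_{0,2,\sigma^-+\alpha}$, it inherits the obstruction theory and also has virtual dimension $0$.

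For \textbf{(B)} I would first treat $\alpha=0$: by the analysis in the proof of Proposition~\ref{prop open mod compare} the open side is empty unless $i=j$, when it is a single disc $\nu(z)=(z,z,1,\dots,1)$ in suitable local toric coordinates $(w,\chi_1,\dots,\chi_n)$ (with $\tor^{E^-}=\{|w|=|\chi_l|=1\}$, $\pt_{\tor^{E^-}}=(1,\dots,1)$); on the closed side $C_0$ is the unique sphere in class $\sigma^-$ through $\pt$ and $\mathscr{D}_0\cap\mathscr{D}_j$ (Lemma~\ref{lem:reg=>unique}), and since $C_0$ meets $\mathscr{D}_0$ only at $\mathscr{D}_0\cap\mathscr{D}_j$, the constraint $D^{E^-}=\mathscr{D}_0\cap\mathscr{D}_i$ is satisfiable exactly when $i=j$, again a single point. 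For $\alpha\neq0$: Lemma~\ref{Lem Maslov 4} presents an open stable disc as a disc $\nu$ in class $b_0+b_j$ meeting a rational curve $C$ in class $\alpha$ at one node with $p^{\mathrm{int}}\in C$; by Cho--Oh's classification together with the incidence conditions (verbatim as in Proposition~\ref{prop open mod compare}) one normalizes $\nu(z)=(z,z,1,\dots,1)$, node at $z=0$, $p^{\mathrm{bdy}}$ at $z=1$, with $C$ supported in $\mathscr{D}_0$ (Lemma~\ref{Lem alpha}). Because $\nu$ has this explicit algebraic form, it extends to a holomorphic map $\widehat\nu\colon\proj^1\to E^-$, $\widehat\nu(z)=(z,z,1,\dots,1)$; intersecting $\widehat\nu$ with all toric divisors of $E^-$ shows $[\widehat\nu]=\sigma^-$, that $\widehat\nu$ passes through $\mathscr{D}_0\cap\mathscr{D}_j$ at $z=0$, through $\pt$ at $z=1$, and through $\mathscr{D}_\infty$ at $z=\infty$, and that $\widehat\nu|_{\{|z|\le1\}}=\nu$ while $\widehat\nu|_{\{|z|\ge1\}}$ represents the capping class $b_\infty$. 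Attaching $C$ at $z=0$ and placing the marked points at $z=1$ (onto $\pt$) and $p^{\mathrm{int}}$ (onto $D^{E^-}$) yields an element of $\moduli^{\mathrm{cl},\,\sigma^-\,\reg}_{0,2,\sigma^-+\alpha}(D^{E^-},\pt)$ whose sphere component is the $C_0$ of Lemma~\ref{lem:reg=>unique}. Conversely, Lemma~\ref{lem:reg=>unique} writes a closed stable map as $C_0\cup C'$; a direct analysis (using its intersection numbers, or uniqueness) shows $C_0(z)=(z,z,1,\dots,1)$, so $C_0\cap\tor^{E^-}=\{|z|=1\}$ is a circle, and cutting $C_0$ along it recovers $\nu=C_0|_{\{|z|\le1\}}$; the marked point at $\pt=C_0(1)$ becomes $p^{\mathrm{bdy}}$, and the other is forced onto $C'$ since $C_0$ meets $\mathscr{D}_0\supset D^{E^-}$ only at the node. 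These assignments are mutually inverse.

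For \textbf{(C)}, the Kuranishi charts would be matched exactly as in part (C) of the proof of Proposition~\ref{prop open mod compare}. The automorphism groups agree: $\nu$ (resp.\ $C_0$) carries a single relevant special point (the node), so $\Aut$ of each stable object equals $\Aut$ of the shared $\alpha$-curve. The obstruction spaces agree because (i) the disc $\nu$ is unobstructed (Cho--Oh), and (ii) the section-class sphere $C_0$ is unobstructed: from $0\to T\proj^1\to C_0^*TE^-\to N_{C_0/E^-}\to0$ with $H^1(T\proj^1)=H^1(\mathcal{O}(2))=0$ and $N_{C_0/E^-}\cong\mathcal{O}(1)\oplus\mathcal{O}^{\oplus(n-1)}$ (the vertical normal bundle, whose splitting type is read off from the $\cpx^*$-weights on $T_pX$ for $p\in D_j$: one weight-$(+1)$ and $n-1$ weight-$0$ directions), one gets $H^1(C_0,C_0^*TE^-)=0$. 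Hence on both sides the obstruction bundle is supported on the $\alpha$-curve, which sits in $\mathscr{D}_0\cong X$ with normal bundle trivial along fibre classes, so it equals the \emph{same} space $H^1(C,C^*TX)$ on both sides. The deformation spaces likewise match term by term: the boundary point condition makes $\nu$ rigid, the conditions (marked point at $\pt$, node on $\mathscr{D}_0$) make $C_0$ rigid, and the deformations of the $\alpha$-curve together with the one node-smoothing parameter are the same data on both sides; imposing the same incidence conditions then gives identical Kuranishi maps $s_{\mathrm{op}}$.

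The main obstacle will be Step~(C): proving that the capped-off section $C_0$ carries no obstruction even though $E^-$ is \emph{not} semi-Fano (equivalently, that the ``unobstructed disc'' on the open side corresponds under capping off to an ``unobstructed rigid section'' on the closed side), and carrying out the bookkeeping that identifies the deformation data --- including node-smoothings --- on the nose rather than merely up to an isomorphism of virtual classes. As in the remark following Theorem~\ref{Thm open-closed}, the argument is purely geometric: capping off matches the two deformation--obstruction problems directly, so once Steps (A)--(B) are in place the identification of Kuranishi structures should be essentially forced.
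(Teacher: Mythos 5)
Your proposal is correct and follows essentially the same route as the paper: the same case analysis ($\alpha=0$ with $i\neq j$, then $i=j$, then $\alpha\neq 0$), the same bijection obtained by analytically extending the Cho--Oh disc $(z,(z,1,\dots,1))$ to the $\sigma^-$-sphere of Lemma~\ref{lem:reg=>unique} and restricting back, and a Kuranishi-chart identification modeled on Proposition~\ref{prop open mod compare}, which is exactly what the paper invokes (and largely omits). Your added regularity check for the section sphere agrees with the paper's assertion in Lemma~\ref{lem:singleton}, and your placement of the second marked point at $z=1$ mapping to $\pt$ is the correct reading of the construction.
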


The proof is very similar to that of Proposition \ref{prop open mod compare}: we first prove that the two sides are equal as sets, and then compare the Kuranishi charts and show that they can be chosen to be the same.

First, let us consider the case $\alpha = 0$ and $i \neq j$.
We have seen in the proof of Proposition \ref{prop open mod compare} that the LHS of \eqref{open=closed_K_spaces} is the empty set when $\alpha = 0$ and $i \neq j$.  For the right-hand side, we have the following lemma:
\begin{lem} \label{lem:delta}
For $i\not=j$,
the moduli space $\moduli^{\mathrm{cl},\sigma_j^- \reg}_{0,2,\sigma_j^-}(D_i^{E_j^-},\pt)$ is empty.  In particular we have
$$ \langle D_i^{E_j^-}, \pt \rangle^{E_j^-, \sigma_j^- \reg}_{0,2,\sigma_j^-} = 0.$$
\end{lem}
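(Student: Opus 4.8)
The plan is to show that the moduli space $\moduli^{\mathrm{cl},\sigma^-\,\reg}_{0,2,\sigma^-}(D_i^{E^-},\pt)$ is empty as a set when $i\neq j$; the vanishing $\langle D_i^{E^-},\pt\rangle^{E^-,\sigma^-\,\reg}_{0,2,\sigma^-}=0$ then follows automatically, being the integral of $1$ over the virtual class of the empty space. First I would apply Lemma \ref{lem:reg=>unique} with $\alpha=0$: it shows that every element of this moduli space is supported on the unique holomorphic sphere $C$ of class $\sigma^-$ passing through $\pt$ and $\cD_0\cap\cD_j$ (there being no further non-contracted components since $\alpha=0$), where $\pt$ is taken in the open torus orbit of $E^-$. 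Thus the image of any such stable map is the single reduced irreducible curve $C$, of class $\sigma^-$, which must pass through $D_i^{E^-}$ as well as through $\pt$.

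Next I would use two elementary facts from the explicit toric description of $E^-$ recalled in Section \ref{Sect Seidel_rep}. First, $D_i^{E^-}=\cD_i\cap\cD_0$: by construction $D_i^{E^-}$ is the copy of the toric divisor $D_i\subset X$ inside the fiber $\cD_0\cong X$ of $E^-\to\bP^1$ over $0$, and inside that fiber $D_i$ is cut out precisely by the toric divisor $\cD_i$ of $E^-$; in particular $C$ meets $\cD_i$. Second, $\sigma^-\cdot\cD_i=0$ for $i\neq j$: representing $\sigma^-$ by the zero section of $E^-$ (Definition \ref{defn section}) with constant value a generic point $p$ of $D_j$, this section curve meets $\cD_l$ if and only if $p\in D_l$, which for generic $p\in D_j$ holds, among $l\in\{1,\dots,m\}$, only for $l=j$; so this representative is disjoint from $\cD_i$ and hence $\sigma^-\cdot\cD_i=0$.

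These two facts contradict one another once we recall $\pt\in C$: since $\pt$ lies in the open torus orbit, $C\not\subset\cD_i$, yet $C$ meets $\cD_i$ (at the image of the marked point sent into $D_i^{E^-}\subset\cD_i$), so the intersection number $C\cdot\cD_i$ of the irreducible curve $C$ with the effective divisor $\cD_i$ on the smooth projective variety $E^-$ is strictly positive; this contradicts $C\cdot\cD_i=\sigma^-\cdot\cD_i=0$. Therefore no such stable map exists, $\moduli^{\mathrm{cl},\sigma^-\,\reg}_{0,2,\sigma^-}(D_i^{E^-},\pt)=\emptyset$, and the invariant vanishes.

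I do not anticipate any real obstacle. Once Lemma \ref{lem:reg=>unique} is invoked, the argument reduces to the two pieces of toric bookkeeping above — that $D_i^{E^-}$ genuinely sits inside the divisor $\cD_i$, and that $\sigma^-\cdot\cD_i$ vanishes for $i\neq j$ — and both follow directly from the fan of $E^-$ and from the definition of its zero section, with no analysis of Kuranishi structures needed since the moduli space is honestly empty.
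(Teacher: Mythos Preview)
Your proof is correct and follows essentially the same approach as the paper: both invoke Lemma \ref{lem:reg=>unique} to reduce to the unique holomorphic sphere $C$ in class $\sigma^-$ through $\pt$, and then argue that $C$ cannot meet $D_i^{E^-}$ for $i\neq j$. The paper concludes by noting that $C\cap\cD_0$ is a single point lying in $D_j^{E^-}$, whereas you reach the same contradiction via the intersection number $\sigma^-\cdot\cD_i=0$; these are equivalent pieces of toric bookkeeping.
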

\begin{proof}
By Lemma \ref{lem:reg=>unique}, a rational curve in $\moduli^{\mathrm{cl},\sigma_j^- \reg}_{0,2,\sigma_j^-}(D_i^{E_j^-},\pt)$ is a holomorphic sphere representing $\sigma_j^-$ passing through $\pt$ in the open toric orbit.  Such a sphere is unique and intersect $\cD_0$ at only one point which lies in $D_j^{E_j}$.  It never intersects $D_i^{E_j^-}$ for $i\not=j$.  Hence the moduli space is empty.
\end{proof}

By the above lemma, when $\alpha = 0$ and $i \neq j$, both sides are the empty set, and we have
$n^{{E_j^-}}_{1,1}(b_0 + b_j; D_i^{{E_j^-}}, [\pt]_{\tor^{{E_j^-}}}) = \langle D_i^{{E_j^-}}, [\pt]_{{E_j^-}} \rangle^{E_j^-,\,\sigma_j^-\,\reg}_{0,2,\sigma_j^-} = 0$.

\noindent\textbf{Proof of Proposition \ref{Prop open-closed mod} when $\alpha \neq 0$ or $i = j$.}  First we construct a bijection between the two sides of \eqref{open=closed_K_spaces}.

For a stable disc in $\moduli^{\mathrm{op}}_{1,1} (b_0 + b_j + \alpha) \times_{{E_j^-} \times \tor^{E_j^-}} (D_i^{E_j^-} \times \{\pt\})$, we denote the domain interior marked point by $p^{\mathrm{int}}$ and the domain boundary marked point by $p^{\mathrm{bdy}}$.  For a rational curve in $\moduli^{\mathrm{cl},\,\sigma_j^-\,\reg}_{0,2,\sigma_j^- + \alpha}  \times_{{E_j^-} \times {E_j^-}} (D_i^{E_j^-} \times \{\pt\})$, we denote by $p_0$ the marked point mapped to $D_i^{E_j^-}$, and $p_1$ the marked point mapped to $\pt$.

By relabeling $\{D_l\}_{l=1}^m$ if necessary, we assume $j = 1$.  Let us fix a local toric chart $\chi = (\chi_1, \ldots, \chi_n)$ of $X$ which covers the open orbit of $D_1 \subset X$, and such that $\chi_l|_{D_l} = 0$ (by relabeling $\{D_l\}_{l=1}^m$ if necessary). Correspondingly we have the local chart $(\chi,w)$ of ${E_j^-}$ around the fiber $w=0 \in \proj^1$.  Without loss of generality we take $\tor^{E_j^-}$ to be the fiber $|\chi_l| = |w| = 1$ for all $l$, and $\pt \in \tor^{E_j^-}$ to be $\chi_l(\pt) = w(\pt) = 1$ for all $l$.

Consider the case when $\alpha = 0$ and $i=j$.  We have seen in the proof of Proposition \ref{prop open mod compare} that the LHS of \eqref{open=closed_K_spaces} is a singleton when $i = j$.  When $i=j$, it is the disc $w = z, \chi = (z,1, \ldots, 1)$ on $\Delta \ni z$.

On the RHS of \eqref{open=closed_K_spaces}, by Lemma \ref{lem:reg=>unique} the element is the unique holomorphic sphere $\rho$ representing $\sigma_j^-$ passing through $\pt$ and $D_j^{E_j^-}$.  Since $\mathscr{D}_\infty \cdot \sigma_j^-  = 1$, there is a unique point $p_\infty \in \proj^1$ with $\rho(p_\infty) \in \mathscr{D}_\infty$.  By composing with an automorphism of $\proj^1$, we may assume $p_0 = 0$, $p_1 = 1$ and $p_\infty = \infty$.  Consider $\chi_l \circ \rho, w \circ \rho: \proj^1 \to \proj^1$ for $l = 1, \ldots, n$.  Since $\mathscr{D}_l \cdot \sigma_j^- = 0$ except when $l = 0, j, \infty$, $\chi_l \circ \rho$ are constants for $l = 2, \ldots, n$.  Thus $\chi_l \circ \rho = \chi_l \circ \rho (p_1) = 1$ for $l = 2, \ldots, n$.  Moreover $\chi_1 \circ \rho (z)$ and $w \circ \rho (z)$ have only one zero at $0$ and one pole at $\infty$, and so they are equal to $c z$ for some $c \in \cpx$.  But $\rho (p_1) = \pt$ implies $\chi_1 \circ \rho (p_1) = 1$, and this forces $c=1$.  Thus $\rho$ is $(w,\chi) = (z,(z,1,\ldots,1))$.  The curve is regular and so the obstruction is trivial.  This proves that for the case when $i=j$ and $\alpha=0$, we have the following
\begin{lem} \label{lem:singleton}
The moduli space $\moduli^{\mathrm{cl},\,\sigma_j^-\,\reg}_{0,2,\sigma_j^-}(D_j^{E_j^-}, \pt)$ is a singleton, and we have
$$ \langle \pt, D_j^{E_j^-} \rangle^{E_j^-,\,\sigma_j^-\,\reg}_{0,2,\sigma_j^-} = 1.$$
\end{lem}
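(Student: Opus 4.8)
The plan is to read everything off from the uniqueness and regularity of the minimal section class $\sigma^-$ of ${E^-}$. First I would invoke Lemma~\ref{lem:reg=>unique} with $\alpha = 0$: it says that any element of $\moduli^{\mathrm{cl},\,\sigma^-\,\reg}_{0,2,\sigma^-}(D_j^{E^-},\pt)$ consists of the unique holomorphic sphere representing $\sigma^-$ passing through $\pt$ and $\cD_0 \cap \cD_j$, together with the components representing $\alpha = 0$, i.e.\ no further components -- indeed a genus-$0$ constant bubble carrying at most the two marked points would be unstable, so no bubbling occurs. Since $\cD_0 \cap \cD_j$ is exactly the toric stratum $D_j^{E^-}$ (the copy of $D_j$ sitting inside the fiber $\cD_0 \cong X$), the incidence condition $\rho(p_0)\in D_j^{E^-}$ is automatically satisfied, so set-theoretically $\moduli^{\mathrm{cl},\,\sigma^-\,\reg}_{0,2,\sigma^-}(D_j^{E^-},\pt)$ is precisely this single non-deformable sphere $\rho$.

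For concreteness I would, exactly as in the discussion preceding the statement, normalize by $\Aut(\proj^1)$ so that $p_0 = 0$, $p_1 = 1$, and the unique preimage $p_\infty$ of $\cD_\infty$ (unique because $\cD_\infty \cdot \sigma^- = 1$) equals $\infty$. From the intersection numbers $\cD_l \cdot \sigma^- = 0$ for all $l \notin \{0,j,\infty\}$ one gets that $\chi_l \circ \rho$ is constant, hence identically $1$ by the condition $\rho(p_1)=\pt$, for $l \neq j$; while $\chi_j\circ\rho$ and $w\circ\rho$ are degree-one rational functions with their only zero at $0$ and only pole at $\infty$, hence of the form $z \mapsto cz$, and $\rho(p_1)=\pt$ forces $c = 1$. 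Thus $\rho$ is the curve $(w,\chi)=(z,(z,1,\dots,1))$ in the chosen toric chart (taking $j=1$; the general case is only a relabeling).

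It then remains to check that this single point is a \emph{reduced} point of the Kuranishi space -- whose virtual dimension is $\dim_{\cpx}{E^-} + c_1(\sigma^-) + 2 - 3 - 2 - (n+1) = (n+1) + 3 + 2 - 3 - 2 - (n+1) = 0$ -- so that integrating $1$ over it gives $1$. Concretely I would show $H^1(\proj^1, \rho^* T{E^-}) = 0$ using the relative tangent sequence $0 \to \rho^* T({E^-}/\proj^1) \to \rho^* T{E^-} \to \mathcal{O}_{\proj^1}(2) \to 0$: one has $H^1(\mathcal{O}_{\proj^1}(2)) = 0$, and the $\cpx^*$-weights at the fixed locus $D_j$ determining $\sigma^-$ (all weights on $T D_j$ being $0$ and the weight on $N_{D_j/X}$ being $1$) give $\rho^* T({E^-}/\proj^1) \cong \mathcal{O}_{\proj^1}^{\oplus(n-1)} \oplus \mathcal{O}_{\proj^1}(1)$, which also has vanishing $H^1$. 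This is just the standard regularity of the minimal section class underlying the construction of Seidel representations. Hence the Kuranishi chart at $\rho$ is $(\{\pt\},0,\{1\},\mathrm{id},0)$, the moduli space is a single reduced point, and $\langle \pt, D_j^{E^-}\rangle^{{E^-},\,\sigma^-\,\reg}_{0,2,\sigma^-} = 1$. I expect the only substantive step to be this last one -- the concrete verification, via the toric weight data, that $\rho^* T{E^-}$ has no higher cohomology; the set-theoretic description is handed to us by Lemma~\ref{lem:reg=>unique} together with the explicit parametrization above.
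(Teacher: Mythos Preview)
Your proof is correct and follows essentially the same route as the paper: invoke Lemma~\ref{lem:reg=>unique} with $\alpha=0$ to get a single sphere, normalize the three special points, read off $\rho(z)=(z,(z,1,\dots,1))$ from the intersection numbers $\cD_l\cdot\sigma^-$, and conclude from regularity. Where the paper simply asserts ``the curve is regular and so the obstruction is trivial,'' you supply the verification via the relative tangent sequence and the weight decomposition $\rho^*T(E^-/\proj^1)\cong\mathcal{O}^{\oplus(n-1)}\oplus\mathcal{O}(1)$, which is a welcome elaboration but not a different strategy.
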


In particular there is a bijection between the LHS and RHS of \eqref{open=closed_K_spaces}.

Now consider the case when $\alpha \neq 0$.  Let $\phi_{\mathrm{op}}^{E_j^-}$ be a stable disc in the LHS.  From the proof of Proposition \ref{prop open mod compare}, $\phi_{\mathrm{op}}^{E_j^-}$ is a holomorphic disc representing $b_0 + b_j$ attached with a rational curve $C$ representing $\alpha$ at exactly one nodal point, where the interior marked point $p^{\mathrm{int}}$ is located in $C$, and the map $\phi_{\mathrm{op}}^{E_j^-}|_\Delta$ on the disc component $\Delta \ni z$ is given by $(w, \chi) = (z, (z, 1, \ldots, 1))$.  Such a map from $\Delta$ to ${E_j^-}$ analytically extends to a map $\phi_{\mathrm{cl},\proj^1}: \proj^1 \to {E_j^-}$, where $\infty \in \proj^1$ is mapped to $w = \infty$ and $\chi_l = 1$ for $l = 2, \ldots, n$, which is the point $\pt \in \mathscr{D}_\infty$.  Then $\phi_{\mathrm{cl},\proj^1}$ attached with the same rational curve $C$, with marked points $p_0 = p^{\mathrm{int}}$ in the rational curve and $p_1 = \infty \in \Dom (\phi_{\mathrm{cl},\proj^1})$, is an element in the moduli on the RHS.  This gives a map from the LHS to the RHS of \eqref{open=closed_K_spaces}.

Now we show that this map is invertible.  By Lemma \ref{lem:reg=>unique}, an element in $\moduli^{\mathrm{cl},\,\sigma_j^-\,\reg}_{0,2,\sigma_j^- + \alpha}(D_i^{E_j^-},\pt)$ is the unique holomorphic sphere $\rho_{\proj^1}: \proj^1 \to {E_j^-}$ representing $\sigma_j^-$ passing through $\pt$ and $D_j^{E_j^-}$ union with a rational curve $\rho_C: C \to {E_j^-}$ representing $\alpha$.  By the above argument, $(w,\chi)\circ \rho_{\proj^1}(z) = (z,(z,1,\ldots,1))$ where $\rho_{\proj^1}(0)$ is the nodal point.  Then by restricting  $\rho_{\proj^1}$ to $\Delta \subset \proj^1$, we obtain a stable disc in the LHS.  This gives the inverse of the above map.

The comparison of Kuranishi structures is very similar to the proof of Proposition \ref{prop open mod compare} and thus omitted (cf. \cite{Chan10,LLW10}).

\section{Computing closed invariants by Seidel representations} \label{sect compute Seidel}

In this section we prove Theorems \ref{thm delta_intro}, \ref{thm W equal}, \ref{thm conv}, \ref{thm Seidel to gen intro} as promised in the introduction.

\subsection{Calculations}
We have equated the open GW invariants appearing in the disc potential of $X$ with certain two-point closed GW invariants in the Seidel spaces $E_j^-$ associated to $X$.  Computing these closed GW invariants is challenging. Firstly, these closed GW invariants are more refined closed GW invariants, namely $\sigma^{-}$-regular GW invariants as defined in Definition \ref{def:regGW}. Also, because the infinity section class $\sigma^-_\infty \in H_2^{\eff}(E_j^-)$ may have $c_1(\sigma^-_\infty) < 0$, $E_j^-$ is {\em not} semi-Fano.  This is because the $\cpx^*$-action induced by $-v_j$ can have a fixed locus whose normal bundle has total weights less than $-2$. Thus, many tools such as the mirror theorem do not apply to our setting. 

Our computation of open GW invariants involves a number of techniques. Observe that the Seidel space $E_j$ associated to $v_j$ is always semi-Fano because every fixed locus in $X$ has total weights not less than $-2$ (the fixed locus $D_j$ has weight $-1$ which is already minimum), see \cite[Lemma 3.2]{G-I11}.  In this case the mirror theorem for $E_j$ is much easier to handle.  In particular, the normalized Seidel element $S^\circ_j$ corresponding to $E_j$ has been computed by Gonz\'alez-Iritani \cite{G-I11} and can be explicitly expressed in terms of the Batyrev element $B_j$:

\begin{prop}[\cite{G-I11}, Theorem 3.13 and Lemma 3.17 and \cite{G-I12}, Remark 4.18] \label{prop Seidel Batyrev}
$$ B_j(\check{q}(q)) = \exp (g_j(\check{q}(q))) S^\circ_j(q) = D_j - \sum_{i=1}^{m} g_{i,j}(\check{q}(q)) D_i,$$
where
$$g_{i,j}(\check{q}):=\sum_{d}\frac{(-1)^{(D_j\cdot d)}(D_j,d)(-(D_i\cdot d)-1)!}{\prod_{p\neq i} (D_p\cdot d)!}\check{q}^d,$$
where the summation is over all effective curve classes $d \in H_2^\text{eff}(X)$ satisfying
$-K_X\cdot d=0$, $D_i\cdot d<0$ and  $D_p\cdot d \geq 0$ for all $p\neq i$.
\end{prop}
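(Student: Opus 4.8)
The plan is to deduce the proposition from the computation of Seidel elements of toric manifolds due to Gonz\'alez--Iritani, whose key feature is that the Seidel space $E_j$ attached to $v_j$ -- unlike $E_j^-$ -- is semi-Fano, so that the toric mirror theorem of Section \ref{sect tor mir} applies to $E_j$. First I would record this semi-Fano property. From the fan of $E_j$ (rays $(0,v_l)$ for $l=1,\dots,m$, together with $(1,0)$ and $(-1,v_j)$) one sees that the torus-invariant curves of $E_j$ are the fiber curves of $X$ (which have $c_1\geq 0$ since $X$ is semi-Fano) and the curves coming from fixed loci of the $v_j$-action on $X$; the latter have $c_1^{E_j}=2+(\text{total normal weight})$, and since every fixed locus of the $v_j$-action has total normal weight $\geq -2$ -- the minimum $-1$ being attained at $D_j$ -- we get $-K_{E_j}\cdot C\geq 0$ for all $C$, i.e. $-K_{E_j}$ is nef. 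This is \cite[Lemma 3.2]{G-I11}. Hence Theorems \ref{thm toric mir thm} and \ref{Giv_thm2} hold for $E_j$, and $QH^*(E_j)$ admits its Batyrev presentation; the normalized Seidel element $S^\circ_j$ is, by \cite[Theorem 3.13]{G-I11}, recovered from the Batyrev element of the fiber divisor $\cD_0$ of $E_j$ via this presentation, which in turn is read off the $J$-function -- equivalently the $I$-function -- of $E_j$.

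Concretely, I would extract $S^\circ_j$ from the $I$-function of $E_j$ as follows. By Proposition \ref{Prop sigma gen} the effective classes of $E_j$ are $k\sigma_j+\alpha$ with $k\geq 0$, $\alpha\in H_2^{\eff}(X)$, and since the zero section satisfies $c_1(\sigma_j)=1$ (its normal bundle is $\mathcal{O}(-1)\oplus\mathcal{O}^{\oplus(n-1)}$, forced by the weight $-1$ at $D_j$), only $k=0,1$ feed into the $z^{-1}$- and $z^{-2}$-coefficients of the $I$-function. Using $\cD_0\cdot\sigma_j=\cD_\infty\cdot\sigma_j=1$, $\cD_j\cdot\sigma_j=-1$ and $\cD_l\cdot\sigma_j=0$ for $l\neq j$, the relevant hypergeometric factor splits as
\[
I^{E_j}_{\sigma_j+\alpha}\;=\;\frac{\cD_j+(D_j\cdot\alpha)\,z}{(\cD_0+z)(\cD_\infty+z)}\cdot I^{X}_{\alpha}\big|_{D_l\mapsto\cD_l},
\]
with $I^{X}_{\alpha}$ the factor $I_d$ ($d=\alpha$) of Definition \ref{defn I} for $X$ (here $D_l$ is replaced by the corresponding divisor $\cD_l$ of $E_j$). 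Taking the $z^{-2}$-coefficient of the $\check q^{\sigma_j}$-part of the $I$-function of $E_j$ (keeping the prefactor $\exp(\tfrac1z\sum_k(\log\check q_k)[\cD_{n+k}])$), then passing to K\"ahler parameters via the inverse mirror map of $E_j$, identifying the Novikov variables $\check q^\alpha$ and $\check q^{\sigma_j}$ with those of $X$ and with $q^{\sigma_j}$, and restricting to the fiber (so $\cD_l\mapsto B_l$), one obtains $S^\circ_j(q)$: the denominator $(\cD_0+z)(\cD_\infty+z)$ produces the section prefactor, the numerator $\cD_j+(D_j\cdot\alpha)z$ produces the divisor $D_j$ and the factor $(D_j\cdot\alpha)$ in $g_{i,j}$, and the discrepancy between the mirror map of $E_j$ and that of $X$ produces the normalizing exponential $\exp(g_j)$ and the functions $g_{i,j}$. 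This is precisely \cite[Theorem 3.13 and Lemma 3.17]{G-I11}, with the explicit form of $g_{i,j}$ as in \cite[Remark 4.18]{G-I12}, and gives $B_j(\check q(q))=\exp(g_j(\check q(q)))S^\circ_j(q)=D_j-\sum_i g_{i,j}(\check q(q))D_i$.

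The main difficulty is the bookkeeping in the last step -- matching the K\"ahler and Novikov parameters of $E_j$ with those of $X$ together with the section variable $q^{\sigma_j}$, tracking the contribution of the $\exp((\cdot)/z)$ prefactor to the $z^{-2}$-coefficient, and correctly applying the inverse mirror map of $E_j$ -- and it is exactly here that the semi-Fano property of $E_j$ enters, both to make the mirror theorem available and to ensure that only the classes $k\sigma_j+\alpha$ with $k\leq 1$ contribute. Since this analysis is carried out in detail in \cite{G-I11,G-I12}, I would present the proof by recalling the relevant statements from those references, highlighting only the geometric inputs above: semi-Fano-ness of $E_j$, $c_1(\sigma_j)=1$, and the split form of $I^{E_j}_{\sigma_j+\alpha}$.
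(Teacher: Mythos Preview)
Your proposal is appropriate: the paper itself gives no proof of this proposition---it is stated purely as a citation of \cite[Theorem 3.13, Lemma 3.17]{G-I11} and \cite[Remark 4.18]{G-I12}, and is used as a black box throughout Section \ref{sect compute Seidel}. Your sketch correctly reconstructs the argument from those references: the semi-Fano property of $E_j$ (Lemma 3.2 of \cite{G-I11}), the application of the toric mirror theorem to $E_j$, the decomposition of $I^{E_j}_{\sigma_j+\alpha}$ using $c_1(\sigma_j)=1$ and the intersection numbers $\cD_0\cdot\sigma_j=\cD_\infty\cdot\sigma_j=1$, $\cD_j\cdot\sigma_j=-1$, and the extraction of $S^\circ_j$ from the appropriate coefficient. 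This is precisely the route taken by Gonz\'alez--Iritani, so your proposal is correct and matches the cited source; there is nothing further to compare against in the present paper.
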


By Theorem \ref{Thm open-closed}, the open invariants are equal to the closed invariants $\langle \iota_* D_i, [\pt] \rangle^{E_j^-,\,\sigma_j^-\,\reg}_{0,2,\sigma_j^-+\alpha}$, where $v_i \in F(v_j)$ and $\alpha \in H_2^{\eff,c_1=0}(X)$ is such that $D_l \cdot \alpha = 0$ for $v_l\notin F(v_j)$.  To compute them it is useful to express $D_i$ in terms of the Seidel elements $S_l$'s.


\begin{prop} \label{prop:li}
For every $i$, $\{B_i\} \cup \{B_l: g_l \neq 0 \}$ is a linearly independent set in $H^*(X,\rat)$.
\end{prop}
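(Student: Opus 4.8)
The plan is to reduce linear independence of the Batyrev elements to linear independence of the corresponding divisor classes $[D_l]$, and then to a combinatorial statement about the fan polytope $P = \mathrm{conv}\{v_1,\dots,v_m\}$. The key preliminary fact is that each Batyrev element has the form $B_l = [D_l] + (\text{higher order in }\check q)$, i.e. $B_l|_{\check q = 0} = [D_l]$: this is immediate from the definition of $B_l$ together with the fact that $g_l$ (and hence each $\partial g_l/\partial\log\check q_k$) has no constant term, so the mirror map $q(\check q)$ is the identity to first order and $B_{n+k}|_{\check q=0} = [D_{n+k}]$, $B_l|_{\check q=0} = \sum_k (D_l\cdot\Psi_k)[D_{n+k}] = [D_l]$. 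Consequently, any relation $\sum_{l\in S} a_l B_l = 0$ with $a_l\in\mathbb{Q}$, where $S := \{i\}\cup\{l : g_l\neq 0\}$, specializes at $\check q=0$ to $\sum_{l\in S} a_l[D_l] = 0$; so it suffices to show that $\{[D_l] : l\in S\}$ is linearly independent in $H^2(X,\mathbb{Q})$.

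The geometric heart of the argument is the claim: \emph{if $g_l\neq 0$, then $v_l$ is not a vertex of $P$}. Indeed, $g_l\neq 0$ means there is an effective class $d\in H_2^{\eff}(X)$ with $c_1(d)=0$, $D_l\cdot d<0$ and $D_p\cdot d\geq 0$ for all $p\neq l$. The toric relation $\sum_{p=1}^m (D_p\cdot d)\,v_p = 0$ then gives $v_l = \sum_{p\neq l}\lambda_p v_p$ with $\lambda_p := (D_p\cdot d)/(-(D_l\cdot d))\geq 0$, and $\sum_{p\neq l}\lambda_p = (c_1(d)-D_l\cdot d)/(-(D_l\cdot d)) = 1$ precisely because $c_1(d)=0$. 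Hence $v_l$ is a genuine convex combination of the other ray generators, so it cannot be a vertex of $P$. (The converse also holds, via Lemma \ref{lem:G-I}, but is not needed.) Therefore every $l\notin S$ has $v_l$ a vertex of $P$, and so $\{v_l : l\notin S\}$ contains every vertex of $P$ except possibly $v_i$.

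The remaining input is an elementary convex-geometry lemma: for a full-dimensional polytope $P$ containing the origin in its interior (which holds here since $\Sigma$ is complete), the vertices of $P$ still span the ambient vector space after deleting any single vertex. If not, some nonzero $\nu$ annihilates all vertices but one, say $v$; set $t := \langle\nu,v\rangle$. If $t=0$ then $\nu$ annihilates the affine hull of the vertices, which is $N_\mathbb{R}$, a contradiction; if $t\neq 0$ then $P$ lies in the slab between $\{\langle\nu,\cdot\rangle=0\}$ and $\{\langle\nu,\cdot\rangle=t\}$, so no interior point of $P$ can lie on $\{\langle\nu,\cdot\rangle=0\}$, contradicting $0\in\mathrm{int}\,P$. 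Applying this with the vertex $v_i$ (and doing nothing if $v_i$ is already a non-vertex), we conclude $\{v_l : l\notin S\}$ spans $N_\mathbb{R}$. Since the $\mathbb{Q}$-linear relations among $[D_1],\dots,[D_m]$ are exactly $\sum_l\langle\nu,v_l\rangle[D_l]=0$ for $\nu\in M_\mathbb{Q}$, any relation $\sum_{l\in S}a_l[D_l]=0$ corresponds to a $\nu\in M_\mathbb{Q}$ with $\langle\nu,v_l\rangle=0$ for all $l\notin S$, forcing $\nu=0$ and hence all $a_l=0$. This gives the linear independence of $\{[D_l]:l\in S\}$, and therefore of $\{B_i\}\cup\{B_l:g_l\neq 0\}$.

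I expect the main obstacle to be the claim in the second paragraph — identifying which indices satisfy $g_l\neq 0$, and in particular using the hypothesis $c_1(d)=0$ to get that the coefficients $\lambda_p$ sum to exactly $1$ rather than merely being nonnegative; this is what upgrades ``$v_l$ lies in the cone spanned by the other rays'' to ``$v_l$ lies in their convex hull,'' which is the precise statement needed. The convex-geometry lemma is routine but should be stated with care because it is used in the slightly nonstandard form of deleting one vertex.
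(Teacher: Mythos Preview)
Your proof is correct and takes essentially the same approach as the paper: reduce to the linear relations among the $[D_l]$'s, use that $g_l\neq 0$ forces $v_l$ off the vertex set of the fan polytope $P$, and then invoke that the vertices of a full-dimensional polytope with $0$ in its interior still span after deleting any one of them. Your version is more self-contained, proving the last two facts directly rather than citing \cite{G-I11}.

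One small slip worth fixing: the sentence ``every $l\notin S$ has $v_l$ a vertex of $P$'' is the converse of what you proved and is not justified by your argument (indeed you explicitly say the converse is not needed). What you actually need --- and what \emph{does} follow from the one-sided implication $g_l\neq 0\Rightarrow v_l$ non-vertex --- is only the containment $\{v_l:l\notin S\}\supseteq\{\text{vertices of }P\}\setminus\{v_i\}$, which is exactly the clause you write immediately afterward; the rest of the proof then goes through unchanged.
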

\begin{proof}
It can be seen from the fan polytope of $X$.  Since $X$ is semi-Fano, the generators $v_l$ of rays lie on the boundary of the fan polytope, and those $v_l$ with $g_l \neq 0$ are not the vertices of the fan polytope by \cite[Proposition 4.3]{G-I11}.  Since the number of vertices is at least $n+1$, the number of $v_l$'s with $g_l \neq 0$ is no more than $m-n-1$.  Moreover the only relations among the $B_l$'s (regarded as elements in a vector space) are the linear relations, and all of them involve elements outside $\{B_l: g_l \neq 0 \}$.  Thus $\{B_l: g_l \neq 0 \}$ is linearly independent.  Every linear relation involves more than two vertices, and hence there is no linear relation involving only $B_i$ and $B_l$'s with $g_l \neq 0$.
\end{proof}

\begin{prop} \label{Prop D-B}
We have
$$D_i = \tilde{B}_i + \sum_{l=1}^m (\hat{D}_i \cdot g_l(\check{q}(q))) \tilde{B}_l$$
as divisors (where $\tilde{B}_l$'s are the extended Batyrev elements in Definition \ref{def:extB}).  Thus
$[D_i] = B_i + \sum_{l=1}^m (\hat{D}_i \cdot g_l(\check{q}(q))) B_l$
as elements in $\QH^*(X)$.
\end{prop}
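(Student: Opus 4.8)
The plan is to establish the divisor identity $D_i = \tilde{B}_i + \sum_{l=1}^m (\hat{D}_i \cdot g_l(\check{q}(q))) \tilde{B}_l$ by using the description of the extended Batyrev elements $\tilde{B}_l$ as pushforwards of the coordinate vector fields $\partial/\partial\log\check{Q}_l$ under the extended mirror map, and then projecting to $QH^*(X)$ via Proposition \ref{Prop [B]}. First I would recall from Definition \ref{def:extB} that $\tilde{B}_l = \sum_{p=1}^m \frac{\partial \log Q_p(\check Q)}{\partial \log \check Q_l} D_p$, so that the matrix $\left(\frac{\partial \log Q_p}{\partial \log \check Q_l}\right)_{p,l}$ is exactly the Jacobian of the extended mirror map. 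Since the extended mirror map has the form $Q_l(\check Q) = \check Q_l \exp(-g_l(\check q(\check Q)))$ from Definition \ref{Def ext}, I would compute this Jacobian directly: $\frac{\partial \log Q_p}{\partial \log \check Q_l} = \delta_{pl} - \frac{\partial g_p(\check q(\check Q))}{\partial \log \check Q_l}$. The key point is that $g_p$ depends on $\check Q$ only through $\check q$, so by the chain rule $\frac{\partial g_p}{\partial \log \check Q_l} = \sum_k \frac{\partial g_p}{\partial \log \check q_k}\frac{\partial \log \check q_k}{\partial \log \check Q_l}$, and using $\check q_k = \check Q^{-v_{n+k}}\check Q_{n+k}$ one gets $\frac{\partial \log \check q_k}{\partial \log \check Q_l} = D_{n+k}\cdot\beta_l$ (the pairing of $\Psi_k$-type data with $D_l$), which is precisely $\hat{D}_l$ applied to $g_p$ in the sense of Definition \ref{defn_diff_operator}. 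Thus $\frac{\partial g_p(\check q(\check Q))}{\partial \log \check Q_l} = \hat{D}_l \cdot g_p(\check q)$, and inverting/rearranging the relation $\tilde B_l = D_l - \sum_p (\hat D_l\cdot g_p) D_p$ is what I need.

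The cleanest route is to avoid literally inverting an $m\times m$ matrix and instead verify the claimed identity directly. Substituting the proposed formula for $D_i$, the right-hand side is
$$\tilde B_i + \sum_{l=1}^m (\hat D_i\cdot g_l)\tilde B_l = \Big(D_i - \sum_p (\hat D_i\cdot g_p)D_p\Big) + \sum_l (\hat D_i\cdot g_l)\Big(D_l - \sum_p (\hat D_l\cdot g_p)D_p\Big),$$
so the claim reduces to showing that the second-order terms cancel, i.e. that $\sum_l (\hat D_i\cdot g_l)(\hat D_l\cdot g_p) = 0$ for every $p$, to the order we work (this is a statement about the Hessian of the mirror map, and should follow from the fact that $g_p$ only involves curve classes $d$ with $c_1(d)=0$ while composing two such derivative operators lands in a strictly deeper part of the Novikov filtration — more precisely, one should recognize that the correct statement is an exact identity of power series obtained by the Lagrange-inversion structure of the inverse mirror map, not a truncation). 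Here I would be careful: the honest assertion is that the $\tilde B_l$ written in the $D_p$-basis form a change of basis whose inverse is given by the displayed formula, and this is a formal consequence of the semigroup grading — the derivative $\hat D_l$ raises the Novikov degree in the $c_1=0$ directions, so the Neumann series $D_i = \sum_{k\ge 0}(\text{(matrix with entries }\hat D_\bullet\cdot g_\bullet))^k \tilde B_i$ converges formally; the first-order truncation displayed in the proposition is exact only after one checks the higher terms vanish, which uses $g_l(\check q(q))$ being built from the inverse mirror map. I would therefore restructure: interpret $\hat D_i\cdot g_l(\check q(q))$ as already evaluated at the inverse mirror map, use Corollary \ref{cor:g}/Lemma \ref{lem:G-I} to control supports, and show the higher-order compositions telescope.

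The main obstacle I anticipate is precisely this bookkeeping of the higher-order terms: making rigorous the claim that the first-order expression is an \emph{exact} identity (not merely a leading-order approximation) among divisor-valued functions on $\tilde{\moduli}^{\mathrm{mir}}$. The resolution should come from observing that $D_i$ is a \emph{constant} (cohomology class), so applying the vector field $\hat D_i$ to the identity $\check Q_l(Q) = Q_l\exp(g_l(\check q(q(Q))))$ and using $\tilde B_l \mapsto \partial/\partial\log\check Q_l$ under the dictionary of Definition \ref{defn_diff_operator} turns the problem into the tautology that the pushforward of $\partial/\partial\log Q_i$ equals $\partial/\partial\log Q_i$; chasing the chain rule through $Q\mapsto q\mapsto\check q\mapsto\check Q$ and collecting terms yields exactly the stated coefficients with no remainder. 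Once the divisor identity is established, the second assertion $[D_i] = B_i + \sum_l (\hat D_i\cdot g_l(\check q(q)))B_l$ in $QH^*(X)$ follows immediately by applying the projection $H^2_\cpx(X,\tor)\to H^2(X,\cpx)$ and invoking Proposition \ref{Prop [B]}, which identifies $[\tilde B_l|_{\moduli^{\mathrm{mir}}}]$ with $B_l$.
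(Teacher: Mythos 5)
Your final argument is correct and is essentially the paper's own proof: the paper obtains the identity directly from the formula $\log\check{Q}_l(Q)=\log Q_l+g_l(\check{q}(q(Q)))$ for the extended inverse mirror map together with the description of $\tilde{B}_l$ as the pushforward of $\partial_{\log\check{Q}_l}$, so the coefficients $\delta_{il}+\hat{D}_i\cdot g_l(\check{q}(q))$ are exactly the Jacobian entries $\partial\log\check{Q}_l/\partial\log Q_i$ and there is no remainder to control. Your middle detour (inverting the forward Jacobian and hoping that $\sum_l(\hat{D}_i\cdot g_l)(\hat{D}_l\cdot g_p)$ vanishes) is indeed not the right statement, but you correctly discard it; note also the small slip that $\partial\log\check{q}_k/\partial\log\check{Q}_l$ equals $D_l\cdot\Psi_k$, not $D_{n+k}\cdot\beta_l$, though this does not affect your final chain-rule argument.
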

\begin{proof}
This follows directly from the definition of the extended mirror map $\log Q_l(\check{Q}) = \log \check{Q}_l - g_l(\check{q}(\check{Q}))$ and definition of the extended Batyrev elements as push forward of the basis $\{D_1,\ldots,D_m\} \subset H^2(X,T)$ via the differential of the extended mirror map.
\end{proof}

By Propositions \ref{prop Seidel Batyrev}, the (normalized) Seidel elements can be taken to be the divisors\footnote{These were defined to be the lifts of Seidel elements in \cite{G-I12}.} $S_l^\circ = \exp (-g_l(\check{q}(q))) \tilde{B}_l$. Then by Proposition \ref{Prop D-B}, we have
$$D_i = \exp (g_i(\check{q}(q))) S_i^\circ + \sum_{l=1}^m (\hat{D}_i \cdot g_l(\check{q}(q))) \exp (g_l(\check{q}(q))) S_l^\circ $$
as divisors.  Then
\begin{equation} \label{eq:main}
\begin{aligned}
&\langle \iota_* D_i, [\pt] \rangle^{E_j^-,\,\sigma_j^-\,\reg}_{0,2,\sigma_j^-+\alpha}\\
=&\exp (g_i(\check{q}(q))) \langle \iota_* S^\circ_i, [\pt] \rangle^{E_j^-,\,\sigma_j^-\,\reg}_{0,2,\sigma_j^-+\alpha} + \sum_{l=1}^m (\hat{D}_i \cdot g_l(\check{q}(q))) \exp (g_l(\check{q}(q))) \langle \iota_* S^\circ_l, [\pt] \rangle^{E_j^-,\,\sigma_j^-\,\reg}_{0,2,\sigma_j^-+\alpha}.
\end{aligned}
\end{equation}


\begin{prop} \label{prop:delta}
We have
\begin{equation}\label{eqn:2pt_reg_inv}
\sum_\alpha q^\alpha \langle \iota_* S^\circ_i, [\pt] \rangle^{E_j^-,\,\sigma_j^-\,\reg}_{0,2,\sigma_j^-+\alpha} = \delta_{ij}.
\end{equation}
\end{prop}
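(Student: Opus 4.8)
The plan is to evaluate the generating function $\sum_\alpha q^\alpha \langle \iota_* S^\circ_i, [\pt] \rangle^{E_j^-,\,\sigma_j^-\,\reg}_{0,2,\sigma_j^-+\alpha}$ by recognizing it as (a component of) a Seidel-type invariant of $E_j^-$, and then applying the composition law for Seidel representations (Proposition \ref{Cor Seidel}) to reduce it to a computation involving the trivial action. First I would observe that, by Definition \ref{defn:seidel_elt}, the normalized Seidel element $S^\circ_i$ of $X$ is itself a sum $\sum_a \sum_{\gamma} q^\gamma \langle \phi_a^{E_i} \rangle_{0,1,\sigma_i+\gamma}\phi^a$ over curves in the Seidel space $E_i$. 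Thus the left-hand side of \eqref{eqn:2pt_reg_inv}, after expanding $\iota_* S^\circ_i$ as a cohomology insertion in $E_j^-$, becomes a count of configurations in $E_j^-$ that ``see'' both the section class $\sigma_j^-$ and (through the insertion $\iota_* S^\circ_i$) the section class $\sigma_i$ of $E_i$. The natural geometric home for such a count is the fiber bundle $\hat E$ over $\proj^1\times\proj^1$ built from the two commuting $\cpx^*$-actions generated by $-v_j$ and $v_i$ — or more precisely the degeneration family of \eqref{eq:degfam} adapted to our pair of actions.

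The key steps, in order, would be: (1) Rewrite $\langle \iota_* S^\circ_i, [\pt] \rangle^{E_j^-,\,\sigma_j^-\,\reg}_{0,2,\sigma_j^-+\alpha}$ using the definition of $S^\circ_i$, producing a three-point-type invariant on $\hat E$ with insertions $[\pt]$, $D_i^{E_j^-}$ (the divisor cutting out the relevant fiber), and the point class dual to $\sigma_i$; one must be careful that the $\sigma_j^-$-regular constraint selects exactly the component where $\sigma_j^-$ is represented by an irreducible sphere, just as in Lemma \ref{lem:reg=>unique}. (2) Apply the degeneration/composition argument of Seidel--McDuff, exactly as used to prove Proposition \ref{Cor Seidel}, to the family $\hat E \to \proj^1\times\proj^1$ degenerating the diagonal: this identifies the invariant with the corresponding invariant for the composed action $\rho_3 = (\text{action of } v_i) * (\text{action of } -v_j)$. (3) When $i=j$ the composition is the trivial action, and the relevant Seidel element is just $q^{\sigma_0}$ for a section $\sigma_0$ of the product $X\times\proj^1\to\proj^1$; here the invariant collapses — by Lemma \ref{lem:singleton} the underlying $\sigma^-$-regular moduli space over the trivial bundle is a single point with no obstruction in the $\alpha=0$ sector, and an argument parallel to Proposition \ref{prop open mod compare}/\ref{Prop open-closed mod} forces $\alpha=0$ for any nonzero contribution, giving the value $1$. (4) When $i\neq j$, the fixed locus $D_i$ of $X$ with respect to $-v_j$ does not meet the unique $\sigma_j^-$-sphere (cf. Lemma \ref{lem:delta}), so after the degeneration the constrained moduli space is empty and the invariant vanishes. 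Combining (3) and (4) yields $\delta_{ij}$.

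The main obstacle I anticipate is step (2): making the degeneration argument rigorous in the presence of the $\sigma_j^-$-regularity constraint and the divisor insertion $D_i^{E_j^-}$, since the standard composition law for Seidel elements is stated for one-point invariants of the total Seidel space, not for these more refined two-point, $d_0$-regular invariants. One has to check that the regular component behaves well under the degeneration — i.e. that the limit of a $\sigma_j^-$-regular configuration is again a configuration containing an irreducible sphere in the appropriate broken section class, and conversely that no unwanted components of the central-fiber moduli space contribute. An alternative, perhaps cleaner route, which I would pursue if the degeneration bookkeeping becomes unwieldy, is to argue entirely within $E_j^-$: use Lemma \ref{lem:reg=>unique} to pin down the sphere component representing $\sigma_j^-$ (it passes through $\pt$ and $\cD_0\cap\cD_j$ and the remaining components live in $\cD_0\cong X$), then interpret the insertion $\iota_* S^\circ_i$ via the divisor/string-type relations for the Batyrev elements (Propositions \ref{prop Seidel Batyrev}, \ref{Prop D-B}) together with the multiplicative relations \eqref{eqn Batyrev multi}, reducing \eqref{eqn:2pt_reg_inv} to the already-established vanishing/singleton statements (Lemmas \ref{lem:delta}, \ref{lem:singleton}) in the $\alpha=0$ sector and a dimension count killing the $\alpha\neq 0$ sectors. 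Either way, the essential input is that the only $\sigma_j^-$-regular curve meeting $D_i^{E_j^-}$ forces $i=j$ and $\alpha=0$.
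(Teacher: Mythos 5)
Your overall strategy coincides with the paper's: the actual proof runs the Seidel--McDuff degeneration family \eqref{eq:degfam} for the pair of commuting actions generated by $v_i$ and $-v_j$, restricted to the $\sigma_j^-$-regular pieces, and the difficulty you single out (compatibility of the regularity constraint with the degeneration) is precisely what the paper addresses: it takes $\cF(\pt)^{\reg}$ to be the union of those connected components of the family whose central fibres are the $\sigma_j^-$-regular components, obtaining a restricted degeneration formula which equates $\sum_\alpha q^\alpha\langle \iota_* S^\circ_i,[\pt]\rangle^{E_j^-,\,\sigma_j^-\,\reg}_{0,2,\sigma_j^-+\alpha}$ with $\sum_\alpha q^\alpha\langle X_0,X_z,X_{z'},\pt\rangle^{E_{v_i-v_j},\,\reg}_{0,4,\sigma_i+\sigma_j^-+\alpha}$, the $E_{v_i}$-side factor being recognized as the Seidel element by the divisor equation together with the semi-Fano property of $E_{v_i}$ (only section classes with $c_1=1$ contribute). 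Your $i=j$ endgame (trivial bundle $X\times\proj^1$, value $1$ in the $\alpha=0$ sector and $0$ otherwise) also agrees with the paper.

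The genuine gap is in your step (4). For $i\neq j$ the statement \eqref{eqn:2pt_reg_inv} is \emph{not} a termwise vanishing: Lemma \ref{lem:delta} only covers $\alpha=0$, and for $\alpha\neq 0$ the moduli spaces $\moduli^{\mathrm{cl},\sigma_j^-\,\reg}_{0,2,\sigma_j^-+\alpha}(D_i^{E_j^-},\pt)$ are in general nonempty, since the $\alpha$-components of a $\sigma_j^-$-regular configuration lie in $\cD_0\cong X$ and can meet $D_i^{E_j^-}$; indeed by Theorem \ref{Thm open-closed} these invariants equal $n_{1,1}(\beta_j+\alpha;D_i,[\pt])$, which are typically nonzero when $v_i\in F(v_j)$. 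So for $i\neq j$ the identity expresses a cancellation in the generating series, in which the $q$-dependence hidden inside $S^\circ_i$ is essential, and your appeal to Lemma \ref{lem:delta} ``after the degeneration'' does not supply the needed argument. What is required, and what the paper proves, is the vanishing of $\langle X_0,X_z,X_{z'},\pt\rangle^{E_{v_i-v_j},\,\reg}_{0,4,\sigma_i+\sigma_j^-+\alpha}$ on the composed Seidel space: dimension counting forces $c_1(\sigma_i+\sigma_j^-+\alpha)=2$, while the sphere component through the generic point $\pt$ represents a section class meeting $\cD_0$ and $\cD_\infty$ once each and, by the balancing condition (since $v_0^E+v_\infty^E=(v_i-v_j,0)\neq 0$ when $i\neq j$), at least one further toric divisor, so its Chern number is at least $3$ --- a contradiction. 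Supplying this Chern-number/balancing argument (or an equivalent one) is the missing step in your proposal.
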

\begin{proof}
The idea is to use the degeneration family, which is the key to derive the composition law $S_{v_i-v_j} = S_{v_i} * S_{-v_j}$ of Seidel representation, and restrict it to those connected components of the moduli which contribute to $\langle \iota_* S^\circ_i, [\pt] \rangle^{E_j^-,\,\sigma_j^-\,\reg}_{0,2,\sigma_j^-+\alpha}$.  We use the degeneration due to McDuff \cite{McDuff_seidel}; degenerations for Seidel representations were also extensively studied in \cite[Section 29]{FOOO-spec}.

Consider the degeneration family of $E_{v_i-v_j}$ to a union of $E_{v_i}$ and $E_{-v_j}$ along $X$ (see Equation \eqref{eq:degfam}).  It gives a degeneration formula as follows.  By the construction of McDuff \cite[Sections 2.3.2 and 4.3.3]{McDuff_seidel}, there is a family $\cF$ of moduli spaces over the disc whose generic fiber is $\cM_{0,4,\sigma_i + \sigma_j^- + \alpha}^{E_{v_i-v_j}}$ and whose fiber at zero is $\cF_0 = \bigcup_{s_1 + s_2=\sigma_i + \sigma_j^- + \alpha} \cM^{E_{v_i}}_{0,3,s_1} \times_X \cM^{E_{-v_j}}_{0,3,s_2}$.  Let $\pt$ be a generic point chosen such that in the degeneration, $\pt$ lies in the open toric orbit of $E_{-v_j}$.  Let $X_0, X_z, X_{z'}$ be fibers of $E_{v_i-v_j} \to \bP^1$ for $0,z,z' \in \bP^1$ (which are isomorphic to $X$) such that in the degeneration, $X_0, X_z \subset E_{v_i}$ are the fibers of $E_{v_i} \to \bP^1$ at $0, \infty \in \bP^1$ and $X_{z'} \subset E_{-v_j}$ is a fiber of $E_{-v_j} \to \bP^1$ at a generic point.  Taking the fiber product with $X_0$, $X_z$, $X_{z'}$ and the generic point $\pt$, we get a family $\cF(\pt)$ whose generic fiber is
$\cM_{0,4,\sigma_i + \sigma_j^- + \alpha}^{E_{v_i-v_j}}(X_0,X_z,X_{z'},\pt)$
and whose fiber at zero is
$$\cF_0(\pt) = \bigcup_{s_1 + s_2=\sigma_i + \sigma_j^- + \alpha} \cM^{E_{v_i}}_{0,3,s_1}(X_0,X_z) \times_X \cM^{E_{-v_j}}_{0,3,s_2}(X_{z'},\pt).$$

Let $\{\Phi_l\}$ be a basis of $H^*(X)$ and $\{\Phi^l\}$ be the dual basis with respect to the Poincar\'e pairing.
Then the degeneration formula in \cite[Sections 2.3.2 and 4.3.3]{McDuff_seidel} gives
$$
\begin{aligned}
&\langle [\pt] \rangle_{0,1,\sigma_i + \sigma_j^- + \alpha}^{E_{v_i-v_j}} = \langle X_0, X_z, X_{z'}, [\pt] \rangle_{0,4,\sigma_i + \sigma_j^- + \alpha}^{E_{v_i-v_j}}\\
=& \sum_{\substack{s_1 + s_2=\sigma_i + \sigma_j^- + \alpha\\l}}\langle X_0,X_z,\iota_*\Phi_l \rangle_{0,3,s_1}^{E_{v_i}} \langle X_{z'},\iota_*\Phi^l,[\pt] \rangle^{E_{-v_j}}_{0,3,s_2} = \sum_{\substack{s_1 + s_2=\sigma_i + \sigma_j^- + \alpha\\l}}\langle \iota_*\Phi_l \rangle_{0,1,s_1}^{E_{v_i}} \langle \iota_*\Phi^l,[\pt] \rangle^{E_{-v_j}}_{0,2,s_2},
\end{aligned}
$$
where the first and last equality follows from the divisor equation (a section class intersects a fiber class once).  The left-hand side is one of the terms of $\pairing{S_{v_i-v_j}}{[\pt]}$, while the right-hand side are terms appearing in $\pairing{S_{v_i} * S_{-v_j}}{[\pt]}$.  The degeneration formula is the main ingredient in deriving the composition law $S_{v_i-v_j} = S_{v_i} * S_{-v_j}$.

Each fiber of $\cF(\pt)$ is compact and has finitely many connected components.  We denote by $\cF_0(\pt)^{\sigma_j^- \,\reg}$ the union of those connected components of $\cF_0(\pt)$ which contain a rational curve with a sphere component in $E_{-v_j}$ representing $\sigma_j^-$:
\begin{align*}
\cF_0(\pt)^{\sigma_j^- \,\reg} &= \bigcup_{s_1 + s_2=\sigma_i + \sigma_j^- + \alpha} \cM^{E_{v_i}}_{0,3,s_1}(X_0,X_z) \times_X \cM^{E_{-v_j},\sigma_j^- \,\reg}_{0,3,s_2}(X_{z'},\pt) \\
&= \bigcup_{s_1 + s_2=\sigma_i + \sigma_j^- + \alpha} \left(\cM^{E_{v_i}}_{0,3,s_1}(X_0,X_z) \times_{E_{v_i}} \cD^{E_{v_i}}_\infty\right) \times_{E_{-v_j}} \cM^{E_{-v_j},\sigma_j^- \,\reg}_{0,3,s_2}(X_{z'},\pt).
\end{align*}
The virtual cycle of the above expression is (locally) the zeroes of $(s_1,s_2)$ (modding out finite automorphisms), where $s_1,s_2$ are multi-sections of the first and second factors respectively.  The zeroes of $s_1$ give the virtual cycle $[\cM^{E_{v_i}}_{0,3,s_1}(X_0,X_z,\cD^{E_{v_i}}_\infty)]_\virt$ of the first factor, which is a cycle in $\cD^{E_{v_i}}_\infty \cong \cD_0^{E_{-v_j}}$, fiber product with the second factor $\cM^{E_{-v_j},\sigma_j^- \,\reg}_{0,3,s_2}(X_{z'},\pt)$.  Then the zeroes of $s_2$ gives the virtual cycle
\begin{align*}
[\cF_0(\pt)^{\sigma_j^- \, \reg}]_\virt 
= \sum_{s_1 + s_2=\sigma_i + \sigma_j^- + \alpha} \left[\cM^{E_{-v_j},\sigma_j^- \,\reg}_{0,3,s_2}(\iota_* [\cM^{E_{v_i}}_{0,3,s_1}(X_0,X_z,\cD^{E_{v_i}}_\infty)]_\virt, X_{z'},\pt)\right]_\virt.
\end{align*}
We take $\cF(\pt)^\reg\subset \cF(\pt)$ to be union of those connected components whose fibers at zero are components of $\cF_0(\pt)^{\sigma_j^- \,\reg}$.  A generic fiber $\cM_{0,4,\sigma_i + \sigma_j^- + \alpha}^{E_{v_i-v_j},\reg}(X_0,X_z,X_{z'},\pt)$ is a union of those components of $\cM_{0,4,\sigma_i + \sigma_j^- + \alpha}^{E_{v_i-v_j}}(X_0,X_z,X_{z'},\pt)$ which contain a rational curve with one sphere component passing through $\pt$ representing a section class $s$.  This restricted degeneration family gives
$$\langle X_0, X_z,X_{z'},\pt \rangle_{0,4,\sigma_i + \sigma_j^- + \alpha}^{E_{v_i-v_j},\reg} = \sum_{s_1 + s_2=\sigma_i + \sigma_j^- + \alpha} \langle \iota_* [\cM^{E_{v_i}}_{0,3,s_1}(X_0,X_z,\cD^{E_{v_i}}_\infty)]_\virt,X_{z'},\pt \rangle^{E_{-v_j},\sigma_j^- \,\reg}_{0,3,s_2},$$
where the left-hand side is by definition the integration of $1$ over the virtual fundamental class associated to $\cM_{0,4,\sigma_i + \sigma_j^- + \alpha}^{E_{v_i-v_j},\reg}(X_0, X_z,X_{z'},\pt)$.

Since the Seidel element $S_i = \sum_{s_1} q^{s_1} [\cM^{E_{v_i}}_{0,1,s_1}(\cD^{E_{v_i}}_\infty)]_\virt = \sum_{s_1} q^{s_1} [\cM^{E_{v_i}}_{0,3,s_1}(X_0,X_z,\cD^{E_{v_i}}_\infty)]_\virt$ is a divisor in $X$ (where the last equality is by divisor equation since $X_0, X_z$ are divisors in $E_{v_i}$ and $s_1 \cdot X_0 = s_1 \cdot X_z = 1$), only $s_1$ with $c_1(s_1) = 1$ contributes.  But $E_{v_i}$ is semi-Fano and so any section class (which is $\sigma_i + \alpha$ for some $\alpha$) has $c_1 \geq 1$.  Thus $s_1 = \sigma_i + \alpha_1$ where $\alpha_1<\alpha$ has $c_1(\alpha_1)=0$.  Moreover $c_1(\alpha)=0$ by dimension counting on the left-hand side.  Then $s_2 = \sigma_j^-+\alpha_2$ for some $\alpha_2$ satisfying $c_1(\alpha_2)=0$ and $\alpha_1+\alpha_2=\alpha$.

Now summing over $\alpha$ gives
\begin{equation} \label{eq:sum}
\sum_\alpha q^{\alpha} \langle X_0,X_z,X_{z'},\pt \rangle_{0,4,\sigma_i + \sigma_j^- + \alpha}^{E_{v_i-v_j},\reg} = \sum_{\alpha_2,l} q^{\alpha_2} \langle \iota_* S_i^\circ,X_{z'},\pt \rangle^{E_{-v_j},\sigma_j^- \,\reg}_{0,3,\sigma_j^- + \alpha_2}.
\end{equation}
By Lemma \ref{lem:reg=>unique} and its proof, every rational curve in $\cM^{E_{-v_j},\sigma_j^- \,\reg}_{0,3,\sigma_j^- + \alpha_2}(\iota_* D_l,X_{z'},\pt)$ is a union of a holomorphic sphere representing $\sigma_j^-$ and a rational curve supported in $\cD_0$ representing $\alpha$.  Such a rational curve intersects $X_{z'}$ at exactly one point (we take $z'$ such that $X_{z'} \not= \cD_0 \subset E_{-v_j}$), and hence $\cM^{E_{-v_j},\sigma_j^- \,\reg}_{0,3,\sigma_j^- + \alpha_2}(\iota_* D_l,X_{z'},\pt) \cong \cM^{E_{-v_j},\sigma_j^- \,\reg}_{0,2,\sigma_j^- + \alpha_2}(\iota_* D_l,\pt)$.  So the right-hand side of \eqref{eq:sum} is exactly the quantity we want to compute, namely,
$$\sum_\alpha q^{\alpha} \langle \iota_* S_i^\circ, \pt \rangle^{E_{-v_j},\sigma_j^- \, \reg}_{0,2,\sigma_j^-+\alpha}.$$

Now consider the left-hand side of \eqref{eq:sum}.  The moduli space contains a rational curve with a sphere component passing through $\pt$  representing a section class $s$.  Moreover, by dimension counting, the invariant is non-zero only when $c_1(\sigma_i + \sigma_j^- + \alpha) = 2$.  Since $X$ is semi-Fano, the sphere component representing $s$ which does not lie in any toric divisor and intersect each toric divisor transversely has $c_1 \leq 2$.  On the other hand, Since $s$ is a section class, it intersects $\cD_0$ and $\cD_\infty$ once.  Suppose $i\neq j$.  In order to have the balancing condition $\sum_{i\in\{0,1\ldots,m,\infty\}} \left(\cD_i \cdot s\right) v^E_i = 0$, the sphere component must intersect some divisors other than $\cD_0$ and $\cD_\infty$ (because $v^E_0 + v^E_\infty = (v_i-v_j,0) \not=0$).  This implies $s$ has $c_1 > 2$, a contradiction.  Thus the left-hand side of \eqref{eq:sum} is simply zero when $i \neq j$.  When $i=j$, $E_{v_i-v_j}$ is the trivial bundle $X \times \bP^1$, and $\sigma_i + \sigma_j^-$ is the constant section $\bP^1 \to X \times \bP^1$.  Thus the invariant $\langle X_0,X_z,X_{z'},\pt \rangle_{0,4,\sigma_i + \sigma_j^- + \alpha}^{E_{v_i-v_j},\reg}$ is one when $\alpha=0$, and zero otherwise.  Hence the left-hand side is $\delta_{ij}$.  This proves \eqref{eqn:2pt_reg_inv}.
\end{proof}

We are now ready to prove our main theorem:
\begin{thm}[=Theorem \ref{thm delta_intro}] \label{Thm g-open}
For all $j=1,\ldots,m$,
$$ \exp (g_j(\check{q}(q))) = \sum_{\alpha \in H_2^{c_1=0}(X)} q^{\alpha} n_1(\beta_j + \alpha) = 1 + \delta_{j}(q). $$
\end{thm}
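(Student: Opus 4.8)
The plan is to convert the open--closed formula of Theorem~\ref{Thm open-closed}, together with the divisor equation, into a small overdetermined system of first-order differential equations for the difference $f := (1+\delta_j(q)) - \exp(g_j(\check{q}(q)))$, and then to show that the only solution with the correct Novikov support is $f\equiv 0$. Throughout, $j$ is fixed. First I record the shape of $f$: by Corollary~\ref{cor:n} the series $\delta_j$ involves only monomials $q^\alpha$ with $c_1(\alpha)=0$ and $D_l\cdot\alpha=0$ for every $v_l\notin F(v_j)$, while by Corollary~\ref{cor:g} the series $\exp(g_j(\check{q}(q)))$ has exactly the same property; hence $f=\sum_d c_d q^d$ is supported on classes $d$ with $c_1(d)=0$ and $D_l\cdot d=0$ whenever $v_l\notin F(v_j)$, and its constant term vanishes since both $1+\delta_j$ and $\exp(g_j(\check{q}(q)))$ have constant term $1$.

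Next, for each ray generator $v_i$ with $v_i\in F(v_j)$ I would evaluate the generating function $\sum_\alpha q^\alpha\, n^X_{1,1}(\beta_j+\alpha;D_i,[\pt]_{\tor^X})$ in two ways. Using Theorem~\ref{thm div eq open}, the relation $D_i\cdot\beta_j=\delta_{ij}$, and the identity $\hat{D}_i(q^\alpha)=(D_i\cdot\alpha)\,q^\alpha$ coming from Definition~\ref{defn_diff_operator} (valid because $\{[D_{n+k}]\}$ is dual to $\{\Psi_k\}$), one finds
$$\sum_\alpha q^\alpha\, n^X_{1,1}(\beta_j+\alpha;D_i,[\pt]_{\tor^X}) = \sum_\alpha q^\alpha\,(D_i\cdot(\beta_j+\alpha))\,n_1(\beta_j+\alpha) = \delta_{ij}\,(1+\delta_j(q))+\hat{D}_i\,\delta_j(q).$$
On the other hand, Theorem~\ref{Thm open-closed}---whose hypotheses $v_i\in F(v_j)$ and $D_l\cdot\alpha=0$ for $v_l\notin F(v_j)$ hold on the supports in question by Corollaries~\ref{cor:n} and \ref{cor:g}---identifies the same function with $\sum_\alpha q^\alpha\,\langle\iota_* D_i,[\pt]_{E^-}\rangle^{E_j^-,\,\sigma_j^-\,\reg}_{0,2,\sigma_j^-+\alpha}$, which by \eqref{eq:main}, Proposition~\ref{prop:delta}, and the chain rule $\hat{D}_i\exp(g_j)=(\hat{D}_i g_j)\exp(g_j)$ equals $\delta_{ij}\,\exp(g_i(\check{q}(q)))+\hat{D}_i\exp(g_j(\check{q}(q)))$. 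Equating the two expressions and rearranging, treating $i=j$ and $i\neq j$ separately, I obtain $f+\hat{D}_j f=0$ and $\hat{D}_i f=0$ for every $v_i\in F(v_j)$ with $i\neq j$.

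It remains to solve this system. Suppose $c_d\neq 0$ for some $d\neq 0$. The $q^d$-coefficient of $\hat{D}_i f=0$ forces $D_i\cdot d=0$ for each $i\neq j$ with $v_i\in F(v_j)$; the $q^d$-coefficient of $f+\hat{D}_j f=0$ forces $(1+D_j\cdot d)\,c_d=0$, hence $D_j\cdot d=-1$; and the support constraint gives $D_l\cdot d=0$ for $v_l\notin F(v_j)$. Combining, $D_i\cdot d=0$ for all $i\neq j$ while $D_j\cdot d=-1$, so $c_1(d)=-K_X\cdot d=\sum_i D_i\cdot d=-1$, contradicting $c_1(d)=0$. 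Hence $f\equiv 0$, i.e. $1+\delta_j(q)=\exp(g_j(\check{q}(q)))$; and since $n_1(\beta_j)=1$ the left-hand side equals $\sum_{\alpha\in H_2^{c_1=0}(X)}q^\alpha\, n_1(\beta_j+\alpha)$, which is the assertion.

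The main obstacle is the bookkeeping in the middle step: one must verify that the hypotheses of Theorem~\ref{Thm open-closed} are satisfied for every $\alpha$ that contributes to these generating functions---so that the open GW invariants really are computed by the $\sigma_j^-$-regular closed GW invariants of $E_j^-$---and that \eqref{eq:main} together with Proposition~\ref{prop:delta} genuinely collapses to the single term $\delta_{ij}\exp(g_i)+\hat{D}_i\exp(g_j)$. Once the coupled equations $f+\hat{D}_j f=0$ and $\hat{D}_i f=0$ (for $v_i\in F(v_j)$, $i\neq j$) are in place, the remainder is the short combinatorial argument above, whose engine is the toric identity $\sum_i D_i=-K_X$ evaluated against a class of vanishing Chern number.
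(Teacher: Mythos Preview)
Your proof is correct and follows essentially the same route as the paper's own proof: both combine Theorem~\ref{Thm open-closed} with the divisor equation on one side, and \eqref{eq:main} together with Proposition~\ref{prop:delta} on the other, to obtain the system $\delta_{ij}f+\hat{D}_i f=0$ for all $i$. The only cosmetic difference is in the final step: the paper packages these equations as $\hat{D}_i(Q_j f)=0$ for all $i$ on the extended K\"ahler moduli (so $Q_j f$ is constant, hence zero), whereas you extract the $q^d$-coefficient and use $\sum_i D_i\cdot d=-K_X\cdot d=0$ to reach the same contradiction---your version is arguably the more transparent of the two.
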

\begin{proof}
The idea is to use Theorem \ref{Thm open-closed} to identify open GW invariants of $X$ with some closed GW invariants of the Seidel spaces, and then use \eqref{eq:main} to compute these closed invariants.

For the left-hand side of the formula we want to deduce, by Corollary \ref{cor:g}, $\exp(g_j(\check{q}(q)))$ only involves Novikov variables $q^\alpha$ with $\alpha \in H_2^{\eff,c_1=0}(X)$ satisfying $D_l \cdot \alpha = 0$ for $v_l\notin F(v_j)$.  For the right-hand side, by Corollary \ref{cor:n}, $\sum_\alpha q^\alpha n_1(\beta_j+\alpha)$ also has only Novikov variables $q^\alpha$ with $D_i \cdot \alpha = 0$ whenever $v_i \not\in F(v_j)$.  Thus if $v_i \not\in F(v_j)$, then
$$\hat{D}_i \cdot \exp(g_j(\check{q}(q))) = \hat{D}_i \cdot \left(\sum_\alpha q^\alpha n_1(\beta_j+\alpha)\right) = 0.$$
In the following we prove that the above equality also holds in the case when $v_i \in F(v_j)$.

Taking $\sum_\alpha q^\alpha \cdot$ on both sides of Equation \eqref{eq:main} and applying Proposition \ref{prop:delta} to the right-hand side, we have
$$
\sum_\alpha q^\alpha \langle \iota_* D_i, [\pt] \rangle^{E_j^-,\,\sigma_j^-\,\reg}_{0,2,\sigma_j^-+\alpha}\\
=\delta_{ij} \exp (g_i(\check{q}(q))) + (\hat{D}_i \cdot g_j(\check{q}(q))) \exp (g_j(\check{q}(q)))
$$
Combining with Theorem \ref{Thm open-closed}, we have
$$ \exp (g_j(\check{q}(q))) (\delta_{ij} + \hat{D}_i \cdot g_j(\check{q}(q))) = \sum_{\alpha \in H_2^{c_1=0}(X)} q^{\alpha} n_{1,1}(\beta_j + \alpha; D_{i}, [\pt]_X). $$
Thus
\begin{align*}
\hat{D}_{i} \cdot \big(Q_j \exp (g_j(\check{q}(q))) \big) &= \sum_{\alpha \in H_2^{c_1=0}(X)} Q_j q^{\alpha} n_{1,1}(\beta_j + \alpha; D_{i}, [\pt]_X) \\
&= \hat{D}_{i} \cdot\left( \sum_{\alpha \in H_2^{c_1=0}(X)} Q_j q^{\alpha} n_1(\beta_j + \alpha) \right)
\end{align*}
where we recall that $Q_j$ is a coordinate on the extended K\"ahler moduli $\tilde{\Kcone}^\C_X$ and $\hat{D}_{i} \cdot Q_j = \delta_{ij}$ (Section \ref{sec:extmod}), and the last equality follows from Theorem \ref{thm div eq open} (the divisor equation). This proves that the above equality holds for all $D_i$, and the theorem follows.
\end{proof}

\subsection{Corollaries}
We now describe some consequences of Theorem \ref{Thm g-open}.


\begin{thm} \label{thm conv}
The coefficients of the disc potential $W^{\mathrm{LF}}$ of a compact semi-Fano toric manifold $X$ are convergent power series in the K\"ahler parameters $q^\nef$.
\end{thm}

\begin{proof}
This follows from Theorem \ref{Thm g-open} and the fact that the hypergeometric series $g_j(\check{q}^\nef)$ and the inverse mirror map $\check{q}^\nef(q^\nef)$ are convergent.
\end{proof}

\begin{cor} \label{cor inv mir map}
The inverse mirror map $\check{q}(q)$ of a compact semi-Fano toric manifold $X$ is written in terms of the generating functions $\delta_l$ of open GW invariants as
$$\check{q}_k(q) = q_k \prod_{l=1}^m (1+\delta_l(q))^{D_l \cdot \Psi_k} = q_k (1+\delta_{n+k}(q)) \prod_{p=1}^n (1+\delta_p(q))^{-\pairing{v_{n+k}}{\nu_p}}.$$
\end{cor}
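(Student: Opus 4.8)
The plan is to derive Corollary \ref{cor inv mir map} as a direct algebraic consequence of Theorem \ref{Thm g-open} (i.e. Theorem \ref{thm delta_intro}), together with the explicit form of the mirror map recorded in Definition \ref{Defn mir map}. Recall from \eqref{mirror_map_eqn} that the mirror map is
$$ q_k(\check{q}) = \check{q}_k \exp(-g^{\Psi_k}(\check{q})), \qquad g^{\Psi_k} = \sum_{l=1}^m (D_l \cdot \Psi_k)\, g_l, $$
for $k=1,\ldots,m-n$. Passing to the inverse mirror map $\check{q} = \check{q}(q)$, this becomes
$$ \check{q}_k(q) = q_k \exp\big(g^{\Psi_k}(\check{q}(q))\big) = q_k \prod_{l=1}^m \exp\big((D_l\cdot\Psi_k)\, g_l(\check{q}(q))\big) = q_k \prod_{l=1}^m \big(\exp g_l(\check{q}(q))\big)^{D_l\cdot\Psi_k}. $$
Now Theorem \ref{Thm g-open} identifies $\exp(g_l(\check{q}(q))) = 1 + \delta_l(q)$ for every $l=1,\ldots,m$, so substituting gives the first claimed equality
$$ \check{q}_k(q) = q_k \prod_{l=1}^m (1+\delta_l(q))^{D_l\cdot\Psi_k}. $$

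For the second equality, I would simply plug in the intersection numbers $D_l \cdot \Psi_k$ coming from the definition \eqref{H_2_basis} of the basis $\{\Psi_k\}$. From
$$ \Psi_k = -\sum_{p=1}^n \pairing{\nu_p}{v_{n+k}}\beta_p + \beta_{n+k}, $$
and the fact that $\{D_l\}_{l=1}^m$ is dual to $\{\beta_l\}_{l=1}^m$ (so $D_l \cdot \beta_{l'} = \delta_{ll'}$), one reads off $D_p \cdot \Psi_k = -\pairing{\nu_p}{v_{n+k}}$ for $p=1,\ldots,n$, $D_{n+k}\cdot\Psi_k = 1$, and $D_{n+r}\cdot\Psi_k = 0$ for $r\neq k$ (this last fact is already noted in the text: $D_{n+r}\cdot\Psi_k = \delta_{kr}$). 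Inserting these exponents into the product over $l$ collapses it to
$$ \check{q}_k(q) = q_k\, (1+\delta_{n+k}(q)) \prod_{p=1}^n (1+\delta_p(q))^{-\pairing{v_{n+k}}{\nu_p}}, $$
which is the second displayed formula.

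There is essentially no hard step here: the content of the corollary is entirely carried by Theorem \ref{Thm g-open}, and everything else is bookkeeping with the mirror map formula \eqref{mirror_map_eqn} and the intersection pairings from \eqref{H_2_basis}. The only mild subtlety worth a sentence of care is interpreting the fractional/negative exponents $(1+\delta_l)^{D_l\cdot\Psi_k}$ as formal power series: since $\delta_l(q)$ has no constant term (it is a sum over $\alpha \neq 0$), $1+\delta_l$ is a unit in the relevant ring of power series in the Novikov/Kähler variables, so $(1+\delta_l)^{D_l\cdot\Psi_k}$ is well-defined for any integer exponent, and the convergence asserted in Theorem \ref{thm conv} upgrades this to an honest analytic identity near the large radius limit. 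I would state this once and then present the two-line computation above.
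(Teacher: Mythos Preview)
Your proposal is correct and follows essentially the same approach as the paper's own proof: invert the mirror map formula \eqref{mirror_map_eqn} to get $\check{q}_k(q) = q_k \prod_l (\exp g_l(\check{q}(q)))^{D_l\cdot\Psi_k}$, apply Theorem \ref{Thm g-open}, and then read off the intersection numbers $D_l\cdot\Psi_k$ from \eqref{H_2_basis}. Your added remark about $(1+\delta_l)^{D_l\cdot\Psi_k}$ being well-defined is a nice touch but not needed for the argument.
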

\begin{proof}
By \eqref{mirror_map_eqn}, we have
$$\check{q}_k (q) = q_k \exp (g^{\Psi_k}(\check{q}(q))) = q_k \prod_{l=1}^m (\exp g_l (\check{q}(q)))^{D_l \cdot \Psi_k}$$
and thus the equality follows from Theorem \ref{Thm g-open}.
Also, $\Psi_k = \beta_{n+k} - \sum_{p=1}^{n} \pairing{v_{n+k}}{\nu_p} \beta_p$, and so $D_p \cdot \Psi_k = -\pairing{v_{n+k}}{\nu_p}$ for $p=1,\ldots,n$ and $D_{n+r} \cdot \Psi_k = \delta_{rk}$ for $r=1,\ldots,m-n$.
\end{proof}

\begin{proof}[Proof of Theorem \ref{thm W equal}]
Recall that the Hori-Vafa potential $\tilde{W}^{\mathrm{HV}}_{\check{q}}$ is written as \eqref{eqn W tilde}:
\begin{equation*}
\tilde{W}^{\mathrm{HV}}_{\check{q}}(z_1, \ldots, z_n) = \sum_{p=1}^n (\exp g_p(\check{q})) z_p +
\sum_{k=1}^{m-n} \check{q}_{k} z^{v_{n+k}} \prod_{p=1}^n \exp \left(g_p(\check{q})\right)^{\pairing{v_{n+k}}{\nu_p}}.
\end{equation*}
Then by Theorem \ref{Thm g-open} and Corollary \ref{cor inv mir map}, we have
\begin{align*}
\tilde{W}^{\mathrm{HV}}_{\check{q}(q)} = \sum_{p=1}^n (1 + \delta_p(q)) z_p + \sum_{k=1}^{m-n} q_k z^{v_{n+k}} (1+\delta_{n+k}(q))
= W^{\mathrm{LF}}_{q}.
\end{align*}
\end{proof}


\begin{proof}[Proof of Theorem \ref{thm Seidel to gen intro}]
By Theorem \ref{thm W equal}, since  $W^{\mathrm{LF}}$ and $\tilde{W}^{\mathrm{HV}}$ are equal, $\QH^*(X, \omega_q)\overset{\simeq}\to \Jac(W_q^{\mathrm{LF}})$ is the same as $\QH^*(X, \omega_q)\overset{\simeq}\to \Jac(\tilde{W}_{\check{q}(q)}^{\mathrm{HV}})$.  By Proposition \ref{prop Bat to}, each Batyrev element $B_l$ is mapped to $(\exp g_l) Z_l$ for $l=1,\ldots,m$.  Since $B_l = (\exp g_l) S^\circ_l$ by Proposition \ref{prop Seidel Batyrev}, it follows that $S^\circ_l$ is mapped to $Z_l$ for $l=1,\ldots,m$.
\end{proof}

We conjecture that Theorem \ref{thm Seidel to gen intro} holds true for {\em any} compact toric manifold:
\begin{cnjc}\label{conj:seidel_elt}
Let $X$ be a compact toric manifold, not necessarily semi-Fano. Then the isomorphism \eqref{FOOO_isom_intro} maps the normalized Seidel elements $S^{\circ}_l\in \QH^*(X, \omega_q)$ to the generators\footnote{When $X$ is not even semi-Fano, $W_q^{\mathrm{LF}}$ is in general a Laurent series, instead of a Laurent polynomial, over the Novikov ring. Nevertheless we can still define the monomials $Z_l$ by Equation \eqref{Eqn Z}.} $Z_l$ of the Jacobian ring $\Jac(W_q^{\mathrm{LF}})$, where $Z_l$ are monomials defined by Equation \eqref{Eqn Z}.
\end{cnjc}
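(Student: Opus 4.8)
The plan is to upgrade, link by link, the chain of identifications behind Theorem \ref{thm Seidel to gen intro} from the semi-Fano case to an arbitrary compact toric manifold $X$. In the semi-Fano situation that chain factors through: (i) the open/closed equality of Theorem \ref{Thm open-closed}, identifying the open invariant $n^X_{1,1}(\beta_l+\alpha;D_k,[\pt]_L)$ with a $\sigma_l^-$-regular closed invariant of the Seidel space $E_l^-$; (ii) the evaluation of that closed invariant by running the McDuff degeneration of $E_{v_i-v_j}$ into $E_{v_i}\cup_X E_{-v_j}$, together with the Gonz\'alez--Iritani computation \cite{G-I11} of the Seidel element of the companion space $E_l$; and (iii) the coincidence $W^{\mathrm{LF}}_q=\tilde W^{\mathrm{HV}}_{\check q(q)}$ of Theorem \ref{thm W equal}, which makes the two Jacobian rings literally equal and lets the Batyrev-element computation of Proposition \ref{prop Bat to} be transported to the Lagrangian Floer side. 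For general $X$ link (iii) is unavailable, so the target of the map must be phrased intrinsically through $W^{\mathrm{LF}}$ (now possibly a Laurent \emph{series} over the Novikov ring, as in the footnote of the Conjecture), and the aim becomes to pin down the image of $S^\circ_l$ as the monomial $Z_l$ inside $\Jac(W^{\mathrm{LF}}_q)$ from links (i) and (ii) plus FOOO's isomorphism $QH^*(X,\omega_q)\cong\Jac(W^{\mathrm{LF}}_q)$ of \cite{FOOO10b}.

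First I would re-examine which stable discs contribute to the open invariants appearing in $W^{\mathrm{LF}}$. Dimension counting still forces Maslov index $2$, and the Cho--Oh bound $\mu\geq 2$ for each non-constant disc component still holds, but without the nef hypothesis a disc component of Maslov index $\geq 4$ may carry sphere bubbles of \emph{negative} Chern number, so Lemma \ref{Lem Maslov-two stable disc} fails as stated. The task is to show that, once the interior divisor constraint $D_k$ and the boundary point constraint $[\pt]_L$ are imposed, only configurations of the shape ``basic disc $\beta_l$ plus a rational tail supported in $\bigcup_j D_j$'' survive with nonzero virtual contribution — i.e.\ the other configurations lie in Kuranishi strata of excess codimension. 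Granting this, the geometric ``capping off'' of Section \ref{Sect open_closed} (Propositions \ref{prop open mod compare} and \ref{Prop open-closed mod}) should apply essentially verbatim, since, as the remark after Theorem \ref{thm:open-closed_intro} emphasizes, it identifies the deformation and obstruction theories of the two moduli problems on the nose; this yields the general open/closed formula $n^X_{1,1}(\beta_l+\alpha;D_k,[\pt]_L)=\langle D_k^{E_l^-},[\pt]_{E_l^-}\rangle^{E_l^-,\,\sigma_l^-\,\reg}_{0,2,\sigma_l^-+\alpha}$.

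For link (ii) I would rerun the degeneration argument in the proof of Proposition \ref{prop:delta}, aiming at the same identity $\sum_\alpha q^\alpha\langle \iota_* S^\circ_i,[\pt]\rangle^{E_j^-,\,\sigma_j^-\,\reg}_{0,2,\sigma_j^-+\alpha}=\delta_{ij}$, which rests on the composition law $S_{v_i-v_j}=S_{v_i}*S_{-v_j}$ and on Lemma \ref{lem:reg=>unique} — both valid for any toric $X$. Here one must be careful: the proof of Proposition \ref{prop:delta} also used that $E_{v_i}$ is semi-Fano (so that any section class has $c_1\geq 1$), a point no longer automatic, so the section-class bookkeeping in the degeneration would have to be controlled instead by Lemma \ref{lem:reg=>unique}, which still forces the $\sigma^-$-regular component to be the rigid section plus tails supported in $\cD_0$. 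Combining (i) and (ii) expresses each generating function $\delta_l$, hence $W^{\mathrm{LF}}=\sum_l(1+\delta_l)Z_l$, in terms of $\sigma^-$-regular invariants of the Seidel spaces, i.e.\ of the normalized Seidel elements $S^\circ_l$; matching this against the description of the generators of $\Jac(W^{\mathrm{LF}}_q)$ under FOOO's isomorphism (the analogue of $[D_{n+k}]\mapsto[\partial_{\log q_k}W^{\mathrm{LF}}]$, cf.\ Theorem \ref{Giv_thm2}) should then yield $S^\circ_l\mapsto Z_l$.

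The hard part will be the first step: controlling the moduli of Maslov index $2$ stable discs outside the semi-Fano range, where negative sphere bubbles appear, the relevant moduli can acquire excess components, and $W^{\mathrm{LF}}$ is only a Laurent series; one must rule out virtually every stable-disc configuration other than ``$\beta_l$ plus toric-boundary rational tail'' after imposing the constraints, for otherwise the capping-off construction has no well-behaved moduli to act on. A secondary, and probably softer, point is that FOOO's ring isomorphism \cite{FOOO10b} is proved for the \emph{big}, bulk-deformed potential, so one must verify it restricts to an isomorphism $QH^*(X,\omega_q)\cong\Jac(W^{\mathrm{LF}}_q)$ for the small, un-deformed potential even when the critical points fail to be isolated; here FOOO's own injectivity argument, which already passed through the McDuff--Tolman presentation by Seidel representations \cite{McDuff-Tolman}, should be reconciled with the direct computation above.
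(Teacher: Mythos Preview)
The statement you are attempting to prove is stated in the paper as a \emph{conjecture} (Conjecture~\ref{conj:seidel_elt}), not as a theorem; the paper provides no proof of it. So there is nothing to compare your proposal against. What you have written is not a proof but a research outline, and to your credit you recognize this: you correctly flag that Lemma~\ref{Lem Maslov-two stable disc} fails without the nef hypothesis, that the argument in Proposition~\ref{prop:delta} uses the semi-Fano property of $E_{v_i}$, and that the small-versus-big potential issue in \cite{FOOO10b} must be addressed.

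These are not technicalities to be ``granted'' and then cleaned up later; they are precisely the obstructions that left the statement conjectural. In particular, your key first step---showing that after imposing the divisor and point constraints only configurations ``basic disc $\beta_l$ plus rational tail in $\bigcup_j D_j$'' contribute virtually---is false as stated for general toric manifolds: once $-K_X$ is not nef there can be stable discs of Maslov index $2$ whose disc component has Maslov index $\geq 4$ glued to spheres of negative Chern number, and nothing in your outline rules out their virtual contribution. Likewise, your replacement for the $c_1\geq 1$ bound on section classes of $E_{v_i}$ (``control instead by Lemma~\ref{lem:reg=>unique}'') does not actually do the required bookkeeping: Lemma~\ref{lem:reg=>unique} constrains the $\sigma_j^-$-regular side in $E_{-v_j}$, not the section classes $s_1$ appearing in $E_{v_i}$ in the degeneration, and without the semi-Fano bound the sum over $s_1$ is no longer truncated to $c_1(s_1)=1$. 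Until these two points are genuinely resolved, the proposal remains a plausible strategy rather than a proof.
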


\begin{ex}
Consider the semi-Fano toric surface $X$ whose moment map image is shown in Figure \ref{fig egsftor}.  The disc potential $W_q^{\mathrm{LF}}$ and generating functions $\delta_i(q)$ of $X$ were computed in \cite{chan-lau}.  The key result is that $n_1(\beta) = 1$ when $\beta$ is an admissible disc class, and $n_1(\beta) = 0$ otherwise.  Admissibility is a combinatorial condition which is easy to check, and the readers are referred to \cite{chan-lau} for the detailed definitions and results.

\begin{figure}[htp]
\begin{center}
\includegraphics[scale=0.8]{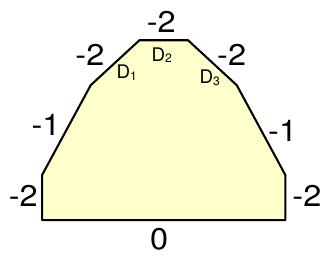}
\end{center}
\caption{An example of semi-Fano toric surface ($X_8$ in \cite[Appendix A]{chan-lau}).}
\label{fig egsftor}
\end{figure}

The generating functions corresponding to $D_1,D_2,D_3$ are
\begin{align*}
\delta_1(q) &= q_1 + q_1 q_2 + q_1 q_2 q_3,\\
\delta_2(q) &= q_2 + q_1 q_2 + q_2 q_3 + q_1 q_2 q_3 + q_1 q_2^2 q_3,\\
\delta_3(q) &= q_3 + q_2 q_3 + q_1 q_2 q_3
\end{align*}
respectively, where $q_i$'s are the K\"ahler parameters of $D_i$'s for $i=1,2,3$.  Each term in the above generation functions corresponds to an admissible disc class.

On the other hand, the mirror map is given by
\begin{align*}
q_1 &= \check{q}_1 \exp \big(2 g_1(\check{q}_1,\check{q}_2,\check{q}_3) - g_2(\check{q}_1,\check{q}_2,\check{q}_3)\big);\\
q_2 &= \check{q}_2 \exp \big(-g_1(\check{q}_1,\check{q}_2,\check{q}_3) + 2 g_2(\check{q}_1,\check{q}_2,\check{q}_3) - g_3(\check{q}_1,\check{q}_2,\check{q}_3)\big);\\
q_3 &= \check{q}_3 \exp \big(-g_2(\check{q}_1,\check{q}_2,\check{q}_3) +2 g_3(\check{q}_1,\check{q}_2,\check{q}_3)\big)
\end{align*}
where
\begin{align*}
g_1(\check{q}_1,\check{q}_2,\check{q}_3) &= \sum_{(a,b,c)} \check{q}_1^a\check{q}_2^b\check{q}_3^c \frac{(-1)^{2a-b} (2a-b-1)!}{a!c!(a-2b+c)!(b-2c)!};\\
g_2(\check{q}_1,\check{q}_2,\check{q}_3) &= \sum_{(a,b,c)} \check{q}_1^a\check{q}_2^b\check{q}_3^c \frac{(-1)^{2b-a-c} (2b-a-c-1)!}{a!c!(b-2a)!(b-2c)!};\\
g_3(\check{q}_1,\check{q}_2,\check{q}_3) &= \sum_{(a,b,c)} \check{q}_1^a\check{q}_2^b\check{q}_3^c \frac{(-1)^{2c-b} (2c-b-1)!}{a!c!(a-2b+c)!(b-2a)!}
\end{align*}
where the summations are over all $(a,b,c)\in\integer^3$ such that the term before each factorial sign is non-negative.
By Theorem \ref{Thm g-open}, we have $1+\delta_i(q(\check{q})) = \exp g_i(\check{q})$ for $i=1,2,3$. This produces non-trivial identities between hypergeometric series, and hence a closed formula for the inverse mirror map $\check{q}(q)$:

\begin{align*}
\check{q}_1 &= q_1 \cdot \frac{1+q_2 + q_1 q_2 + q_2 q_3 + q_1 q_2 q_3 + q_1 q_2^2 q_3}{(1+q_1 + q_1 q_2 + q_1 q_2 q_3)^{2}};\\
\check{q}_2 &= q_2 \cdot \frac{(1+q_1 + q_1 q_2 + q_1 q_2 q_3)(1+q_3 + q_2 q_3 + q_1 q_2 q_3)}{(1+q_2 + q_1 q_2 + q_2 q_3 + q_1 q_2 q_3 + q_1 q_2^2 q_3)^{2}};\\
\check{q}_3 &= q_3 \cdot \frac{1+q_2 + q_1 q_2 + q_2 q_3 + q_1 q_2 q_3 + q_1 q_2^2 q_3}{(1+q_3 + q_2 q_3 + q_1 q_2 q_3)^{2}}.
\end{align*}

\end{ex}

\subsection{Equivalence of results}
In fact the statements in Corollary \ref{cor inv mir map} and Theorems \ref{thm W equal}, \ref{Thm g-open}, and \ref{thm Seidel to gen intro} are all equivalent to each other.
\begin{prop} \label{prop:equiv}
Let $X$ be a semi-Fano toric manifold.  The following statements are equivalent.
\begin{enumerate}
\item The inverse mirror map is equal to
$$\check{q}_k(q) = q_k \prod_{l=1}^m (1+\delta_l(q))^{D_l \cdot \Psi_k} = q_k (1+\delta_{n+k}(q)) \prod_{p=1}^n (1+\delta_p(q))^{-\pairing{v_{n+k}}{\nu_p}}.$$
\item The generating function of open Gromov-Witten invariants is given by
$$ \sum_{\alpha \in H_2^{c_1=0}(X)} q^{\alpha} n_1(\beta_j + \alpha) = 1+\delta_j(q) = \exp (g_j(\check{q}(q))).$$
\item The disc potential is equal to the Hori-Vafa superpotential via the inverse mirror map:
$$ W^{\mathrm{LF}}_{q}=\tilde{W}^{\mathrm{HV}}_{\check{q}(q)}.$$
\item The (normalized) Seidel elements $S^\circ_l\in \QH^*(X, \omega_q)$ are mapped to the generators $Z_1, \ldots, Z_m$ of the Jacobian ring $\Jac(W_q^{\mathrm{LF}})$ (see Equation \eqref{Eqn Z}) under the isomorphism \eqref{FOOO_isom_intro}.
\end{enumerate}
\end{prop}

We have seen that (2) implies (1) which then implies (3). Conversely,
suppose that we have (3). Then (1) holds by definition of the potentials. On the other hand, McDuff-Tolman \cite[Proposition 5.2]{McDuff-Tolman} show that the normalized Seidel elements $S^\circ_j(q)$ satisfy the multiplicative relations:
$$\prod_{l=1}^m S^\circ_j(q)^{D_l \cdot d} = q^d$$
for any $d\in H_2(X,\integer)$.
Together with the multiplicative relations \eqref{eqn Batyrev multi} satisfied by the Batyrev elements and Proposition \ref{prop Seidel Batyrev}, we obtain
\begin{equation}\label{eqn multi rel}
\prod_{l=1}^m (1+\delta_l(q))^{D_l \cdot d} = \prod_{l=1}^m \exp \left(g_l(\check{q}(q))\right)^{D_l \cdot d}
\end{equation}
for any $d\in H_2(X,\integer)$. To see that this implies (2), we need the following\footnote{This lemma is obviously a consequence of (2), but here we need to prove it without assuming (2).}

\begin{lem}
If $g_l(\check{q})$ vanishes, then so does $\delta_l(q)$.
\end{lem}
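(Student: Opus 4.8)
The plan is to prove the contrapositive: assuming $\delta_l(q)\neq 0$, I would deduce $g_l(\check q)\neq 0$. The point is to translate both conditions into combinatorics of the fan polytope $P$ and match them up.

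First I would record that there is no cancellation in the series \eqref{eqn:funcn_g}, so $g_l$ vanishes identically if and only if its index set is empty, i.e.\ there is no effective curve class $d$ with $c_1(d)=0$, $D_l\cdot d<0$ and $D_p\cdot d\geq 0$ for all $p\neq l$. Indeed, each such $d$ contributes the coefficient $(-1)^{D_l\cdot d}\,(-(D_l\cdot d)-1)!\,/\prod_{p\neq l}(D_p\cdot d)!$, which is a nonzero rational since all the factorials are of nonnegative integers, and distinct classes $d$ give distinct monomials $\check q^{d}=\prod_k\check q_k^{D_{n+k}\cdot d}$ because $\{[D_{n+1}],\dots,[D_m]\}$ is a basis of $H^2(X)$.

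Next, suppose $n_1(\beta_l+\alpha)\neq 0$ for some nonzero $\alpha\in H_2^{\eff}(X)$ with $c_1(\alpha)=0$. By Corollary \ref{cor:n}, $D_i\cdot\alpha=0$ whenever $v_i\notin F(v_l)$. Were $F(v_l)=\{v_l\}$, then together with $c_1(\alpha)=\sum_i D_i\cdot\alpha=0$ this would force $D_i\cdot\alpha=0$ for every $i$, hence $\alpha=0$, a contradiction. So $v_l$ is not a vertex of $P$; by the standard characterization of the vertices of a convex hull of a finite set, $v_l$ lies in the convex hull of $\{v_i:i\neq l\}$, and clearing denominators there are $a_i\in\integer_{\geq 0}$ ($i\neq l$) and $b\in\integer_{>0}$ with $\sum_{i\neq l}a_iv_i=b\,v_l$ and $\sum_{i\neq l}a_i=b$. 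This relation among the ray generators is a class $d\in H_2(X,\integer)$ with $D_i\cdot d=a_i\geq 0$ for $i\neq l$, $D_l\cdot d=-b<0$ and $c_1(d)=\sum_{i\neq l}a_i-b=0$; applying Lemma \ref{lem:G-I} to the ray $\real_{\geq 0}v_l$ (whose complementary generators are exactly the $v_i$ with $i\neq l$) shows that $d$ is effective. Thus $d$ lies in the index set of \eqref{eqn:funcn_g}, and by the first paragraph $g_l(\check q)\neq 0$, as required.

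The step I expect to need the most care is the effectivity of $d$ in the last paragraph — this is precisely where the semi-Fano hypothesis is used — but it is handled cleanly by Lemma \ref{lem:G-I}, so in the end nothing here is hard. As an alternative to that paragraph one may simply invoke \cite[Proposition 4.3]{G-I11}, which already characterizes the indices $l$ with $g_l\neq 0$ as precisely those for which $v_l$ is not a vertex of $P$.
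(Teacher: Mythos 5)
Your proof is correct, and it is not circular: Corollary \ref{cor:n} and Lemma \ref{lem:G-I} come from Section \ref{Sect open_closed} and from \cite{G-I11}, independently of Theorem \ref{Thm g-open}, so they may be used here. But your route is genuinely different from the paper's. The paper argues directly on a stable disc contributing to $n_1(\beta_l+\alpha)\neq 0$: it takes the irreducible curve component $C'$ attached to the disc, shows $C'\subset D_l$ (so $D_l\cdot[C']<0$) and $D_j\cdot[C']\geq 0$ for $j\neq l$ via a Maslov-index/codimension-two argument, and thus gets an honestly represented (hence effective) class in the index set of \eqref{eqn:funcn_g}. You instead use Corollary \ref{cor:n} plus $c_1(\alpha)=0$ to rule out $v_l$ being a vertex of the fan polytope, then manufacture the required class purely combinatorially from a rational relation $\sum_{i\neq l}a_iv_i=bv_l$, importing effectivity from Lemma \ref{lem:G-I} (or short-circuiting via \cite[Proposition 4.3]{G-I11}, which the paper itself invokes right after this lemma). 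Your approach is more modular and avoids the somewhat delicate geometric claims about $C'$ (its containment in $D_l$ and the sign of its intersection numbers), at the cost of leaning on Corollary \ref{cor:n} — whose proof encapsulates essentially the same geometric input via Proposition \ref{prop:Cin} — and on the Gonz\'alez--Iritani effectivity lemma; the paper's argument is self-contained at the level of stable-disc geometry and produces the effective class with no convexity input. Your preliminary observations (no cancellation among the monomials $\check{q}^d$, and rationality of the convex-combination coefficients before clearing denominators) are the right points to flag and are both fine.
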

\begin{proof}
Suppose that $\delta_l \neq 0$.  Then there exists $\alpha \in H_2(X)$ represented by a rational curve with Chern number zero such that $n_{\beta_l + \alpha} \neq 0$.  The class $\alpha$ is represented by a tree $C$ of rational curves in $X$. Let $C'$ be the irreducible component of $C$ which intersects with the disk representing $\beta_l$. Let $d=[C']\in H_2(X)$. Then the Chern number of $d$ is also zero since $X$ is semi-Fano. Furthermore, $D_l\cdot d<0$ because the invariance of $n_{\beta_l+\alpha}$ under deformation of the Lagrangian torus fiber $L$ implies that $C'$ is contained inside the toric divisor $D_l$. We claim that $D_j\cdot d\geq0$ for all $j\neq l$. When $n=2$, this is obvious. When $n\geq3$, $D_j\cdot d<0$ for some other $j\neq l$ implies that the curve $C'$ is contained inside the codimension two subvariety $D_l\cap D_j$. However, the intersection of $C'$ with the disk representing $\beta_l$ cannot be inside $D_l\cap D_j$ since $\beta_l$ has Maslov index 2. So we conclude that $D_j\cdot d\geq0$ for all $j\neq l$.
Thus $d = [C'] \in H_2(X)$ satisfies the properties that
\begin{align*}
-K_X \cdot d = 0, D_l \cdot d < 0\text{ and } D_j \cdot d \geq 0 \text{ for all } j\neq l,
\end{align*}
which contributes to a term of $g_l(\check{q})$, and hence $g_l(\check{q}) \neq 0$ (distinct $d$ leads to distinct $\check{q}^d$, and hence they do not cancel each other).
\end{proof}

Now consider $A_l(q):=\log\left(e^{-g_l(\check{q})}(1+\delta_l(q))\right)$, $l=1,\ldots,m$.
By \cite[Proposition 4.3]{G-I11}, $g_l$ vanishes if and only if $v_l$ is a vertex of the fan polytope of $X$, and
any convex polytope with nonempty interior in $\real^n$ has at least $n+1$ vertices, so at least $n+1$ of the functions $g_l$ are vanishing (cf. \cite[Corollary 4.6]{G-I11}). Thus the above lemma implies that at least $n+1$ of the functions $A_l$ are vanishing.
Without loss of generality, assume that $g_1,\ldots,g_s$ (with $s \leq m-n-1$) are the non-vanishing functions so that $A_l \equiv 0$ for $l>s$. Taking logarithms on both sides of \eqref{eqn multi rel} we have the following equality for any $d\in H_2(X,\integer)$:
\begin{equation}\label{eqn log multi rel}
\sum_{l=1}^s \left( D_l \cdot d \right) A_l(q)=0.
\end{equation}

For $l=1,\ldots,s$ (when $v_l$ is not a vertex of the fan polytope), recall that $F(v_l)$ is the minimal face of the fan polytope of $X$ containing $v_l$. Then $F(v_l)$ is the convex hull of primitive generators $v_{p_1},\ldots,v_{p_k}$ which are vertices of the fan polytope of $X$. So there exist integers $a_1,\ldots,a_k,b_l>0$ such that $a_1v_{p_1}+\ldots+a_kv_{p_k}-b_lv_l=0$. This primitive relation corresponds to a class $d_l\in H_2(X,\integer)$ such that $D_l\cdot d_l = -b_l < 0$, $D_{p_t}\cdot d_l = a_t$ and $D_r\cdot d_l = 0$ when $r$ is none of $l,p_1,\ldots,p_k$. (cf. proof of \cite[Theorem 1.2]{G-I11}.)  Then the Equation \eqref{eqn log multi rel} for the class $d = d_l$ is simply given by
$$-b_l A_l = 0,$$
whence $A_l \equiv 0$ for $l = 1,\ldots,s$. This proves (2).

We have seen that (2) implies (3), which in turn implies (4).  Now suppose (4) holds, i.e.  the isomorphism \eqref{FOOO_isom_intro} maps $S^\circ_l$ to $Z_l$ for $l=1,\ldots,m$, then the elements $\tilde{B}_l\in \QH^*(X, \omega_q)$ defined by $\tilde{B}_l:=(1+\delta_l)S^\circ_l$ satisfy the conditions (i), (ii) and (iii) of \cite[Theorem 1.2]{G-I11}, which states that these conditions completely characterize the Batyrev elements, so that we have $\tilde{B}_l=B_l$ in $\QH^*(X, \omega_q)$. (2) then follows from Proposition \ref{prop Seidel Batyrev}.  This completes the proof of Proposition \ref{prop:equiv}.

\bibliographystyle{amsplain}
\bibliography{geometry}

\end{document}